\documentclass[11pt]{amsart}
\usepackage{amsthm,amsmath,amssymb}
\usepackage{stmaryrd,mathrsfs}
\usepackage{enumerate}
\usepackage{enumitem}
\usepackage[T1]{fontenc}
\usepackage{esint}
\usepackage{tikz}
\usepackage{hyperref}
\usepackage{stackengine}
\usepackage{verbatim}
\usepackage[greek, ngerman, english]{babel}
\usepackage{csquotes}
\stackMath
\allowdisplaybreaks
\usepackage{geometry}
\usepackage{hyperref}
	\hypersetup{colorlinks, breaklinks,
            	linkcolor = blue,
				urlcolor = blue,
				anchorcolor = blue,
				citecolor = blue}

\usepackage[doi=true,isbn=true,sorting=nty,giveninits=true,maxbibnames=99,backend=biber]{biblatex}
\newcommand{\abs}[1]{|#1|}

\newcommand{\Norm}[2]{\|#1\|_{#2}}

\newcommand{\ave}[1]{\langle #1\rangle}
\newcommand{\BMO}[0]{\operatorname{BMO}}
\newcommand{\CMO}[0]{\operatorname{CMO}}

\newcommand{\supp}[0]{\operatorname{supp}}
\newcommand{\loc}[0]{\operatorname{loc}}
\DeclareMathOperator*{\esssup}{ess\;sup}
\DeclareMathOperator*{\essinf}{ess\;inf}

\newcommand{\R}{\mathbb{R}}
\newcommand{\C}{\mathbb{C}}
\newcommand{\N}{\mathbb{N}}
\newcommand{\Z}{\mathbb{Z}}

\newcommand{\eps}[0]{\varepsilon}

\newcommand{\dd}{\,\mathrm{d}}

\swapnumbers \numberwithin{equation}{section}

\theoremstyle{plain}
\newtheorem{theorem}[equation]{Theorem}
\newtheorem{proposition}[equation]{Proposition}

\newtheorem{lemma}[equation]{Lemma}

\theoremstyle{definition}
\newtheorem{definition}[equation]{Definition}

\newtheorem{remark}[equation]{Remark}

\makeatletter
\@namedef{subjclassname@2020}{
  \textup{2020} Mathematics Subject Classification}
 \def\@textbottom{\vskip \z@ \@plus 1pt}
 \let\@texttop\relax
\makeatother

\makeatletter
\newcommand*{\mint}[1]{%
  \mint@l{#1}{}%
}
\newcommand*{\mint@l}[2]{%
  \@ifnextchar\limits{%
    \mint@l{#1}%
  }{%
    \@ifnextchar\nolimits{%
      \mint@l{#1}%
    }{%
      \@ifnextchar\displaylimits{%
        \mint@l{#1}%
      }{%
        \mint@s{#2}{#1}%
      }%
    }%
  }%
}
\newcommand*{\mint@s}[2]{%
  \@ifnextchar_{%
    \mint@sub{#1}{#2}%
  }{%
    \@ifnextchar^{%
      \mint@sup{#1}{#2}%
    }{%
      \mint@{#1}{#2}{}{}%
    }%
  }%
}
\def\mint@sub#1#2_#3{%
  \@ifnextchar^{%
    \mint@sub@sup{#1}{#2}{#3}%
  }{%
    \mint@{#1}{#2}{#3}{}%
  }%
}
\def\mint@sup#1#2^#3{%
  \@ifnextchar_{%
    \mint@sup@sub{#1}{#2}{#3}%
  }{%
    \mint@{#1}{#2}{}{#3}%
  }%
}
\def\mint@sub@sup#1#2#3^#4{%
  \mint@{#1}{#2}{#3}{#4}%
}
\def\mint@sup@sub#1#2#3_#4{%
  \mint@{#1}{#2}{#4}{#3}%
}
\newcommand*{\mint@}[4]{%
  \mathop{}%
  \mkern-\thinmuskip
  \mathchoice{%
    \mint@@{#1}{#2}{#3}{#4}%
        \displaystyle\textstyle\scriptstyle
  }{%
    \mint@@{#1}{#2}{#3}{#4}%
        \textstyle\scriptstyle\scriptstyle
  }{%
    \mint@@{#1}{#2}{#3}{#4}%
        \scriptstyle\scriptscriptstyle\scriptscriptstyle
  }{%
    \mint@@{#1}{#2}{#3}{#4}%
        \scriptscriptstyle\scriptscriptstyle\scriptscriptstyle
  }%
  \mkern-\thinmuskip
  \int#1%
  \ifx\\#3\\\else_{#3}\fi
  \ifx\\#4\\\else^{#4}\fi  
}
\newcommand*{\mint@@}[7]{%
  \begingroup
    \sbox0{$#5\int\m@th$}%
    \sbox2{$#5\int_{}\m@th$}%
    \dimen2=\wd0 %
    \let\mint@limits=#1\relax
    \ifx\mint@limits\relax
      \sbox4{$#5\int_{\kern1sp}^{\kern1sp}\m@th$}%
      \ifdim\wd4>\wd2 %
        \let\mint@limits=\nolimits
      \else
        \let\mint@limits=\limits
      \fi
    \fi
    \ifx\mint@limits\displaylimits
      \ifx#5\displaystyle
        \let\mint@limits=\limits
      \fi
    \fi
    \ifx\mint@limits\limits
      \sbox0{$#7#3\m@th$}%
      \sbox2{$#7#4\m@th$}%
      \ifdim\wd0>\dimen2 %
        \dimen2=\wd0 %
      \fi
      \ifdim\wd2>\dimen2 %
        \dimen2=\wd2 %
      \fi
    \fi
    \rlap{%
      $#5%
        \vcenter{%
          \hbox to\dimen2{%
            \hss
            $#6{#2}\m@th$%
            \hss
          }%
        }%
      $%
    }%
  \endgroup
}

\begin{document}

\title[Riesz--Kolmogorov and extrapolation of compactness]{Weighted Riesz--Kolmogorov criterion and multilinear extrapolation of compactness on variable Lebesgue spaces}

\author{Spyridon Kakaroumpas}
\address{Institut f{\"u}r Mathematik, Julius-Maximilians-Universit{\"a}t
W{\"u}rzburg, Emil-Fischer-Str.~41, 97074 W{\"u}rzburg, Germany}
\email{spyridon.kakaroumpas@uni-wuerzburg.de}

\author{Stefanos Lappas}
\address{Mathematisches Institut, Rheinische Friedrich-Wilhelms-Universit{\"a}t Bonn, Endenicher Allee 60, 53115 Bonn, Germany}
\email{lappas@math.uni-bonn.de; vlappas@hotmail.com}

\thanks{}

\keywords{Extrapolation, interpolation, multilinear variable weights, compact operators, Calder\'{o}n--Zygmund operators, fractional integral operators, Fourier multipliers}
\subjclass[2020]{Primary: 42B20, 42B25, 42B35 ; Secondary: 46B70, 47H60}



\begin{abstract}
This paper addresses a novel weighted Riesz--Kolmogorov theorem and the extrapolation of multilinear compact operators in the context of weighted variable Lebesgue spaces. We establish the latter result via our Riesz--Kolmogorov theorem which yields a weighted interpolation theorem for multilinear compact operators in the variable Lebesgue setting. In proving this, we also show a weighted interpolation theorem in mixed-norm variable Lebesgue spaces. By means of our extrapolation result, we obtain new weighted compactness estimates for the commutators of multilinear $\omega$-Calder\'{o}n--Zygmund operators, multilinear fractional integrals and multilinear Fourier multipliers on weighted variable Lebesgue spaces. Our work generalizes several recent ones, including but not limited to those of Cao, Olivo and Yabuta in the setting of multilinear operators acting on the classical weighted Lebesgue spaces as well as the previous result by the authors in the setting of bilinear operators and variable Lebesgue spaces.
\end{abstract}

\maketitle


\section{Introduction}

This paper focuses on two new compactness results: the weighted Riesz--Kolmogorov characterization of total boundedness (or equivalently precompactness/relatively compactness) theorem and the extrapolation theorem for multilinear compact operators in the variable exponent setting. Let us begin by first recalling some historical facts related to the Riesz--Kolmogorov compactness criteria.

The theory of variable Lebesgue spaces has seen substantial progress over the past few years, with systematic treatments provided in \cite{DHHR2011, CF2013}. One important aspect of this theory concerns the characterization of totally bounded sets. In the classical constant $L^p$ Lebesgue spaces, relative compactness is characterized by the Riesz--Kolmogorov theorem, see, for instance, \cite{Kolmogorov1931, Tamarkin1932, Riesz1933, Tulajkov1933, Sudakov1957}. We remark that this criterion was further extended (when $0<p<1$) in \cite{Tsuji1951}. A weighted $L^p(v)$ version of Riesz--Kolmogorov theorem for $p>1$ was studied in \cite{ClopCruz2013}, and was further extended to the case $0<p<1$ (see \cite{XYY2021, CaoOlivoYabuta2022}). In recent years, Riesz--Kolmogorov type theorems have also been generalized to variable exponent $L^{p(\cdot)}$ Lebesgue spaces (see \cite{Rafeiro2009} in the Euclidean setting and \cite{GorkaMacios2015, BG2018} in the setting of metric measure spaces), to variable exponent $L^{p(\cdot),\lambda(\cdot)}$ Morrey spaces on a bounded doubling metric measure space (see \cite{BGGS2021}), to Banach/quasi-Banach function spaces (see \cite{GorkaRafeiro2016} and \cite{CGO2016, GuoZhao2020}) and to the setting of matrix weights (see \cite{LYZ2023}). 

In this paper, we will focus in proving a weighted variable $L^{p(\cdot)}(w)$  Riesz--Kolmogorov theorem. To state it we need the following definition of variable Muckenhoupt $\mathcal{A}_{p(\cdot)}$ class of weights (see also Section \ref{sec: preliminaries} for the notations).
\begin{definition}[\cite{CruzUribe2011}, (1.3) in page 365]
Given an exponent $p(\cdot)\in\mathscr{P}$ and weight $w$, we say that $w\in\mathcal{A}_{p(\cdot)}$ if     
\begin{equation*}
  [w]_{\mathcal{A}_{p(\cdot)}}:=\sup_{Q} |Q|^{-1}\Vert w\chi_{Q}\Vert_{p(\cdot)}\Vert w^{-1}\chi_{Q}\Vert_{p'(\cdot)}<\infty,   
\end{equation*}
where the supremum is taken over all cubes $Q\subset\R^n$.
\end{definition}
Notice that $w\in\mathcal{A}_{p(\cdot)}$ if and only if $u=w^{p(\cdot)}\in A_{p(\cdot)}$, with $[w]_{\mathcal{A}_{p(\cdot)}}=[u]_{A_{p(\cdot)}}$. Here, we define $[u]_{A_{p(\cdot)}}:=\sup_{Q} |Q|^{-1}\Vert u^{\frac{1}{p(\cdot)}}\chi_{Q}\Vert_{p(\cdot)}\Vert u^{-\frac{1}{p(\cdot)}}\chi_{Q}\Vert_{p'(\cdot)}<\infty$ and this is the non-symmetric definition of the variable Muckenhoupt $\mathcal{A}_{p(\cdot)}$ class (see \cite[page 364]{CruzUribe2011}). By choosing $p(\cdot)=p=\text{constant}$ in the latter definition, one recovers the classical Muckenhoupt $A_p$ class of weights introduced in \cite{Muckenhoupt1972}. 

Our first main result is the following new weighted version of the Riesz--Kolmogorov criterion in the variable exponent setting (see Section \ref{sec: preliminaries} for a detailed description of the definitions and notations used).

\begin{theorem}\label{thm: RK criterion}
Let $p(\cdot)\in\mathscr{P}_0\cap\mathrm{LH}$, $\widetilde{q}\in(0,p_{-})$,  $x_0\in\R^n$ and $w^{\widetilde{q}}\in\mathcal{A}_{\frac{p(\cdot)}{\widetilde{q}}}$. Then, the family $\mathcal{F}\subset L^{p(\cdot)}(w)$ is totally bounded (or equivalently precompact/relatively compact) if and only if the following conditions are satisfied:
\begin{enumerate}[label=\textup{(\roman*)}]
\item $\mathcal{F}$ is uniformly bounded, i.e.,
  \begin{equation*}
   \sup_{f\in\mathcal{F}}\|f\|_{L^{p(\cdot)}(w)}<\infty     
  \end{equation*}
  or equivalently, there exists $M>0$ such that for all $f\in\mathcal{F}$
  \begin{equation*}
  \sup_{f\in\mathcal{F}}\int_{\R^n}|f(x)|^{p(x)}\;w(x)^{p(x)} \dd x\leq M;
  \end{equation*}\label{itm: (i)}
\item $\mathcal{F}$ is $\widetilde{q}$-equicontinuous, i.e.,
  \begin{equation*}
  \lim_{r\rightarrow 0}\sup_{f\in\mathcal{F}}\left\|\left(\displaystyle \mint{-}_{B(\cdot,r)}|f(\cdot)-f(y)|^{\widetilde{q}}\dd y\right)^{1/\widetilde{q}}\right\|_{L^{p(\cdot)}(w)}=0     
  \end{equation*}
  or equivalently,
  \begin{equation*}
  \lim_{r\rightarrow 0}\sup_{f\in\mathcal{F}}\int_{\R^n}\left(\displaystyle \mint{-}_{B(x,r)}|f(x)-f(y)|^{\widetilde{q}}\dd y\right)^{p(x)/\widetilde{q}}w(x)^{p(x)}\dd x=0;  
  \end{equation*}\label{itm: (ii)}
\item $\mathcal{F}$ uniformly vanishes at infinity, i.e.,
  \begin{equation*}
  \lim_{R\rightarrow\infty}\sup_{f\in\mathcal{F}}\|f\chi_{\R^n\setminus B(x_0,R)}\|_{L^{p(\cdot)}(w)}=0 
  \end{equation*}
  or equivalently,
  \begin{equation*}  \lim_{R\rightarrow\infty}\sup_{f\in\mathcal{F}}\int_{\R^n\setminus B(x_0,R)}|f(x)|^{p(x)}\;w(x)^{p(x)}\dd x=0.
  \end{equation*}\label{itm: (iii)}
\end{enumerate}
\end{theorem}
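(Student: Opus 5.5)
We prove the two directions separately. The standard tool will be the averaging operator $A_r f(x):=\mint{-}_{B(x,r)} f(y)\dd y$, or rather its $\widetilde{q}$-version. The key weighted input is that $w^{\widetilde{q}}\in\mathcal{A}_{p(\cdot)/\widetilde{q}}$ with $p(\cdot)/\widetilde{q}\in\mathscr{P}\cap\mathrm{LH}$ and $(p/\widetilde q)_->1$. By the Cruz-Uribe--Fiorenza--Neugebauer theorem, this gives boundedness of the Hardy--Littlewood maximal operator $M$ on $L^{p(\cdot)/\widetilde q}(w^{\widetilde q})$. Equivalently, $g\mapsto (M|g|^{\widetilde q})^{1/\widetilde q}$ is bounded on $L^{p(\cdot)}(w)$, with norm depending only on $[w^{\widetilde q}]$ and the LH constants. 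We use this throughout via the identity $\||g|^{\widetilde q}\|_{L^{p(\cdot)/\widetilde q}(w^{\widetilde q})}=\|g\|_{L^{p(\cdot)}(w)}^{\widetilde q}$. We also note that $L^{p(\cdot)}(w)$ is a quasi-Banach lattice (a Banach space if $p_-\ge1$) with an $s$-triangle inequality, $s=\min(1,p_-)$, which we use freely.

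\emph{Necessity.} (i) is immediate from total boundedness. For (ii) and (iii), we use the usual $\varepsilon$-net argument. Each of the quantities in (ii) and (iii) is a quasi-seminorm-type expression that is subadditive up to constants in $f$. By the $s$-triangle inequality, it suffices to verify (ii) and (iii) for a single function $f$, and by density for $f\in C_c^\infty$. For (ii), this density rests on the pointwise bound $(\mint{-}_{B(x,r)}|f(x)-f(y)|^{\widetilde q}\dd y)^{1/\widetilde q}\lesssim |f(x)|+(M|f|^{\widetilde q})^{1/\widetilde q}(x)$ together with the maximal bound above. Density of $C_c^\infty$ in $L^{p(\cdot)}(w)$ holds since $w^{p(\cdot)}$ is locally integrable and $p_+<\infty$. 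For $f\in C_c^\infty$, the integrand in (ii) is bounded by $\|\nabla f\|_\infty r$ on a fixed compact set, so dominated convergence gives (ii). Condition (iii) follows from dominated convergence in the modular.

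\emph{Sufficiency.} Assume (i)--(iii) and fix $\varepsilon>0$.

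\textbf{Step 1 (cut-off at infinity).} Choose $R$ so that $\|f\chi_{\R^n\setminus B(x_0,R)}\|_{L^{p(\cdot)}(w)}<\varepsilon$ uniformly in $f\in\mathcal F$.

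\textbf{Step 2 (discretisation).} Partition $B(x_0,R)$ into finitely many dyadic-type cubes $Q_j$ of side $\delta$, and set $T_\delta f:=\sum_j \langle f\rangle_{Q_j}\chi_{Q_j}$. Here $\langle f\rangle_{Q}$ is the usual average; when $\widetilde q<1$ this average may be undefined, so in that case we replace it by the $\widetilde q$-median-type value or by $\langle |f|^{\widetilde q}\rangle$. More precisely, for $\widetilde q<1$ we choose a constant $c_j$ that minimises $\mint{-}_{Q_j}|f-c|^{\widetilde q}$. For each $j$, $|c_j|\lesssim \langle|f|^{\widetilde q}\rangle_{Q_j}^{1/\widetilde q}$. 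Since $w^{p(\cdot)}\chi_{Q_j}$ is integrable and $w^{-\widetilde q}$ is locally in the appropriate dual space, (i) bounds these coefficients uniformly. Hence $\{T_\delta f: f\in\mathcal F\}$ lies in a bounded subset of a finite-dimensional space and is totally bounded.

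\textbf{Step 3 (approximation estimate; the main obstacle).} We must show $\|(f-T_\delta f)\chi_{B(x_0,R)}\|_{L^{p(\cdot)}(w)}\lesssim$ the quantity in (ii) with $r=\sqrt n\,\delta$. For $x\in Q_j$, we have $Q_j\subset B(x,r)$ and $|Q_j|\simeq|B(x,r)|$, so
\[
|f(x)-c_j|^{\widetilde q}\le |f(x)-c|^{\widetilde q}\Big|_{c=\text{any}}\lesssim \mint{-}_{Q_j}|f(x)-f(y)|^{\widetilde q}\dd y+\mint{-}_{Q_j}|f(y)-c_j|^{\widetilde q}\dd y .
\]
The first term is controlled directly by (ii). The second term is at most $\mint{-}_{Q_j}\mint{-}_{Q_j}|f(y)-f(z)|^{\widetilde q}\dd z\dd y$, which is an average over $Q_j$ of the (ii)-integrand $G_r(y):=\mint{-}_{B(y,r)}|f(y)-f(z)|^{\widetilde q}\dd z$, up to a constant. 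We bound this average pointwise on $Q_j$ by $M(G_r)(x)$. Applying the boundedness of $M$ on $L^{p(\cdot)/\widetilde q}(w^{\widetilde q})$ then yields
\[
\|f-T_\delta f\|_{L^{p(\cdot)}(w)}\lesssim \|G_r^{1/\widetilde q}\|_{L^{p(\cdot)}(w)}\to0
\]
uniformly in $f\in\mathcal F$. This is exactly where the hypothesis $w^{\widetilde q}\in\mathcal A_{p(\cdot)/\widetilde q}$ and $p\in\mathrm{LH}$ are needed.

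\textbf{Step 4 (conclusion).} Combining Steps 1--3 via the $s$-triangle inequality, $\mathcal F$ lies within $C\varepsilon$ of a totally bounded set. Since $\varepsilon>0$ was arbitrary, $\mathcal F$ is totally bounded, and hence precompact by completeness of $L^{p(\cdot)}(w)$. The equivalence between the norm and modular formulations in (i)--(iii) follows from the standard modular--norm relations, since $p_+<\infty$.
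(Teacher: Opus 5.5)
Your proof is correct. The necessity direction is essentially the paper's; the sufficiency direction takes a genuinely different route.

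\textbf{Sufficiency.} The paper multiplies by a Lipschitz cut-off $\varphi_R$ and works with the truncated family. It proves equi-integrability by averaging over $B(x,r)$ and applying a local H\"older inequality with $w^{-\widetilde q}\in L^{(p(\cdot)/\widetilde q)'}_{\loc}$. It proves total boundedness in $L^0(B(x_0,R),w^{p(\cdot)})$ via Markov's inequality and Krotov's criterion (almost uniform boundedness plus almost equicontinuity). It then concludes with the Lebesgue--Vitali theorem.

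You instead approximate each $f$ on $B(x_0,R)$ by a grid-piecewise-constant function, using $\widetilde q$-best constants to handle $\widetilde q<1$. You bound the error pointwise by $G_r+MG_r$ with $r\sim\delta$. Boundedness of $M$ on $L^{p(\cdot)/\widetilde q}(w^{\widetilde q})$ then gives $\|(f-T_\delta f)\chi_{B(x_0,R)}\|_{L^{p(\cdot)}(w)}\lesssim\|G_r^{1/\widetilde q}\|_{L^{p(\cdot)}(w)}$ with a constant independent of $\delta$. Hence $\mathcal F$ lies within $C\eps$ of a bounded subset of a finite-dimensional space.

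Your route is shorter, self-contained and quantitative, and it avoids the $L^0$ compactness machinery entirely. The price is that you use the full weighted maximal bound, hence $\mathrm{LH}$ and the global $\mathcal{A}_{p(\cdot)/\widetilde q}$ condition, in the sufficiency direction. The paper's sufficiency argument uses only the local integrability consequences of the weight condition ($w^{p(\cdot)}\in L^1_{\loc}$ and $w^{-\widetilde q}\in L^{(p(\cdot)/\widetilde q)'}_{\loc}$) and invokes the maximal operator only for necessity.

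\textbf{Necessity.} Both arguments use a finite net plus the maximal bound to control the perturbation term. The one difference is how each net element is handled. You use density of $C_c^\infty$ in $L^{p(\cdot)}(w)=L^{p(\cdot)}(w^{p(\cdot)}\,dx)$ together with the Lipschitz estimate. The paper uses Lebesgue differentiation with dominated convergence. Both work.

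\textbf{Two cosmetic fixes.}
\begin{enumerate}
\item In Step 3, drop the middle expression in your first display ($|f(x)-c_j|^{\widetilde q}\le|f(x)-c|^{\widetilde q}$ for any $c$); it is false pointwise. The bound you need follows directly by averaging the quasi-triangle inequality over $y\in Q_j$. Minimality of $c_j$ is needed only for the second term: compare with $c=f(z)$ and then average in $z$. For $\widetilde q\ge 1$ with the ordinary mean, Jensen's inequality does the same job.
\item A ball cannot be partitioned into cubes. Cover $B(x_0,R)$ by finitely many grid cubes and restrict to the ball.
\end{enumerate}
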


\begin{remark}\label{rmk: main 1}
\leavevmode
\begin{enumerate}[label=\rm{(\arabic*)}]
  \item By choosing $\widetilde{q}=1$ in Theorem \ref{thm: RK criterion}, one notices that this result is very similar to the one stated in \cite[Theorem 3.2]{BG2018}.
  \item Theorem \ref{thm: RK criterion} extends the weighted Fr\'{e}chet--Kolmogorov compactness theorem of \cite[Theorem 2.10]{CaoOlivoYabuta2022} to the setting of weighted variable exponent Lebesgue spaces. Indeed, by taking $p(\cdot)=p=\text{constant}\in(0,\infty)$, $p_0:=\frac{p}{\widetilde{q}}\in(1,\infty)$ and $x_0=0$ in Theorem \ref{thm: RK criterion}, one sees that $w^{\widetilde{q}}\in\mathcal{A}_{p_0}$ if and only if $u:=(w^{\widetilde{q}})^{p_0}=w^p\in A_{p_0}$, thereby recovering exactly the statement of \cite[Theorem 2.10]{CaoOlivoYabuta2022}. Additionally, setting $p(\cdot)=p=\text{constant}\in(1,\infty)$, $\widetilde{q}=1$ and $x_0=0$ in Theorem \ref{thm: RK criterion}, and with similar reasoning as before, one recovers \cite[Theorem 2.9]{CaoOlivoYabuta2022}.
\end{enumerate}
\end{remark}

Let us briefly outline the core mechanism behind the proof of Theorem \ref{thm: RK criterion}, which will be presented in Section \ref{sec: char. cmp.}. In particular, it is inspired by \cite[Theorems 2.4 and 3.1]{BG2018} and relies on three fundamental ingredients: Lemma \ref{lem: Krotov's result} (that is, any almost uniformly bounded and almost equicontinuous subset of $L^0(B(x_0,R), w^{p(\cdot)})$ is totally bounded), Lemma \ref{lem: LV cmp. thm} (Lebesgue--Vitali compactness criterion) and Lemma \ref{lem: HL} (boundedness of the $\widetilde{q}$-Hardy--Littlewood maximal operator in the weighted variable exponent setting). Compared to \cite[Theorems 2.4 and 3.1]{BG2018}, we are forced to develop new ideas. We highlight that Theorem \ref{thm: RK criterion} will imply an interpolation of compactness result (see Theorem \ref{thm:interpolation_compactness}) that will be essential in order to conclude the proof of our second main result below.

Having stated our first main result, we now proceed with the main extrapolation result for multilinear compact operators on weighted variable Lebesgue spaces. We first provide a brief historical overview of this topic.

One of the fundamental tools for proving weighted $L^p$ boundedness results in harmonic analysis is the extrapolation theorem of Rubio de Francia \cite{Rubio_de_Francia_1984}. A systematic study of this theory for linear and multilinear operators on weighted Lebesgue spaces is presented in \cite{Book_Extrapolation} and \cite{GrafakosMartell2004, CM2018, Nieraeth_2019, LMO2020, LMMOV2020}, respectively. There is now a well-developed extrapolation theory for variable exponent spaces in both the linear and multilinear settings; see \cite{CW2017, CaoMarinMartell2022, Nieareth2023}. Nevertheless, it remains an open problem whether one can establish the precise linear and multilinear counterparts of Rubio de Francia’s extrapolation of boundedness on weighted variable Lebesgue spaces (see \cite[Remark 1.7]{LO2025}).

In the past few years, the study of extending extrapolation results to compact operators has been the focus of considerable research activity. In the linear setting, the first work addressing this problem was obtained by Hyt\"onen and the second named author in \cite{HL2023}. Subsequently, several extensions have been developed in different settings, including weighted Morrey spaces \cite{Lappas2022}, two-weight inequalities \cite{Liu2022}, Banach function spaces \cite{Lorist2024} (see also \cite{LO2025}, where an alternative proof for the compactness of linear operators is provided in the framework of weighted variable Lebesgue spaces), and one-sided situations \cite{MartinReyes2025}. Furthermore, the first extrapolation results for multilinear compact operators in the context of classical weighted Lebesgue spaces were obtained by Cao, Olivo, and Yabuta in \cite{CaoOlivoYabuta2022} (see also \cite{HL2022} by Hyt\"onen and the second named author). The latter was further extended to bilinear operators acting on weighted variable Lebesgue spaces by us \cite{extr_comp_var_bil}. Motivated by the aforementioned works, our second main result is the following abstract extrapolation of compactness principle for multilinear operators on weighted variable Lebesgue spaces.

\begin{theorem}
\label{thm:main_result_extr}
Let $\Theta$ be a collection of pairs $(\vec{Y},Y)$ with $\vec{Y}=(Y_1,\ldots,Y_m)$ a $m$-tuple of Banach spaces and $Y$ a quasi-Banach space. Let $T$ be a $m$-linear operator defined and
\begin{equation*}
  \text{bounded}\quad T:Y_1\times\ldots\times Y_m\to Y\quad \text{for \textbf{all}} \quad (\vec{Y},Y)\in\Theta        
\end{equation*}
as well as 
\begin{equation*}
  \text{compact}\quad T:X_1\times\ldots\times X_m\to X \quad \text{for \textbf{some}} \quad (\vec{X},X)\in\Theta.   
\end{equation*}
Then $T$ is
\begin{equation*}
     \text{compact}\quad T:Z_1\times\ldots\times Z_m\to Z \quad \text{for \textbf{all}} \quad (\vec{Z},Z)\in\Theta,
\end{equation*}
in each of the following cases:
\begin{enumerate}[label=\textup{(\arabic*)}]
    \item There exist fixed constants $t\in(0,\infty)$ and $\gamma\in\left[0,\infty\right)$ such that $\Theta$ consists of all pairs $(\vec{Y},Y)$ of the form
    \begin{align*}
        &Y_j = L^{p_j(\cdot)}(w_j),\quad j=1,\ldots,m,\\
        &Y = L^{q(\cdot)}(\nu_{\vec{w}}),
    \end{align*}
    where $(\vec{p}(\cdot),q(\cdot),\vec{1},\infty)$ is a proper $m$-admissible quadruple with
    \begin{equation*}
        t<\min\{(p_{j})_{-}:~j=1,\ldots,m\}
    \end{equation*}
    and
    \begin{equation*}
        \frac{1}{p(\cdot)}-\frac{1}{q(\cdot)}=\gamma,
    \end{equation*}
    and $\vec{w} = (w_1,\ldots,w_m)$ is a $m$-tuple of weights such that $\vec{w}\in\mathcal{A}_{\vec{p}(\cdot),q(\cdot)}$ and $\vec{w}^{t}\in\mathcal{A}_{\frac{\vec{p}(\cdot)}{t},\frac{q(\cdot)}{t}}$.\label{eq: main thm 1}

    \item There exist a fixed constant $\gamma\in\left[0,\infty\right)$, a fixed $m$-tuple $\vec{r}\in[1,\infty)^{m}$ and a fixed constant $s\in(0,\infty]$ such that $\Theta$ consists of all pairs $(\vec{Y},Y)$ of the form
    \begin{align*}
        &Y_j = L^{p_j(\cdot)}(w_j),\quad j=1,\ldots,m,\\
        &Y = L^{q(\cdot)}(\nu_{\vec{w}}),
    \end{align*}
    where $(\vec{p}(\cdot),q(\cdot),\vec{r},s)$ is a proper $m$-admissible quadruple with
    \begin{equation*}
        \frac{1}{p(\cdot)}-\frac{1}{q(\cdot)}=\gamma,
    \end{equation*}
    and $\vec{w} = (w_1,\ldots,w_m)$ is a $m$-tuple of weights such that $\vec{w}\in\mathcal{A}_{(\vec{p}(\cdot),q(\cdot)),(\vec{r},s)}$.\label{eq: main thm 2}
\end{enumerate}
\end{theorem}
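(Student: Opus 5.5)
The proof will follow the scheme of Hyt\"onen--Lappas and Cao--Olivo--Yabuta, adapted to the variable exponent setting as in our earlier bilinear work. It has two ingredients. The first is an interpolation of compactness result for multilinear operators on weighted variable Lebesgue spaces (Theorem~\ref{thm:interpolation_compactness}, announced in the introduction). The second is a factorization step: every target pair $(\vec{Z},Z)\in\Theta$ must be realized as an intermediate space between the given compact pair $(\vec{X},X)$ and a suitable auxiliary pair $(\vec{Y},Y)\in\Theta$, where only boundedness is known.

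Concretely, I will prove the interpolation of compactness first. Suppose $T$ is bounded $\prod_j L^{p_j^1(\cdot)}(w_j^1)\to L^{q^1(\cdot)}(\nu^1)$ and compact $\prod_j L^{p_j^0(\cdot)}(w_j^0)\to L^{q^0(\cdot)}(\nu^0)$. Then $T$ should be compact on the complex-interpolated spaces
\[
\frac{1}{p_\theta(\cdot)}=\frac{1-\theta}{p^0(\cdot)}+\frac{\theta}{p^1(\cdot)},\qquad w_\theta=(w^0)^{1-\theta}(w^1)^{\theta}.
\]
The argument runs through Theorem~\ref{thm: RK criterion}. Take the image under $T$ of a product of unit balls, intersected with nice dense functions. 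Uniform boundedness comes from the (mixed-norm, weighted) Riesz--Thorin-type bound. Conditions (ii) and (iii) come from a Hölder interpolation inequality of the form
\[
\|F\|_{L^{q_\theta(\cdot)}(\nu_\theta)}\lesssim \|F\|_{L^{q^0(\cdot)}(\nu^0)}^{1-\theta}\,\|F\|_{L^{q^1(\cdot)}(\nu^1)}^{\theta}.
\]
We apply this to $F=\chi_{\R^n\setminus B(x_0,R)}T\vec f$ and to $F=(\mint{-}_{B(\cdot,r)}|T\vec f(\cdot)-T\vec f(y)|^{\widetilde q}dy)^{1/\widetilde q}$. The $X$-norm factor tends to zero uniformly by the ``only if'' direction of Theorem~\ref{thm: RK criterion}, since the image is precompact in $X$. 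The $Y$-norm factor stays bounded by the boundedness of $T$ together with Lemma~\ref{lem: HL}, which is needed to control the local $\widetilde q$-oscillation by a $\widetilde q$-maximal function in $L^{q^1(\cdot)}(\nu^1)$. This is why the hypothesis $\vec w^{t}\in\mathcal{A}_{\vec p/t,q/t}$ appears in case~\ref{eq: main thm 1}. The mixed-norm interpolation theorem mentioned in the abstract handles the oscillation functional, viewed as an $L^{q(\cdot)}_x(L^{\widetilde q}_y)$-norm of $T\vec f(x)-T\vec f(x+ry)$. Finally, a density argument passes from nice functions to the whole unit balls.

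Second, given $(\vec Z,Z)$ and the compact pair $(\vec X,X)$, I will construct $\theta\in(0,1)$ and $(\vec Y,Y)\in\Theta$ such that
\[
\frac1{p_j^Z}=\frac{1-\theta}{p_j^X}+\frac{\theta}{p_j^Y},\qquad w_j^Z=(w_j^X)^{1-\theta}(w_j^Y)^{\theta}.
\]
That is, we solve for $1/p_j^Y=(1/p_j^Z-(1-\theta)/p_j^X)/\theta$ and $w_j^Y=(w_j^Z)^{1/\theta}(w_j^X)^{-(1-\theta)/\theta}$. For $\theta$ close to $1$, the exponents $p_j^Y$ remain log-Hölder, bounded away from the endpoints imposed by $t$ or $\vec r,s$, and satisfy $1/p^Y-1/q^Y=\gamma$; this relation is linear, so it is preserved. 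It remains to check that $\vec w^Y$ lies in the right multilinear variable weight class. Here I will use the openness/self-improvement of multilinear variable $\mathcal{A}$ classes: a reverse Hölder-type property (via the factorization into linear $A_{\cdot}$ conditions on $\nu_{\vec w}$ and $w_j^{-1}$), giving $(w_j^Z)^{1+\varepsilon}$-type control. Combined with Hölder's inequality in variable Lebesgue spaces, this should show that the product $(w^Z)^{1/\theta}(w^X)^{-(1-\theta)/\theta}$ satisfies the $\mathcal A$ condition with $\theta$ near~$1$. Once this holds, boundedness on $(\vec Y,Y)$ and compactness on $(\vec X,X)$ give compactness on $(\vec Z,Z)$ by the first step. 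Case~\ref{eq: main thm 2} is handled identically, with $\mathcal A_{(\vec p,q),(\vec r,s)}$ in place of $\mathcal A_{\vec p,q}$; here the oscillation estimate uses a $\widetilde q\le\min r_j$-type maximal bound.

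The main obstacle is this weight-class stability step in the variable exponent setting. Reverse Hölder inequalities for $A_{p(\cdot)}$ weights are not fully available, and the extrapolation-of-boundedness literature warns about exactly this point. I would first try to reduce it to the linear variable condition on $\nu_{\vec w}^{\,q/t}$ and on the duals $w_j^{-1}$. On these I would apply the known openness of $A_{p(\cdot)}$ (via boundedness of $M$ on $L^{(p/t)'(\cdot)}$ and the Rubio de Francia algorithm), and then transfer the result back using the multilinear characterization.
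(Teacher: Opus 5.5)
Your architecture matches the paper's: write the target pair as an interpolant of the compact pair and an auxiliary pair in $\Theta$, then transfer compactness through Theorem~\ref{thm: RK criterion}. However, your derivation of conditions (ii) and (iii) on the intermediate space has a genuine gap.

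A H\"older-type bound $\|F\|_{Z}\lesssim\|F\|_{X}^{1-\theta}\|F\|_{Y}^{\theta}$ for the single function $F=\chi_{\R^n\setminus B(x_0,R)}T(\vec f)$ (or for the oscillation) cannot give smallness uniformly over the unit ball of $\vec Z$. For $\|f_j\|_{Z_j}\le 1$, even with $f_j$ simple, the norms $\|f_j\|_{X_j}$ and $\|f_j\|_{Y_j}$, and hence $\|F\|_X$ and $\|F\|_Y$, are not controlled by $\|f_j\|_{Z_j}$. In general $\vec f$ does not lie in $\vec X$ at all, so precompactness of the image under $T$ of the unit ball of $\vec X$ says nothing about such $\vec f$. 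Nor can you factor $|f_j|=g_j^{1-\theta}h_j^{\theta}$ and compare $T(\vec f)$ pointwise with $T(\vec g)$ and $T(\vec h)$, since $T$ is merely multilinear.

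The paper instead does the following:
\begin{itemize}
\item It recasts the compact endpoint homogeneously as operator bounds, $\|\chi_{\R^n\setminus B(0,R)}T(\vec f)\|_{X}\le\delta\prod_j\|f_j\|_{X_j}$, and the analogous bound for $S(\vec f)(x,y):=T(\vec f)(x)-T(\vec f)(x+y)$ in the mixed norm $L^{q(\cdot)}_x(L^{\widetilde q}_y)$.
\item It pairs these with the bounds $M_0$ and $C_0M_0$ at the bounded endpoint, the latter via Lemma~\ref{lem: HL}.
\item It interpolates the \emph{operators} $\chi_{\R^n\setminus B(0,R)}T$ and $S$, using the multilinear change-of-weights theorem with quasi-Banach target (Theorem~\ref{thm:interpolation_boundedness}) and its mixed-norm version (Theorem~\ref{thm:interpolation_boundedness_mixed}).
\end{itemize}
This gives operator norms at most $M_0^{1-\theta}\delta^{\theta}$ and $(C_0M_0)^{1-\theta}\delta^{\theta}$ on the intermediate space (with $\theta$ weighting the compact side), and these are $<\eps$ for $\delta$ small.

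That operator interpolation is the real content your sketch replaces with a function-level inequality that does not suffice. It is proved by the complex method using the merely subharmonic functions $\Phi_\ell(z)=\int|U_\ell(z,\cdot)|^{s}$, because $q_-<1$ is allowed. You do mention the mixed-norm theorem for the oscillation, but it must be applied to the operator $S$, and Theorem~\ref{thm:interpolation_boundedness} to $\chi_{\R^n\setminus B(0,R)}T$, not to a single function.

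On the factorization step, the paper does not re-derive it. It cites Lemmas~\ref{lem:KeyLemma} and~\ref{lem:KeyLemma_with_t} from the authors' earlier work, which rest on the linear variable factorization of Lappas--Oikari combined with the characterization in Lemma~\ref{lem:Apqrs char.}. Your reverse-H\"older/openness sketch is, as you note, not established as written. In case~\ref{eq: main thm 1} it would also have to preserve $t<\min_j(p_{0,j})_-$ and $\vec w_0^{\,t}\in\mathcal{A}_{\vec p_0(\cdot)/t,\,q_0(\cdot)/t}$, which is exactly what Lemma~\ref{lem:KeyLemma_with_t} adds.

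In the proof, that is the only place the $t$-hypothesis enters: it keeps the auxiliary pair inside $\Theta$. It is not what makes the maximal bound work. The condition $\nu_{\vec w}^{\widetilde q}\in\mathcal{A}_{q(\cdot)/\widetilde q}$ with $\widetilde q=(1/r-\gamma)^{-1}$ already follows from $\vec w\in\mathcal{A}_{(\vec p(\cdot),q(\cdot)),(\vec r,s)}$ via Lemmas~\ref{lem:Apqrs char.}, \ref{lem:containment} and~\ref{lem:two_to_one_exp}.
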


\begin{remark}
In contrast to \cite{extr_comp_var_bil}, we do not restrict ourselves to bilinear operators, and moreover, we explicitly allow that our operators map into quasi-Banach spaces rather than just Banach ones. Furthermore, setting $t=1$ and taking all variable exponents to be constant in Theorem \ref{thm:main_result_extr}, allows us to recover and extend several results of \cite{CaoOlivoYabuta2022, HL2022, Lorist2024}.
\end{remark}

The main ingredients needed for the proof of Theorem~\ref{thm:main_result_extr} are three-fold. First, we rely on our Riesz--Kolmogorov type Theorem \ref{thm: RK criterion}. Second, we need to understand the interpolation of compactness for multilinear operators on weighted variable Lebesgue spaces, see Theorem \ref{thm:interpolation_compactness}. This is achieved through delicate technical work in Section~\ref{sec: interpolation}, see, especially, the weighted interpolation of boundedness Theorems \ref{thm:interpolation_boundedness} and \ref{thm:interpolation_boundedness_mixed} in weighted variable Lebesgue spaces and mixed-norm weighted variable Lebesgue spaces, respectively. The fact that we explicitly allow quasi-Banach spaces in the range of our operators considerably complicates matters. In particular, we cannot apply one of the classical abstract interpolation schemes that can be found in standard references on the subject such as \cite{BeLo1976}. To overcome this difficulty, we adapt the strategy from  \cite{CaoOlivoYabuta2022}, which concerned a similar setting to ours but with constant exponents. However, the presence of variable exponents necessitates much more involved arguments and new ideas. Finally, we use the factorization results for multilinear variable Muckenhoupt classes from the paper \cite{extr_comp_var_bil} of the authors, which themselves relied in turn on the factorization results for linear variable Muckenhoupt classes proved by the second named author and Oikari in \cite{LO2025}.

As applications of Theorem \ref{thm:main_result_extr}, we establish new compactness results for the multilinear commutators of $\omega$-Calder\'{o}n--Zygmund operators (see Subsection \ref{subsec:CZ}), fractional integrals (see Subsection \ref{subsec:fractional}) and Fourier multipliers (see Subsection \ref{subsec:fourier}).

The subsequent sections of this paper are structured as follows. In Section \ref{sec: preliminaries}, we recall the notion of multilinear compact operators and introduce several definitions related to  unweighted/weighted variable Lebesgue spaces and variable Muckenhoupt weights. In Section \ref{sec: char. cmp.}, we prove our first main result, that is, the Riesz–Kolmogorov Theorem \ref{thm: RK criterion}. This will be used in Section \ref{sec: interpolation} to establish a weighted interpolation theorem for multilinear compact operators in the setting of variable Lebesgue spaces (see Theorem \ref{thm:interpolation_compactness}). With this at our disposal, and together with the factorization results from \cite{extr_comp_var_bil} (see Lemmata \ref{lem:KeyLemma} and \ref{lem:KeyLemma_with_t}), we complete the proof of our second main result, namely, Theorem \ref{thm:main_result_extr}, in Section \ref{sec: pf. abstract results}. Lastly, in Section \ref{sec: applic.}, we turn to applications of Theorem \ref{thm:main_result_extr}.

\subsection*{Notation}
We use the notation $x\lesssim y$ to indicate that there exists a constant $C>0$ such that $x\leq Cy$. The constant $C$ is either an absolute constant or may depend on parameters that are specified in each instance or are clear from the context. Moreover, we write $x\sim y$ if $x\lesssim y$ and $y\lesssim x$. For $x\in\R^n$ and $r>0$, we denote by $B(x,r)$ the Euclidean open ball centered at $x$ with radius $r$. Next, for a set $U\subset\R^n$, we denote by $|U|$ its $n$-dimensional Lebesgue measure. Finally, we define the integral average $\langle g\rangle_U$ of the function $g$ over any measurable set $U$ of finite positive measure by
\begin{equation*}
 \langle g\rangle_U:=\displaystyle \mint{-}_U\;g(x)\dd x=\frac{1}{|U|}\int_U\;g(x)\dd x.
\end{equation*}

\section{Preliminaries}\label{sec: preliminaries}
As this paper primarily concerns multilinear compact operators, we briefly review their definition here for the convenience of the reader. Let $A_1,\dots A_m$ be normed spaces and $B$ be a quasi normed space. We say that a multilinear operator   
$T:A_1\times\cdots\times A_m\rightarrow B$ is compact if the set $\{T(a_1,\dots,a_m):\;\|a_i\|_{A_i}\leq1,\;1\leq i\leq m\}$ is precompact in $B$. It is easy to check that \cite[Proposition 1]{BenTor2013} also holds for multilinear operators and thus it provides several equivalent ways of formulating the compactness of $T$. 

\subsection{Lebesgue spaces in the variable exponent setting} We begin by gathering some basic results about variable Lebesgue spaces. We point out that the references \cite{CF2013} and \cite{DHHR2011} contain most of the properties for these spaces.

Throughout the paper, all sets denoted by notations such as $\Omega,\Omega_i,\widetilde{\Omega},$ or similar will be assumed to be measurable subsets of some $\R^{n}$ of positive measure, unless explicitly otherwise stated. Notice that the implicit $n$ might be different from set to set, unless explicitly otherwise stated.

\subsubsection{Notation for functions with variable exponents}
For any measurable function $p(\cdot):\Omega\to[0,\infty]$ and any set $U\subset \Omega$, we define
$$
  p_{-}(U)=\essinf_{x\in U}p(x),\qquad p_{+}(U)=\esssup_{x\in U}p(x),
$$
and if $U=\Omega$, then we will abbreviate $p_{\pm}(\Omega)$ to $p_{\pm}$.

In the sequel, we will focus on certain notable classes of measurable functions $p(\cdot):\Omega\to[0,\infty]$. These are defined as
\begin{equation*}
  \mathscr{P}_0:=\{p(\cdot):~0<p_{-} \leq p_{+}<\infty\}
\end{equation*}
and
\begin{equation*}
  \mathscr{P}:=\{p(\cdot):~1<p_{-}\leq p_{+}<\infty\}.
\end{equation*}
 
Given $p(\cdot)\in\mathscr{P}$, its dual exponent $p'(\cdot)\in\mathscr{P}$ is defined by the relation $1/p(\cdot)+ 1/p'(\cdot)=1.$

\subsubsection{Luxemburg norm of $L^{p(\cdot)}$ spaces} Given $p(\cdot)\in\mathscr{P}_0$, define the modular associated with $p(\cdot)$ by 
\begin{equation}\label{eq:modular}
  \rho_{p(\cdot)}(f):=\int_{\Omega}|f(x)|^{p(x)}\dd x.
\end{equation} 
The variable Lebesgue space $L^{p(\cdot)}(\Omega)$, or for simplicity just $L^{p(\cdot)}$, consists of all measurable functions $f:\Omega\rightarrow \R$ such that 
\begin{equation*}
  \|f\|_{L^{p(\cdot)}}= \|f\|_{{p(\cdot)}}:=\inf\{\lambda\in(0,\infty):\rho_{p(\cdot)}(f/\lambda)\leq 1\}<\infty.
\end{equation*}
It follows from \cite[Proposition 2.12]{CF2013} that $f\in L^{p(\cdot)}(\Omega)$ if and only if $\rho_{p(\cdot)}(f)<\infty$.

\subsubsection{Weighted Lebesgue spaces in the variable exponent setting} A weight is a measurable function $w$ on $\Omega$ such that $0<w(x)<\infty$ for almost every $x\in\Omega$. Given a weight $w$, we define $L^{p(\cdot)}(w)$ to be the weighted variable Lebesgue space with norm
\begin{equation*}
  \|f\|_{L^{p(\cdot)}(w)}:= \|fw\|_{p(\cdot)}< \infty.
\end{equation*}
We will often exploit the fact that the weighted variable Lebesgue space $L^{p(\cdot)}(w)$ is a quasi-Banach function space for $p(\cdot)\in\mathscr{P}_0$; when $p(\cdot)\in\mathscr{P}$, it becomes a Banach function space. This can be found in \cite[page 42]{Nieareth2023}.
  
\subsubsection{Log-H\"{o}lder continuity conditions} For technical purposes, the variable exponent functions we will be considering on the whole of $\R^n$ will possess both local and asymptotic H\"{o}lder continuity. The relevant definitions are provided below.

\begin{definition}\label{logHolder}
Let $p(\cdot):\R^n\rightarrow\R$ be a function. We say that $p(\cdot)$
\begin{enumerate}
\item\label{cond1} is locally log-H{\"o}lder continuous, denoted by $p(\cdot)\in \mathrm{LH}_0,$ if there exists $C_0>0$ such that
\begin{equation*}
  |p(x)-p(y)|\leq\frac{C_0}{-\log(|x-y|)}\;\;\text{for all}\;\; x,y\in\R^n,\;\; \text{whenever}\;\; |x-y|<\tfrac{1}{2};
\end{equation*}
\item\label{cond2} is log-H{\"o}lder continuous at infinity, denoted by $p(\cdot)\in \mathrm{LH}_{\infty},$ if there exist $p_{\infty}\in\R$ and $C_\infty>0$ such that
\begin{equation*}
  |p(x)-p_{\infty}|\leq\frac{C_{\infty}}{\log(e+|x|)} \;\;\text{for all}\;\; x\in\R^n.
\end{equation*}
\end{enumerate}
We then define the class of globally log-H{\"o}lder continuous functions as $\mathrm{LH} := \mathrm{LH}_0\cap \mathrm{LH}_{\infty}$. The corresponding log-H{\"o}lder constant is defined as $C_{\log}(p):=\max\{C_0,C_{\infty}\}$.
\end{definition}

\subsubsection{Auxiliary results}

The following results were originally established for exponents in $\mathscr{P}$. In case of Lemma~\ref{lem:homog}, the same proof remains valid for exponents in $\mathscr{P}_0$. For the rest (except Lemma~\ref{Holder's ineq.}), the same proofs remain valid for exponents in $\mathscr{P}_0$, or one can use Lemma~\ref{lem:homog} to reduce back to exponents in $\mathscr{P}$ (except Lemma \ref{lem: triangle ineq.}).

\begin{lemma}[\cite{CF2013}, Proposition 2.18]\label{lem:homog}
Let $p(\cdot)\in\mathscr{P}_0$. Then, it holds that  $\||f|^s\|_{p(\cdot)}=\|f\|^s_{sp(\cdot)},$ for all constants $s\in(0,\infty)$. 
\end{lemma}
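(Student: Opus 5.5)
The identity follows directly from the definition of the Luxemburg norm together with the elementary identity
\begin{equation*}
\rho_{p(\cdot)}(g^s)=\int_\Omega |g(x)|^{s p(x)}\dd x=\rho_{sp(\cdot)}(g)
\end{equation*}
for nonnegative measurable $g$. Note first that $p(\cdot)\in\mathscr{P}_0$ and $s\in(0,\infty)$ give $sp(\cdot)\in\mathscr{P}_0$, since $(sp)_-=sp_->0$ and $(sp)_+=sp_+<\infty$. Thus both sides of the claimed equality are defined by the same infimum recipe, possibly taking the value $+\infty$.

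The key step is a change of variables in the infimum. Fix $\lambda>0$ and put $\mu=\lambda^{1/s}$, so that $\lambda=\mu^s$. Applying the modular identity with $g=|f|/\mu$ gives
\begin{equation*}
\rho_{p(\cdot)}\bigl(|f|^s/\lambda\bigr)=\int_\Omega\Bigl(\frac{|f(x)|}{\mu}\Bigr)^{sp(x)}\dd x=\rho_{sp(\cdot)}(f/\mu).
\end{equation*}
Hence
\begin{equation*}
\{\lambda>0:\rho_{p(\cdot)}(|f|^s/\lambda)\le 1\}=\{\mu^s:\ \mu>0,\ \rho_{sp(\cdot)}(f/\mu)\le 1\}.
\end{equation*}
The map $\mu\mapsto\mu^s$ is an increasing bijection of $(0,\infty)$, so it preserves infima. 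Taking the infimum of both sets therefore yields $\||f|^s\|_{p(\cdot)}=\bigl(\|f\|_{sp(\cdot)}\bigr)^s$.

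The degenerate cases need a brief check. If one of the sets is empty, so is the other, and both norms equal $+\infty$ under the convention $(+\infty)^s=+\infty$. If $f=0$ almost everywhere, both sides are $0$. There is no real obstacle; the only point requiring care is that the correspondence between the admissible sets is exact, so that no monotonicity or continuity property of the modular is needed.
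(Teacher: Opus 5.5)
Your argument is correct and is essentially the proof the paper relies on: the paper gives no proof of its own but cites \cite[Proposition 2.18]{CF2013}, whose argument is exactly your substitution $\lambda=\mu^{s}$ in the Luxemburg infimum via $\rho_{p(\cdot)}(|f|^{s}/\mu^{s})=\rho_{sp(\cdot)}(f/\mu)$. The paper also remarks that this argument carries over unchanged to $p(\cdot)\in\mathscr{P}_0$, as your proof shows, and your treatment of the degenerate cases is fine.
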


\begin{lemma}[\cite{CF2013}, Corollary 2.23]\label{lem: min/max}
Let $p(\cdot)\in \mathscr{P}_0$ and $f\in L^{p(\cdot)}$. If $\Vert f\Vert_{p(\cdot)}>1$, then
\begin{equation*}
  \rho_{p(\cdot)}(f)^{\frac{1}{p_{+}}} \leq \Vert f\Vert_{p(\cdot)} \leq \rho_{p(\cdot)}(f)^{\frac{1}{p_{-}}}.
\end{equation*}
If $0<\Vert f\Vert_{p(\cdot)}\leq1$, then
\begin{equation*}
  \rho_{p(\cdot)}(f)^{\frac{1}{p_{-}}} \leq \Vert f\Vert_{p(\cdot)} \leq \rho_{p(\cdot)}(f)^{\frac{1}{p_{+}}}.
\end{equation*}
Summarizing we have 
\begin{equation*}
  \min\left\{\rho_{p(\cdot)}(f)^{\frac{1}{p_-}},\rho_{p(\cdot)}(f)^{\frac{1}{p_{+}}}
  \right\}\leq\|f\|_{p(\cdot)}\leq\max\left\{\rho_{p(\cdot)}(f)^{\frac{1}{p_-}},\rho_{p(\cdot)}(f)^{\frac{1}{p_{+}}}\right\}.
\end{equation*}
\end{lemma}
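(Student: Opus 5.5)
The statement to prove is Lemma~\ref{lem: min/max}, the norm--modular comparison for $p(\cdot)\in\mathscr{P}_0$. The plan is to reduce everything to the scaling behaviour of the modular. The basic inequalities are: for $\lambda\ge 1$,
\[
\lambda^{p_-}\rho_{p(\cdot)}(f)\le \rho_{p(\cdot)}(\lambda f)\le \lambda^{p_+}\rho_{p(\cdot)}(f),
\]
and for $0<\lambda\le 1$ the same holds with $p_-$ and $p_+$ interchanged. Both follow pointwise from $\lambda^{p_-}\le\lambda^{p(x)}\le\lambda^{p_+}$ (respectively the reverse), integrated over $\Omega$.

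Two auxiliary facts are also needed.
\begin{enumerate}[label=\textup{(\alph*)}]
\item If $0<\|f\|_{p(\cdot)}<\infty$, then $\rho_{p(\cdot)}(f/\|f\|_{p(\cdot)})=1$. This uses $p_+<\infty$. For $\mu>\|f\|_{p(\cdot)}$ we have $\rho(f/\mu)\le1$, and monotone convergence as $\mu\downarrow\|f\|_{p(\cdot)}$ gives $\rho(f/\|f\|_{p(\cdot)})\le 1$. Conversely, for $\mu<\|f\|_{p(\cdot)}$ we have $\rho(f/\mu)>1$. The scaling bound shows $\rho(f/\mu)\le (\|f\|/\mu)^{p_+}\rho(f/\|f\|)<\infty$ for $\mu$ near $\|f\|$, so $\mu\mapsto\rho(f/\mu)$ is finite and continuous there by dominated convergence. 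Letting $\mu\uparrow\|f\|_{p(\cdot)}$ then yields $\rho(f/\|f\|_{p(\cdot)})\ge 1$.
\item $\rho_{p(\cdot)}(f)<\infty$ whenever $f\in L^{p(\cdot)}$. This is the fact quoted from \cite[Proposition 2.12]{CF2013}, or it follows from (a) and the scaling bound.
\end{enumerate}

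With these in hand, set $\lambda=\|f\|_{p(\cdot)}$ and write $f=\lambda\cdot(f/\lambda)$, so that $\rho(f)=\rho(\lambda\cdot f/\lambda)$ with $\rho(f/\lambda)=1$ by (a).

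\textbf{Case $\lambda>1$.} The scaling inequality gives $\lambda^{p_-}\le\rho(f)\le\lambda^{p_+}$. Taking roots yields $\rho(f)^{1/p_+}\le\lambda\le\rho(f)^{1/p_-}$, which is the first claim.

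\textbf{Case $0<\lambda\le1$.} The reversed inequality gives $\lambda^{p_+}\le\rho(f)\le\lambda^{p_-}$. Taking roots yields $\rho(f)^{1/p_-}\le\lambda\le\rho(f)^{1/p_+}$, which is the second claim.

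The summarized min/max inequality follows by combining the two cases. In each case the lower bound is one of the two roots and hence at least their minimum, and the upper bound is one of the two roots and hence at most their maximum. The degenerate case $\|f\|_{p(\cdot)}=0$ is also covered: then $f=0$ a.e., so $\rho(f)=0$ and the inequality is trivial.

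The only delicate step is (a), the identity $\rho(f/\|f\|)=1$. The inequality $\rho\le1$ alone would only give one direction of each bound. Establishing equality requires continuity of the modular in the scaling parameter, which in turn relies on $p_+<\infty$ and on the finiteness of the modular from (b). Everything else is elementary.
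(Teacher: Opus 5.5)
Your argument is correct and is the standard proof behind the cited result \cite[Corollary 2.23]{CF2013}: since $p_+<\infty$ the normalized function has unit modular, and the bounds $\lambda^{p_-}\le\lambda^{p(x)}\le\lambda^{p_+}$ for $\lambda\ge 1$ (reversed for $\lambda\le 1$) then give both cases. The paper gives no separate proof; it only remarks that this argument carries over verbatim from $\mathscr{P}$ to $\mathscr{P}_0$ (or that one can reduce to $\mathscr{P}$ via Lemma~\ref{lem:homog}), and you carried it out directly in $\mathscr{P}_0$.
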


\begin{lemma}[\cite{BG2018}, Lemma 1.2]\label{lem: triangle ineq.}
Let $p(\cdot)\in \mathscr{P}_0$. Then
\begin{equation*}
  \|f+g\|_{L^{p(\cdot)}}\leq\max\left\{2^{\frac{1}{p_{-}}},2^{\frac{p_+}{p_{-}}}\right\}(\|f\|_{L^{p(\cdot)}}+\|g\|_{L^{p(\cdot)}}),      
\end{equation*}
for all $f,g\in L^{p(\cdot)}$.
\end{lemma}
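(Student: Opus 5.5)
We will in fact prove the slightly stronger bound $\|f+g\|_{L^{p(\cdot)}}\le 2^{1/p_-}(\|f\|_{L^{p(\cdot)}}+\|g\|_{L^{p(\cdot)}})$. This implies the stated inequality because $2^{1/p_-}\le\max\{2^{1/p_-},2^{p_+/p_-}\}$. The plan is to control the modular of $(f+g)/\lambda$, with $\lambda=\|f\|_{p(\cdot)}+\|g\|_{p(\cdot)}$, by $2$ through a pointwise inequality, and then convert this modular bound into a norm bound.

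\emph{Preliminary reductions.} Set $a=\|f\|_{p(\cdot)}$ and $b=\|g\|_{p(\cdot)}$. If $a=0$, then $\rho_{p(\cdot)}(f/\mu)\le 1$ for every $\mu>0$, and monotone convergence as $\mu\to0$ forces $f=0$ a.e.; the inequality is then trivial, and likewise if $b=0$. So assume $a,b>0$. We first record that $\rho_{p(\cdot)}(f/a)\le1$. Take $\mu_k\downarrow a$ with $\rho_{p(\cdot)}(f/\mu_k)\le1$. Then $|f/\mu_k|^{p(x)}\uparrow|f/a|^{p(x)}$ pointwise, and monotone convergence gives the claim. The same holds for $g/b$.

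\emph{Pointwise step.} Put $F=f/a$, $G=g/b$, $\lambda=a+b$ and $\theta=a/\lambda\in(0,1)$. Then $|f+g|/\lambda\le\theta|F|+(1-\theta)|G|$. Fix $x$. If $p(x)\ge1$, convexity of $t\mapsto t^{p(x)}$ gives
\[
\Bigl(\theta|F|+(1-\theta)|G|\Bigr)^{p(x)}\le\theta|F|^{p(x)}+(1-\theta)|G|^{p(x)}\le|F|^{p(x)}+|G|^{p(x)}.
\]
If $0<p(x)<1$, we use the subadditivity $(u+v)^{p}\le u^p+v^p$ together with $\theta^{p(x)},(1-\theta)^{p(x)}\le1$, and obtain the same final bound. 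Integrating over $\Omega$ then yields
\[
\rho_{p(\cdot)}\bigl((f+g)/\lambda\bigr)\le\rho_{p(\cdot)}(F)+\rho_{p(\cdot)}(G)\le2.
\]

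\emph{From modular to norm.} For $C\ge1$ and any $h$ we have $|h/C|^{p(x)}=C^{-p(x)}|h|^{p(x)}\le C^{-p_-}|h|^{p(x)}$, so $\rho_{p(\cdot)}(h/C)\le C^{-p_-}\rho_{p(\cdot)}(h)$. Apply this with $h=(f+g)/\lambda$ and $C=2^{1/p_-}$. This gives $\rho_{p(\cdot)}\bigl((f+g)/(2^{1/p_-}\lambda)\bigr)\le 1$, and by the definition of the Luxemburg norm
\[
\|f+g\|_{p(\cdot)}\le 2^{1/p_-}\bigl(\|f\|_{p(\cdot)}+\|g\|_{p(\cdot)}\bigr)\le\max\{2^{1/p_-},2^{p_+/p_-}\}\bigl(\|f\|_{p(\cdot)}+\|g\|_{p(\cdot)}\bigr).
\]

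The only delicate point is the pointwise step in the region $\{p<1\}$, where convexity fails. There subadditivity of $t^{p(x)}$ replaces it; this costs the factor $2$ in the modular, which the $p_-$-homogeneity estimate turns into $2^{1/p_-}$.
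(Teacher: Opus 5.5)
Your argument is correct, and it proves the lemma with the smaller constant $2^{1/p_-}$. The paper gives no proof of its own here; the lemma is quoted from \cite{BG2018}. The constant in the statement is, however, exactly what a cruder route produces. With $\lambda=\|f\|_{p(\cdot)}+\|g\|_{p(\cdot)}$, applying \eqref{eq: kn. ineq.} pointwise with $\alpha=p(x)$ gives $\rho_{p(\cdot)}\bigl((f+g)/\lambda\bigr)\le\max\{1,2^{p_+-1}\}\bigl(\rho_{p(\cdot)}(f/\lambda)+\rho_{p(\cdot)}(g/\lambda)\bigr)\le\max\{2,2^{p_+}\}$. Lemma~\ref{lem: min/max} then yields $\|f+g\|_{p(\cdot)}\le\max\{2,2^{p_+}\}^{1/p_-}\lambda=\max\{2^{1/p_-},2^{p_+/p_-}\}\lambda$.

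Your version improves on this through the splitting $|f+g|/\lambda\le\theta|F|+(1-\theta)|G|$ with separately normalized $F$ and $G$. The convex weights absorb the factor $2^{p(x)-1}$ on $\{p\ge1\}$, so the modular bound is $2$ independently of $p_+$. Your direct scaling $\rho_{p(\cdot)}(h/C)\le C^{-p_-}\rho_{p(\cdot)}(h)$ then replaces the appeal to Lemma~\ref{lem: min/max}.

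For the sharpest statement of this type, Lemma~\ref{lem:homog} gets there even faster. When $p_-\le1$, one has $\|f+g\|_{p(\cdot)}^{p_-}=\||f+g|^{p_-}\|_{p(\cdot)/p_-}\le\||f|^{p_-}+|g|^{p_-}\|_{p(\cdot)/p_-}\le\|f\|_{p(\cdot)}^{p_-}+\|g\|_{p(\cdot)}^{p_-}$. The last step holds because the Luxemburg functional for the exponent $p(\cdot)/p_-\ge1$ is a genuine norm. Hence $\|f+g\|_{p(\cdot)}\le 2^{1/p_{-}-1}(\|f\|_{p(\cdot)}+\|g\|_{p(\cdot)})$, and for $p_->1$ the constant is simply $1$. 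This $p_-$-power form of the triangle inequality is often the more convenient one in quasi-Banach arguments. For the uses in this paper, though, any finite constant suffices, so your bound is more than enough.
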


\begin{lemma}[\cite{CF2013}, Corollary 2.30]\label{Holder's ineq.} 
Let $p_1(\cdot),\dots,p_m(\cdot)\in\mathscr{P}$ satisfy
\begin{equation*}
  \sum_{j=1}^m\frac{1}{p_{j}(\cdot)}=1,
\end{equation*}
where $j=1,\dots,m$. Then, for any $f_j\in L^{p_j(\cdot)}$, $j=1,\ldots,m$ we have
\begin{equation*}
  \int_{\Omega}|f_1(x)\cdots f_m(x)|\dd x\lesssim\|f_1\|_{L^{p_1(\cdot)}}\cdots\|f_m\|_{L^{p_m(\cdot)}},
\end{equation*}
where the implicit constant depends only on $m$ and $p_1(\cdot),\ldots,p_{m}(\cdot)$.
\end{lemma}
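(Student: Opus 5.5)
The claim is the generalized Hölder inequality for $m$ exponents, and the plan is to reduce it to the two-exponent case, \cite[Theorem 2.26]{CF2013}, by induction on $m$. That theorem gives, for $p(\cdot)\in\mathscr{P}$ and $g\in L^{p(\cdot)}$, $h\in L^{p'(\cdot)}$,
\begin{equation*}
\int_\Omega|gh|\dd x\le K_{p(\cdot)}\|g\|_{p(\cdot)}\|h\|_{p'(\cdot)},
\end{equation*}
with $K_{p(\cdot)}$ depending only on $p_\pm$. I would also record its one-line proof, so that the argument stays self-contained. Normalize $\|g\|_{p(\cdot)}=\|h\|_{p'(\cdot)}=1$, so that both modulars are at most $1$ by Lemma~\ref{lem: min/max}. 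Apply Young's inequality pointwise:
\begin{equation*}
|gh|\le \frac{|g|^{p(x)}}{p(x)}+\frac{|h|^{p'(x)}}{p'(x)}.
\end{equation*}
Integrating gives a bound of at most $1/p_-+1/(p')_-$.

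For the inductive step with $m\ge3$, I would single out $p_m$ and define the exponent $r(\cdot)$ by
\begin{equation*}
\frac1{r(\cdot)}=\sum_{j<m}\frac1{p_j(\cdot)}=1-\frac1{p_m(\cdot)}.
\end{equation*}
Then $r(\cdot)=p_m'(\cdot)\in\mathscr{P}$, because $p_m\in\mathscr{P}$. The two-exponent case then gives
\begin{equation*}
\int|f_1\cdots f_m|\lesssim\|f_1\cdots f_{m-1}\|_{r(\cdot)}\|f_m\|_{p_m(\cdot)}.
\end{equation*}
The remaining task is the product estimate $\|f_1\cdots f_{m-1}\|_{r(\cdot)}\lesssim\prod_{j<m}\|f_j\|_{p_j(\cdot)}$.

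For the product estimate I would use Lemma~\ref{lem:homog}. Note that $r_->1$ here. With $s=r_-$,
\begin{equation*}
\|f_1\cdots f_{m-1}\|_{r(\cdot)}=\||f_1\cdots f_{m-1}|^{s}\|_{r(\cdot)/s}^{1/s}.
\end{equation*}
The problem is that $r(\cdot)/s$ may equal $1$ on a set, so it need not lie in $\mathscr{P}$. The cleaner route is to prove directly the general inequality
\begin{equation*}
\|gh\|_{r(\cdot)}\lesssim\|g\|_{p(\cdot)}\|h\|_{q(\cdot)}\quad\text{whenever}\quad \frac1r=\frac1p+\frac1q,
\end{equation*}
for exponents in $\mathscr{P}_0$. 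Normalize $\|g\|_{p(\cdot)}=\|h\|_{q(\cdot)}=1$. Young's inequality with the exponents $p/r$ and $q/r$, whose reciprocals sum to $1$, gives pointwise
\begin{equation*}
|gh|^{r(x)}\le\frac{r(x)}{p(x)}|g|^{p(x)}+\frac{r(x)}{q(x)}|h|^{q(x)}.
\end{equation*}
Hence $\rho_{r(\cdot)}(gh)\le 2$, and Lemma~\ref{lem: min/max} converts this into $\|gh\|_{r(\cdot)}\le 2^{1/r_-}$. Iterating this $m-2$ times gives the product estimate. That iteration in fact yields the whole lemma, with final exponent $r\equiv1$: the modular of $f_1\cdots f_m$ is then just $\int|f_1\cdots f_m|$, which is bounded by $m$ after normalization, and homogeneity finishes.

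I expect the only delicate point to be the bookkeeping of intermediate exponents, namely keeping track of which lie in $\mathscr{P}_0$ and checking that the constants depend only on the $(p_j)_\pm$. Because the modular argument above never needs an intermediate exponent to exceed $1$, this difficulty disappears. Indeed the simplest proof is direct and uses no induction at all. Normalize all $\|f_j\|_{p_j(\cdot)}=1$. By Lemma~\ref{lem: min/max}, $\rho_{p_j(\cdot)}(f_j)\le1$. Since $\sum_j 1/p_j(x)=1$, the weighted AM--GM inequality gives pointwise
\begin{equation*}
\prod_j|f_j|\le\sum_j\frac{|f_j|^{p_j(x)}}{p_j(x)}.
\end{equation*}
Integrating bounds the left side by $\sum_j 1/(p_j)_-\le m$. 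Multilinear scaling then gives the stated inequality with constant $m$, for nonzero $f_j$; if some $f_j=0$ almost everywhere, the inequality is trivial.
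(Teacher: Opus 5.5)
Your proof is correct, and the direct argument at the end stands on its own. After normalizing, Lemma~\ref{lem: min/max} gives $\rho_{p_j(\cdot)}(f_j)\le 1$. The pointwise weighted AM--GM inequality with weights $1/p_j(x)$, which sum to $1$, gives $\prod_j|f_j|\le\sum_j |f_j|^{p_j(x)}/p_j(x)$. Integrating yields the inequality with the explicit constant $\sum_j 1/(p_j)_-<m$.

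The paper gives no proof of its own and imports the statement from \cite{CF2013}. There the $m$-fold inequality is obtained as a consequence of two earlier results. The first is the two-exponent H\"older inequality, proved by Young's inequality on the modulars. The second is the product estimate $\|fg\|_{r(\cdot)}\lesssim\|f\|_{p(\cdot)}\|g\|_{q(\cdot)}$ for $1/r=1/p+1/q$. This is essentially the iterative route you sketched first.

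Your version collapses that iteration into a single $m$-term Young inequality. There is no bookkeeping of intermediate exponents, and the constant depends only on the $(p_j)_-$ rather than being a product of constants from each step. What the book's route buys is generality. Its exponent class allows $p_j=1$ or $p_j=\infty$ on subsets of $\Omega$; there the modular AM--GM step is unavailable, and $\{p_j=\infty\}$ has to be handled through the $L^\infty$ norm. Under the paper's standing assumption $p_j(\cdot)\in\mathscr{P}$, this issue does not arise.

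As a by-product, the same argument with weights $p(x)/p_j(x)$ gives $\rho_{p(\cdot)}(f_1\cdots f_m)\le m$ after normalization. Hence $\|f_1\cdots f_m\|_{p(\cdot)}\le m^{1/p_-}\prod_j\|f_j\|_{p_j(\cdot)}$ whenever $1/p=\sum_j 1/p_j$ with all exponents in $\mathscr{P}_0$. That is exactly Lemma~\ref{lem:Hoelder}, which the paper cites separately.
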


\begin{lemma}[\cite{TWX2025}, Lemma 6.2]\label{lem:Hoelder}
Let $p(\cdot),p_1(\cdot),\dots,p_m(\cdot)\in\mathscr{P}_{0}$ satisfy
\begin{equation*}
  \frac{1}{p(\cdot)}=\sum_{j=1}^m\frac{1}{p_j(\cdot)},
\end{equation*}
where $j=1,\dots,m$. Then, for any $f_j\in L^{p_j(\cdot)}$, $j=1,\ldots,m$ we have $f_1\cdots f_m \in L^{p(\cdot)}$, and in fact
\begin{equation*}
\|f_1\cdots f_m\|_{L^{p(\cdot)}}\lesssim\|f_1\|_{L^{p_1(\cdot)}}\cdots\|f_m\|_{L^{p_m(\cdot)}},
\end{equation*}
where the implicit constant depends only on $m$ and $p_1(\cdot),\ldots,p_{m}(\cdot)$.
\end{lemma}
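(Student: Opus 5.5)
The plan is to prove the inequality directly from the modular, without passing through Lemma~\ref{Holder's ineq.}. The tools are a pointwise weighted arithmetic--geometric mean inequality, homogeneity of the Luxemburg quasi-norm, and the relation between modular and quasi-norm.

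First, the trivial cases. If some $\|f_j\|_{L^{p_j(\cdot)}}=0$, then $f_j=0$ a.e., so $f_1\cdots f_m=0$ a.e. and there is nothing to prove. Otherwise, since $\|cf\|_{p(\cdot)}=|c|\,\|f\|_{p(\cdot)}$ for constants $c$ (immediate from the definition of the Luxemburg quasi-norm), replacing $f_j$ by $f_j/\|f_j\|_{L^{p_j(\cdot)}}$ reduces matters to the normalized case $\|f_j\|_{L^{p_j(\cdot)}}=1$ for all $j$. In this case I claim $\rho_{p_j(\cdot)}(f_j)\le 1$. Indeed, $\rho_{p_j(\cdot)}(f_j/\lambda)\le 1$ for every $\lambda>1$ by the definition of the infimum. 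Letting $\lambda\downarrow 1$, monotone convergence gives the claim, because the integrands $|f_j/\lambda|^{p_j(x)}$ increase pointwise to $|f_j|^{p_j(x)}$.

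Next, the pointwise step. For a.e.\ $x$ set $\theta_j(x):=p(x)/p_j(x)\in(0,1]$, so that $\sum_j\theta_j(x)=1$. The weighted AM--GM inequality, applied with weights $\theta_j(x)$ to the numbers $|f_j(x)|^{p_j(x)}$, gives
\begin{equation*}
|f_1(x)\cdots f_m(x)|^{p(x)}=\prod_{j=1}^m\bigl(|f_j(x)|^{p_j(x)}\bigr)^{\theta_j(x)}\le\sum_{j=1}^m\theta_j(x)\,|f_j(x)|^{p_j(x)}\le\sum_{j=1}^m|f_j(x)|^{p_j(x)}.
\end{equation*}
The product is measurable. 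Integrating the display yields $\rho_{p(\cdot)}(f_1\cdots f_m)\le\sum_j\rho_{p_j(\cdot)}(f_j)\le m$, so in particular $f_1\cdots f_m\in L^{p(\cdot)}$.

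Finally, convert the modular bound into a norm bound. For $\lambda\ge1$ we have $\rho_{p(\cdot)}(g/\lambda)\le\lambda^{-p_-}\rho_{p(\cdot)}(g)$. Taking $\lambda=m^{1/p_-}$ therefore gives $\rho_{p(\cdot)}(f_1\cdots f_m/\lambda)\le1$, and hence
\begin{equation*}
\|f_1\cdots f_m\|_{p(\cdot)}\le m^{1/p_-}.
\end{equation*}
Alternatively, one can read this off Lemma~\ref{lem: min/max}. Undoing the normalization gives the stated estimate with constant $m^{1/p_-}$, which depends only on $m$ and, through $p_-$, on the $p_j(\cdot)$. The only delicate point is justifying $\rho_{p_j(\cdot)}(f_j)\le 1$ at exact norm one, which the monotone convergence argument above handles. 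Everything else is elementary; in particular no triangle inequality is needed, so the possible failure of convexity for $p_->1$ not holding causes no trouble.
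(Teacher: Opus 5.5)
Your proof is correct. The paper does not prove this lemma. It cites \cite{TWX2025}, and the remark preceding the auxiliary lemmas indicates that the quasi-Banach range is handled in one of two ways: by rerunning the Banach-range proof, or by rescaling with Lemma~\ref{lem:homog}. In the rescaling route one chooses $0<s<p_-$, so that $p(\cdot)/s$ and all $p_j(\cdot)/s$ lie in $\mathscr{P}$. One then applies the norm form of the variable H\"older inequality to $|f_j|^s\in L^{p_j(\cdot)/s}$ and returns via $\|\,|g|^s\|_{p(\cdot)/s}=\|g\|_{p(\cdot)}^s$.

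Your argument is the direct version of what sits underneath that reduction. The Banach-range inequality is itself proved by Young's inequality on the modular. You apply its $m$-factor form, weighted AM--GM with the $x$-dependent weights $\theta_j(x)=p(x)/p_j(x)$, directly in $\mathscr{P}_0$. So you need neither the rescaling nor an induction on the number of factors, and you obtain the explicit constant $m^{1/p_-}$ instead of an unspecified one.

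The cited route is shorter on the page but relies on external results. Yours is self-contained: it uses only homogeneity of the Luxemburg functional, monotone convergence to get $\rho_{p_j(\cdot)}(f_j)\le 1$ at norm one (which can also be read off Lemma~\ref{lem: min/max}), and the elementary bound $\rho_{p(\cdot)}(g/\lambda)\le\lambda^{-p_-}\rho_{p(\cdot)}(g)$ for $\lambda\ge 1$.
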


\subsubsection{Convergence theorems}
The following monotone and dominated convergence theorems for variable Lebesgue spaces were originally established for exponents in $\mathscr{P}$. We state them directly for exponents in $\mathscr{P}_0$, since the same proofs remain valid in this more general setting. Alternatively, one can use Lemma~\ref{lem:homog} to reduce back exponents in $\mathscr{P}$.

\begin{proposition}[\cite{CF2013}, Theorem 2.59]\label{prop:monotone_convergence}
Let $p(\cdot)\in\mathscr{P}_0$. Let $(f_k)_{k=1}^{\infty}$ be an increasing sequence of nonnegative functions in $L^{p(\cdot)}$ converging pointwise a.e.~to a function $f$. Then, we have $\lim\limits_{k\to\infty}\Vert f_k\Vert_{L^{p(\cdot)}} = \Vert f\Vert_{L^{p(\cdot)}}$.    
\end{proposition}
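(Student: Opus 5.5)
The statement to prove is the monotone convergence theorem for $L^{p(\cdot)}$ with $p(\cdot)\in\mathscr{P}_0$. Suppose $0\le f_k\uparrow f$ a.e. I would first reduce to the modular: by the classical monotone convergence theorem, for every fixed $\lambda>0$ we have
\begin{equation*}
\rho_{p(\cdot)}(f_k/\lambda)=\int_\Omega \Big(\frac{f_k(x)}{\lambda}\Big)^{p(x)}\dd x\ \uparrow\ \int_\Omega \Big(\frac{f(x)}{\lambda}\Big)^{p(x)}\dd x=\rho_{p(\cdot)}(f/\lambda),
\end{equation*}
since $t\mapsto t^{p(x)}$ is increasing on $[0,\infty)$ for each $x$.

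Next, monotonicity of the Luxemburg quasinorm gives that $\|f_k\|_{p(\cdot)}$ is increasing in $k$ and bounded above by $\|f\|_{p(\cdot)}$ (possibly $+\infty$). Hence the limit $L:=\lim_k\|f_k\|_{p(\cdot)}$ exists and satisfies $L\le\|f\|_{p(\cdot)}$. If $L=\infty$ we are done.

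For the reverse inequality, assume $L<\infty$ and fix $\lambda>L$. Then $\lambda>\|f_k\|_{p(\cdot)}$ for every $k$. By the definition of the infimum, there is some $\mu$ with $\|f_k\|_{p(\cdot)}\le\mu<\lambda$ and $\rho_{p(\cdot)}(f_k/\mu)\le1$. Since $\rho_{p(\cdot)}(f_k/\cdot)$ is decreasing in the scaling parameter, this gives $\rho_{p(\cdot)}(f_k/\lambda)\le 1$ for all $k$. Letting $k\to\infty$ and using the first step yields $\rho_{p(\cdot)}(f/\lambda)\le1$, so $\|f\|_{p(\cdot)}\le\lambda$. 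Letting $\lambda\downarrow L$ gives $\|f\|_{p(\cdot)}\le L$, which proves the claim. This also shows $f\in L^{p(\cdot)}$ exactly when the norms stay bounded.

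None of the steps uses $p_->1$; only positivity of $p(\cdot)$ and monotonicity are needed. Alternatively, one can apply Lemma~\ref{lem:homog} with $s=p_-/2$ to rescale into $\mathscr{P}$ and cite the original result. The only point needing care is the passage from the infimum to $\rho_{p(\cdot)}(f_k/\lambda)\le1$ via monotonicity of the modular in $\lambda$, so I do not expect a genuine obstacle.
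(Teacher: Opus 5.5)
Your proof is correct and essentially matches the paper, which gives no separate argument: it cites \cite[Theorem 2.59]{CF2013} and notes that the same proof works verbatim for $p(\cdot)\in\mathscr{P}_0$. That proof, like yours, applies classical monotone convergence to the modular at each fixed scale and uses the infimum definition of the Luxemburg norm, with no convexity needed. The paper's alternative route, reducing to $\mathscr{P}$ via Lemma~\ref{lem:homog}, is exactly your remark with $s=p_-/2$.
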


\begin{proposition}[\cite{CF2013}, Theorem 2.62]\label{prop:dominated_convergence}
Let $p(\cdot)\in\mathscr{P}_0$. Let $(f_k)_{k=1}^{\infty}$ be a sequence of functions in $L^{p(\cdot)}$ converging pointwise a.e.~to a function $f$. Assume that there exists a nonnegative function $g\in L^{p(\cdot)}$ such that $|f_{k}|\leq g$ a.e., for all $k=1,2,\ldots$. Then, we have $f\in L^{p(\cdot)}$ and $f_k\to f$ as $k\to\infty$ in $L^{p(\cdot)}$.    
\end{proposition}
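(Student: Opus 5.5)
The plan is to work at the level of the modular $\rho_{p(\cdot)}$ and use the classical Lebesgue dominated convergence theorem there. Since $p_{+}<\infty$, modular convergence and norm convergence are equivalent, and Lemma~\ref{lem: min/max} converts one into the other. No reduction via Lemma~\ref{lem:homog} is needed, although one could also pass to $|f_k|^{p_-}$ and the exponent $p(\cdot)/p_-\in$ (or at the boundary of) the Banach range.

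\emph{Step 1: finiteness of $\rho_{p(\cdot)}(g)$.} Since $g\in L^{p(\cdot)}$, there is $\lambda\in(0,\infty)$ with $\rho_{p(\cdot)}(g/\lambda)\le 1$. Hence
\[
\rho_{p(\cdot)}(g)=\int_\Omega \lambda^{p(x)}\,(g(x)/\lambda)^{p(x)}\dd x\le \max\{\lambda^{p_-},\lambda^{p_+}\}<\infty .
\]
(This is the $\mathscr{P}_0$ version of \cite[Proposition 2.12]{CF2013}; only $p_{+}<\infty$ is used.) Passing to the a.e.\ limit in $|f_k|\le g$ gives $|f|\le g$ a.e. Therefore $\rho_{p(\cdot)}(f)\le\rho_{p(\cdot)}(g)<\infty$. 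By the same scaling argument in reverse, with $\lambda\ge1$ large so that $\lambda^{-p_-}\rho_{p(\cdot)}(f)\le1$, we get $\|f\|_{p(\cdot)}<\infty$, i.e.\ $f\in L^{p(\cdot)}$.

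\emph{Step 2: modular convergence.} Set $h_k:=f_k-f$. Then $h_k\to0$ a.e., and a.e.
\[
|h_k(x)|^{p(x)}\le (2g(x))^{p(x)}\le 2^{p_+}g(x)^{p(x)}.
\]
The majorant is integrable by Step 1, so the classical dominated convergence theorem yields $\rho_{p(\cdot)}(h_k)\to0$.

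\emph{Step 3: from modular to norm.} If $\|h_k\|_{p(\cdot)}=0$ there is nothing to prove. Otherwise, Lemma~\ref{lem: min/max} applies, since $h_k\in L^{p(\cdot)}$ by Step 1 and the same scaling argument. It gives
\[
\|h_k\|_{p(\cdot)}\le\max\bigl\{\rho_{p(\cdot)}(h_k)^{1/p_-},\rho_{p(\cdot)}(h_k)^{1/p_+}\bigr\},
\]
and the right-hand side tends to $0$ because $0<p_-\le p_+<\infty$. Hence $f_k\to f$ in $L^{p(\cdot)}$.

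There is no serious obstacle. The only point needing care is that the equivalence "$f\in L^{p(\cdot)}$ iff $\rho_{p(\cdot)}(f)<\infty$" and Lemma~\ref{lem: min/max} are being used for exponents in $\mathscr{P}_0$ rather than $\mathscr{P}$. The scaling estimate in Step 1 verifies the former directly, and the paper already records the latter for $\mathscr{P}_0$.
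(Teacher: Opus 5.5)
Your proof is correct and follows essentially the same route as the paper. The paper gives no separate argument: it cites \cite[Theorem 2.62]{CF2013} and remarks that the same proof (dominate the modular, apply classical dominated convergence, pass from modular to norm) remains valid for $\mathscr{P}_0$, or alternatively that one can reduce to $\mathscr{P}$ via Lemma~\ref{lem:homog}; your Steps 1--3 carry out that first route explicitly, using only $0<p_-\le p_+<\infty$.
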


Recall that a sequence of measurable functions $f_k:\Omega\to\C$, $k=1,2,\ldots$ \emph{converges in measure} to a measurable function $f:\Omega\to\C$ if
\begin{equation*}
  \lim_{k\to\infty}|\{|f_k-f|\geq\eps\}| = 0,\quad\forall \eps>0.
\end{equation*}
Recall also that if $(f_k)_{k=1}^{\infty}$ converges in measure to $f$, then one can find  a subsequence $(f_{m_k})^{\infty}_{k=1}$ converging pointwise a.e.~to $f$.

\begin{proposition}[\cite{CF2013}, Theorem 2.66 and remarks after its proof]\label{prop:norm_convergence_to_subs_pointwise_convergence}
Let $p(\cdot)\in\mathscr{P}_0$. Let $(f_k)_{k=1}^{\infty}$ be a sequence of functions in $L^{p(\cdot)}$ such that $f_k\to f$ as $k\to\infty$ in $L^{p(\cdot)}$. Then, $(f_k)_{k=1}^{\infty}$ converges in measure to $f$. In particular, there exists a subsequence $(f_{m_k})^{\infty}_{k=1}$ converging pointwise a.e.~to $f$.
\end{proposition}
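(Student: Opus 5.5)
This is the standard proof (Cruz-Uribe--Fiorenza, Theorem 2.66): norm convergence gives convergence in measure via a Chebyshev-type estimate for the modular, and the subsequence statement is the Riesz theorem already recalled above. We may assume $f\in L^{p(\cdot)}$, since $f_k-f\in L^{p(\cdot)}$ for large $k$.

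\emph{Step 1 (norm to modular).} Set $g_k:=f_k-f$, so $\|g_k\|_{p(\cdot)}\to 0$. We show $\rho_{p(\cdot)}(g_k)\to0$. For large $k$ we have $\|g_k\|_{p(\cdot)}\le1$, and Lemma~\ref{lem: min/max} then gives
\begin{equation*}
  \rho_{p(\cdot)}(g_k)\le \|g_k\|_{p(\cdot)}^{p_-}\longrightarrow 0 .
\end{equation*}
Here $p_->0$ because $p(\cdot)\in\mathscr{P}_0$. If $g_k=0$ the bound is trivial.

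\emph{Step 2 (Chebyshev).} Fix $\eps>0$. Without loss of generality take $\eps\le1$, since shrinking $\eps$ only enlarges the set $\{|g_k|\ge\eps\}$. On this set we have $|g_k(x)|/\eps\ge1$, so $(|g_k(x)|/\eps)^{p(x)}\ge1$. Hence
\begin{equation*}
  |\{|g_k|\ge\eps\}|\le \int_\Omega\Big(\frac{|g_k(x)|}{\eps}\Big)^{p(x)}\dd x\le \eps^{-p_+}\rho_{p(\cdot)}(g_k).
\end{equation*}
The second inequality uses $\eps^{-p(x)}\le\eps^{-p_+}$, which holds because $\eps\le1$. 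Finiteness of $p_+$ is essential at this point. By Step 1 the right-hand side tends to $0$, so $f_k\to f$ in measure.

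\emph{Step 3.} Convergence in measure yields a subsequence converging a.e., as recalled just before the statement. This comes from picking $m_k$ with $|\{|g_{m_k}|\ge2^{-k}\}|<2^{-k}$ and applying Borel--Cantelli.

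There is no real obstacle here. The only point needing care is Step 1, where one has to check that the modular--norm comparison of Lemma~\ref{lem: min/max} is valid for exponents in $\mathscr{P}_0$ and not just in $\mathscr{P}$. Alternatively, one can apply Lemma~\ref{lem:homog} with $s=p_-$ or $s=1/p_-$ to reduce to an exponent in $\mathscr{P}$.
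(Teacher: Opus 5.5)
Your proof is correct and follows essentially the same route as the argument the paper cites, \cite{CF2013} (Theorem 2.66 and the remark after it): a Chebyshev-type bound on $\{|f_k-f|\ge\eps\}$ through the norm--modular comparison, then the classical Riesz subsequence argument. The point you flag needs no further checking here, since the paper states Lemma~\ref{lem: min/max} directly for $p(\cdot)\in\mathscr{P}_0$ and also mentions the alternative reduction via Lemma~\ref{lem:homog}.
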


Proposition~\ref{prop:norm_convergence_to_subs_pointwise_convergence} will be useful later to prove a.e.~equality of functions that arise as limits of the same sequence of functions in two different modes of convergence. For future purposes, we will also need a result of this type that concerns \emph{mixed} norms.

\begin{proposition}\label{prop:almost_everywhere_equality_mixed}
Let $p_1(\cdot),p_2(\cdot)\in\mathscr{P}_0(\Omega)$ and $q_1,q_2\in(0,\infty)$. Let $f_k:\Omega\times\widetilde{\Omega}\to\C$, $k=1,2,\ldots$ be a sequence of measurable functions and let $f,g:\Omega\times\widetilde{\Omega}\to\C$ be measurable functions such that
\begin{equation*}
  \lim_{k\to\infty}\Vert \Vert f_{k}(x,y)-f(x,y)\Vert_{L^{q_1}_{y}}\Vert_{L^{p_1(\cdot)}_{x}}=0
\end{equation*}
and
\begin{equation*}
  \lim_{k\to\infty}\Vert \Vert f_{k}(x,y)-g(x,y)\Vert_{L^{q_2}_{y}}\Vert_{L^{p_2(\cdot)}_{x}}=0.
\end{equation*}
Then, it holds $f = g$ a.e.~on $\Omega\times\widetilde{\Omega}$.
\end{proposition}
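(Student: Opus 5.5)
The idea is to reduce both convergence statements to convergence in measure on sets of finite measure, where both $f$ and $g$ are limits of the same sequence. Then a.e.\ subsequences force $f=g$.

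\textbf{Reduction to finite-measure pieces.} Since $\Omega\times\widetilde{\Omega}$ is $\sigma$-finite, it suffices to show $f=g$ a.e.\ on $E\times F$ for arbitrary measurable $E\subset\Omega$, $F\subset\widetilde{\Omega}$ of finite measure. We may also shrink $E$ so that $p_1(\cdot),p_2(\cdot)$ are bounded there, which is automatic since they lie in $\mathscr{P}_0$.

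\textbf{Convergence in measure in $x$.} Set $h_k(x):=\Vert f_k(x,\cdot)-f(x,\cdot)\Vert_{L^{q_1}_y}$. By hypothesis $h_k\to0$ in $L^{p_1(\cdot)}(\Omega)$, so Proposition~\ref{prop:norm_convergence_to_subs_pointwise_convergence} gives a subsequence with $h_{k_l}(x)\to0$ for a.e.\ $x$. Applying the same proposition to $\widetilde h_k(x):=\Vert f_k(x,\cdot)-g(x,\cdot)\Vert_{L^{q_2}_y}$ along this subsequence, we pass to a further subsequence (still denoted $k_l$) along which $\widetilde h_{k_l}(x)\to0$ a.e.\ as well. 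Let $N$ be the null set of $x$ where either convergence fails.

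\textbf{Fiberwise comparison.} Fix $x\notin N$. Then $f_{k_l}(x,\cdot)\to f(x,\cdot)$ in $L^{q_1}(\widetilde{\Omega})$ and $f_{k_l}(x,\cdot)\to g(x,\cdot)$ in $L^{q_2}(\widetilde{\Omega})$. Both are constant exponents, so Proposition~\ref{prop:norm_convergence_to_subs_pointwise_convergence} applies; alternatively, Chebyshev's inequality shows directly that both modes imply convergence in measure on $\widetilde{\Omega}$. Limits in measure are unique a.e., so $f(x,\cdot)=g(x,\cdot)$ a.e.\ on $\widetilde{\Omega}$. Measurability of the set $\{f\neq g\}$ and Tonelli's theorem then give
\begin{equation*}
|\{f\neq g\}|=\int_{\Omega}|\{y:f(x,y)\neq g(x,y)\}|\dd x=0.
\end{equation*}
So the restriction to finite-measure pieces is in fact not even needed.

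\textbf{Main point of care.} The only subtle step is ensuring measurability of $x\mapsto h_k(x)$, so that it is a legitimate element of $L^{p_1(\cdot)}$. This follows from Tonelli's theorem applied to $|f_k-f|^{q_1}$, which is a measurable function on the product space. A secondary point is that the hypotheses implicitly include $h_k\in L^{p_1(\cdot)}$, at least for large $k$; this is all Proposition~\ref{prop:norm_convergence_to_subs_pointwise_convergence} needs, applied to the sequence $h_k$ with limit $0$.
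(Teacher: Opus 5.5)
Your proposal is correct and follows essentially the same route as the paper. Both extract nested subsequences so that, for a.e.\ $x$, the fibers $f_{k}(x,\cdot)$ converge to $f(x,\cdot)$ in $L^{q_1}$ and to $g(x,\cdot)$ in $L^{q_2}$. Both then conclude that the slices of $\{f\neq g\}$ are null and finish with Tonelli. The opening reduction to finite-measure rectangles is superfluous, as you note yourself.
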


\begin{proof}
Consider the measurable set
\begin{equation*}
E:=\{f \neq g\}\subset\Omega\times\widetilde{\Omega}
\end{equation*}
and define
\begin{equation*}
  E_x:= \{y\in\widetilde{\Omega}:~(x,y)\in E\},\quad x\in\Omega.
\end{equation*}
By the theorem of Tonelli--Fubini we have
\begin{equation*}
  |E|=\int_{\Omega}|E_x|\dd x.
\end{equation*}
Therefore, in order to show that $|E|=0$, it suffices to show that $|E_x|=0$ for a.e.~$x\in\Omega$.

Since
\begin{equation*}
  \lim_{k\to\infty}\Vert \Vert f_{k}(x,y) - f(x,y)\Vert_{L^{q_1}_{y}}\Vert_{L^{p_1(\cdot)}_{x}} = 0,
\end{equation*}
by Proposition~\ref{prop:norm_convergence_to_subs_pointwise_convergence} we deduce that there exist a subsequence $(f_{m_k})_{k=1}^{\infty}$ and a measurable set $A\subset\Omega$ with $|\Omega\setminus A|=0$, such that
\begin{equation*}
  \lim_{k\to\infty}\Vert f_{m_k}(x,y) - f(x,y)\Vert_{L^{q_1}_{y}} = 0,\quad\forall x\in A.
\end{equation*}
Then, since
\begin{equation*}
  \lim_{k\to\infty}\Vert \Vert f_{m_k}(x,y) - g(x,y)\Vert_{L^{q_2}_{y}}\Vert_{L^{p_2(\cdot)}_{x}} = 0,
\end{equation*}
by Proposition~\ref{prop:norm_convergence_to_subs_pointwise_convergence} we deduce that there exist a further subsequence $(f_{m_{\ell_k}})_{k=1}^{\infty}$ and a measurable set $B\subset\Omega$ with $|\Omega\setminus B|=0$, such that
\begin{equation*}
  \lim_{k\to\infty}\Vert f_{m_{\ell_k}}(x,y) - g(x,y)\Vert_{L^{q_2}_{y}} = 0,\quad\forall x\in B.
\end{equation*}
    
Set $C:= A\cap B$. Then, $C\subset\Omega$ is a measurable set with $|\Omega\setminus C|=0$. We will show that $|E_x|=0$, for every $x\in C$. Indeed, for every $x\in C$ we have
\begin{equation*}
  f_{m_{\ell_k}}(x,y) \to f(x,y)\quad\text{ as }k\to\infty\quad\text{in }L^{q_1}_{y} 
\end{equation*}
and
\begin{equation*}
  f_{m_{\ell_k}}(x,y) \to g(x,y)\quad\text{ as }k\to\infty\quad\text{in }L^{q_2}_{y}, 
\end{equation*}
which yields that
\begin{equation*}
  f(x,y)=g(x,y)\quad\text{for a.e. }y\in\widetilde{\Omega},
\end{equation*}
that is $|E_x|=0$. This concludes the proof.
\end{proof}

\subsection{Multilinear variable Muckenhoupt conditions}

In this subsection we describe the classes of the weights that feature in the results of this paper. The definitions and facts that follow have already appeared in \cite{extr_comp_var_bil}, where we refer the reader for more details.

\begin{definition}
\noindent\begin{enumerate}
    \item[(1)] 
We say that a quadruple $(\vec{p}(\cdot), q(\cdot), \vec{r}, s)$ is \emph{$m$-admissible} if all of the following conditions are satisfied:
\begin{itemize}
    \item $\vec{r}= (r_1,\ldots,r_m)\in(0,\infty)^m$ is a $m$-tuple of constants.

    \item $s\in(0,\infty]$ is a constant.

    \item $\vec{p}(\cdot) = (p_1(\cdot),\ldots,p_m(\cdot))\in\mathscr{P}_0^m$ is a $m$-tuple of variable exponents with
    \begin{equation*}
    r_j < (p_{j})_{-},\quad\forall j=1,\ldots,m.
    \end{equation*}

    \item $q(\cdot)\in\mathscr{P}_{0}$ is a variable exponent with $q_{+} < s$.

    \item There is a constant $\gamma\in[0,\infty)$ such that
    \begin{equation*}
    \frac{1}{p(\cdot)} - \frac{1}{q(\cdot)} = \gamma,
    \end{equation*}
    where the variable exponent $p(\cdot)\in\mathscr{P}_0$ is defined via
    \begin{equation*}
    \frac{1}{p(\cdot)} := \sum_{j=1}^{m}\frac{1}{p_j(\cdot)}.
    \end{equation*}
\end{itemize}
In this case, we define the constant $r\in(0,\infty)$ via
    \begin{equation*}
    \frac{1}{r} := \sum_{j=1}^{m}\frac{1}{r_j}.
    \end{equation*}

    \item[(2)] We say that a $m$-admissible quadruple $(\vec{p}(\cdot), q(\cdot), \vec{r}, s)$ is \emph{proper} if
    \begin{equation*}
    p_j(\cdot)\in \mathrm{LH},\quad\forall j=1,\ldots,m
    \end{equation*}
    and
    \begin{equation*}
    q(\cdot)\in \mathrm{LH}.
    \end{equation*}
   Notice that in this case, $p(\cdot)\in \mathrm{LH}$.
\end{enumerate}
\end{definition}

\begin{definition}
Let $(\vec{p}(\cdot),q(\cdot),\vec{r},s)$ be a $m$-admissible quadruple with
\begin{equation*}
    \frac{1}{p(\cdot)} - \frac{1}{q(\cdot)} = \gamma\in[0,\infty).
\end{equation*}
A vector of weights $\vec{w}=(w_1,\dots,w_m)$ is said to satisfy the $m$-linear $\mathcal{A}_{({\vec{p}(\cdot),q(\cdot)}),(\vec{r},s)}$ condition (or $\vec{w}\in \mathcal{A}_{({\vec{p}(\cdot),q(\cdot)}),(\vec{r},s)}$) if

\begin{equation}\label{eq:Apqrs constant}
    [\vec{w}]_{\mathcal{A}_{(\vec{p}(\cdot),q(\cdot)),(\vec{r},s)}}:=\sup_{Q}|Q|^{\gamma - \left(\frac{1}{r}-\frac{1}{s}\right)}
    \Vert \nu_{\vec{w}}\chi_{Q}\Vert_{\frac{1}{\frac{1}{q(\cdot)}-\frac{1}{s}}}\prod_{j=1}^{m}\Vert w_j^{-1}\chi_{Q}\Vert_{\frac{1}{\frac{1}{r_j}-\frac{1}{p_j(\cdot)}}}<\infty.
\end{equation}
Here, the supremum is taken over all cubes $Q\subset\R^n$ and $\nu_{\vec{w}} := \prod_{j=1}^{m}w_j$.

If $\gamma=0$, we denote
\begin{equation*}
    \mathcal{A}_{\vec{p}(\cdot),(\vec{r},s)} := \mathcal{A}_{({\vec{p}(\cdot),q(\cdot)}),(\vec{r},s)}.
\end{equation*}
Moreover, if $r_j=1$ and $s=\infty$, then we denote
\begin{equation*}
    \mathcal{A}_{\vec{p}(\cdot),q(\cdot)} := \mathcal{A}_{({\vec{p}(\cdot),q(\cdot)}),(\vec{r},s)}.
\end{equation*}
\end{definition}

An equivalent characterization for these multilinear variable exponent Muckenhoupt classes in terms of linear ones was proved in \cite[Lemma 3.5]{extr_comp_var_bil}. Since this principle will be important for us, we state it below and refer the reader to \cite[Lemma 3.5]{extr_comp_var_bil} for its proof.

\begin{lemma}[\cite{extr_comp_var_bil}, Lemma 3.5]\label{lem:Apqrs char.}
Let $(\vec{p}(\cdot),q(\cdot),\vec{r},s)$ be a $m$-admissible quadruple. Then $\vec{w}\in\mathcal{A}_{({\vec{p}(\cdot),q(\cdot)}),(\vec{r},s)}$ if and only if    
\begin{equation}\label{equiv. mult. weights 1}
\begin{cases}
  w_j\in\mathcal{A}_{p_j(\cdot),(r_j,\sigma_j)}, &\text{with}\quad\frac{1}{\sigma_j}=\frac{1}{r_j}-\big(\frac{1}{r}-\frac{1}{s}\big)\quad\text{for all}\quad j=1,\dots,m, \\
  \nu_{\vec{w}}\in\mathcal{A}_{(p(\cdot),q(\cdot)),(r,s)}.
\end{cases}
\end{equation}
\end{lemma}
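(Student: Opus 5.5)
Both directions reduce to the generalized H\"older inequality (Lemma~\ref{lem:Hoelder}), used together with the homogeneity property (Lemma~\ref{lem:homog}) and the identity $\Vert \chi_Q\Vert_{1/\gamma}=|Q|^{\gamma}$. First I would write out what the two linear conditions say. With $a_j^{-1}:=\frac{1}{p_j(\cdot)}-\frac{1}{\sigma_j}$, $c_j^{-1}:=\frac{1}{r_j}-\frac{1}{p_j(\cdot)}$, $b^{-1}:=\frac{1}{q(\cdot)}-\frac1s$ and $c^{-1}:=\frac1r-\frac{1}{p(\cdot)}$, and using $\frac{1}{r_j}-\frac{1}{\sigma_j}=\frac1r-\frac1s$, they read
\begin{equation*}
 \sup_Q |Q|^{-(\frac1r-\frac1s)}\Vert w_j\chi_Q\Vert_{a_j}\Vert w_j^{-1}\chi_Q\Vert_{c_j}<\infty,\qquad \sup_Q |Q|^{\gamma-(\frac1r-\frac1s)}\Vert \nu_{\vec w}\chi_Q\Vert_{b}\Vert \nu_{\vec w}^{-1}\chi_Q\Vert_{c}<\infty .
\end{equation*}
The exponents satisfy $c^{-1}=\sum_j c_j^{-1}$. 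They also satisfy $a_j^{-1}=b^{-1}+\sum_{i\neq j}c_i^{-1}+\gamma$, which follows from $\frac1{p(\cdot)}-\frac1{q(\cdot)}=\gamma$. Degenerate cases, where some reciprocal exponent vanishes, are read as $L^\infty$ norms and are handled the same way.

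\emph{Necessity.} For the condition on $\nu_{\vec w}$, write $\nu_{\vec w}^{-1}=\prod_j w_j^{-1}$. Lemma~\ref{lem:Hoelder} gives $\Vert\nu_{\vec w}^{-1}\chi_Q\Vert_c\lesssim\prod_j\Vert w_j^{-1}\chi_Q\Vert_{c_j}$, which together with $\vec w\in\mathcal A_{(\vec p(\cdot),q(\cdot)),(\vec r,s)}$ yields $\nu_{\vec w}\in\mathcal A_{(p(\cdot),q(\cdot)),(r,s)}$. For each $w_j$, write
\begin{equation*}
 w_j\chi_Q=(\nu_{\vec w}\chi_Q)\prod_{i\ne j}(w_i^{-1}\chi_Q)\cdot\chi_Q .
\end{equation*}
By the exponent identity above, Lemma~\ref{lem:Hoelder} then gives
\begin{equation*}
 \Vert w_j\chi_Q\Vert_{a_j}\lesssim |Q|^{\gamma}\,\Vert\nu_{\vec w}\chi_Q\Vert_b\prod_{i\neq j}\Vert w_i^{-1}\chi_Q\Vert_{c_i} .
\end{equation*}
Multiplying by $\Vert w_j^{-1}\chi_Q\Vert_{c_j}$ and by $|Q|^{-(\frac1r-\frac1s)}$ reproduces exactly the multilinear constant, so $w_j\in\mathcal A_{p_j(\cdot),(r_j,\sigma_j)}$.

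\emph{Sufficiency.} Here we must bound $\Vert\nu_{\vec w}\chi_Q\Vert_b\prod_j\Vert w_j^{-1}\chi_Q\Vert_{c_j}$. The naive H\"older chains go the wrong way: $\sum_j a_j^{-1}$ exceeds $b^{-1}$, and $c_j^{-1}\le c^{-1}$. So I would instead take a geometric mean of the $m+1$ linear conditions. I would use Lemma~\ref{lem:homog} to move fractional powers $\theta_j,\theta_0\in(0,1)$ inside the norms, and write $w_j^{-1}$ through a factorization of the form
\begin{equation*}
 w_j^{-\alpha}=\nu_{\vec w}^{-\beta}\prod_{i\ne j}w_i^{\delta},
\end{equation*}
with powers chosen so that H\"older now runs in the correct direction. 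Then, for each $j$,
\begin{equation*}
 \Vert w_j^{-1}\chi_Q\Vert_{c_j}^{\alpha}\lesssim |Q|^{\kappa_j}\Vert \nu_{\vec w}^{-1}\chi_Q\Vert_c^{\beta}\prod_{i\ne j}\Vert w_i\chi_Q\Vert_{a_i}^{\delta}
\end{equation*}
for suitable exponents $\kappa_j$. Next I would replace each $\Vert w_i\chi_Q\Vert_{a_i}$ by $[w_i]\,|Q|^{\frac1r-\frac1s}/\Vert w_i^{-1}\chi_Q\Vert_{c_i}$, and $\Vert\nu_{\vec w}^{-1}\chi_Q\Vert_c$ by the analogous quantity coming from the condition on $\nu_{\vec w}$. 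This turns the system into a multiplicative inequality among the quantities $X_i:=\Vert w_i^{-1}\chi_Q\Vert_{c_i}$ and $\Vert\nu_{\vec w}\chi_Q\Vert_b$. I would then solve this (linear in logarithms) system. By scaling, the powers of $|Q|$ should come out exactly as $|Q|^{-\gamma+\frac1r-\frac1s}$.

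I expect the main obstacle to be this sufficiency step: finding the correct exponents $\alpha,\beta,\delta$ so that every application of Lemma~\ref{lem:Hoelder} is legitimate, meaning all the resulting reciprocal exponents are nonnegative and sum correctly pointwise. This must hold uniformly in $x$, even though $p_j(\cdot)$ and $q(\cdot)$ vary. If a pointwise choice fails, I would fall back on the constant-exponent blueprint, namely the characterization of multilinear $A_{\vec P}$ via $\nu_{\vec w}$ and $\sigma_j$, and mimic its H\"older bookkeeping line by line.
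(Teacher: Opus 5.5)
The necessity half of your proposal is correct, but the sufficiency half has a genuine gap: the step it rests on is impossible.

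\textbf{Necessity.} The identities $c^{-1}=\sum_j c_j^{-1}$ and $a_j^{-1}=b^{-1}+\sum_{i\neq j}c_i^{-1}+\gamma$ hold. Admissibility puts all these exponents in $\mathscr{P}_0$, so Lemma~\ref{lem:Hoelder} applies. In fact no reciprocal exponent can vanish; only $\gamma=0$ needs the trivial change of dropping the factor $\chi_Q$. Your two H\"older estimates then give both linear conditions.

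\textbf{Sufficiency.} Your diagnosis that the naive chains go the wrong way is right, but the proposed repair has the same defect. For $m\geq2$, an identity $w_j^{-\alpha}=\nu_{\vec w}^{-\beta}\prod_{i\ne j}w_i^{\delta}$ that holds for arbitrary weights forces $\alpha=\beta=\delta$: compare the powers of $w_j$ and of each $w_i$. So there are no free parameters. After Lemma~\ref{lem:homog}, the H\"older step you need would require $\frac{1}{c_j}\geq\frac1c+\sum_{i\neq j}\frac{1}{a_i}$ pointwise. This is impossible, since $\frac1c\geq\frac1{c_j}$ and every $a_i^{-1}>0$. Indeed, the inequality
\begin{equation*}
\Vert w_j^{-1}\chi_Q\Vert_{c_j}\lesssim |Q|^{\kappa_j}\Vert\nu_{\vec w}^{-1}\chi_Q\Vert_{c}\prod_{i\neq j}\Vert w_i\chi_Q\Vert_{a_i}
\end{equation*}
is not a consequence of H\"older's inequality. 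Take constant exponents, $m=2$, $j=1$ and $w_2\equiv1$. It becomes the reverse H\"older inequality $\Vert w_1^{-1}\chi_Q\Vert_{c_1}\lesssim|Q|^{\kappa}\Vert w_1^{-1}\chi_Q\Vert_{c}$ with $c<c_1$, which fails for general $w_1$. So upper-bounding each $X_j$ separately cannot work, and the ``linear system in logarithms'' never starts. The fallback of mimicking the constant-exponent proof names a direction but supplies no argument.

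\textbf{The missing idea.} Use H\"older to get a \emph{lower} bound for the product of the dual norms, not upper bounds for the individual $X_j$. We have $\nu_{\vec w}^{-1}\prod_{j=1}^m w_j\equiv1$, and the relevant exponent sum is a constant:
\begin{equation*}
\frac1c+\sum_{j=1}^m\frac1{a_j}=\Big(\frac1r-\frac1{p(\cdot)}\Big)+\frac1{p(\cdot)}-\sum_{j=1}^m\frac1{\sigma_j}=m\Big(\frac1r-\frac1s\Big).
\end{equation*}
Hence Lemma~\ref{lem:Hoelder} gives
\begin{equation*}
|Q|^{m(\frac1r-\frac1s)}=\Vert\chi_Q\Vert_{\frac{1}{m(1/r-1/s)}}\lesssim\Vert\nu_{\vec w}^{-1}\chi_Q\Vert_c\prod_{j=1}^m\Vert w_j\chi_Q\Vert_{a_j}.
\end{equation*}
Multiplying the $m+1$ linear conditions yields
\begin{equation*}
\Big(\Vert\nu_{\vec w}\chi_Q\Vert_b\prod_{j=1}^m\Vert w_j^{-1}\chi_Q\Vert_{c_j}\Big)\Big(\Vert\nu_{\vec w}^{-1}\chi_Q\Vert_c\prod_{j=1}^m\Vert w_j\chi_Q\Vert_{a_j}\Big)\lesssim|Q|^{(m+1)(\frac1r-\frac1s)-\gamma}.
\end{equation*}
All these norms are positive and finite: the weights are positive a.e., and each linear constant is finite. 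Dividing the last display by the lower bound gives $|Q|^{\gamma-(\frac1r-\frac1s)}\Vert\nu_{\vec w}\chi_Q\Vert_b\prod_j\Vert w_j^{-1}\chi_Q\Vert_{c_j}\lesssim1$ uniformly in $Q$. This is the norm-level form of the H\"older trick in the constant-exponent characterization of $A_{\vec P}$, which your proposal does not contain.
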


\section{Characterization of total boundedness in the variable exponent setting}\label{sec: char. cmp.}

In this section, we establish the Riesz--Kolmogorov criterion for total boundedness in the weighted variable Lebesgue space $L^{p(\cdot)}(w)$ for $p(\cdot)\in\mathscr{P}_0\cap\mathrm{LH}$. We begin by gathering several key results that will be needed later.

Let $p(\cdot)\in\mathscr{P}_0$ and $w$ be a weight such that $w^{p(\cdot)}$ is integrable over the ball $B(x_0,R)$.
Then, we denote by $L^0(B(x_0,R),w^{p(\cdot)})$ the space of measurable functions on $B(x_0,R)$ equipped with the metric 
\begin{equation*}
  d_0(f,g):=\int_{B(x_0,R)}\phi(f(x)-g(x)) w(x)^{p(x)}\dd x,  
\end{equation*}
where 
\begin{equation*}
  \phi(t):=\frac{|t|}{1+|t|},\quad t\in\C.
\end{equation*}
It is a well-known fact that this space is a complete metric space with respect to this metric and the convergence in this metric is equivalent to the convergence in measure. In what follows, we denote by $w(A)$ for $A\subset B(x_0,R)$ the weighted Lebesgue measure $w(A):=\int_A w(x)^{p(x)}\dd x$. The following result is a special case of \cite[Theorem 1.3]{BG2018} (see also \cite{Krotov2012}):

\begin{lemma}[\cite{BG2018}, Theorem 1.3]\label{lem: Krotov's result}
Any almost uniformly bounded and almost equicontinuous subset of $L^0(B(x_0,R), w^{p(\cdot)})$ is totally bounded. This means that a subset $\mathcal{F}$ of $L^0(B(x_0,R), w^{p(\cdot)})$ is totally bounded if for any $\varepsilon>0$, there exist $\delta>0$ and $\lambda>0$ such that, for any function $f\in\mathcal{F}$, there exists a measurable subset $E(f)\subset B(x_0,R)$ satisfying the following properties
\begin{enumerate}[label=\textup{(\arabic*)}]
  \item 
  \begin{equation*}
  w(E(f))<\varepsilon;       
  \end{equation*}
  \item 
  \begin{equation*}
  |f(x)-f(y)|<\varepsilon,\quad\text{for any}\;\;x,y\in B(x_0,R)\setminus E(f)\;\;\text{satisfying}\;\;|x-y|<\delta;
  \end{equation*}
  \item 
  \begin{equation*}
  |f(x)|\leq\lambda\quad\text{for}\;\;x\in B(x_0,R)\setminus E(f).
  \end{equation*}
\end{enumerate}
\end{lemma}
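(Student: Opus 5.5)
The plan is to build, for each $\varepsilon>0$, an explicit finite $C\varepsilon$-net for $\mathcal{F}$ in the metric $d_0$, with $C$ depending only on $w(B(x_0,R))$. Two facts about the metric do most of the work. First, $w(B(x_0,R))=\int_{B(x_0,R)}w(x)^{p(x)}\dd x<\infty$ by the standing assumption. Second, $\phi(t)\le\min\{1,|t|\}$. Together they give, for measurable $f,g$ and any measurable $E\subset B(x_0,R)$,
\begin{equation*}
d_0(f,g)\le w(E)+\sup_{B(x_0,R)\setminus E}|f-g|\cdot w(B(x_0,R)).
\end{equation*}
So it suffices to approximate each $f\in\mathcal{F}$ uniformly, off a set of small weighted measure, by a member of a fixed finite family.

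Fix $\varepsilon>0$ and let $\delta,\lambda$ be the constants given by the hypotheses for this $\varepsilon$. Partition $B(x_0,R)$ into finitely many measurable cells $Q_1,\dots,Q_N$, for instance intersections of the ball with a dyadic grid of cubes of side less than $\delta/\sqrt{n}$. Then every cell has diameter $<\delta$, and $N$ depends only on $\delta,R,n$, not on $f$.

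For $f\in\mathcal{F}$ with its exceptional set $E(f)$, define a step function $g_f=\sum_i c_i\chi_{Q_i}$ as follows.
\begin{itemize}
\item If $Q_i\setminus E(f)\neq\emptyset$, pick any $x_i\in Q_i\setminus E(f)$ and set $c_i=f(x_i)$.
\item Otherwise set $c_i=0$.
\end{itemize}
This choice handles the main difficulty, namely that the good set $B(x_0,R)\setminus E(f)$ varies with $f$, so no common sample points can be used. Choosing the sample point per $f$ and per cell avoids this.

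By property (2), for $x\in Q_i\setminus E(f)$ we have $|x-x_i|<\delta$, hence $|f(x)-g_f(x)|<\varepsilon$. Cells with $Q_i\setminus E(f)=\emptyset$ lie inside $E(f)$ and contribute nothing off $E(f)$. By property (3), $|c_i|\le\lambda$ for all $i$. The displayed estimate with $E=E(f)$ and property (1) then give
\begin{equation*}
d_0(f,g_f)\le \varepsilon+\varepsilon\, w(B(x_0,R)).
\end{equation*}

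Next, quantize the coefficients. Let $G$ be a finite $\varepsilon$-net of the closed disk $\{|z|\le\lambda\}\subset\C$. Replacing each $c_i$ by a nearest point of $G$ yields a step function $h$ from the finite family
\begin{equation*}
\mathcal{H}:=\Big\{\sum_{i=1}^N a_i\chi_{Q_i}:~a_i\in G\Big\},
\end{equation*}
which has $|G|^N$ elements. Since $|g_f-h|\le\varepsilon$ everywhere, the estimate with $E=\emptyset$ gives $d_0(g_f,h)\le\varepsilon\,w(B(x_0,R))$. By the triangle inequality for $d_0$,
\begin{equation*}
d_0(f,h)\le\varepsilon\bigl(1+2w(B(x_0,R))\bigr).
\end{equation*}
Hence $\mathcal{H}$ is a finite $\varepsilon(1+2w(B(x_0,R)))$-net for $\mathcal{F}$. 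Since $\varepsilon>0$ is arbitrary, $\mathcal{F}$ is totally bounded.

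The only points left to check are routine: $d_0$ is a metric, which follows from subadditivity of the increasing concave function $\phi$, and $g_f$ and $h$ are measurable, which is clear since they are finite step functions on measurable cells.
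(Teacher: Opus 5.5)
Your proof is correct and complete. The comparison is with a citation rather than a proof: the paper never proves this lemma and simply imports it as a special case of \cite[Theorem 1.3]{BG2018} (see also \cite{Krotov2012}), a result stated there in greater generality.

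Your route is a direct, self-contained construction built from three pieces:
\begin{enumerate}
\item a fixed finite partition of $B(x_0,R)$ into cells of diameter $<\delta$;
\item an $f$-dependent choice of one sample point per cell inside the good set $B(x_0,R)\setminus E(f)$, which neatly handles the fact that $E(f)$ varies with $f$;
\item quantization of the resulting coefficients, which lie in $\{|z|\le\lambda\}$, to a finite $\varepsilon$-net of that disk.
\end{enumerate}
The elementary bound $\phi(t)\le\min\{1,|t|\}$ then converts uniform closeness off a set of small weighted measure into closeness in $d_0$.

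What your approach buys is transparency about which hypotheses are actually used. Only two facts enter: $w(B(x_0,R))<\infty$, which is the standing integrability assumption on $w^{p(\cdot)}$, and the ability to split the ball into finitely many small cells. No Muckenhoupt condition and no absolute continuity of the measure are needed. The same argument works verbatim on any totally bounded metric space carrying a finite measure: cover by finitely many balls of radius $\delta/3$ and disjointify. The citation, by contrast, buys brevity and the full generality of the original theorem.

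One small remark: hypothesis (2) is a pointwise statement, so it refers to a fixed representative of $f$, and your sample values $f(x_i)$ are taken from that representative. This is harmless, since $d_0$ only depends on a.e.\ equivalence classes.
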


We recall a particular form of the Lebesgue--Vitali compactness-type theorem, corresponding to a special case of \cite[Theorem 2.1]{BG2018}:  

\begin{lemma}[\cite{BG2018}, Theorem 2.1]\label{lem: LV cmp. thm}
Let $p(\cdot)\in\mathscr{P}_0$. Then, a subset $\mathcal{F}$ of $L^{p(\cdot)}(B(x_0,R),w)$ is totally bounded if and only if the following conditions are satisfied:
\begin{enumerate}[label=\textup{(\arabic*)}]
  \item $\mathcal{F}$ is totally bounded in $L^{0}(B(x_0,R),w^{p(\cdot)})$;\label{itm: (1)}
  \item the family $\mathcal{F}$ is $L^{p(\cdot)}(B(x_0,R),w)$-equi-integrable, i.e., for every $\varepsilon>0$, there exists $\delta>0$, such that
  \begin{equation*}
  \sup_{f\in\mathcal{F}}\|f\chi_E\|_{L^{p(\cdot)}(w)}<\varepsilon\;(\text{or equivalently}, \;\sup_{f\in\mathcal{F}}\int_E |f(x)|^{p(x)}w(x)^{p(x)}\dd x<\varepsilon) 
\end{equation*}
for all $E\subset B(x_0,R)$ satisfying $w(E)<\delta$.\label{itm: (2)}      
\end{enumerate}
\end{lemma}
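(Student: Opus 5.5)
We prove the two implications separately, working throughout with the modular $\int |h|^{p(x)}w(x)^{p(x)}\dd x$ on $B:=B(x_0,R)$. We pass between modular and norm via Lemma~\ref{lem: min/max}, and use Lemma~\ref{lem: triangle ineq.} in place of the triangle inequality. We also use that a subset of a quasi-normed space is totally bounded if and only if every sequence in it has a Cauchy subsequence. The standard argument for this only needs the quasi-triangle inequality; alternatively, pass to an equivalent $p_-$-type metric.

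\emph{Necessity.} Assume $\mathcal{F}$ is totally bounded in $L^{p(\cdot)}(B,w)$.

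For \ref{itm: (1)}, it suffices to show that $d_0(f,g)$ is small whenever $\|f-g\|_{L^{p(\cdot)}(w)}$ is small. Put $h=f-g$ and fix $\eta\in(0,1)$, and split $B$ into $\{|h|\le\eta\}$ and $\{|h|>\eta\}$. Since $\phi\le\min\{1,|t|\}$, and since $(|h(x)|/\eta)^{p(x)}>1$ on the second set, a Chebyshev-type estimate gives
\begin{equation*}
d_0(f,g)\le \eta\, w(B) + \int_B \Big(\frac{|h(x)|}{\eta}\Big)^{p(x)} w(x)^{p(x)}\dd x .
\end{equation*}
If $\|h\|_{L^{p(\cdot)}(w)}$ is small relative to $\eta$, the last modular is small by Lemma~\ref{lem: min/max}. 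Hence an $\varepsilon$-net in norm yields an $\varepsilon'$-net in $d_0$.

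For \ref{itm: (2)}, take a finite norm-net $g_1,\dots,g_N$ of $\mathcal{F}$. Each fixed $g_i$ has $|g_iw|^{p(\cdot)}\in L^1(B)$, and $w^{p(\cdot)}\dd x$ is a finite measure absolutely continuous with respect to Lebesgue measure. By absolute continuity of the integral there is $\delta$ with $\int_E|g_iw|^{p(\cdot)}<\varepsilon$ for all $i$ whenever $w(E)<\delta$. Writing $f\chi_E=g_i\chi_E+(f-g_i)\chi_E$ and applying Lemma~\ref{lem: triangle ineq.} gives uniform smallness of $\|f\chi_E\|_{L^{p(\cdot)}(w)}$.

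\emph{Sufficiency.} Let $(f_k)\subset\mathcal{F}$. Since $\mathcal{F}$ is totally bounded in $L^0(B,w^{p(\cdot)})$, we may pass to a subsequence that is $d_0$-Cauchy, hence Cauchy in $w^{p(\cdot)}$-measure. We claim this subsequence is Cauchy in $L^{p(\cdot)}(B,w)$.

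Fix $\varepsilon>0$ and choose $\delta$ from \ref{itm: (2)}. Then choose $\eta>0$ so small that $\int_B \eta^{p(x)}w(x)^{p(x)}\dd x\le \eta^{p_-}w(B)$ is small. For $k,l$ large, the set $E_{k,l}=\{|f_k-f_l|>\eta\}$ satisfies $w(E_{k,l})<\delta$. Write
\begin{equation*}
f_k-f_l=(f_k-f_l)\chi_{B\setminus E_{k,l}}+f_k\chi_{E_{k,l}}-f_l\chi_{E_{k,l}} .
\end{equation*}
The first term has modular at most $\eta^{p_-}w(B)$. The last two are controlled by equi-integrability. Applying Lemma~\ref{lem: triangle ineq.} twice and Lemma~\ref{lem: min/max}, $\|f_k-f_l\|_{L^{p(\cdot)}(w)}$ is small, proving the claim and hence total boundedness.

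The main technical points are passing between modular and norm with constants that stay uniform, and the fact that the space is only quasi-normed. Lemmas~\ref{lem: min/max} and~\ref{lem: triangle ineq.} handle exactly these two issues. We also need that convergence in $d_0$ is the same as convergence in $w^{p(\cdot)}$-measure, which holds because $w(B)<\infty$.
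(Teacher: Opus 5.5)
Your proof is correct. Note that the paper does not prove this lemma: it imports it verbatim from \cite{BG2018}, Theorem 2.1, a Lebesgue--Vitali criterion for variable Lebesgue spaces over a general finite measure. That citation applies because the weighted modular $\int_{B(x_0,R)}|f|^{p(x)}w(x)^{p(x)}\dd x$ is the unweighted modular with respect to $\mu:=w^{p(\cdot)}\dd x$. Hence $L^{p(\cdot)}(B(x_0,R),w)=L^{p(\cdot)}(B(x_0,R),\mu)$ isometrically, and $\mu(B(x_0,R))<\infty$ by the standing integrability assumption.

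You instead prove this special case directly by the standard Vitali-type argument:
\begin{itemize}
\item The bound $d_0(f,g)\le\eta\,w(B(x_0,R))+\bigl(\|f-g\|_{L^{p(\cdot)}(w)}/\eta\bigr)^{p_-}$, valid whenever $\|f-g\|_{L^{p(\cdot)}(w)}\le\eta$, shows that the identity map into $L^0$ is uniformly continuous.
\item Absolute continuity of the integral on a finite net gives equi-integrability.
\item Sufficiency goes through the Cauchy-subsequence characterization of total boundedness. This survives in the quasi-normed setting, since it uses only symmetry and the quasi-triangle inequality.
\end{itemize}
Citing buys brevity and generality, since it covers arbitrary finite measure spaces. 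Your route is self-contained and shows exactly where finiteness of $w(B(x_0,R))$, Lemma~\ref{lem: min/max} and Lemma~\ref{lem: triangle ineq.} enter.

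One small wording point concerns the necessity of the equi-integrability condition. The fact you need is absolute continuity of $E\mapsto\int_E|g_i|^{p(x)}\,\dd\mu$ with respect to $\mu$, which holds since $|g_i|^{p(\cdot)}\in L^1(\mu)$. Absolute continuity of $\mu$ with respect to Lebesgue measure plays no role.
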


We will also rely on the following result in later proofs.

\begin{lemma}\label{lem: HL}
Let $p(\cdot)\in\mathscr{P}_0\cap\mathrm{LH}$, $\widetilde{q}\in(0,p_{-})$ and $w^{\widetilde{q}}\in\mathcal{A}_{\frac{p(\cdot)}{\widetilde{q}}}$. Then
\begin{equation*}
  \|M_{\widetilde{q}}(f)\|_{L^{p(\cdot)}(w)}\leq C\|f\|_{L^{p(\cdot)}(w)},
\end{equation*}
where $M_{\widetilde{q}}(f)(x):=\sup_{r>0}\left(\displaystyle \mint{-}_{B(x,r)}|f(y)|^{\widetilde{q}}\; \dd y\right)^{1/\widetilde{q}}$ is the $\widetilde{q}$-Hardy--Littlewood maximal operator and $C$ is a positive constant that depends on $n$, $p(\cdot)$, $\widetilde{q}$, $C_{\log}(p)$ and the weight $w$.
\end{lemma}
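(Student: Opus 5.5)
The plan is to reduce Lemma~\ref{lem: HL} to the unweighted-exponent-rescaled boundedness of the Hardy--Littlewood maximal operator $M$ on weighted variable Lebesgue spaces, using the homogeneity Lemma~\ref{lem:homog}. Set $P(\cdot):=p(\cdot)/\widetilde{q}$ and $W:=w^{\widetilde{q}}$. Since $\widetilde{q}<p_-$, we have $P_->1$ and $P_+<\infty$, so $P(\cdot)\in\mathscr{P}$. Log-H\"older continuity is preserved under division by a positive constant, so $P(\cdot)\in\mathrm{LH}$, with $C_{\log}(P)=C_{\log}(p)/\widetilde{q}$. By hypothesis $W\in\mathcal{A}_{P(\cdot)}$.

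The key identity is $M_{\widetilde{q}}(f)=\bigl(M(|f|^{\widetilde{q}})\bigr)^{1/\widetilde{q}}$, which holds pointwise with $M$ the centered ball maximal operator. Lemma~\ref{lem:homog} with $s=1/\widetilde{q}$ then gives
\begin{equation*}
\|M_{\widetilde{q}}(f)\|_{L^{p(\cdot)}(w)}=\bigl\|\bigl(M(|f|^{\widetilde{q}})\,w^{\widetilde{q}}\bigr)^{1/\widetilde{q}}\bigr\|_{p(\cdot)}=\bigl\|M(|f|^{\widetilde{q}})\,W\bigr\|_{P(\cdot)}^{1/\widetilde{q}}.
\end{equation*}
Next we invoke the weighted boundedness theorem of Cruz-Uribe, Fiorenza and Neugebauer (\cite{CruzUribe2011}): for $P(\cdot)\in\mathscr{P}\cap\mathrm{LH}$ and $W\in\mathcal{A}_{P(\cdot)}$, we have $\|(Mg)W\|_{P(\cdot)}\le C\|gW\|_{P(\cdot)}$. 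Their result is stated with cubes, but centered balls and cubes give pointwise comparable maximal functions up to dimensional constants. We apply it with $g=|f|^{\widetilde{q}}$. Using Lemma~\ref{lem:homog} once more,
\begin{equation*}
\bigl\||f|^{\widetilde{q}}W\bigr\|_{P(\cdot)}=\bigl\||fw|^{\widetilde{q}}\bigr\|_{P(\cdot)}=\|fw\|_{p(\cdot)}^{\widetilde{q}}.
\end{equation*}
Combining the two displays yields the claim with constant $C^{1/\widetilde{q}}$. This constant depends on $n$, $P(\cdot)$ (hence on $p(\cdot)$ and $\widetilde{q}$), $C_{\log}(p)$ and $[W]_{\mathcal{A}_{P(\cdot)}}$.

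The main point to check is that the cited weighted theorem is really available in the form needed. Cruz-Uribe--Fiorenza--Neugebauer state it for $u=W^{P(\cdot)}\in A_{P(\cdot)}$ in the non-symmetric formulation, and the paper's introduction notes that $W\in\mathcal{A}_{P(\cdot)}$ if and only if $u=W^{P(\cdot)}\in A_{P(\cdot)}$ with equal constants. We also need their $\mathrm{LH}$ hypotheses, namely local log-H\"older continuity and the decay condition at infinity, which are implied by $P\in\mathrm{LH}$. If only the $L^{P(\cdot)}(u)$ version (with $u$ as a measure) is available, one translates norms via $\|gW\|_{P(\cdot)}=\|g\|_{L^{P(\cdot)}(u)}$ in their convention. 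The remaining steps are routine.
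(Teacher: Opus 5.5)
Your proposal is correct and follows the same route as the paper. The paper (see Remark~\ref{rmk:bdd. of HL}) proves Lemma~\ref{lem: HL} by rescaling with Lemma~\ref{lem:homog} to the exponent $p(\cdot)/\widetilde{q}\in\mathscr{P}\cap\mathrm{LH}$ and weight $w^{\widetilde{q}}$, and then applying the weighted boundedness of $M$ from \cite[Theorem 1.5]{CUFN2012} (see also \cite[Theorem 1.3]{CruzUribe2011}); the only slip is bibliographic, since the Cruz-Uribe--Fiorenza--Neugebauer theorem you invoke is \cite{CUFN2012}, not \cite{CruzUribe2011}.
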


\begin{remark}\label{rmk:bdd. of HL}
Notice that the proof of Lemma \ref{lem: HL} follows by combining Lemma \ref{lem:homog} with \cite[Theorem 1.5]{CUFN2012} (see also \cite[Theorem 1.3]{CruzUribe2011}).
\end{remark}

Now, we are in a position to present the proof of the weighted Riesz--Kolmogorov Theorem \ref{thm: RK criterion} in the variable exponent setting.

\begin{proof}[Proof of Theorem~\ref{thm: RK criterion}]
Let us first prove the necessity. We fix $0<\varepsilon<1$ such that $\widetilde{\varepsilon}:=\frac{\varepsilon}{C}<1$, where $C>0$ is the constant that appears in Lemma \ref{lem: HL}. Since $\mathcal{F}$ is totally bounded in $L^{p(\cdot)}(w)$, there exists a finite $\widetilde{\varepsilon}$-net $\{f_k\}_{k=1}^N$ in $\mathcal{F}$, that is, there exists a finite number of functions $\{f_k\}_{k=1}^N\subset\mathcal{F}$ such that 
\begin{equation*}
  \mathcal{F}\subset\bigcup_{k=1}^N B(f_k,\widetilde{\varepsilon}).    
\end{equation*}
Let $f\in\mathcal{F}$ be an arbitrary function. Then, we can fix $k\in\{1,\dots,N\}$ satisfying
\begin{equation}\label{eq: fk-f}
  \|f-f_k\|_{L^{p(\cdot)}(w)}<\widetilde{\varepsilon}<1.
\end{equation}
By Lemma \ref{lem: min/max}, we have 
\begin{equation}\label{eq: fk-f modular}
  \int_{\R^n}|f(x)-f_k(x)|^{p(x)}w(x)^{p(x)}\dd x<\widetilde{\varepsilon}^{p_{-}}.
\end{equation}
Set $M_1:=\sup_{k=1,\dots,N}\|f_k\|_{L^{p(\cdot)}(w)}<\infty$.
Then, condition \ref{itm: (i)} is satisfied since by Lemma \ref{lem: triangle ineq.} and \eqref{eq: fk-f} we have 
\begin{equation*}
  \|f\|_{L^{p(\cdot)}(w)}\leq\max\left\{2^{\frac{1}{p_{-}}},2^{\frac{p_+}{p_{-}}}\right\}(\|f_k\|_{L^{p(\cdot)}(w)}+\|f-f_k\|_{L^{p(\cdot)}(w)})\leq\max\left\{2^{\frac{1}{p_{-}}},2^{\frac{p_+}{p_{-}}}\right\}(M_1+\widetilde{\varepsilon})=M,
\end{equation*}
where $M:=\max\left\{2^{\frac{1}{p_{-}}},2^{\frac{p_+}{p_{-}}}\right\}(M_1+\widetilde{\varepsilon})>0$.
\newline By the triangle inequality and the elementary inequality
\begin{equation}\label{eq: kn. ineq.}
  (a+b)^{\alpha}\leq\max\{1,2^{\alpha-1}\}(a^{\alpha}+b^{\alpha})\leq 2^{\alpha}(a^{\alpha}+b^{\alpha}),
\end{equation}
where $a,b\ge0$ and $\alpha>0$, we deduce the following estimates
\begin{align}\label{eq: chain of ineq.}
  &\int_{\R^n}\left(\displaystyle \mint{-}_{B(x,r)}|f(x)-f(y)|^{\widetilde{q}}\dd y\right)^{p(x)/\widetilde{q}}w(x)^{p(x)}\dd x\\ 
  &\leq 2^{p_{+}+\frac{p_{+}}{\widetilde{q}}}\Bigg(\int_{\R^n}\left(\displaystyle \mint{-}_{B(x,r)}|f(x)-f_k(x)|^{\widetilde{q}}\dd y\right)^{p(x)/\widetilde{q}}w(x)^{p(x)}\dd x\notag\\
  &\qquad +\int_{\R^n}\left(\displaystyle \mint{-}_{B(x,r)}|f_k(x)-f(y)|^{\widetilde{q}}\dd y\right)^{p(x)/\widetilde{q}}w(x)^{p(x)}\dd x\Bigg)\notag\\
  &\leq 2^{p_{+}+\frac{p_{+}}{\widetilde{q}}}\int_{\R^n}|f(x)-f_k(x)|^{p(x)}w(x)^{p(x)}\dd x\notag\\
  &\qquad +4^{p_{+}+\frac{p_{+}}{\widetilde{q}}}\int_{\R^n}\left(\displaystyle \mint{-}_{B(x,r)}|f_k(x)-f_k(y)|^{\widetilde{q}}\dd y\right)^{p(x)/\widetilde{q}}w(x)^{p(x)}\dd x\notag\\
  &\qquad+4^{p_{+}+\frac{p_{+}}{\widetilde{q}}}\int_{\R^n}\left(\displaystyle \mint{-}_{B(x,r)}|f_k(y)-f(y)|^{\widetilde{q}}\dd y\right)^{p(x)/\widetilde{q}}w(x)^{p(x)}\dd x\notag\\
  &=:2^{p_{+}+\frac{p_{+}}{\widetilde{q}}}\mathcal{I}_1+4^{p_{+}+\frac{p_{+}}{\widetilde{q}}}\mathcal{I}_2+4^{p_{+}+\frac{p_{+}}{\widetilde{q}}}\mathcal{I}_3.\notag
\end{align}
Next, we provide the bounds for $\mathcal{I}_1$, $\mathcal{I}_2$ and $\mathcal{I}_3$. By \eqref{eq: fk-f modular} it is immediate that $\mathcal{I}_1<\widetilde{\varepsilon}^{p_{-}}$.  
\newline Now, by the triangle inequality, Lemmata \ref{lem: min/max} and \ref{lem: HL}, and \eqref{eq: kn. ineq.}, we have 
\begin{align*}
  &\left(\displaystyle \mint{-}_{B(x,r)}|f_k(x)-f_k(y)|^{\widetilde{q}}\dd y\right)^{p(x)/\widetilde{q}}\\
  &\leq 2^{p_{+}}\left(|f_k(x)|^{\widetilde{q}}+\displaystyle \mint{-}_{B(x,r)}|f_k(y)|^{\widetilde{q}}\dd y\right)^{p(x)/\widetilde{q}}\\
  &\leq 2^{p_{+}\left(1+\frac{1}{\widetilde{q}}\right)-1}\left(|f_k(x)|^{p(x)}+\left(\displaystyle \mint{-}_{B(x,r)}|f_k(y)|^{\widetilde{q}}\dd y\right)^{p(x)/\widetilde{q}}\right)\\ 
  &\leq 2^{p_{+}\left(1+\frac{1}{\widetilde{q}}\right)-1}\left(|f_k(x)|^{p(x)}+[M_{\widetilde{q}} f_k(x)]^{p(x)}\right)\in L^1(w^{p(\cdot)}).
\end{align*}
Notice that $f_k\in L^{p(\cdot)}(w)$ and $w^{\widetilde{q}}\in\mathcal{A}_{\frac{p(\cdot)}{\widetilde{q}}}$ imply that $|f_k|^{\widetilde{q}}\in L^{\frac{p(\cdot)}{\widetilde{q}}}(w^{\widetilde{q}})\subset L^1_{\loc}(\R^n)$. Hence, by the Lebesgue differentiation theorem for classical Lebesgue spaces we obtain that
\begin{equation*}
  \displaystyle \mint{-}_{B(x,r)}|f_k(x)-f_k(y)|^{\widetilde{q}}\dd y\rightarrow0\;\text{as}\;r\rightarrow 0^{+},\;\text{a.e.}\;x\in\R^n.
\end{equation*}
Thus, it follows from the dominated convergence theorem for classical Lebesgue spaces that $\mathcal{I}_2<\varepsilon$ for sufficiently small $r>0$.
\newline Lastly, to deal with $\mathcal{I}_3$, we have 
\begin{equation*}
  \|M_{\widetilde{q}}(|f_k-f|)\|_{L^{p(\cdot)}(w)}\leq C\|f_k-f\|_{L^{p(\cdot)}(w)}\leq C\widetilde{\varepsilon}=\varepsilon<1,
\end{equation*}
where we used Lemma \ref{lem: HL} (since $w^{\widetilde{q}}\in\mathcal{A}_{\frac{p(\cdot)}{\widetilde{q}}}$) and \eqref{eq: fk-f}. Therefore, this and Lemma \ref{lem: min/max} imply that
\begin{align*}
  \mathcal{I}_3&=\int_{\R^n}\left(\displaystyle \mint{-}_{B(x,r)}|f_k(y)-f(y)|^{\widetilde{q}}\dd y\right)^{p(x)/\widetilde{q}}w(x)^{p(x)}\dd x\\
  &\leq\int_{\R^n}(M_{\widetilde{q}}(|f_k-f|)(x))^{p(x)}w(x)^{p(x)}\dd x\\
  &\leq\|M_{\widetilde{q}}(|f_k-f|)\|_{L^{p(\cdot)}(w)}^{p_{-}}\\
  &\leq C^{p_{-}}\|f_k-f\|_{L^{p(\cdot)}(w)}^{p_{-}}\\
  &\leq\varepsilon^{p_{-}}.
\end{align*}
Combining \eqref{eq: chain of ineq.} with the preceding estimates, we conclude that condition \ref{itm: (ii)} holds. 
\newline Fix a point $x_0\in \R^n$. Since $f_k\in L^{p(\cdot)}(w)$, it follows from the dominated convergence theorem for classical Lebesgue spaces that we can fix sufficiently large $R_k>0$ such that
\begin{equation}\label{eq: tail}
  \int_{\R^n\setminus B(x_0,R_k)}|f_k(x)|^{p(x)}w(x)^{p(x)}\dd x<\varepsilon,\; k=1,\dots,N.  
\end{equation}
Then, for $R:=\max\{R_k:\;k=1,\dots,N\}$, by the triangle inequality, \eqref{eq: fk-f modular}, \eqref{eq: kn. ineq.} and \eqref{eq: tail} we have 
\begin{align*}
  &\int_{\R^n\setminus B(x_0,R_k)}|f_k(x)|^{p(x)}w(x)^{p(x)}\dd x\\
  &\leq 2^{p_{+}}\left( \int_{\R^n}|f(x)-f_k(x)|^{p(x)}w(x)^{p(x)}\dd x+\int_{\R^n\setminus B(x_0,R_k)}|f_k(x)|^{p(x)}w(x)^{p(x)}\dd x\right)\\
  &\leq 2^{p_{+}}(\widetilde{\varepsilon}^{p_{-}}+\varepsilon).
\end{align*}
This shows that condition  \ref{itm: (iii)} holds. Therefore, we have shown that conditions \ref{itm: (i)}–\ref{itm: (iii)} are satisfied, which completes the proof of necessity.

Let us next prove the sufficiency. Suppose that conditions \ref{itm: (i)}, \ref{itm: (ii)} and \ref{itm: (iii)} hold. Then by condition \ref{itm: (iii)}, for any fixed $0<\kappa<1$, there exists $R>>1$ such that 
\begin{equation}\label{eq: van. at inf.}
  \sup_{f\in\mathcal F}\int_{\R^n\setminus B(x_0,R)}|f(x)|^{p(x)}w(x)^{p(x)}\dd x<\kappa.
\end{equation}
At this point, we introduce the following Lipschitz cut-off function with the Lipschitz constant $\frac{1}{R}$:
\begin{equation*}
  \varphi_R(x):=
  \begin{cases}
  2-\frac{|x-x_0|}{R}, & \text{if}\quad x\in B(x_0,2R)\setminus B(x_0,R),\\
  1, & \text{if}\quad x\in B(x_0,R),\\
  0, & \text{if}\quad x\in\R^n\setminus B(x_0,2R).
  \end{cases}
\end{equation*}
Note that $0\leq\varphi_{R}\leq1$.
In addition, we define the following set $\mathcal{F}_R$ by
\begin{equation*}
  \mathcal{F_R}:=\{f\varphi_R:\;f\in\mathcal{F}\}.
\end{equation*}
Our goal is to show that $\mathcal{F_R}$ is totally bounded in $L^{p(\cdot)}(B(x_0,R),w)$. The proof first relies on Lemmata \ref{lem: first prop. of F_R}, \ref{lem: p-equiv.-int. of F_R}, and \ref{lem: total bdd. of F_R}, which will be stated and proved below. Observe that Lemma \ref{lem: first prop. of F_R} is used only in the proofs of Lemmata \ref{lem: p-equiv.-int. of F_R} and \ref{lem: total bdd. of F_R}. Then, the latter two lemmata guarantee that conditions \ref{itm: (1)} and \ref{itm: (2)} of Lemma \ref{lem: LV cmp. thm} are satisfied which in turn implies that $\mathcal{F}_R$ is totally bounded in $L^{p(\cdot)}(B(x_0,R),w)$. 

Thus, we can choose $\{f_k\}_{k=1}^N$ in $\mathcal{F}$ such that $\{f_k\varphi_R\}_{k=1}^N$ is an $\varepsilon$-net in $\mathcal{F}_R$, that is, there exists a finite number of functions $\{f_k\varphi_R\}_{k=1}^N\subset\mathcal{F}_R$  such that 
\begin{equation*}
  \mathcal{F}_R\subset\bigcup_{k=1}^N B(f_k\varphi_R,\varepsilon).    
\end{equation*}
Let $g\in\mathcal{F}_R$ be an arbitrary function. Then, $g$ can be written in the form $g=f\varphi_R $ with $f\in\mathcal{F}$ and we can fix $k\in\{1,\dots,N\}$ satisfying
\begin{equation}\label{eq: fk-f in F_R}
  \|g-g_k\|_{L^{p(\cdot)}(w)}<\varepsilon<1,
\end{equation}
where $g_k=f_k\varphi_R$ with $\{f_k\}_{k=1}^N\subset\mathcal{F}$. We will show that $\{f_k\}_{k=1}^N\subset\mathcal{F}$ is an $\widetilde{\varepsilon}$-net set in $\mathcal{F}$, where 
\begin{equation*}
  0<\widetilde{\varepsilon}:=\max\left\{2^{\frac{1}{p_{-}}},2^{\frac{p_{+}}{p_{-}}}\right\}\left(\max\left\{2^{1+\frac{1}{p_{-}}},2^{1+\frac{p_{+}}{p_{-}}}\right\}\kappa^{\frac{1}{p_{+}}}+\varepsilon\right)<1.
\end{equation*}
For any $f\in\mathcal{F}$, by invoking Lemmata \ref{lem: min/max} and \ref{lem: triangle ineq.}, together with inequalities \eqref{eq: van. at inf.} and \eqref{eq: fk-f in F_R}, and applying the properties of the cut-off function $\varphi_R$, we conclude that 
\begin{align*}
  \|f-f_k\|_{L^{p(\cdot)}(w)}&\leq\max\left\{2^{\frac{1}{p_{-}}},2^{\frac{p_{+}}{p_{-}}}\right\}\Bigg(\max\left\{2^{\frac{1}{p_{-}}},2^{\frac{p_{+}}{p_{-}}}\right\}\big[\|f\chi_{\R^n\setminus B(x_0,R)}\|_{L^{p(\cdot)}(w)}\\
  &\qquad+\|f_k\chi_{\R^n\setminus B(x_0,R)}\|_{L^{p(\cdot)}(w)}\big]+\|(f-f_k)\varphi_R\chi_{B(x_0,R)}\|_{L^{p(\cdot)}(w)}\Bigg)\\
  &<\widetilde{\varepsilon},
\end{align*}
which shows that $\mathcal{F}$ is totally bounded in $L^{p(\cdot)}(w)$. This completes the proof of the theorem.  
\end{proof}

As mentioned earlier, we now present the statements and proofs of the three auxiliary lemmata used in the proof of Theorem \ref{thm: RK criterion}.

\begin{lemma}\label{lem: first prop. of F_R}
The family $\mathcal{F}_R$ satisfies the following conditions:
\begin{enumerate}[label=\textup{(\alph*)}]
\item $\mathcal{F}_R$ is uniformly bounded, i.e., 
  \begin{equation*}
   \sup_{g\in\mathcal{F}_R}\|g\|_{L^{p(\cdot)}(w)}<\infty     
  \end{equation*}
  or equivalently, there exists $\widetilde{M}>0$ such that for all $g\in\mathcal{F}_R$
  \begin{equation*}
  \sup_{g\in\mathcal{F}_R}\int_{\R^n}|g(x)|^{p(x)}\;w(x)^{p(x)} \dd x\leq \widetilde{M};
  \end{equation*}\label{itm: (a)}
\item $\mathcal{F}_R$ is $\widetilde{q}$-equicontinuous, i.e.,
  \begin{equation*}
  \lim_{r\rightarrow 0}\sup_{g\in\mathcal{F}_R}\left\|\left(\displaystyle \mint{-}_{B(\cdot,r)}|g(\cdot)-g(y)|^{\widetilde{q}}\dd y\right)^{1/\widetilde{q}}\right\|_{L^{p(\cdot)}(w)}=0     
  \end{equation*}
  or equivalently,
  \begin{equation*}
  \lim_{r\rightarrow 0}\sup_{g\in\mathcal{F}_R}\int_{\R^n}\left(\displaystyle \mint{-}_{B(x,r)}|g(x)-g(y)|^{\widetilde{q}}\dd y\right)^{p(x)/\widetilde{q}}w(x)^{p(x)}\dd x=0.
  \end{equation*}\label{itm: (b)}
\end{enumerate}
\end{lemma}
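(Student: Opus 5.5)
Part (a) follows from condition (i) of Theorem~\ref{thm: RK criterion} and the pointwise bound $0\le\varphi_R\le1$. Since $|f\varphi_R|\le|f|$, the modular satisfies
\begin{equation*}
\int_{\R^n}|f\varphi_R|^{p(x)}w^{p(x)}\dd x\le\int_{\R^n}|f|^{p(x)}w^{p(x)}\dd x\le M .
\end{equation*}
So $\widetilde M:=M$ works, and Lemma~\ref{lem: min/max} converts this into a bound for the norm.

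For part (b), I fix $g=f\varphi_R$ and split pointwise:
\begin{equation*}
g(x)-g(y)=\varphi_R(x)\,(f(x)-f(y))+f(y)\,(\varphi_R(x)-\varphi_R(y)).
\end{equation*}
Since $0\le\varphi_R\le1$ and $\varphi_R$ is Lipschitz with constant $1/R$, for $y\in B(x,r)$ we have $|g(x)-g(y)|\le|f(x)-f(y)|+\frac{r}{R}|f(y)|$. Using \eqref{eq: kn. ineq.} with exponent $\widetilde q$ and then with exponent $p(x)/\widetilde q\le p_+/\widetilde q$, this gives
\begin{equation*}
\Big(\mint{-}_{B(x,r)}|g(x)-g(y)|^{\widetilde q}\dd y\Big)^{p(x)/\widetilde q}\lesssim\Big(\mint{-}_{B(x,r)}|f(x)-f(y)|^{\widetilde q}\dd y\Big)^{p(x)/\widetilde q}+\Big(\frac rR\Big)^{p(x)}\big(M_{\widetilde q}f(x)\big)^{p(x)},
\end{equation*}
with constants depending only on $p_+$ and $\widetilde q$. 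Integrating against $w^{p(\cdot)}$, the first term tends to $0$ uniformly in $f\in\mathcal F$ by condition (ii) of Theorem~\ref{thm: RK criterion}.

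For the second term, take $r<R$. Then $(r/R)^{p(x)}\le(r/R)^{p_-}$, so the term is bounded by $(r/R)^{p_-}\int(M_{\widetilde q}f)^{p(x)}w^{p(x)}\dd x$. By Lemma~\ref{lem: HL} (available since $w^{\widetilde q}\in\mathcal A_{p(\cdot)/\widetilde q}$) we have $\|M_{\widetilde q}f\|_{L^{p(\cdot)}(w)}\le C\|f\|_{L^{p(\cdot)}(w)}\le CM$, where $M$ here denotes the uniform norm bound from (i). Lemma~\ref{lem: min/max} turns this into the modular bound $\max\{(CM)^{p_-},(CM)^{p_+}\}$, uniformly over $\mathcal F$. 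Hence the second term is $O((r/R)^{p_-})\to0$ as $r\to0$, uniformly in $f$. Combining the two terms gives the modular form of (b). The norm form then follows from Lemma~\ref{lem: min/max}, because for small moduli the norm is at most the modular raised to the power $1/p_+$.

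The only substantive step is controlling the averaged factor $\mint{-}_{B(x,r)}|f(y)|^{\widetilde q}\dd y$ uniformly over the family. Pointwise domination by the maximal function, followed by the weighted bound of Lemma~\ref{lem: HL}, handles it. The remaining point to check is that the factor $(r/R)^{p(x)}$ can be taken out uniformly, which holds since $r/R<1$ and $p_->0$.
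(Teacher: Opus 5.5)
Your proof is correct, but in part (b) it takes a slightly heavier route than the paper.

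\textbf{Your route.} You split $g(x)-g(y)=\varphi_R(x)(f(x)-f(y))+f(y)(\varphi_R(x)-\varphi_R(y))$. This leaves $f(y)$ inside the average. You then dominate $\frac{1}{|B(x,r)|}\int_{B(x,r)}|f(y)|^{\widetilde q}\dd y$ by $(M_{\widetilde q}f(x))^{\widetilde q}$ and invoke Lemma~\ref{lem: HL}. That lemma is the weighted boundedness of $M_{\widetilde q}$, which relies on $w^{\widetilde q}\in\mathcal{A}_{p(\cdot)/\widetilde q}$ and $p(\cdot)\in\mathrm{LH}$.

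\textbf{The paper's route.} The paper writes $g(x)-g(y)=(f(x)-f(y))\varphi_R(y)+f(x)(\varphi_R(x)-\varphi_R(y))$. Now the second term carries the $y$-independent factor $|f(x)|^{\widetilde q}$, so its average over $B(x,r)$ is at most $r^{\widetilde q}|f(x)|^{\widetilde q}$. After raising to the power $p(x)/\widetilde q$ and integrating against $w^{p(\cdot)}$, it is bounded by $r^{p_-}$ times the uniform modular bound from (i). No maximal function is needed.

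\textbf{Comparison.} The paper's argument is more elementary. It shows the lemma uses only conditions (i), (ii) and the Lipschitz property of $\varphi_R$, so it holds for an arbitrary weight. Your version is equally valid here, because the weight hypothesis is in force throughout Theorem~\ref{thm: RK criterion}. It costs one extra appeal to Lemma~\ref{lem: HL} (and Lemma~\ref{lem: min/max} to pass from norm to modular) and gives the same uniformity over $\mathcal{F}$.
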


\begin{proof}
Let $g$ be an element of $\mathcal{F}_R$. Then, $g$ is of the form $g=f\varphi_R$ with $f\in\mathcal{F}$ and the first part \ref{itm: (a)} of the lemma follows immediately by combining the properties of the Lipschitz cut-off function $\varphi_R$ with condition \ref{itm: (i)}. We now proceed to prove condition \ref{itm: (b)}. From the triangle inequality, \eqref{eq: kn. ineq.} and the properties of the cut-off function $\varphi_R$, it follows that
\begin{align*}
  &\displaystyle \mint{-}_{B(x,r)}|g(x)-g(y)|^{\widetilde{q}}\dd y\\
  &\leq 2^{\widetilde{q}}\left(\displaystyle \mint{-}_{B(x,r)}|f(x)-f(y)|^{\widetilde{q}}\varphi_R^{\widetilde{q}}(y)\dd y+\displaystyle \mint{-}_{B(x,r)}|\varphi_R(x)-\varphi_R(y)|^{\widetilde{q}}|f(x)|^{\widetilde{q}}\dd y\right)\\
  &\leq 2^{\widetilde{q}}\left(\displaystyle \mint{-}_{B(x,r)}|f(x)-f(y)|^{\widetilde{q}}\dd y+r^{\widetilde{q}}|f(x)|^{\widetilde{q}}\right).
\end{align*}
Thus, this in conjunction with \eqref{eq: kn. ineq.} and condition \ref{itm: (i)} allow us to conclude that   
\begin{align}\label{eq: equic. of g}
  &\int_{\R^n}\left(\displaystyle \mint{-}_{B(x,r)}|g(x)-g(y)|^{\widetilde{q}}\dd y\right)^{p(x)/\widetilde{q}}w(x)^{p(x)}\dd x\\
  &\leq 2^{p_{+}+\frac{p_{+}}{\widetilde{q}}}\left(\int_{\R^n}\left(\displaystyle \mint{-}_{B(x,r)}|f(x)-f(y)|^{\widetilde{q}}\dd y\right)^{p(x)/\widetilde{q}}w(x)^{p(x)}\dd x+r^{p_{-}}\int_{\R^n}|f(x)|^{p(x)}w(x)^{p(x)}\dd x\right)\notag\\
  &\leq 2^{p_{+}+\frac{p_{+}}{\widetilde{q}}}\left(\int_{\R^n}\left(\displaystyle \mint{-}_{B(x,r)}|f(x)-f(y)|^{\widetilde{q}}\dd y\right)^{p(x)/\widetilde{q}}w(x)^{p(x)}\dd x+r^{p_{-}}M\right).\notag
\end{align}
By condition \ref{itm: (ii)} for given $\varepsilon>0$, we can fix $0<r<<1$ such that the right-hand side of \eqref{eq: equic. of g} is less than $\varepsilon$. This completes the proof of the lemma.     
\end{proof}

\begin{lemma}\label{lem: p-equiv.-int. of F_R}
The family $\mathcal{F}_R$ is $L^{p(\cdot)}(B(x_0,R),w)$-equi-integrable, i.e., for every $\varepsilon>0$, there exists $\delta>0$, such that 
\begin{equation*}
  \sup_{f\in\mathcal{F}_R}\|f\chi_E\|_{L^{p(\cdot)}(w)}<\varepsilon\;(\text{or equivalently}, \;\sup_{f\in\mathcal{F}_R}\int_E |f(x)|^{p(x)}w(x)^{p(x)}\dd x<\varepsilon) 
\end{equation*}
for all $E\subset B(x_0,R)$ satisfying $w(E)=\int_{E}w(x)^{p(x)}dx<\delta$.      
\end{lemma}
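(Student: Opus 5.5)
We prove equi-integrability by splitting each $g=f\varphi_R\in\mathcal{F}_R$ into a local average and a remainder. Fix $\varepsilon>0$ and a small radius $r\in(0,1)$, to be chosen. For $x\in E\subset B(x_0,R)$ we write
\begin{equation*}
  |g(x)|^{\widetilde{q}}\leq \mint{-}_{B(x,r)}|g(x)-g(y)|^{\widetilde{q}}\dd y+\mint{-}_{B(x,r)}|g(y)|^{\widetilde{q}}\dd y,
\end{equation*}
which holds pointwise for $\widetilde{q}\le 1$ by subadditivity of $t\mapsto t^{\widetilde{q}}$, and with a factor $2^{\widetilde{q}}$ in general by \eqref{eq: kn. ineq.}. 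Raising to the power $p(x)/\widetilde{q}$, applying \eqref{eq: kn. ineq.} again, multiplying by $w(x)^{p(x)}$ and integrating over $E$ gives
\begin{equation*}
  \int_E|g|^{p(x)}w^{p(x)}\dd x\lesssim \int_{\R^n}\Big(\mint{-}_{B(x,r)}|g(x)-g(y)|^{\widetilde{q}}\dd y\Big)^{p(x)/\widetilde{q}}w(x)^{p(x)}\dd x+\int_E\Big(\mint{-}_{B(x,r)}|g(y)|^{\widetilde{q}}\dd y\Big)^{p(x)/\widetilde{q}}w(x)^{p(x)}\dd x,
\end{equation*}
with constant $2^{p_++p_+/\widetilde{q}}$. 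By Lemma~\ref{lem: first prop. of F_R}\ref{itm: (b)}, we first fix $r$ so that the first term is less than $\varepsilon/2$, uniformly in $g\in\mathcal{F}_R$.

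With $r$ fixed, we control the second term by a uniform pointwise bound. Since $\operatorname{supp} g\subset B(x_0,2R)$, for every $x$ we have
\begin{equation*}
  \mint{-}_{B(x,r)}|g|^{\widetilde{q}}\dd y\le |B(0,r)|^{-1}\int_{B(x_0,2R)}|g|^{\widetilde{q}}\dd y.
\end{equation*}
The last integral is bounded uniformly over $\mathcal{F}_R$. Indeed, $w^{\widetilde{q}}\in\mathcal{A}_{p(\cdot)/\widetilde{q}}$ gives $\|w^{-\widetilde{q}}\chi_{B(x_0,2R)}\|_{(p(\cdot)/\widetilde{q})'}<\infty$. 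Applying Lemma~\ref{Holder's ineq.} with exponents $p(\cdot)/\widetilde{q}$ and its dual, then Lemma~\ref{lem:homog}, we get
\begin{equation*}
  \int_{B(x_0,2R)}|g|^{\widetilde{q}}\dd y\lesssim \||g|^{\widetilde{q}}w^{\widetilde{q}}\|_{p(\cdot)/\widetilde{q}}\,\|w^{-\widetilde{q}}\chi_{B(x_0,2R)}\|_{(p(\cdot)/\widetilde{q})'}= \|g\|_{L^{p(\cdot)}(w)}^{\widetilde{q}}\|w^{-\widetilde{q}}\chi_{B(x_0,2R)}\|_{(p(\cdot)/\widetilde{q})'}.
\end{equation*}
By Lemma~\ref{lem: first prop. of F_R}\ref{itm: (a)} this is bounded independently of $g$. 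So there is a constant $\Lambda=\Lambda(r,R,w)\ge1$ with $\big(\mint{-}_{B(x,r)}|g|^{\widetilde{q}}\big)^{1/\widetilde{q}}\le\Lambda$ for all $x$ and all $g\in\mathcal{F}_R$. Hence the second term is at most $\Lambda^{p_+}w(E)$, and choosing $\delta:=\varepsilon/(2C\Lambda^{p_+})$, where $C$ is the splitting constant, finishes the modular version of the claim. The norm version then follows from Lemma~\ref{lem: min/max}.

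The main obstacle is the second step: the local $L^{\widetilde{q}}$ bound needs $p(\cdot)/\widetilde{q}\in\mathscr{P}$, that is $\widetilde{q}<p_-$, to invoke H\"older and the dual weight from the Muckenhoupt condition. This step is the only place the weight hypothesis enters, and the integrability of $w^{p(\cdot)}$ on balls (also implicit in the $\mathcal{A}$ condition) is what makes $w(E)$ finite and meaningful. If the uniform sup bound seemed delicate, I would instead bound the second term by $\int_E (M_{\widetilde{q}}g)^{p(x)}w^{p(x)}$ and use Lemma~\ref{lem: HL}. However, that gives only boundedness, not smallness in $w(E)$, so the pointwise bound above is the route I would take.
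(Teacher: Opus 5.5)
Your proof is correct and follows essentially the same route as the paper. Both arguments split $|g(x)|^{\widetilde{q}}$ pointwise into a local oscillation average and a local average, and make the first term small uniformly in $g$ via Lemma~\ref{lem: first prop. of F_R}\ref{itm: (b)}. Both then bound the local average uniformly (of size $\lesssim r^{-n/\widetilde{q}}$) by H\"older's inequality with the dual weight $w^{-\widetilde{q}}\in L^{(p(\cdot)/\widetilde{q})'}_{\loc}$ and uniform boundedness, so the second term is a constant times $w(E)$.

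The only cosmetic difference is that you localize using $\operatorname{supp} g\subset B(x_0,2R)$, whereas the paper uses $B(x,r)\subset B(x_0,2R)$ for $x\in E$ and $r<1<R$. Also, choose $r$ so that the first term, multiplied by the splitting constant, is below $\varepsilon/2$, so that your $\delta$ closes the estimate.
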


\begin{proof}
Fix $0<r<1$ and $x\in B(x_0,R)$. By the triangle inequality and \eqref{eq: kn. ineq.}, we have 
\begin{equation}\label{eq:ineq. for f}
  |f(x)|^{\widetilde{q}}\leq 2^{\widetilde{q}}(|f(x)-f(y)|^{\widetilde{q}}+|f(y)|^{\widetilde{q}}),
\end{equation}
for any $y\in B(x,r)$ and $f\in\mathcal{F}_R$. Averaging \eqref{eq:ineq. for f} with respect to $y\in B(x,r)$, we get
\begin{equation*}
   |f(x)|^{\widetilde{q}}\leq 2^{\widetilde{q}}\left(\frac{1}{|B(x,r)|}\int_{B(x,r)}|f(x)-f(y)|^{\widetilde{q}}\dd y+\frac{1}{|B(x,r)|}\int_{B(x,r)}|f(y)|^{\widetilde{q}}\dd y\right).     
\end{equation*}
Next, we raise the preceding inequality to the power $\frac{p(x)}{\widetilde{q}}$ and integrate the resulting expression over the measurable set $E\subset B(x_0,R)$. By the H\"older's inequality (Lemma \ref{Holder's ineq.}) and \eqref{eq: kn. ineq.}, we have   
\begin{align}\label{eq: estimate for f in E}
  &\int_E |f(x)|^{p(x)} w(x)^{p(x)}\dd x\\
  &\leq 2^{p_{+}+\frac{p_{+}}{\widetilde{q}}}\Bigg(\int_E\left(\displaystyle \mint{-}_{B(x,r)}|f(x)-f(y)|^{\widetilde{q}}\dd y\right)^{p(x)/\widetilde{q}}w(x)^{p(x)}\dd x\notag\\
  &\qquad+\int_E\left(\frac{1}{|B(x,r)|}\int_{B(x,r)}|f(y)|^{\widetilde{q}}\dd y\right)^{p(x)/\widetilde{q}}w(x)^{p(x)}\dd x\Bigg)\notag\\
  &\leq 2^{p_{+}+\frac{p_{+}}{\widetilde{q}}}\int_E\left(\displaystyle \mint{-}_{B(x,r)}|f(x)-f(y)|^{\widetilde{q}}\dd y\right)^{p(x)/\widetilde{q}}w(x)^{p(x)}\dd x\notag\\
  &\qquad+2^{p_{+}+\frac{p_{+}}{\widetilde{q}}}C^{\frac{p_{+}}{\widetilde{q}}}\int_E\left(\frac{1}{|B(x,r)|}\|f^{\widetilde{q}}w^{\widetilde{q}}\|_{L^{\frac{p(\cdot)}{\widetilde{q}}}(B(x,r))}\|w^{-\widetilde{q}}\|_{L^{\left(\frac{p(\cdot)}{\widetilde{q}}\right)'}(B(x,r))}\right)^{p(x)/\widetilde{q}}w(x)^{p(x)}\dd x.\notag
\end{align}
By Lemma \ref{lem:homog} and condition \ref{itm: (a)} of Lemma \ref{lem: first prop. of F_R}, we have 
\begin{equation}\label{eq: estimate for f in ball}
  \|f^{\widetilde{q}}w^{\widetilde{q}}\|_{L^{\frac{p(\cdot)}{\widetilde{q}}}(B(x,r))}=\|f\|^{\widetilde{q}}_{L^{p(\cdot)}(B(x,r),w)}\leq\widetilde{M}^{\widetilde{q}},
\end{equation} where $\widetilde{M}$ is some positive constant.
Now, for $r<1<R$ and $x\in E\subset B(x_0,R)$ we have $B(x,r)\subset B(x_0,2R)$, therefore
\begin{equation}\label{eq: estimate for χ}
  \frac{1}{|B(x,r)|}\cdot\|\chi_{B(x,r)}w^{-\widetilde{q}}\|_{L^{\left(\frac{p(\cdot)}{\widetilde{q}}\right)'}}\leq\frac{\|\chi_{B(x_0,2R)}w^{-\widetilde{q}}\|_{L^{\left(\frac{p(\cdot)}{\widetilde{q}}\right)'}}}{|B(x,r)|}\leq\widetilde{C}r^{-n},
\end{equation}
where $\widetilde{C}$ is some positive constant that depends on $n,R, x_0, p(\cdot),\widetilde{q}$ and the weight  $w$. Here, notice that we used the fact that $\|\chi_{B(x_0,2R)}w^{-\widetilde{q}}\|_{L^{\left(\frac{p(\cdot)}{\widetilde{q}}\right)'}}<\infty$ since $w^{\widetilde{q}}\in\mathcal{A}_{\frac{p(\cdot)}{\widetilde{q}}}$ implies that $w^{-\tilde{q}}\in L^{(p(\cdot)/\widetilde{q})'}_{\loc}$.

Hence, by collecting the preceding estimates \eqref{eq: estimate for f in E}, \eqref{eq: estimate for f in ball} and \eqref{eq: estimate for χ}, we obtain  

\begin{align}\label{eq: final estimate for f in E}
  &\int_E |f(x)|^{p(x)} w(x)^{p(x)}\dd x\\
  &\leq 2^{p_{+}+\frac{p_{+}}{\widetilde{q}}}\Bigg(\int_E\left(\displaystyle \mint{-}_{B(x,r)}|f(x)-f(y)|^{\widetilde{q}}\dd y\right)^{p(x)/\widetilde{q}}w(x)^{p(x)}\dd x\notag\\
  &\qquad+C^{\frac{p_{+}}{\widetilde{q}}}(M\widetilde{C}^{\frac{1}{\widetilde{q}}}+1)^{p_{+}}r^{-\frac{np_{+}}{\widetilde{q}}}w(E)\Bigg)\notag.
\end{align}
By condition \ref{itm: (b)} of Lemma \ref{lem: first prop. of F_R} for given $\varepsilon>0$, we can fix $0<r<<1$ such that the first term on the right-hand side of \eqref{eq: final estimate for f in E} is less than $\varepsilon$. Next, we choose 
\begin{equation*}
  0<\delta<\frac{\varepsilon}{C^{\frac{p_{+}}{\widetilde{q}}}(M\widetilde{C}^{\frac{1}{\widetilde{q}}}+1)^{p_{+}}r^{-\frac{np_{+}}{\widetilde{q}}}}
\end{equation*}
so that the second term on the right-hand side of \eqref{eq: final estimate for f in E} is less than $\varepsilon$ for any measurable set $E$ satisfying that $w(E)<\delta$. Hence, this shows that the family $\mathcal{F}_R$ is $L^{p(\cdot)}(B(x_0,R),w)$-equi-integrable and completes the proof of the lemma.
\end{proof}

\begin{lemma}\label{lem: total bdd. of F_R}
The family $\mathcal{F}_R$ is totally bounded in $L^0(B(x_0,R), w^{p(\cdot)})$.
\end{lemma}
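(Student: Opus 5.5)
The plan is to verify the hypotheses of Lemma \ref{lem: Krotov's result}: for each $\varepsilon>0$ we find $\delta>0$, $\lambda>0$ and, for every $g\in\mathcal{F}_R$, an exceptional set $E(g)\subset B(x_0,R)$ with $w(E(g))<\varepsilon$, off which $g$ is bounded by $\lambda$ and has oscillation less than $\varepsilon$ at scale $\delta$. Mollification is awkward with variable exponents and weights, so instead we use local $\widetilde{q}$-averages. For $0<r<1$ put $g_r(x):=\bigl(\mint{-}_{B(x,r)}|g(y)|^{\widetilde{q}}\dd y\bigr)^{1/\widetilde{q}}$; a complex-valued version would use $\mint{-}_{B(x,r)} g$ when $\widetilde{q}\ge 1$. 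Since $\widetilde{q}$ may be smaller than $1$, plain averages are not controlled. A cleaner device is to compare $g(x)$ and $g(x')$ through the common region $B(x,r)\cap B(x',r)$. For $|x-x'|<\delta\le r$ this intersection contains a ball of radius $r/2$, so averaging the inequality $|g(x)-g(x')|^{\widetilde{q}}\lesssim |g(x)-g(y)|^{\widetilde{q}}+|g(x')-g(y)|^{\widetilde{q}}$ over $y$ in it gives
\begin{equation*}
|g(x)-g(x')|^{\widetilde{q}}\lesssim 2^{n}\bigl(\Phi_r g(x)+\Phi_r g(x')\bigr),\qquad \Phi_r g(x):=\mint{-}_{B(x,r)}|g(x)-g(y)|^{\widetilde{q}}\dd y .
\end{equation*}
This avoids any continuity argument for averages.

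The next step is to build the exceptional set from two bad sets. Let $E_1(g):=\{x\in B(x_0,R): \Phi_r g(x)^{1/\widetilde{q}}>\eta\}$ and $E_2(g):=\{x\in B(x_0,R): |g(x)|>\lambda\}$, and set $E(g):=E_1(g)\cup E_2(g)$. By Chebyshev's inequality in modular form,
\begin{equation*}
w(E_1(g))\le \eta^{-p_+}\int_{\R^n}\bigl(\Phi_r g(x)\bigr)^{p(x)/\widetilde{q}}w(x)^{p(x)}\dd x \qquad(\eta<1),
\end{equation*}
and by Lemma \ref{lem: first prop. of F_R}\ref{itm: (b)} the right-hand side can be made smaller than $\varepsilon/2$ uniformly in $g$ by choosing $r$ small once $\eta$ is fixed. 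Likewise $w(E_2(g))\le\lambda^{-p_-}\widetilde{M}<\varepsilon/2$ for $\lambda$ large, by Lemma \ref{lem: first prop. of F_R}\ref{itm: (a)}. This gives conditions (1) and (3) of Lemma \ref{lem: Krotov's result}. Condition (2) follows from the displayed comparison with $\delta:=r$: for $x,x'\notin E(g)$ we get $|g(x)-g(x')|\lesssim_{n,\widetilde{q}}\eta$, and we pick $\eta$ so that this is below $\varepsilon$. The order of choices is $\eta$, then $r=\delta$, then $\lambda$.

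The main obstacle is that the comparison holds only for almost every $x$, $x'$: $g$ is merely a.e.\ defined, while Lemma \ref{lem: Krotov's result} requires the estimate for all points off $E(g)$. I would handle this by adding to $E(g)$ a $w$-null set, using that $w^{p(\cdot)}$ is absolutely continuous with respect to Lebesgue measure. The bound also has to hold for points near $\partial B(x_0,R)$, where the balls $B(x,r)$ leave $B(x_0,R)$. This causes no difficulty, because $g=f\varphi_R$ is defined on all of $\R^n$ and Lemma \ref{lem: first prop. of F_R} controls integrals over $\R^n$. The final step is to check that the constant in the comparison depends only on $n$ and $\widetilde{q}$, via \eqref{eq: kn. ineq.} and $|B(x,r)\cap B(x',r)|\ge 2^{-n}|B(x,r)|$.
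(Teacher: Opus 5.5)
Your argument is correct and is essentially the paper's proof. The paper also averages the quasi-triangle inequality over a common region and then applies Markov's inequality in modular form, via Lemma \ref{lem: first prop. of F_R}, to the oscillation set and the level set, concluding with Lemma \ref{lem: Krotov's result}; the only difference is that the paper uses $B(x,\delta)\subset B(x',2\delta)$ with the oscillation functional at scale $2\delta$, where you use $B\bigl(\tfrac{x+x'}{2},\tfrac{r}{2}\bigr)\subset B(x,r)\cap B(x',r)$ at scale $r$. The almost-everywhere issue you flag does not arise: once you fix an everywhere-defined representative of $g$, the averaged comparison holds at every pair of points, so no extra null set is needed.
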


\begin{proof}

Let us fix $0<\varepsilon<1$. Then, by condition \ref{itm: (b)} of Lemma \ref{lem: first prop. of F_R}, there exists a $\delta>0$ such that the following condition holds for all $f\in\mathcal{F}_R$:
\begin{equation*}
  \int_{\R^n}\left(\displaystyle \mint{-}_{B(x,2\delta)}|f(x)-f(y)|^{\widetilde{q}}\dd y\right)^{p(x)/\widetilde{q}}w(x)^{p(x)}\dd x<\varepsilon^{p_{+}+1}.
\end{equation*}
Hence, by Markov's inequality, we obtain
\begin{align*}
  &w\left(\left\{x\in\R^n:\;\left(\displaystyle \mint{-}_{B(x,2\delta)}|f(x)-f(y)|^{\widetilde{q}}\dd y\right)^{1/\widetilde{q}}>\varepsilon\right\}\right)\\
  &\leq\frac{1}{\varepsilon^{p_{+}}}\int_{\R^n}\left(\displaystyle \mint{-}_{B(x,2\delta)}|f(x)-f(y)|^{\widetilde{q}}\dd y\right)^{p(x)/\widetilde{q}}w(x)^{p(x)}\dd x\\
  &<\varepsilon.
\end{align*}
Here, we recall that $w(E)$ is defined by $w(E)=\int_{E}w(x)^{p(x)}dx$, where the set $E$ is given by 
\begin{equation*}
  E=\left\{x\in\R^n:\;\left(\displaystyle \mint{-}_{B(x,2\delta)}|f(x)-f(y)|^{\widetilde{q}}\dd y\right)^{1/\widetilde{q}}>\varepsilon\right\}.    
\end{equation*}
\newline Next, for any $x,y\in B(x_0,R)$ satisfying $|x-y|<\delta$, we deduce from triangle inequality and \eqref{eq: kn. ineq.} that 
\begin{align*}
  |f(x)-f(y)|&=\left(\displaystyle \mint{-}_{B(x,\delta)}|f(x)-f(y)|^{\widetilde{q}}\dd z\right)^{1/\widetilde{q}}\\
  &\leq2^{1+\frac{1}{\widetilde{q}}}\left(\left(\displaystyle \mint{-}_{B(x,\delta)}|f(x)-f(z)|^{\widetilde{q}}\dd z\right)^{1/\widetilde{q}}+\left(\displaystyle \mint{-}_{B(x,\delta)}|f(y)-f(z)|^{\widetilde{q}}\dd z\right)^{1/\widetilde{q}}\right).
\end{align*}
Since $B(x,\delta)\subset B(y,2\delta)$ and using the property of the Lebesgue measure that $|B(x,2\delta)|=2^n|B(x,\delta)|$, the previous  inequality implies that for any $x,y\in B(x_0,R)$ with $|x-y|<\delta$
\begin{equation*}
  |f(x)-f(y)|\leq2^{1+\frac{1}{\widetilde{q}}}2^{\frac{n}{\widetilde{q}}}\left(\left(\displaystyle \mint{-}_{B(x,2\delta)}|f(x)-f(z)|^{\widetilde{q}}\dd z\right)^{1/\widetilde{q}}+\left(\displaystyle \mint{-}_{B(y,2\delta)}|f(y)-f(z)|^{\widetilde{q}}\dd z\right)^{1/\widetilde{q}}\right).  
\end{equation*}
Let $\lambda>1$. By applying Markov's inequality and invoking condition \ref{itm: (a)} of Lemma \ref{lem: first prop. of F_R}, we deduce
\begin{align*}
  w\left(\left\{x\in B(x_0,R):\; |f(x)|>\lambda\right\}\right)&= w\left(\left\{x\in B(x_0,R):\; |f(x)|^{p(x)}>\lambda^{p(x)}\right\}\right)\\
  &\leq\frac{1}{\lambda^{p_{-}}}\int_{B(x_0,R)}|f(x)|^{p(x)}w(x)^{p(x)}\dd x\\
  &\leq\frac{\widetilde{M}}{\lambda^{p_{-}}}.
\end{align*}
Thus, if $\lambda>1$ is large enough then 
\begin{equation*}
  \sup_{f\in\mathcal{F}_R} w\left(\left\{x\in B(x_0,R):\; |f(x)|>\lambda\right\}\right)<\varepsilon.
\end{equation*}
We define
\begin{align*}
  E(f)&:=\left\{x\in B(x_0,R):\; |f(x)|>\lambda\right\}\\
  &\qquad\cup\left\{x\in B(x_0,R):\;\left(\displaystyle \mint{-}_{B(x,2\delta)}|f(x)-f(y)|^{\widetilde{q}}\dd y\right)^{1/\widetilde{q}}>\varepsilon\;\right\}\subset B(x_0,R).
\end{align*}
Then,
\begin{align*}
  &w(E(f))<2\varepsilon,\quad\text{as long as}\;\;\lambda>1 \;\;\text{is large enough},\\ 
  &|f(x)|\leq\lambda,\quad\text{for}\;\;x\in B(x_0,R)\setminus E(f),\\
  &|f(x)-f(y)|\leq 2^{2+\frac{1}{\widetilde{q}}}2^{\frac{n}{\widetilde{q}}}\varepsilon,\quad\text{for}\;\;x,y\in B(x_0,R)\setminus E(f),\;|x-y|<\delta.
\end{align*}
Hence, it follows from Lemma \ref{lem: Krotov's result} that $\mathcal{F}_R$ is totally bounded in $L^0(B(x_0,R), w^{p(\cdot)})$. This completes the proof of the lemma.
\end{proof}

\section{Interpolation of compactness for multilinear operators in the variable exponent setting}\label{sec: interpolation}

\subsection{Some terminology and notation}

In this section, we will establish the weighted interpolation for multilinear compact operators within the framework of variable exponent spaces, see Theorem \ref{thm:interpolation_compactness} below. To streamline the presentation, we divide the proof of this result into several subsections below. Now, we begin by defining two important concepts that will be used subsequently.

By a \emph{generalized simple function} we mean a function $f:\Omega\to\C$ of the form
\begin{equation*}
  f = \sum_{i=1}^{n}a_i\chi_{E_i}
\end{equation*}
where $n\in\N$, $a_1,\ldots,a_n\in\C$ and $E_1,\ldots, E_n\subset\Omega$ are measurable sets. By a \emph{simple function}  we mean a function $f:\Omega\to\C$ of the form
\begin{equation*}
  f = \sum_{i=1}^{n}a_i\chi_{E_i}
\end{equation*}
where $n\in\N$, $a_1,\ldots,a_n\in\C$ and $E_1,\ldots, E_n\subset\Omega$ are measurable sets of \emph{finite} measure.

\subsection{Interpolation of boundedness I}
The first key step is to provide the proof of the following counterpart of \cite[Theorem 3.1]{CaoOlivoYabuta2022} in the variable exponent setting.

\begin{theorem}\label{thm:interpolation_boundedness}
Let $p_{0,j}(\cdot),p_{1,j}(\cdot)\in(\mathscr{P}\cap\mathrm{LH})(\Omega_j)$, $j=1,\ldots,m$ and $q_0(\cdot),q_1(\cdot)\in(\mathscr{P}_0\cap\mathrm{LH})(\Omega)$. Let $w_{0,j},w_{1,j}$, $j=1,\ldots,m$ be weights on $\Omega_j$ and let also $v_{0},v_{1}$ be weights on $\Omega$. Let $T$ be a $m$-linear operator such that
\begin{equation*}
        T : L^{p_{0,1}(\cdot)}(w_{0,1})\times\cdots\times L^{p_{0,m}(\cdot)}(w_{0,m}) \to L^{q_0(\cdot)}(v_0)
\end{equation*}
and
\begin{equation*}
  T : L^{p_{1,1}(\cdot)}(w_{1,1})\times\cdots\times L^{p_{1,m}(\cdot)}(w_{1,m}) \to L^{q_1(\cdot)}(v_1)
\end{equation*}
boundedly. For each $i=0,1$, let $M_i\in(0,\infty)$ with
\begin{equation}\label{eq:interpolation_assumption}
  \Vert T(\vec{f})\Vert_{L^{q_i(\cdot)}(v_i)} \leq M_i\prod_{j=1}^{m}\Vert f_j\Vert_{L^{p_{i,j}(\cdot)}(w_{i,j})},
\end{equation}
for all $\vec{f}=(f_1,\ldots,f_m)\in L^{p_{i,1}(\cdot)}(w_{i,1})\times\cdots\times L^{p_{i,m}(\cdot)}(w_{i,m})$. Let $\theta\in(0,1)$ and define
\begin{align*}
  &\frac{1}{p_{j}(\cdot)} := \frac{1-\theta}{p_{0,j}(\cdot)} + \frac{\theta}{p_{1,j}(\cdot)}, \quad j=1,\ldots,m\\
  &\frac{1}{q(\cdot)} := \frac{1-\theta}{q_{0}(\cdot)} + \frac{\theta}{q_{1}(\cdot)},
\end{align*}
as well as
\begin{align*}
  &w_{j} := w_{0,j}^{1-\theta}w_{1,j}^{\theta},\quad j=1,\ldots,m,\\
  & v := v_{0}^{1-\theta}v_{1}^{\theta}.
\end{align*}
Then, we have
\begin{equation*}
  T : L^{p_{1}(\cdot)}(w_{1})\times\cdots\times L^{p_{m}(\cdot)}(w_{m}) \to L^{q(\cdot)}(v)
\end{equation*}
boundedly and it holds
\begin{equation*}
  \Vert T(\vec{f})\Vert_{L^{q(\cdot)}(v)} \leq M_0^{1-\theta}M_{1}^{\theta}\prod_{j=1}^{m}\Vert f_j\Vert_{L^{p_{j}(\cdot)}(w_{j})},
\end{equation*}
for all $\vec{f}=(f_1,\ldots,f_m)\in L^{p_{1}(\cdot)}(w_{1})\times\cdots\times L^{p_{m}(\cdot)}(w_{m})$.
\end{theorem}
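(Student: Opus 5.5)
The plan is a complex interpolation argument of Stein--Weiss type, adapted as in \cite[Theorem 3.1]{CaoOlivoYabuta2022} so that the quasi-Banach target never forces us to use an abstract interpolation functor. We first reduce to nice inputs. By multilinearity and homogeneity, we may assume $\Vert f_j\Vert_{L^{p_j(\cdot)}(w_j)}=1$ and $M_0=M_1=1$; rescaling $T$ by $M_0^{-(1-\theta)}M_1^{-\theta}$ and $f_j$ suitably absorbs the constants. We then take each $f_j$ to be a simple function with $|f_j|\le C\chi_{F_j}$ and $w_{0,j},w_{1,j}$ bounded above and below on $\supp f_j$, which we arrange by truncating the weights' level sets. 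The general case follows by density and Fatou. This is where Proposition~\ref{prop:monotone_convergence} and Proposition~\ref{prop:norm_convergence_to_subs_pointwise_convergence} enter: if $f_j^{(k)}\to f_j$ in $L^{p_j(\cdot)}(w_j)$, we must identify the limit of $T(\vec f^{(k)})$. The multilinear estimate at $\theta$ gives Cauchy sequences in $L^{q(\cdot)}(v)$, and extracting a.e.\ convergent subsequences shows these agree with $T(\vec f)$.

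For the main step, we fix simple $\vec f$ of unit norm and a test function for the target. Since $q(\cdot)$ may be below $1$, we do not dualize $L^{q(\cdot)}(v)$ directly. Instead we pick $s\in(0,\min(q_{0,-},q_{1,-}))$ and write $\Vert T\vec f\Vert_{L^{q(\cdot)}(v)}^s=\Vert |T\vec f|^s v^s\Vert_{q(\cdot)/s}$, using Lemma~\ref{lem:homog}. We then dualize $L^{q(\cdot)/s}$, a Banach space, against $g\ge0$ simple with $\Vert g\Vert_{(q(\cdot)/s)'}\le1$. The analytic families are the variable-exponent version of the Riesz trick:
\begin{equation*}
  f_{j,z}(x):=|f_j(x)|^{p_j(x)\left(\frac{1-z}{p_{0,j}(x)}+\frac{z}{p_{1,j}(x)}\right)}\,\frac{f_j(x)}{|f_j(x)|}\,w_{0,j}(x)^{\frac{p_j(x)(1-z)}{p_{0,j}(x)}\cdot(\ldots)}\cdots,
\end{equation*}
chosen so that $f_{j,\theta}=f_j$. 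We normalize $w_j^{p_j}$ against $w_{i,j}^{p_{i,j}}$ so that, on $\Re z=i$, the modular $\rho_{p_{i,j}(\cdot)}(f_{j,z}w_{i,j})=\rho_{p_j(\cdot)}(f_jw_j)\le1$, and hence $\Vert f_{j,z}\Vert_{L^{p_{i,j}(\cdot)}(w_{i,j})}\le1$. Similarly we form $g_z$ with exponents $(q/s)'$ interpolated and weight powers of $v_0,v_1$. Multilinearity makes $T(\vec f_z)$ a finite combination of $T(\chi_{E_{1}},\dots)$ with entire coefficients of exponential type.

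The difficulty is that $\int |T\vec f_z|^s v^s g_z$ is not analytic. To overcome it, following \cite{CaoOlivoYabuta2022}, we use that $\log|F(z)|$, and hence $|F(z)|^s$, is subharmonic for analytic vector-valued $F$. Here $z\mapsto T(\vec f_z)(x)$ is an analytic polynomial in the entire functions above for a.e.\ $x$. Then $\Phi(z):=\int |T\vec f_z(x)|^s\,|v_z(x)|^s g_z(x)\dd x$, with analytic weight factors $v_z=v_0^{1-z}v_1^{z}$ and $|g_z|$ log-subharmonic, is subharmonic and bounded on the strip. The three lines lemma for subharmonic functions then gives $\Phi(\theta)\le\sup_{\Re z=0}\Phi^{1-\theta}\sup_{\Re z=1}\Phi^{\theta}$. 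On each boundary line we apply H\"older in the variable setting (Lemma~\ref{Holder's ineq.}) with exponents $q_i/s$ and $(q_i/s)'$, together with \eqref{eq:interpolation_assumption}, to get $\Phi\lesssim1$. Taking the supremum over $g$ yields the bound.

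The main obstacle is the constant. Variable-exponent H\"older and norm--modular conversions (Lemma~\ref{lem: min/max}) lose constants, whereas the statement claims exactly $M_0^{1-\theta}M_1^\theta$. To get the sharp constant, we would work purely with modulars: we define $g_z$ so that $\rho$ equals one on the boundary lines, and we use the pointwise Young inequality $ab\le a^{r}/r+b^{r'}/r'$ instead of the H\"older lemma. This yields boundary bounds $\le1$ exactly, and norm one follows from modular $\le1$. We then remove the implicit constant by a tensor-power argument, if needed, applied to the $m$-linear operator acting on $m$-fold products of copies of $\R^n$. The second delicate point is the justification of boundedness and continuity of $\Phi$ on the closed strip, which holds because all functions are simple with weights truncated to be bounded above and below.
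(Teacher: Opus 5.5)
Your overall architecture matches the paper's: reduce to simple data, pass to $|T(\vec f)v|^s$ with $s<\min\{(q_0)_-,(q_1)_-\}$, dualize in the Banach space $L^{q(\cdot)/s}$, and run a subharmonic rather than analytic three-lines argument. But the step that drives the whole argument rests on a false premise. You claim that multilinearity makes $T(\vec f_z)$ a finite combination of $T(\chi_{E_1},\ldots,\chi_{E_m})$ with entire coefficients, so that $z\mapsto T(\vec f_z)(x)$ is analytic for a.e.\ $x$. That holds for constant exponents. Here, however, the exponent $p_j(x)\bigl(\frac{1-z}{p_{0,j}(x)}+\frac{z}{p_{1,j}(x)}\bigr)=e_{1,j}(x)z+e_{2,j}(x)$ is in general not a simple function of $x$. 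So even for simple $f_j$ and simple weights, $f_{j,z}$ is not of the form $\sum_t\alpha_t(z)\chi_{A_t}$ with finitely many fixed sets. Multilinearity then tells you nothing about the $z$-dependence of $T(\vec f_z)(x)$, and since $L^{q_0(\cdot)}(v_0)$ may be only quasi-Banach, you cannot simply invoke vector-valued analyticity either.

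The paper flags exactly this as the hardest part of the proof (Lemma~\ref{lem:phi_ell_con_subh}). It approximates $e_{1,j},e_{2,j}$ uniformly by simple functions. The approximate families $F_{z,k,j}=\sum_t a_{t,j}^{z}b_{t,j}e^{ic_{t,j}}\chi_{A_{t,j}}$ then do admit your finite expansion and give continuous subharmonic $\Phi_{\ell,k}$ (Lemma~\ref{lem:cont_and_subh}). It then shows $\Phi_{\ell,k}\to\Phi_\ell$ locally uniformly on $\bar S$ (Lemma~\ref{lem:unif_comp}), using the $i=0$ bound, $s<1$, and $\int_\Omega|h|^s\le C\Vert h\Vert^s_{L^{q_0(\cdot)}(v_0)}$. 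That last estimate needs $|\Omega|<\infty$ and $v_0$ bounded below, which is why the paper also reduces to finite measure and to simple \emph{output} weights (Lemma~\ref{lem:second_red}, Proposition~\ref{prop:third_red}). Your plan omits these reductions, and without this argument (or an equivalent one) the subharmonicity of $\Phi$ is unproved.

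Your treatment of the constant also does not work as proposed. Integrating Young's inequality with a variable exponent gives $\int A/r_0+\int B/r_0'$ with $\int A,\int B\le 1$. This is only bounded by $1/(r_0)_-+1/(r_0')_-$, which exceeds $1$ as soon as $r_0$ is nonconstant; indeed the Luxemburg-norm H\"older constant itself can then exceed $1$. A tensor-power argument is not available either: variable-exponent norms are not multiplicative on tensor products, and there is no natural exponent on the product space.

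The paper's own boundary estimate (Lemma~\ref{lem:boundary}) also goes through Lemma~\ref{Holder's ineq.} with an implicit constant. So what this method actually delivers is the bound with a constant depending on $q_0(\cdot),q_1(\cdot),s$. That is all Theorem~\ref{thm:interpolation_compactness} uses, since $\delta$ can absorb the constant.

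Finally, you cannot reach $M_0=M_1=1$ by rescaling $T$ and the $f_j$. Replace $v_i$ by $v_i/M_i$, or insert $M_0^{z-1}M_1^{-z}$ into the family as the paper does. With that normalization the plain maximum principle for bounded subharmonic functions on the strip suffices. The multiplicative three-lines form you quote would instead require $\log\Phi$ to be subharmonic.
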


To prove Theorem~\ref{thm:interpolation_boundedness}, we adapt the strategy of the proof of \cite[Theorem 3.1]{CaoOlivoYabuta2022}. However, several steps are significantly more complicated in the present variable exponent setting.

The lengthy proof of Theorem~\ref{thm:interpolation_boundedness} begins with a couple of reductions.

\begin{lemma}\label{lem:first_red}
Assume the same setup as in Theorem~\ref{thm:interpolation_boundedness}. For each $j=1,\ldots,m$ define
\begin{equation*}
  A_j := \{f_j\text{ simple functions on }\Omega_j:~\Vert f_j\Vert_{L^{p_{i,j}(\cdot)}(w_{i,j})}<\infty\text{ for all }i=0,1\}.
\end{equation*}
Then, to prove Theorem~\ref{thm:interpolation_boundedness}, it suffices to show that
\begin{equation}\label{eq:central_estimate}
  \Vert T(\vec{f})\Vert_{L^{q(\cdot)}(v)} \leq M_0^{1-\theta}M_{1}^{\theta}\prod_{j=1}^{m}\Vert f_j\Vert_{L^{p_{j}(\cdot)}(w_{j})},\quad\forall \vec{f}\in\vec{A}:=A_1\times\cdots\times A_m.
\end{equation}
\end{lemma}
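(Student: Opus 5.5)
The plan is a density-plus-limiting argument. Assume \eqref{eq:central_estimate} holds on $\vec{A}$, and fix $\vec{f}\in L^{p_1(\cdot)}(w_1)\times\cdots\times L^{p_m(\cdot)}(w_m)$. First I must make sense of $T(\vec{f})$, since $T$ is a priori defined only on the two endpoint product spaces. The natural choice is to define it by extension.

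\textbf{Density of $A_j$.} Show that $A_j$ is dense in $L^{p_j(\cdot)}(w_j)$. Fix $f_j\in L^{p_j(\cdot)}(w_j)$ and take simple functions $s_k$ with $|s_k|\le |f_j|$, $s_k\to f_j$ a.e., supported on sets $\{w_{0,j}\le k,\ w_{1,j}\le k\}\cap B(0,k)$ (intersected with $\Omega_j$) on which $|f_j|\le k$. Truncating the weights in this way makes $s_k\in L^{p_{i,j}(\cdot)}(w_{i,j})$ for $i=0,1$, because each $s_k$ is then bounded with bounded support and bounded weights. Since $|s_k-f_j|w_j\le 2|f_j|w_j\in L^{p_j(\cdot)}$ and $s_k\to f_j$ a.e., Proposition~\ref{prop:dominated_convergence} gives $s_k\to f_j$ in $L^{p_j(\cdot)}(w_j)$.

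\textbf{Extension from $\vec{A}$.} By multilinearity and the quasi-triangle inequality (Lemma~\ref{lem: triangle ineq.}), \eqref{eq:central_estimate} implies that $(T(\vec{s}_k))_k$ is Cauchy in $L^{q(\cdot)}(v)$ whenever $\vec{s}_k\to\vec{f}$ in the product space. To see this, expand $T(\vec{s}_k)-T(\vec{s}_l)$ telescopically into $m$ terms, each carrying one difference factor. The space $L^{q(\cdot)}(v)$ is a complete quasi-Banach space, so the sequence has a limit. This limit is independent of the approximating sequence by the same telescoping argument, and it satisfies the norm bound in Theorem~\ref{thm:interpolation_boundedness} by passing to the limit. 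Since the quasi-norm is only a quasi-norm, I will get continuity of $\Vert\cdot\Vert$ along convergent sequences from the modular via Lemma~\ref{lem: min/max}, or from Fatou and a subsequence.

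\textbf{Consistency with the original $T$ (main obstacle).} If $\vec{f}$ also belongs to an endpoint space, say $L^{p_{0,1}(\cdot)}(w_{0,1})\times\cdots$, the extension must agree with the original $T(\vec{f})$. To handle this, choose the approximating simple functions simultaneously convergent in both norms; the dominated construction above does exactly that, since the same domination works in each space. Then $T(\vec{s}_k)\to T(\vec{f})$ in $L^{q_0(\cdot)}(v_0)$ by the endpoint bound \eqref{eq:interpolation_assumption}, and $T(\vec{s}_k)$ converges to the extended value in $L^{q(\cdot)}(v)$. Proposition~\ref{prop:norm_convergence_to_subs_pointwise_convergence}, applied to the weighted sequences $(T(\vec{s}_k)-\cdot)\,v_0$ and $(T(\vec{s}_k)-\cdot)\,v$, yields a.e.-convergent subsequences. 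Because the weights are a.e. positive and finite, the two limits coincide a.e. This is the step needing the most care: one must ensure a single sequence works for all the spaces at once, and that a.e. identification holds despite the different weights. Both are secured by the pointwise-dominated construction, so the theorem follows from \eqref{eq:central_estimate}.
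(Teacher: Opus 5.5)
Your proposal is correct and follows the paper's proof. You extend $T|_{\vec{A}}$ by density to an operator on $L^{p_1(\cdot)}(w_1)\times\cdots\times L^{p_m(\cdot)}(w_m)$. You then identify it a.e.\ with $T$ on the intersection with an endpoint product, by approximating with simple functions that converge in both norms and extracting a common a.e.-convergent subsequence via Proposition~\ref{prop:norm_convergence_to_subs_pointwise_convergence}.

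The differences are only in presentation. You get density and simultaneous approximation from a direct weight-truncation construction with pointwise domination, whereas the paper packages this step as Lemma~\ref{lem:density_simple_functions}. You also write out the Cauchy and extension argument that the paper simply cites as the standard extension theorem.
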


\begin{proof}
Assume that \eqref{eq:central_estimate} has been shown. Note that it follows from Lemma \ref{lem:density_simple_functions} below that
\begin{equation*}
  A_j \subset L^{p_{j}(\cdot)}(w_{j}),\quad\forall j=1,\ldots,m.
\end{equation*}
Since $T$ is $m$-linear, an application of the well known extension theorem for bounded multilinear operators on quasi-normed vector spaces yields that there exists a unique $m$-linear operator
\begin{equation*}
\widetilde{T} : L^{p_{1}(\cdot)}(w_{1})\times\cdots\times L^{p_{m}(\cdot)}(w_{m})\to L^{q(\cdot)}(v)
\end{equation*}
such that
\begin{equation*}
  \widetilde{T}|_{\vec{A}} = T|_{\vec{A}}.
\end{equation*}
    
It remains to check that $\widetilde{T}$ agrees with $T$ on the entirety of the intersection of the respective domains of definitions. So let $i\in\{0,1\}$ and
\begin{equation*}
  \vec{f} \in (L^{p_{1}(\cdot)}(w_{1})\times\cdots\times L^{p_{m}(\cdot)}(w_{m}))\cap(L^{p_{i,1}(\cdot)}(w_{i,1})\times\cdots\times L^{p_{i,m}(\cdot)}(w_{i,m}))
\end{equation*}
be arbitrary. Then, by Lemma~\ref{lem:density_simple_functions} we have that for all $j=1,\ldots,m$ there exists a sequence $(f_{j,k})^{\infty}_{k=1}$ in $A_j$ such that
\begin{equation*}
  f_{j,k}\to f_j\quad\text{ as }k\to\infty\text{ in }L^{p_{i,j}(\cdot)}(w_{i,j})
\end{equation*}
and
\begin{equation*}
  f_{j,k}\to f_j\quad\text{ as }k\to\infty\quad\text{in }L^{p_{j}(\cdot)}(w_{j}).
\end{equation*}
By the multilinearity of $T$ and \eqref{eq:central_estimate}, we conclude that
\begin{equation*}
  T(\vec{f}_{k}) \to T(\vec{f})\quad\text{ as }k\to\infty\quad\text{in }L^{q_{i}(\cdot)}(v_{i})
\end{equation*}
and
\begin{equation*}
  \widetilde{T}(\vec{f}_{k}) \to \widetilde{T}(\vec{f})\quad\text{ as }k\to\infty\text{ in }L^{q(\cdot)}(v),
\end{equation*}
that is
\begin{equation*}
  T(\vec{f}_{k})v_i \to T(\vec{f})v_i\quad\text{ as }k\to\infty\quad\text{in }L^{q_{i}(\cdot)}
\end{equation*}
and
\begin{equation*}
  \widetilde{T}(\vec{f}_{k})v \to \widetilde{T}(\vec{f})v\quad\text{ as }k\to\infty\text{ in }L^{q(\cdot)}.
\end{equation*}
Then, by Proposition~\ref{prop:norm_convergence_to_subs_pointwise_convergence} we have that there exists a subsequence $(\vec{f}_{m_k})^{\infty}_{k=1}$ such that
\begin{equation*}
  T(\vec{f}_{m_k})v_i \to T(\vec{f})v_i\quad\text{ as }k\to\infty\quad\text{a.e. pointwise on }\Omega
\end{equation*}
and
\begin{equation*}
  \widetilde{T}(\vec{f}_{m_k})v \to \widetilde{T}(\vec{f})v\quad\text{ as }k\to\infty\quad\text{a.e. pointwise on }\Omega,
\end{equation*}
and thus
\begin{equation*}
        T(\vec{f}_{m_k}) \to T(\vec{f})\quad\text{ as }k\to\infty\quad\text{a.e. pointwise on }\Omega
\end{equation*}
and
\begin{equation*}
  \widetilde{T}(\vec{f}_{m_k}) \to \widetilde{T}(\vec{f})\quad\text{ as }k\to\infty\quad\text{a.e. pointwise on }\Omega,
\end{equation*}
Since $T(\vec{f}_{m_k})=\widetilde{T}(\vec{f}_{m_k})$ a.e., for all $k=1,2,\ldots$, we conclude that $T(\vec{f})=\widetilde{T}(\vec{f})$ a.e. This completes the proof.
\end{proof}

\begin{lemma}\label{lem:density_simple_functions_simple_form}
Let $p(\cdot),q(\cdot)\in(\mathscr{P}\cap\mathrm{LH})(\Omega)$ and let $w,v$ be weights on $\Omega$.
\begin{enumerate}[label=\textup{(\arabic*)}]
\item Define
\begin{equation*}
  A := \{f\text{ simple function on }\Omega:~\Vert f\Vert_{L^{p(\cdot)}(w)}<\infty\text{ and }\Vert f\Vert_{L^{q(\cdot)}(v)}<\infty\}.
\end{equation*}
Then, we have
\begin{equation*}
  A = \mathrm{Span}(\{\chi_{E}:~E\text{ measurable subset of }\Omega\text{ with }|E|<\infty\text{ and }\chi_{E}\in L^{p(\cdot)}(w)\cap L^{q(\cdot)}(v)\}).
\end{equation*}
\item Let $f\in L^{p(\cdot)}(w)\cap L^{q(\cdot)}(v)$. Then, there exists a sequence $(f_n)^{\infty}_{n=1}$ of simple functions in $L^{p(\cdot)}(w)\cap L^{q(\cdot)}(v)$, such that
\begin{equation*}
  f_n\to f\quad\text{as }n\to\infty\quad\text{in }L^{p(\cdot)}(w)
\end{equation*}
and
\begin{equation*}
  f_n\to f\quad\text{as }n\to\infty\quad\text{in }L^{q(\cdot)}(v).
\end{equation*}
\end{enumerate}
\end{lemma}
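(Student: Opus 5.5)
For part (1), I will prove the two inclusions directly. The inclusion ``$\supseteq$'' is immediate: the right-hand side consists of finite linear combinations of characteristic functions of finite-measure sets, so its elements are simple. Since $L^{p(\cdot)}(w)$ and $L^{q(\cdot)}(v)$ are normed spaces (here $p(\cdot),q(\cdot)\in\mathscr{P}$), they are closed under linear combinations, so these functions also lie in both spaces.

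For ``$\subseteq$'', I take a simple $f\in A$ and rewrite it in canonical form $f=\sum_{i=1}^{N} b_i\chi_{F_i}$. Here the $F_i$ are pairwise disjoint sets of finite measure and the $b_i$ are distinct and nonzero; this is done by refining the original sets into the atoms of the finite algebra they generate and discarding the atom where $f=0$. On each $F_i$ we have $|f|\ge |b_i|>0$, so $|\chi_{F_i}|\le |b_i|^{-1}|f|$ pointwise. The lattice property of the Luxemburg norm (monotonicity of the modular) then gives $\Vert\chi_{F_i}w\Vert_{p(\cdot)}\le|b_i|^{-1}\Vert fw\Vert_{p(\cdot)}<\infty$, and the same bound holds in $L^{q(\cdot)}(v)$. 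Hence each $\chi_{F_i}$ belongs to the generating set, and $f$ lies in its span.

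For part (2), I will first reduce to the case $f\ge0$ by writing $f=(\operatorname{Re}f)^+-(\operatorname{Re}f)^-+i\big((\operatorname{Im}f)^+-(\operatorname{Im}f)^-\big)$. Each of the four pieces is dominated by $|f|$, so it lies in both spaces. Approximating each piece and combining the approximants linearly gives convergence in both norms, by the triangle inequality.

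For $0\le f$, I choose the standard increasing simple approximants $s_n=\min\{n,2^{-n}\lfloor 2^n f\rfloor\}\chi_{B(0,n)\cap\Omega}$. These satisfy $0\le s_n\uparrow f$ pointwise a.e. and $0\le s_n\le f$. Each $s_n$ takes finitely many values. The sets $\{s_n=c\}$ with $c>0$ are contained in $B(0,n)$, so they have finite measure. Thus $s_n$ is a simple function in the paper's sense, and $s_n\le f$ places it in both weighted spaces. For convergence, I note $|s_n-f|w\le fw\in L^{p(\cdot)}$ and $(s_n-f)w\to0$ a.e. (since $w<\infty$ a.e.). The dominated convergence theorem, Proposition~\ref{prop:dominated_convergence}, then gives $s_n\to f$ in $L^{p(\cdot)}(w)$. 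The identical argument with $v$ and $q(\cdot)$ gives convergence in $L^{q(\cdot)}(v)$.

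The only points needing care are the following. First, the truncation to $B(0,n)$ is necessary, because a function in $L^{p(\cdot)}(w)$ need not have finite-measure level sets when $w$ can be large. Second, the monotonicity argument in part (1) requires the disjoint canonical representation. I expect no genuine obstacle, and log-H\"older continuity is not needed for this lemma.
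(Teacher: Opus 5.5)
Your proposal is correct and follows essentially the same route as the paper. Part (1) uses a disjoint representation with nonzero coefficients and the lattice bound $\Vert\chi_{E_i}\Vert_{L^{p(\cdot)}(w)}\le|a_i|^{-1}\Vert f\Vert_{L^{p(\cdot)}(w)}$, and part (2) uses simple approximants dominated by $|f|$, cut off to finite-measure pieces of $\Omega$, followed by Proposition~\ref{prop:dominated_convergence}; your reduction to nonnegative parts and the explicit dyadic formula just spell out what the paper calls ``well known'' for complex-valued $f$.
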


\begin{proof}
\begin{enumerate}
\item First of all, it is clear that $A$ is a vector subspace of $L^{p(\cdot)}(w)\cap L^{q(\cdot)}(v)$ and $\chi_{E} \in A$, for every measurable subset of $E$ of $\Omega$ with $|E|<\infty$, $\Vert w\chi_{E}\Vert_{L^{p(\cdot)}}<\infty$ and $\Vert v\chi_{E}\Vert_{L^{q(\cdot)}}<\infty$. This shows the inclusion $\supset$.

We now prove the inclusion $\subset$. Let $f\in A$ be arbitrary. We can write
\begin{equation*}
  f = \sum_{i=1}^{n}a_i\chi_{E_i}
\end{equation*}
where $n\in\N$, $a_1,\ldots,a_n\in\C\setminus\{0\}$ and $E_1,\ldots, E_n\subset\Omega$ are pairwise disjoint measurable sets of finite measure. Then, for all $i=1,\ldots,n$ we have
\begin{equation*}
  \Vert\chi_{E_i}\Vert_{L^{p(\cdot)}(w)} \leq \frac{1}{|a_i|}\Vert|f|\Vert_{L^{p(\cdot)}(w)} = \frac{1}{|a_i|}\Vert f\Vert_{L^{p(\cdot)}(w)}<\infty.
\end{equation*}
Similarly we obtain $\Vert\chi_{E_i}\Vert_{L^{q(\cdot)}(v)}<\infty$, concluding the proof.

\item It is well known that there exists a sequence $(h_n)^{\infty}_{n=1}$ of generalized simple functions on $\Omega$ such that $|h_n|\leq |f|$, for all $n=1,2,\ldots$ and $h_n\to f$ pointwise on $\Omega$. Write
\begin{equation*}
  \Omega = \bigcup_{n=1}^{\infty}\Omega_n,
\end{equation*}
where $(\Omega_n)^{\infty}_{n=1}$ is an increasing sequence of measurable subsets of $\Omega$ of finite measure, and let $g_n := h_n\chi_{\Omega_n}$, $n=1,2,\ldots$. Then $g_1,g_2,\ldots$ are simple functions, $|g_n|\leq |f|$, for all $n=1,2,\ldots$ and $g_n\to f$ pointwise on $\Omega$.
        
Since $|g_n|\leq |f|$, we deduce $g_n\in L^{p(\cdot)}(w)$, for all $n=1,2,\ldots$. Moreover, since
\begin{equation*}
  |g_nw| \leq |fw|,\quad\forall n=1,2,\ldots
\end{equation*}
and $fw\in L^{p(\cdot)}$, by Proposition \ref{prop:dominated_convergence} we deduce that $g_nw\to fw$ as $n\to\infty$ in $L^{p(\cdot)}$, that is $g_n\to f$ as $n\to\infty$ in $L^{p(\cdot)}(w)$. The exact same argument shows that $g_n\to f$ as $n\to\infty$ in $L^{q(\cdot)}(v)$, concluding the proof.
\end{enumerate}
\end{proof}

The following is an analogue of \cite[Lemma 3.3]{CaoOlivoYabuta2022}.

\begin{lemma}\label{lem:density_simple_functions}
Let $p_{0}(\cdot),p_{1}(\cdot)\in(\mathscr{P}\cap\mathrm{LH})(\Omega)$. Let $w_{0},w_{1}$ be weights on $\Omega$. Let $\theta\in(0,1)$ and define
\begin{equation*}
        \frac{1}{p(\cdot)} := \frac{1-\theta}{p_{0}(\cdot)} + \frac{\theta}{p_{1}(\cdot)}
\end{equation*}
as well as
\begin{equation*}
        w := w_{0}^{1-\theta}w_{1}^{\theta}.
\end{equation*}
Define
\begin{equation*}
  A := \{f\text{ simple function on }\Omega:~\Vert f\Vert_{L^{p_{i}(\cdot)}(w_{i})}<\infty\text{ for all }i=0,1\}.
\end{equation*}
\begin{enumerate}[label=\textup{(\arabic*)}]
  \item The family $A$ is dense in $L^{p(\cdot)}(w)$.
  \item Let $i\in\{0,1\}$ be arbitrary. Then, for all $f\in L^{p_i(\cdot)}(w_i)\cap L^{p(\cdot)}(w)$, there exists a sequence $(f_k)_{k=1}^{\infty}$ in $A$ such that $f_k\to f$ as $k\to\infty$ in $L^{p_i(\cdot)}(w_i)$ and $f_k\to f$ as $k\to\infty$ in $L^{p(\cdot)}(w)$.
\end{enumerate}
\end{lemma}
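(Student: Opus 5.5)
Both parts will follow from a single construction. Given $f$ in the relevant space, we build simple functions $f_k\in A$ with $|f_k|\le |f|$ pointwise and $f_k\to f$ pointwise a.e. Proposition~\ref{prop:dominated_convergence}, applied to $f_kw$ dominated by $|fw|$ (and to $f_kw_i$ dominated by $|fw_i|$), then gives convergence in the required norms. The real issue is membership in $A$: a simple function dominated by $|f|$ need not have finite $L^{p_{1-i}(\cdot)}(w_{1-i})$ norm. So the approximants must be supported on sets where both weights are controlled.

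Fix $f$. In part (1) take $f\in L^{p(\cdot)}(w)$; in part (2) take $f\in L^{p_i(\cdot)}(w_i)\cap L^{p(\cdot)}(w)$. Exhaust $\Omega$ by the increasing sets
\begin{equation*}
  F_N:=\{x\in\Omega\cap B(0,N):~N^{-1}\le w_0(x)\le N,\ N^{-1}\le w_1(x)\le N,\ |f(x)|\le N\}.
\end{equation*}
Since $0<w_0,w_1<\infty$ a.e. and $f$ is finite a.e., $\bigcup_N F_N$ has full measure in $\Omega$. Take the standard generalized simple approximants $h_N$ with $|h_N|\le|f|$ and $h_N\to f$ pointwise, and set $f_N:=h_N\chi_{F_N}$. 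Then $f_N\to f$ a.e. and $|f_N|\le|f|$.

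Next we check $f_N\in A$. The function $f_N$ is simple, since $|F_N|\le|B(0,N)|<\infty$. It is bounded by $N$ and supported in $F_N$, where each $w_i\le N$. Hence for $i=0,1$,
\begin{equation*}
  \int_\Omega |f_N w_i|^{p_i(x)}\dd x\le \max\{1,N^{2(p_i)_+}\}\,|F_N|<\infty,
\end{equation*}
so $\Vert f_N\Vert_{L^{p_i(\cdot)}(w_i)}<\infty$ by \cite[Proposition 2.12]{CF2013}. Thus $f_N\in A$.

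For convergence, note that $|(f_N-f)w|\le 2|fw|\in L^{p(\cdot)}$, so Proposition~\ref{prop:dominated_convergence} yields $f_N\to f$ in $L^{p(\cdot)}(w)$. This proves (1). For (2) we additionally have $|(f_N-f)w_i|\le 2|fw_i|\in L^{p_i(\cdot)}$, and the same proposition gives convergence in $L^{p_i(\cdot)}(w_i)$, using the same sequence.

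The main obstacle is the membership of the approximants in $A$, namely finiteness of the \emph{other} weighted norm $L^{p_{1-i}(\cdot)}(w_{1-i})$, about which nothing is known for $f$ itself. The truncation to the sets $F_N$, where both weights are two-sidedly bounded, is designed to remove this difficulty. The log-H\"older hypotheses are not needed for this argument.
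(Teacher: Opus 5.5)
Your argument is correct and proves both parts, but by a more direct route than the paper's. The paper first reduces, via Lemma~\ref{lem:density_simple_functions_simple_form}, to approximating a single indicator $\chi_E$ with $|E|<\infty$ and $w\chi_E\in L^{p(\cdot)}$. It then uses absolute continuity of the $L^{p(\cdot)}$ norm (through a lemma from \cite{Interpolation_operators}) to find $\delta>0$ with $\Vert w\chi_E\chi_F\Vert_{L^{p(\cdot)}}<\eps$ whenever $|F|<\delta$. Finally it removes from $E$ the small set where $w_0^{p_0}$ or $w_1^{p_1}$ exceeds $N$.

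You instead merge the proof of Lemma~\ref{lem:density_simple_functions_simple_form}(2) with the weight truncation: you put the cut-off into the approximants $f_N=h_N\chi_{F_N}$ and let one application of Proposition~\ref{prop:dominated_convergence} do the rest. This avoids both the reduction to indicators and the absolute-continuity lemma. It also yields a single sequence that serves both norms in part (2). The lower bounds $w_i\ge N^{-1}$ and the cap $|f|\le N$ are harmless but unnecessary, since simple functions are bounded; only the upper bounds on the weights and $|F_N|<\infty$ matter.

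One small omission: the phrase ``$A$ is dense in $L^{p(\cdot)}(w)$'' presupposes $A\subset L^{p(\cdot)}(w)$. This inclusion is exactly what Lemma~\ref{lem:first_red} later quotes from this lemma. Your construction only shows that $A\cap L^{p(\cdot)}(w)$ is dense. The paper checks the inclusion with H\"older's inequality (Lemma~\ref{lem:Hoelder}) and Lemma~\ref{lem:homog}.

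Alternatively, write $|fw|=|fw_0|^{1-\theta}|fw_1|^{\theta}$ and apply the weighted AM--GM inequality with exponents satisfying $\frac{(1-\theta)p(x)}{p_0(x)}+\frac{\theta p(x)}{p_1(x)}=1$. This gives $|fw|^{p(x)}\le |fw_0|^{p_0(x)}+|fw_1|^{p_1(x)}$ pointwise, so $\rho_{p(\cdot)}(fw)<\infty$ for every $f\in A$. Add this line and the proof is complete.
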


\begin{proof}
    We adapt the arguments from the proof of \cite[Lemma 3.3]{CaoOlivoYabuta2022}. We will be suppressing the dependence of implicit constants on $p_{0}(\cdot),p_{1}(\cdot)$ and $\theta$ from the notation.
\begin{enumerate}[label=\textup{(\arabic*)}]
\item First of all, let us note that $A$ is a subset of $L^{p(\cdot)}(w)$. Indeed, for all $f\in A$, by Lemma \ref{lem:homog} and H\"{o}lder's inequality, Lemma~\ref{lem:Hoelder}, we have
\begin{align*}
  \Vert f\Vert_{L^{p(\cdot)}(w)} &= \Vert fw\Vert_{L^{p(\cdot)}}
  =\Vert |fw_{0}|^{1-\theta}|fw_{1}|^{\theta}\Vert_{L^{p(\cdot)}}\\
  &\lesssim \Vert |fw_{0}|^{1-\theta}\Vert_{L^{p_{0}(\cdot)/(1-\theta)}}\Vert |fw_{1}|^{\theta}\Vert_{L^{p_{1}(\cdot)/\theta}}\\
  &=\Vert |fw_{0}|\Vert_{L^{p_{0}(\cdot)}}^{1-\theta}\Vert |fw_{1}|\Vert_{L^{p_{1}(\cdot)}}^{\theta}
  =\Vert f\Vert_{L^{p_{0}(\cdot)}(w_{0})}^{1-\theta}\Vert f\Vert_{L^{p_{1}(\cdot)}(w_{1})}^{\theta} < \infty.
\end{align*}
To prove that $A$ is dense in $L^{p(\cdot)}(w)$, by Lemma~\ref{lem:density_simple_functions_simple_form} we deduce that it suffices to prove that for all measurable subsets $E$ of $\Omega$ with $|E|<\infty$ and $\Vert w\chi_{E}\Vert_{L^{p(\cdot)}}<\infty$ and for all $\eps>0$ we can find $f\in A$ with $\Vert f-\chi_{E}\Vert_{L^{p(\cdot)}(w)}<\eps$. Since $\Vert w\chi_{E}\Vert_{L^{p(\cdot)}}<\infty$ and $\Vert\cdot\Vert_{L^{p(\cdot)}}$ is an absolutely continuous norm (see for instance~\cite[page 73]{CF2013}), by \cite[Chapter 1, Lemma 3.4]{Interpolation_operators} we have that there exists $\delta>0$, such that for all measurable subsets $F$ of $\Omega$ with $|F|<\delta$ there holds $\Vert w\chi_{E}\chi_{F}\Vert_{L^{p(\cdot)}}<\eps$. Set
\begin{equation*}
  F_N := \{x\in E:~w_0(x)^{p_0(x)}>N\text{ or }w_1(x)^{p_1(x)}>N\},\quad\forall N=1,2,\ldots.
\end{equation*}
Then, $(F_{N})^{\infty}_{N=1}$ is a decreasing sequence of subsets of $E$ with
\begin{equation*}
  \left|\bigcap_{N=1}^{\infty}F_{N}\right| = |\{x\in E:~w_0(x)^{p_0(x)}=\infty\text{ or }w_1(x)^{p_1(x)}=\infty\}| = 0.
\end{equation*}
Therefore, there exists $N\geq 1$ such that $|F_{N}| < \delta$. Set $f := \chi_{E\setminus F_{N}}$. Observe that since $|E|<\infty$, by construction we have $f\in A$. Moreover, we compute
\begin{align*}
  \Vert \chi_{E} - \chi_{E\setminus F_{N}}\Vert_{L^{p(\cdot)}(w)}
  = \Vert w\chi_{F_{N}}\Vert_{L^{p(\cdot)}} = \Vert w\chi_{E}\chi_{F_{N}}\Vert_{L^{p(\cdot)}} < \eps,
\end{align*}
for $|F_{N}|<\delta$, concluding the proof.\label{itm: first}

\item The proof is similar to the second part of the proof of part \ref{itm: first}, so we only explain the necessary changes. First of all, by Lemma~\ref{lem:density_simple_functions_simple_form} we deduce that it suffices to prove that for all measurable subsets $E$ of $\Omega$ with $|E|<\infty$, $\Vert w_i\chi_{E}\Vert_{L^{p_i(\cdot)}}<\infty$ and $\Vert w\chi_{E}\Vert_{L^{p(\cdot)}}<\infty$ and for all $\eps>0$ we can find $f\in A$ with $\Vert f-\chi_{E}\Vert_{L^{p_i(\cdot)}(w_i)}<\eps$ and $\Vert f-\chi_{E}\Vert_{L^{p(\cdot)}(w)}<\eps$.
Since $\Vert w_i\chi_{E}\Vert_{L^{p_i(\cdot)}}<\infty$ and $\Vert w\chi_{E}\Vert_{L^{p(\cdot)}}<\infty$, by \cite[Chapter 1, Lemma 3.4]{Interpolation_operators} we have that there exists $\delta>0$, such that for all measurable subsets $F$ of $\Omega$ with $|F|<\delta$ there holds $\Vert w_i\chi_{E}\chi_{F}\Vert_{L^{p_i(\cdot)}}<\eps$ and $\Vert w\chi_{E}\chi_{F}\Vert_{L^{p(\cdot)}}<\eps$. The rest of the proof proceeds exactly as in part \ref{itm: first}.
\end{enumerate}
\end{proof}

\begin{lemma}\label{lem:second_red}
Assume the same setup as in Lemma~\ref{lem:first_red}. Then, in proving \eqref{eq:central_estimate} we may assume without loss of generality that $|\Omega| < \infty$.
\end{lemma}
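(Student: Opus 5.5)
The idea is to reduce \eqref{eq:central_estimate} to the case $|\Omega|<\infty$ by exhausting $\Omega$ with sets of finite measure. We then pass to the limit with the monotone convergence theorem, Proposition~\ref{prop:monotone_convergence}. Fix $\vec{f}\in\vec{A}$ and write $\Omega=\bigcup_{N}\Omega^{(N)}$ with $(\Omega^{(N)})_{N}$ increasing, $|\Omega^{(N)}|<\infty$ (e.g.\ $\Omega^{(N)}=\Omega\cap B(0,N)$).

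For each $N$, consider the $m$-linear operator
\begin{equation*}
T_N(\vec{g}):=\chi_{\Omega^{(N)}}T(\vec{g}),
\end{equation*}
regarded as taking values in functions on $\Omega^{(N)}$. The target spaces are $L^{q_i(\cdot)}(\Omega^{(N)},v_i)$ with the restricted exponents and weights, while the domain spaces on $\Omega_1,\dots,\Omega_m$ are left unchanged. Since $\|\chi_{\Omega^{(N)}}h\|_{L^{q_i(\cdot)}(v_i)}\le\|h\|_{L^{q_i(\cdot)}(v_i)}$, the operator $T_N$ satisfies \eqref{eq:interpolation_assumption} with the same constants $M_0,M_1$. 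Restricting the exponents keeps them in $\mathscr{P}_0$, and the interpolated exponent and weight on $\Omega^{(N)}$ are just the restrictions of $q(\cdot)$ and $v$.

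The one point to check is whether log-H\"older continuity survives restriction to a subset $\Omega^{(N)}$. I expect it is used only through finiteness and Lemma~\ref{lem:density_simple_functions}, and there only on the domain side, which is unchanged. If needed, one can use the $\mathrm{LH}$ extension of $q_i(\cdot)$ from $\Omega$ itself, since the restricted exponent is the restriction of a globally $\mathrm{LH}$ function. The set $A_j$ is also unchanged, because only the target space is modified.

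Assuming \eqref{eq:central_estimate} holds whenever the target domain has finite measure, we get
\begin{equation*}
\|\chi_{\Omega^{(N)}}T(\vec{f})\,v\|_{L^{q(\cdot)}}\le M_0^{1-\theta}M_1^{\theta}\prod_{j=1}^m\|f_j\|_{L^{p_j(\cdot)}(w_j)}
\end{equation*}
uniformly in $N$. The functions $|\chi_{\Omega^{(N)}}T(\vec{f})v|$ increase pointwise a.e.\ to $|T(\vec{f})v|$. Proposition~\ref{prop:monotone_convergence} therefore gives $\|T(\vec{f})\|_{L^{q(\cdot)}(v)}\le M_0^{1-\theta}M_1^{\theta}\prod_j\|f_j\|_{L^{p_j(\cdot)}(w_j)}$, where the norm is finite by the uniform bound.

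This also settles membership: a priori $T(\vec{f})$ is only known to lie in $L^{q_0(\cdot)}(v_0)\cap L^{q_1(\cdot)}(v_1)$, so it is a measurable function and the norm makes sense. The main obstacle is purely bookkeeping. We must make sure the finite-measure reduction refers to the target domain $\Omega$, not to the $\Omega_j$, and that no hypothesis of Theorem~\ref{thm:interpolation_boundedness} is lost when we restrict. If the later argument also needs $|\Omega_j|<\infty$, the same truncation applied to the simple functions $f_j$ handles it trivially, since these are supported on sets of finite measure already.
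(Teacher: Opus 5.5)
Your proposal is correct and is essentially the paper's argument. You exhaust $\Omega$ by increasing finite-measure sets, observe that $\chi_{\Omega^{(N)}}T$ satisfies the hypotheses with the same $M_0,M_1$, and pass to the limit via Proposition~\ref{prop:monotone_convergence}; your explicit restriction of the target exponents and weights to $\Omega^{(N)}$, and your check that they stay in $\mathscr{P}_0\cap\mathrm{LH}$, just spell out a bookkeeping step the paper leaves implicit.
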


\begin{proof}
By assumption, we can write
\begin{equation*}
  \Omega = \bigcup_{k=1}^{\infty}\Omega_k,
\end{equation*}
where $(\Omega_k)^{\infty}_{k=1}$ is an increasing sequence of measurable subsets of $\Omega$ with finite measure. For each $k=1,2,\ldots$, it is clear that the operator $T_{k} := \chi_{\Omega_k}T$ satisfies the assumptions of Theorem \ref{thm:interpolation_boundedness}. Moreover, by Proposition~\ref{prop:monotone_convergence} we have
\begin{equation*}
  \lim_{k\to\infty}\Vert g\chi_{\Omega_k}\Vert_{L^{q(\cdot)}(v)} = 
  \lim_{k\to\infty}\Vert |gv\chi_{\Omega_k}|\Vert_{L^{q(\cdot)}}
  = \Vert |gv|\Vert_{L^{q(\cdot)}}
  =\Vert g\Vert_{L^{q(\cdot)}(v)}.
\end{equation*}
for all measurable functions $g$ on $\Omega$. Therefore, in proving \eqref{eq:central_estimate} we may assume without loss of generality that $|\Omega| < \infty$.
\end{proof}

\begin{proposition}\label{prop:third_red}
Assume the same setup as in Lemma~\ref{lem:first_red}. Moreover, assume that $|\Omega| < \infty$. Then, in proving \eqref{eq:central_estimate} we may assume without loss of generality that all weights $w_{i,j}$, $j=1,\ldots,m$, $i=0,1$ and $v_0$, $v_1$ are simple functions.
\end{proposition}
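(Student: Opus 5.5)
We approximate every weight from inside by simple weights and pass to the limit in \eqref{eq:central_estimate} with Proposition~\ref{prop:monotone_convergence}. The main difficulty is that \eqref{eq:central_estimate} must be inherited for the \emph{same} operator $T$, while the hypothesis \eqref{eq:interpolation_assumption} must remain valid for the approximating weights with the same constants $M_0,M_1$.

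\emph{Step 1 (truncation of the weights).} Fix $\vec f\in\vec A$. Each $f_j$ is simple, with support in a set $E_j$ of finite measure, and $\Vert f_j\Vert_{L^{p_{i,j}(\cdot)}(w_{i,j})}<\infty$. For each weight $u\in\{w_{i,j},v_i\}$, choose an increasing sequence of simple functions $u^{(N)}$ such that $0<u^{(N)}\le u$, $u^{(N)}\uparrow u$ a.e., and $u^{(N)}$ is bounded above and away from $0$. For instance, take a dyadic-level truncation of $\min\{u,N\}$ on $\{u\ge 1/N\}$ and set it equal to a small positive constant $c_N\le u$ elsewhere. This can be arranged via $\max$ with dyadic levels below $u$; since $|\Omega|<\infty$ the result is simple. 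Positivity is needed so that $u^{(N)}$ is still a weight.

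\emph{Step 2 (the hypotheses survive for the domain weights but not for the target).} For the target side there is no problem: since $v_i^{(N)}\le v_i$, we have $\Vert T\vec f\Vert_{L^{q_i(\cdot)}(v_i^{(N)})}\le \Vert T\vec f\Vert_{L^{q_i(\cdot)}(v_i)}$. For the domain weights, however, decreasing $w_{i,j}$ decreases the right-hand side of \eqref{eq:interpolation_assumption}, which goes the wrong way. I would therefore approximate the domain weights differently.

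First, $\vec f$ is fixed and every $f_j$ is supported in $E_j$, so only the values of $w_{i,j}$ on $E_j$ matter. Replace $w_{i,j}$ by $\widetilde w_{i,j}^{(N)}\ge w_{i,j}\chi_{E_j}$, simple, with $\widetilde w_{i,j}^{(N)}\downarrow w_{i,j}$ on $E_j$ (a dyadic truncation from above). Then \eqref{eq:interpolation_assumption} holds for the pair $(\widetilde w^{(N)},v^{(N)})$, but only for inputs supported in the $E_j$. Those are the only inputs the proof of \eqref{eq:central_estimate} uses: the complex-interpolation argument of \cite{CaoOlivoYabuta2022} works with the analytic family $f_{j,z}$ built from $f_j$, which has the same support. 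So the weights can be restricted to $\Omega_j':=E_j$ together with the operator $\vec f\mapsto T(\vec f)$ on functions supported there. Convergence from above uses dominated convergence (Proposition~\ref{prop:dominated_convergence}). The first term dominates because $|f_j|\widetilde w^{(1)}$ is bounded on a set of finite measure, provided $w_{i,j}$ is a.e.\ bounded on $E_j$. This fails in general, and I expect it to be the main obstacle. To get around it, I would first shrink $E_j$ to $E_j\setminus\{w_{0,j}+w_{1,j}>K\}$, as in the proof of Lemma~\ref{lem:density_simple_functions}. By Lemma~\ref{lem:first_red} and density of $A_j$, it suffices to prove \eqref{eq:central_estimate} for $f_j$ supported where all the $w_{i,j}$ are bounded, and also bounded below by $1/K$.

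\emph{Step 3 (passage to the limit).} Apply the simple-weight case to the weights $\widetilde w_{i,j}^{(N)}$ and $v_i^{(N)}$. The interpolated weights $(\widetilde w_{0,j}^{(N)})^{1-\theta}(\widetilde w_{1,j}^{(N)})^{\theta}$ converge to $w_j$ boundedly on $E_j$, so by Proposition~\ref{prop:dominated_convergence} the right-hand side converges to $M_0^{1-\theta}M_1^\theta\prod_j\Vert f_j\Vert_{L^{p_j(\cdot)}(w_j)}$. Meanwhile $(v_0^{(N)})^{1-\theta}(v_1^{(N)})^{\theta}\uparrow v$, so by Proposition~\ref{prop:monotone_convergence} the left-hand side increases to $\Vert T\vec f\Vert_{L^{q(\cdot)}(v)}$. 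This yields \eqref{eq:central_estimate}.
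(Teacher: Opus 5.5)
Your strategy matches the paper's. The lemma that proves Proposition~\ref{prop:third_red} also:
(1) restricts the inputs to finite-measure sets $\widetilde F_j$ on which the domain weights are bounded above and below;
(2) approximates $w_{i,j}$ from above by simple functions supported on $\widetilde F_j$, so the restricted operator satisfies \eqref{eq:interpolation_assumption} with the same $M_i$, approximates $v_i$ from below, and passes to the limit with Propositions~\ref{prop:dominated_convergence} and~\ref{prop:monotone_convergence};
(3) removes the truncation by a density argument: $T(\vec f_k)\to T(\vec f)$ in $L^{q_0(\cdot)}(v_0)$, the sequence is Cauchy in $L^{q(\cdot)}(v)$, and the two limits are identified through a.e.\ convergent subsequences.
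Your appeal to Lemma~\ref{lem:first_red} and density amounts to step (3).

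One step fails as written: the target approximants in Step 1. A simple function that is positive a.e.\ takes finitely many values, so it is bounded below by some $c>0$. Hence a simple $v_i^{(N)}$ with $0<v_i^{(N)}\le v_i$ exists only when $\essinf v_i>0$, which a general weight need not satisfy (e.g.\ $v_0(x)=|x|$ near the origin). In particular, your ``small positive constant $c_N\le u$'' cannot be chosen on $\{u<1/N\}$.

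The repair is the restriction device you already use on the domain side, and it is exactly the paper's $\widetilde\Omega=\{\widetilde v\neq0\}$. Take nonnegative simple $v_i^{(N)}\uparrow v_i$ and put $\Omega^{(N)}:=\{v_0^{(N)}v_1^{(N)}\neq0\}$. Apply the simple-weight case to $\vec f\mapsto T(\vec f)|_{\Omega^{(N)}}$ with the positive simple weights $v_i^{(N)}|_{\Omega^{(N)}}$. The hypothesis holds with the same $M_i$ because $v_i^{(N)}\le v_i$. Since $v^{(N)}:=(v_0^{(N)})^{1-\theta}(v_1^{(N)})^{\theta}$ vanishes off $\Omega^{(N)}$, the resulting left-hand sides are $\Vert T(\vec f)v^{(N)}\Vert_{L^{q(\cdot)}}$, which increase to $\Vert T(\vec f)\Vert_{L^{q(\cdot)}(v)}$ by Proposition~\ref{prop:monotone_convergence}.

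A smaller point: you justify the domain restriction by saying the complex-interpolation proof only uses inputs supported in $E_j$. That is not the right justification. A without-loss-of-generality reduction must treat the simple-weight case as a black box. What legitimizes it is that the restricted operator on $E_j$ satisfies \eqref{eq:interpolation_assumption} with the same constants, which you do then state correctly.
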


Proposition~\ref{prop:third_red} follows from the following lemma.

\begin{lemma}
Assume that in any setup as in Lemma~\ref{lem:first_red} with $|\Omega| < \infty$ and such that all weights $w_{i,j}$, $j=1,\ldots,m$, $i=0,1$ and $v_0$, $v_1$ are simple functions, we have managed to establish \eqref{eq:central_estimate}.
\begin{enumerate}[label=\textup{(\arabic*)}]
\item In any setup as in Lemma~\ref{lem:first_red} with $|\Omega| < \infty$, the following is true:

Let $\eps\in(0,1]$ be arbitrary. Let $F_j$ be a measurable subset of $\Omega_j$ of finite measure for $j=1,\ldots,m$. Set
\begin{equation*}
  \widetilde{F}_j := \left\{x\in F_j:~\eps \leq w_{0,j}(x),~w_{1,j}(x)\leq\frac{1}{\eps}\right\},\quad j=1,\ldots,m.
\end{equation*}
For each $j=1,\ldots,m$, let $\widetilde{w}_{0,j}$, $\widetilde{w}_{1,j}$ be nonnegative simple functions on $\Omega_j$ vanishing outside of $\widetilde{F}_j$ with $w_{0,j}(x)\leq \widetilde{w}_{0,j}(x)$ and $w_{1,j}(x)\leq \widetilde{w}_{1,j}(x)$, for all $x\in\widetilde{F}_j$. Moreover, let $\widetilde{v}_{0}$, $\widetilde{v}_{1}$ be nonnegative simple functions on $\Omega$ such that $\widetilde{v}_i\leq v_i$ for $i=0,1$. Set $\widetilde{w}_j := \widetilde{w}_{0,j}^{1-\theta}\widetilde{w}_{1,j}^{\theta}$, $j=1,\ldots,m$ and $\widetilde{v} := \widetilde{v}_{0}^{1-\theta}\widetilde{v}_{1}^{\theta}$. Then, for all tuples $\vec{f} = (f_1,\ldots,f_m)$ such that $f_j$ is a simple function on $\Omega_j$ vanishing outside of $\widetilde{F}_{j}$ for $j=1,\ldots,m$, we have
\begin{equation*}
  \Vert T(\vec{f})\widetilde{v}\Vert_{L^{q(\cdot)}} \leq M_0^{1-\theta}M_{1}^{\theta}\prod_{j=1}^{m}\Vert f_j\widetilde{w}_j\Vert_{L^{p_{j}(\cdot)}}.
\end{equation*}

\item In any setup as in Lemma~\ref{lem:first_red} with $|\Omega| < \infty$, the following is true:

Let $\eps\in(0,1]$ be arbitrary. Let $F_j$ be a measurable subset of $\Omega_j$ of finite measure for $j=1,\ldots,m$. Set
\begin{equation*}
  \widetilde{F}_j := \left\{x\in F_j:~\eps \leq w_{0,j}(x),~w_{1,j}(x)\leq\frac{1}{\eps}\right\},\quad j=1,\ldots,m.
\end{equation*}
Then, for all tuples $\vec{f} = (f_1,\ldots,f_m)$ such that $f_j$ is a simple function on $\Omega_j$ vanishing outside of $\widetilde{F_j}$ for $j=1,\ldots,m$, we have
\begin{equation*}
  \Vert T(\vec{f})\Vert_{L^{q(\cdot)}(v)} \leq M_0^{1-\theta}M_{1}^{\theta}\prod_{j=1}^{m}\Vert f_j\Vert_{L^{p_{j}}(w_j)}.
\end{equation*}

\item In any setup as in Lemma~\ref{lem:first_red} with $|\Omega| < \infty$, the main estimate \eqref{eq:central_estimate} holds.
\end{enumerate}
\end{lemma}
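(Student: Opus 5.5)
We prove the three parts in order, each from the previous one, by limiting arguments on the weights and on the functions.

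\emph{Part (1).} Fix $\eps$, the sets $\widetilde F_j$, the simple majorants $\widetilde w_{i,j}$ and minorants $\widetilde v_i$ as in the statement. Consider the auxiliary setup with weights $\widetilde w_{i,j}$ on the input side and $\widetilde v_i$ on the output side; these are simple functions, but $\widetilde w_{i,j}$ vanish outside $\widetilde F_j$ and $\widetilde v_i$ may vanish. The first step is to make them genuine weights. We replace $\widetilde w_{i,j}$ by $\widetilde w_{i,j}+\eta\chi_{\Omega_j\setminus\widetilde F_j}$ and $\widetilde v_i$ by $\max\{\widetilde v_i,\eta\}$-type simple perturbations. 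Alternatively, we regard the operator $\vec f\mapsto T(\chi_{\widetilde F_1}f_1,\dots,\chi_{\widetilde F_m}f_m)$ restricted to $\widetilde F_j$.

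Since $w_{i,j}\le\widetilde w_{i,j}$ on $\widetilde F_j$ and $\widetilde v_i\le v_i$, the hypothesis \eqref{eq:interpolation_assumption} gives, for $f_j$ supported in $\widetilde F_j$,
\begin{equation*}
\Vert T(\vec f)\widetilde v_i\Vert_{L^{q_i(\cdot)}}\le \Vert T(\vec f)v_i\Vert_{L^{q_i(\cdot)}}\le M_i\prod_j\Vert f_jw_{i,j}\Vert_{L^{p_{i,j}(\cdot)}}\le M_i\prod_j\Vert f_j\widetilde w_{i,j}\Vert_{L^{p_{i,j}(\cdot)}}.
\end{equation*}
This uses only the monotonicity of the Luxemburg norm. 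So the restricted operator satisfies the endpoint bounds with the same constants $M_0,M_1$ for simple weights, and $|\Omega|<\infty$. Moreover, simple functions supported in $\widetilde F_j$ lie in the class $A_j$ for the new weights, because $\widetilde w_{i,j}$ are bounded on a set of finite measure. The assumed estimate \eqref{eq:central_estimate} in the simple-weight setting then yields (1). If a zero output weight must be avoided, we use $\widetilde v_i+\eta$ with $\eta\to0$ and monotone convergence (Proposition~\ref{prop:monotone_convergence}). The input-side perturbation off $\widetilde F_j$ is harmless since $f_j$ vanishes there.

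\emph{Part (2).} On $\widetilde F_j$ the weights $w_{i,j}$ take values in $[\eps,1/\eps]$. Hence we can choose simple $\widetilde w^{(k)}_{i,j}\ge w_{i,j}$ on $\widetilde F_j$ decreasing to $w_{i,j}$ there, for instance by upper dyadic rounding $\widetilde w^{(k)}_{i,j}=2^{-k}\lceil 2^kw_{i,j}\rceil\chi_{\widetilde F_j}$, which is bounded hence simple. On the output side we choose simple $0\le\widetilde v^{(k)}_i\uparrow v_i$ (standard lower approximation, finite measure of $\Omega$). Then $\widetilde w^{(k)}_j=(\widetilde w^{(k)}_{0,j})^{1-\theta}(\widetilde w^{(k)}_{1,j})^\theta$ decreases to $w_j$ on $\widetilde F_j$, and it is bounded by $1/\eps$, so $|f_j\widetilde w_j^{(k)}|\le|f_j|/\eps\in L^{p_j(\cdot)}$. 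By dominated convergence (Proposition~\ref{prop:dominated_convergence}) we get $\Vert f_j\widetilde w^{(k)}_j\Vert_{p_j(\cdot)}\to\Vert f_jw_j\Vert_{p_j(\cdot)}$. Also $\widetilde v^{(k)}\uparrow v$, so by monotone convergence $\Vert T(\vec f)\widetilde v^{(k)}\Vert_{q(\cdot)}\to\Vert T(\vec f)v\Vert_{q(\cdot)}$. Passing to the limit in (1) gives (2).

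\emph{Part (3).} Take $\vec f\in\vec A$ and $F_j$ the support of $f_j$, which has finite measure. Set $\widetilde F_j^{\eps}$ as above and $f_j^\eps=f_j\chi_{\widetilde F_j^\eps}$. Since weights are a.e.\ in $(0,\infty)$, we have $\widetilde F_j^\eps\uparrow F_j$ up to a null set as $\eps\downarrow0$. Then $f_j^\eps\to f_j$ in $L^{p_{i,j}(\cdot)}(w_{i,j})$ for $i=0,1$ and in $L^{p_j(\cdot)}(w_j)$, by dominated convergence with dominating functions $|f_j|$. By multilinearity and the endpoint bound for $i=0$, $T(\vec f^\eps)\to T(\vec f)$ in $L^{q_0(\cdot)}(v_0)$, so along a subsequence a.e. (Proposition~\ref{prop:norm_convergence_to_subs_pointwise_convergence}). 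Fatou's property of the Luxemburg norm (via Proposition~\ref{prop:monotone_convergence} applied to $\inf_{l\ge k}$) then gives
\begin{equation*}
\Vert T(\vec f)v\Vert_{q(\cdot)}\le\liminf\Vert T(\vec f^{\eps})v\Vert_{q(\cdot)}\le M_0^{1-\theta}M_1^\theta\prod_j\Vert f_j\Vert_{L^{p_j(\cdot)}(w_j)}.
\end{equation*}
This is \eqref{eq:central_estimate}.

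The main obstacle is part (1): checking that the auxiliary simple-weight configuration genuinely fits the hypothesis ``setup as in Lemma~\ref{lem:first_red}''. The issues are the weights vanishing off $\widetilde F_j$ or where $\widetilde v_i=0$, and the requirement that $T$ be bounded on the full auxiliary spaces and not only on functions supported in $\widetilde F_j$. I would handle this by working with the localized operator composed with $\chi_{\widetilde F_j}$, and by strictly positive perturbations removed via monotone convergence.
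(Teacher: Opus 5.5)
The real problem is in part (1), on the output side. The minorants $\widetilde v_i$ may vanish on sets of positive measure, so they are not weights on $\Omega$. Your repair, replacing $\widetilde v_i$ by $\widetilde v_i+\eta$ or $\max\{\widetilde v_i,\eta\}$ and letting $\eta\to0$, does not produce a setup as in Lemma~\ref{lem:first_red}. The perturbed weights are no longer dominated by $v_i$, so the endpoint bounds do not follow from the hypotheses, and nothing controls $T$ without a weight.

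Here is a counterexample. Take $m=1$, $\Omega=\Omega_1=(0,1)$, all exponents equal to $2$, $w_{0,1}=w_{1,1}=1$, $v_0(x)=v_1(x)=x$, and $Tf(x)=x^{-1}\int_0^1 f$. Then $M_0=M_1=1$, and $\widetilde v_i=\tfrac12\chi_{(1/2,1)}\le v_i$ is an admissible choice. Yet $\Vert Tf\,(\widetilde v_i+\eta)\Vert_{L^2}=\infty$ whenever $\int_0^1 f\neq0$, for every $\eta>0$. So the assumed estimate cannot be invoked for the perturbed configuration, and a monotone convergence argument in $\eta$ afterwards cannot rescue a hypothesis that fails.

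In this example no simple weight on all of $\Omega$ works at all. An a.e.\ positive simple function is bounded below by a positive constant, so the unweighted norm of $Tf$ would be controlled, and it is infinite. Shrinking the target domain is therefore unavoidable. Your input-side perturbation $\widetilde w_{i,j}+\eta\chi_{\Omega_j\setminus\widetilde F_j}$ has a milder defect of the same kind: it is not a simple function when $|\Omega_j|=\infty$, which the setup allows.

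The missing step is to restrict instead of perturb, on both sides, as the paper does. Put $\widetilde\Omega:=\{\widetilde v_0\neq0\}\cap\{\widetilde v_1\neq0\}$. Regard $\vec f\mapsto T(\vec f)|_{\widetilde\Omega}$, for $f_j$ defined on $\widetilde F_j$ and extended by zero, as an operator between spaces over the $\widetilde F_j$ and $\widetilde\Omega$. There $\widetilde w_{i,j}\ge w_{i,j}\ge\eps$, and $\widetilde v_0,\widetilde v_1$ are genuine simple weights. Your monotonicity chain then gives the endpoint bounds with the same constants $M_i$. Since $\widetilde v=0$ off $\widetilde\Omega$, we have $\Vert T(\vec f)\widetilde v\Vert_{L^{q(\cdot)}(\Omega)}=\Vert T(\vec f)\widetilde v\Vert_{L^{q(\cdot)}(\widetilde\Omega)}$, and the cases $|\widetilde\Omega|=0$ or $|\widetilde F_j|=0$ are trivial. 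The assumption now yields (1).

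With this fix, your parts (2) and (3) go through.
\begin{itemize}
\item Part (2) is the paper's argument. The bound $1/\eps$ for your dyadic majorants should read $1/\eps+1$, which is harmless.
\item Part (3) takes a different but valid route. You use a.e.\ convergence along a subsequence, the Fatou property of the Luxemburg norm, and $\Vert f_j\chi_{\widetilde F_j^{\eps}}\Vert_{L^{p_j(\cdot)}(w_j)}\le\Vert f_j\Vert_{L^{p_j(\cdot)}(w_j)}$.
\item The paper instead applies (2) to tuples containing differences $f_{j,n}-f_{j,k}$ to show that $(T(\vec f_k))_k$ is Cauchy in $L^{q(\cdot)}(v)$, identifies the limit as $T(\vec f)$, and then passes to the limit. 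Your version avoids the Cauchy argument and is somewhat shorter.
\end{itemize}
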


\begin{proof}
\begin{enumerate}[label=\textup{(\arabic*)}]
\item Set
\begin{equation*}
  \widetilde{\Omega} := \{\widetilde{v}\neq0\} = \{\widetilde{v}_0\neq0\}\cap\{\widetilde{v}_1\neq0\}.
\end{equation*}
Observe that
\begin{equation*}
  \Vert T(\vec{f})\Vert_{L^{q_i(\cdot)}(\widetilde{\Omega}, \widetilde{v}_i)} \leq M_i\prod_{j=1}^{m}\Vert f_j\Vert_{L^{p_{i,j}(\cdot)}(\widetilde{F}_j,\widetilde{w}_{i,j})},
\end{equation*}
for all $\vec{f}=(f_1,\ldots,f_m)\in L^{p_{i,1}(\cdot)}(\widetilde{F}_1,\widetilde{w}_{i,1})\times\cdots\times L^{p_{i,m}(\cdot)}(\widetilde{F}_m,\widetilde{w}_{i,m})$, for all $i=0,1$. Moreover, notice that if $f$ is any simple function on $\Omega_j$ vanishing outside of $\widetilde{F}_{j}$, then clearly
\begin{equation*}
  \Vert f\Vert_{L^{p_{i,j}(\cdot)}(\widetilde{F}_j,\widetilde{w}_{i,j})} < \infty,\quad\forall i=0,1,
\end{equation*}
for all $j=1,\ldots,m$. Therefore, the desired estimate follows immediately by applying the assumption.\label{itm: (first)}

\item For each $i=0,1$ and $j=1,\ldots,m$ there exists a decreasing sequence $(w_{i,j,n})^{\infty}_{n=1}$ of nonnegative simple functions on $\Omega_j$ vanishing outside of $\widetilde{F}_j$ with $\lim_{n\to\infty}w_{i,j,n}(x)=w_{i,j}(x)$ for each $x\in\widetilde{F}_j$. Moreover, for each $i=0,1$ there exists an increasing sequence $(v_{i,n})^{\infty}_{n=1}$ of nonnegative simple functions on $\Omega$ such that $v_{i,n}\to v$ as $n\to\infty$ pointwise on $\Omega$. Set
\begin{equation*}
  w_{j,n} := w_{0,j,n}^{1-\theta}w_{1,j}^{\theta},\quad j=1,\ldots,m
\end{equation*}
and
\begin{equation*}
  v_n := v_{0,n}^{1-\theta}v_{1,n}^{\theta}
\end{equation*}
for $n=1,2,\ldots$.
        
Consider a tuple $\vec{f} = (f_1,\ldots,f_m)$ such that $f_j$ is a simple function on $\Omega_j$ vanishing outside of $\widetilde{F}_{j}$ for $j=1,\ldots,m$. Then, by part \ref{itm: (first)} we have
\begin{equation}\label{eq:inter}
  \Vert T(\vec{f})v_k\Vert_{L^{q(\cdot)}} \leq M_0^{1-\theta}M_{1}^{\theta}\prod_{j=1}^{m}\Vert f_jw_{j,n}\Vert_{L^{p_{j}(\cdot)}},
\end{equation}
for all $n,k=1,2,\ldots$. Observe that there exists a constant $0<C<\infty$ such that
\begin{equation*}
  |f_jw_{j,n}| \leq C\chi_{\widetilde{F}_j},\quad\forall n=1,2,\ldots,
\end{equation*}
and $\chi_{\widetilde{F}_j}\in L^{p_{j}(\cdot)}$, for all $j=1,\ldots,m$. Therefore, by letting $n\to\infty$ in \eqref{eq:inter} and using Proposition \ref{prop:dominated_convergence} we obtain
\begin{equation}\label{eq:inter1}
  \Vert T(\vec{f})v_k\Vert_{L^{q(\cdot)}} \leq M_0^{1-\theta}M_{1}^{\theta}\prod_{j=1}^{m}\Vert f_j\Vert_{L^{p_{j}(\cdot)}(w_j)},
\end{equation}
for all $k=1,2,\ldots$. Letting then $k\to\infty$ in \eqref{eq:inter1} and using Proposition \ref{prop:monotone_convergence} we can conclude the proof.\label{itm: (second)}

\item Let $\vec{f}=(f_1,\ldots,f_m)\in\vec{A}$ be arbitrary. For each $j=1,\ldots,m$ pick a measurable subset $F_j$ of $\Omega_j$ of finite measure such that $f_j$ vanishes outside of $\Omega_j$. For all $j=1,\ldots,m$ and all $k=1,2,\ldots$ define
\begin{equation*}
            F_{j,k} := \left\{x\in F_j:~\frac{1}{k}\leq w_{0,j}(x),~w_{1,j}(x)\leq k\right\},\quad f_{j,k} := f_{j}\chi_{F_{j,k}}.
\end{equation*}
By Proposition \ref{prop:dominated_convergence} we obtain
\begin{equation*}
  f_{j,k}\to f_j\quad\text{as }k\to\infty\quad\text{in }L^{p_{j,0}(\cdot)}(w_{j,0}).
\end{equation*}
Since the operator $T$ is $m$-linear, from \eqref{eq:interpolation_assumption} we deduce
\begin{equation*}
  T(\vec{f}_k)\to T(\vec{f})\quad\text{as }k\to\infty\quad\text{in }L^{q_{0}(\cdot)}(v_{0}).
\end{equation*}
Additionally, by Proposition \ref{prop:dominated_convergence} we obtain
\begin{equation*}
  f_{j,k}\to f_j\quad\text{as }k\to\infty\quad\text{in }L^{p_{j}(\cdot)}(w_{j}).
\end{equation*}
Notice that for all $n,k\geq1$, by part \ref{itm: (second)} we deduce
\begin{equation*}
  \Vert T(g_1,\ldots,g_m)\Vert_{L^{q(\cdot)}(v)} \leq M_0^{1-\theta}M_{1}^{\theta}\prod_{j=1}^{m}\Vert g_j\Vert_{L^{p_{j}}(w_j)}
\end{equation*}
for \emph{all} choices
\begin{equation*}
  g_j \in \{f_{j,n}, f_{j,k}, f_{j,n}-f_{j,k}\},\quad j=1,\ldots,m.
\end{equation*}
Thus, since $T$ is $m$-linear, we deduce that $(T(\vec{f}_k))^{\infty}_{k=1}$ is a Cauchy sequence in $L^{q(\cdot)}(v)$. As in the proof of Lemma~\ref{lem:first_red} we deduce that the limit of $(T(\vec{f}_k))^{\infty}_{k=1}$ in $L^{q(\cdot)}(v)$ must be $T(\vec{f})$. Since part \ref{itm: (second)} implies
\begin{equation*}
  \Vert T(f_{1,k},\ldots,f_{m,k})\Vert_{L^{q(\cdot)}(v)} \leq M_0^{1-\theta}M_{1}^{\theta}\prod_{j=1}^{m}\Vert f_{j,k}\Vert_{L^{p_{j}}(w_j)},\quad\forall k=1,2,\ldots,
\end{equation*}
by letting $k\to\infty$ we finally deduce \eqref{eq:central_estimate}.
\end{enumerate}
\end{proof}

\begin{lemma}\label{lem:fourth_red}
Assume the same setup as in Lemma~\ref{lem:first_red}. Moreover, assume that $|\Omega| < \infty$ and that all weights $w_{i,j}$, $j=1,\ldots,m$, $i=0,1$ and $v_0$, $v_1$ are simple functions. Then, in proving \eqref{eq:central_estimate} we may assume without loss of generality that
\begin{equation*}
  \Vert f_j\Vert_{L^{p_{j}(\cdot)}(w_{j})} = 1,\quad\forall j=1,\ldots,m.
\end{equation*}
\end{lemma}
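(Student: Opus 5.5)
The plan is to use the $m$-linearity of $T$ together with the homogeneity of the Luxemburg (quasi-)norms, treating separately the case where some $f_j$ has zero norm.

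\emph{Degenerate case.} Suppose $\vec{f}\in\vec{A}$ and $\Vert f_{j_0}\Vert_{L^{p_{j_0}(\cdot)}(w_{j_0})}=0$ for some $j_0$. Then $f_{j_0}w_{j_0}=0$ a.e. Since $w_{j_0}=w_{0,j_0}^{1-\theta}w_{1,j_0}^{\theta}$ and weights are positive a.e., this forces $f_{j_0}=0$ a.e. on $\Omega_{j_0}$. Consequently $\Vert f_{j_0}\Vert_{L^{p_{0,j_0}(\cdot)}(w_{0,j_0})}=0$, and the assumption \eqref{eq:interpolation_assumption} with $i=0$ gives $\Vert T(\vec{f})\Vert_{L^{q_0(\cdot)}(v_0)}=0$. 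Hence $T(\vec{f})v_0=0$ a.e., so $T(\vec{f})=0$ a.e. because $v_0>0$ a.e. This yields $\Vert T(\vec{f})\Vert_{L^{q(\cdot)}(v)}=0$, and \eqref{eq:central_estimate} holds trivially.

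\emph{Nondegenerate case.} Now suppose $\lambda_j:=\Vert f_j\Vert_{L^{p_j(\cdot)}(w_j)}\in(0,\infty)$ for every $j$; finiteness comes from Lemma~\ref{lem:density_simple_functions}(1), or directly from the Hölder computation there. Set $g_j:=f_j/\lambda_j$.
\begin{itemize}
\item Each $g_j$ is again a simple function on $\Omega_j$.
\item Since $\Vert g_j\Vert_{L^{p_{i,j}(\cdot)}(w_{i,j})}=\lambda_j^{-1}\Vert f_j\Vert_{L^{p_{i,j}(\cdot)}(w_{i,j})}<\infty$ for $i=0,1$, we get $g_j\in A_j$.
\item $\Vert g_j\Vert_{L^{p_j(\cdot)}(w_j)}=1$.
\end{itemize}
The standing assumptions ($|\Omega|<\infty$ and simple weights) concern only $\Omega$ and the weights, so they are unaffected by this rescaling. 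If \eqref{eq:central_estimate} is known for normalized tuples, then by $m$-linearity $T(\vec{f})=\lambda_1\cdots\lambda_m T(\vec{g})$. Using absolute homogeneity of the Luxemburg quasi-norm $\Vert\cdot\Vert_{L^{q(\cdot)}(v)}$ (immediate from its definition, even for $q(\cdot)\in\mathscr{P}_0$), we obtain
\begin{equation*}
\Vert T(\vec{f})\Vert_{L^{q(\cdot)}(v)}=\prod_{j=1}^m\lambda_j\,\Vert T(\vec{g})\Vert_{L^{q(\cdot)}(v)}\leq M_0^{1-\theta}M_1^{\theta}\prod_{j=1}^m\lambda_j,
\end{equation*}
which is exactly \eqref{eq:central_estimate}.

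There is no genuine obstacle here. The only point requiring a moment's care is the degenerate case: one must see that a zero $L^{p_j(\cdot)}(w_j)$-norm forces $T(\vec{f})=0$. This is where a.e. positivity of the weights and the endpoint bound \eqref{eq:interpolation_assumption} enter.
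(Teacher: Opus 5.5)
Your proof is correct and follows the paper's argument. You dispose of the case where some $f_j$ vanishes a.e., then rescale each $f_j$ by $\Vert f_j\Vert_{L^{p_j(\cdot)}(w_j)}$, using the $m$-linearity of $T$ and the homogeneity of the quasi-norm. Your treatment of the degenerate case, via a.e.\ positivity of the weights and the endpoint bound with $i=0$, simply spells out what the paper dismisses as ``nothing to show.''
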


\begin{proof}
Let $\vec{f}\in\vec{A}$ be arbitrary. If $f_j=0$ a.e.~on $\Omega_j$ for some $j=1,2,\ldots,m$, then we have nothing to show in \eqref{eq:central_estimate}. Therefore, we may assume without loss of generality that $\Vert f_j\Vert_{L^{p_j(\cdot)}(w_j)}>0$, for all $j=1,2,\ldots,m$. Clearly, by the homogeneity of $T$ in each argument, \eqref{eq:central_estimate} is then equivalent to
\begin{equation*}
  \left\Vert T\left(\frac{f_1}{\Vert f_1\Vert_{L^{p_{1}(\cdot)}(w_{1})}},\ldots,\frac{f_m}{\Vert f_m\Vert_{L^{p_{m}(\cdot)}(w_{m})}}\right)\right\Vert_{L^{q(\cdot)}(v)} \leq M_0^{1-\theta}M_{1}^{\theta}.
\end{equation*}
This shows the claim.
\end{proof}

The proof of Theorem \ref{thm:interpolation_boundedness} is completed in the next subsection.

\subsection{Proof of interpolation of boundedness I}

\begin{proposition}\label{prop:central_estimate}
Assume the same setup as in Lemma~\ref{lem:first_red}. Moreover, assume that $|\Omega| < \infty$, that all weights $w_{i,j}$, $j=1,\ldots,m$, $i=0,1$ and $v_0$, $v_1$ are simple functions, and that for all $j=1,\ldots,m$, $f_j$ is a simple function on $\Omega_j$ with $\Vert f_j\Vert_{L^{p_{j}(\cdot)}(w_{j})} = 1$. Then, \eqref{eq:central_estimate} holds.
\end{proposition}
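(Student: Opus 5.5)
We will run a complex-interpolation (three lines lemma) argument adapted to variable exponents, following \cite[Theorem 3.1]{CaoOlivoYabuta2022}. Because $q(\cdot)$ may be below $1$, we do not use duality against $L^{q'(\cdot)}$. Instead we use a \emph{log-subharmonicity} version of the three lines lemma. In this version the quasi-norm of an analytic family of simple functions is controlled on the boundary lines.

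Write $f_j=\sum_k a_{j,k}\chi_{E_{j,k}}$ with disjoint $E_{j,k}$, and $a_{j,k}=|a_{j,k}|e^{i\alpha_{j,k}}$. For $z$ in the strip $S=\{0\le \Re z\le 1\}$ we define
\begin{equation*}
f_{j,z}(x):=|f_j(x)w_j(x)|^{p_j(x)\left(\frac{1-z}{p_{0,j}(x)}+\frac{z}{p_{1,j}(x)}\right)}e^{i\arg f_j(x)}\,w_{0,j}(x)^{-(1-z)}w_{1,j}(x)^{-z}.
\end{equation*}
At $z=\theta$ this gives $f_{j,\theta}=f_j$. For $\Re z=i\in\{0,1\}$ we have $|f_{j,z}w_{i,j}|=|f_jw_j|^{p_j/p_{i,j}}$, so $\rho_{p_{i,j}(\cdot)}(f_{j,z}w_{i,j})=\rho_{p_j(\cdot)}(f_jw_j)=1$. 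The last equality holds because $\|f_j\|_{L^{p_j(\cdot)}(w_j)}=1$ and the modular is continuous here, since the functions are simple and $|\Omega_j|<\infty$. Hence $\|f_{j,z}\|_{L^{p_{i,j}(\cdot)}(w_{i,j})}=1$ on both boundary lines. Since all weights and the $f_j$ are simple, each $f_{j,z}$ is a finite sum $\sum_k \phi_{j,k}(z)\chi_{E_{j,k}'}$, where the $\phi_{j,k}$ are entire and bounded on $S$. By multilinearity, $T(\vec f_z)=\sum_{\vec k}\prod_j\phi_{j,k_j}(z)\,T(\chi_{E'_{1,k_1}},\dots,\chi_{E'_{m,k_m}})$ is a finite combination with analytic coefficients of fixed functions $g_{\vec k}$. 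These functions lie in $L^{q_0(\cdot)}(v_0)\cap L^{q_1(\cdot)}(v_1)$.

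Next we treat the output side. Set $\Phi(z)(x):=T(\vec f_z)(x)\,v_0(x)^{1-z}v_1(x)^{z}$, and replace the exponent $q(\cdot)$ by the analytic family $q_z$ defined by $1/q_z=(1-z)/q_0+z/q_1$. We must show $\rho_{q(\cdot)}(T(\vec f)v/\lambda)\le 1$ for $\lambda=M_0^{1-\theta}M_1^\theta$. The plan is to fix a small $r\in(0,\min(q_{0,-},q_{1,-}))$ and apply Lemma~\ref{lem:homog}, so that $\|T(\vec f)v\|_{q(\cdot)}^r=\||T(\vec f)v|^r\|_{q(\cdot)/r}$, where $q(\cdot)/r\in\mathscr P$. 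This reduces the problem to a Banach space. There we dualize: choose a simple $h\ge0$ with $\|h\|_{(q/r)'(\cdot)}\le1$ and pair it with $|T(\vec f)v|^r$. We then build an analytic $h_z$ with $h_\theta=h$, by the same construction as for $f_{j,z}$ but with dual exponents. The function
\begin{equation*}
F(z):=\int_\Omega \log\!\big(|\Phi(z)(x)|\big)\ldots
\end{equation*}
is awkward to use directly. Instead we take $G(z):=\int_\Omega |\Phi(z)|^r h_z\,dx$, which is not analytic; this is the crux. We will therefore use that $z\mapsto\log|\Phi(z)(x)|$ is subharmonic for each $x$, because $\Phi(z)(x)$ is a finite sum of analytic functions times $v$-powers. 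Jensen/Poisson majorization on the strip then gives $|\Phi(\theta)(x)|\le\exp\int P_\theta(t)\log|\Phi(it')(x)|$, with the integral taken over both boundary lines. Combining this with Hölder in $t$ and in $x$, together with the boundary bounds $\|T(\vec f_{i+it})\|_{L^{q_i(\cdot)}(v_i)}\le M_i$, yields $\lambda$ after the Poisson weights are integrated ($\int P_0=1-\theta$, $\int P_1=\theta$).

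The main obstacle is this last step: transferring a pointwise Poisson estimate into a bound in the variable-exponent quasi-norm. Constant-exponent Hölder must be replaced by a modular argument with $x$-dependent exponents $q_0(x),q_1(x)$. The first thing to try is to work with modulars directly. We use the pointwise inequality $|\Phi(\theta)(x)/\lambda|^{q(x)}\le \int P_\theta(0,t)\,|\Phi(it)(x)/M_0|^{q_0(x)}\,dt+\int P_\theta(1,t)\,|\Phi(1+it)(x)/M_1|^{q_1(x)}\,dt$. This follows from subharmonicity and the convexity of $\exp$, after writing $q(x)\log|\cdot|$ as the Poisson-weighted combination with exponents $q(x)(1-\theta)/q_0(x)\cdot q_0(x)$, etc., and then weighted AM--GM. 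Integrating in $x$ and using that each boundary modular is $\le1$ then gives $\rho_{q(\cdot)}(T(\vec f)v/\lambda)\le1$. Making the AM--GM weights sum correctly with variable $x$ requires care, but we expect it to go through because $\frac{(1-\theta)q(x)}{q_0(x)}+\frac{\theta q(x)}{q_1(x)}=1$ pointwise.
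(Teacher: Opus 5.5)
Your proposal has two genuine gaps.

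\textbf{Gap 1: analyticity in $z$ is not established.} Your claim that each $f_{j,z}$ is a finite sum $\sum_k\phi_{j,k}(z)\chi_{E'_{j,k}}$ with fixed sets is false. The reductions make the weights and the $f_j$ simple, but not the exponents. You have $p_j(x)\bigl(\frac{1-z}{p_{0,j}(x)}+\frac{z}{p_{1,j}(x)}\bigr)=e_{1,j}(x)z+e_{2,j}(x)$, where $e_{1,j},e_{2,j}$ are merely bounded log-H\"older functions. So even on a set where $f_jw_j$, $w_{0,j}$, $w_{1,j}$ are constant, $f_{j,z}$ behaves like $c^{e_{1,j}(x)z+e_{2,j}(x)}$, which is not of the form $\phi(z)\chi_E$. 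Consequently nothing in your argument shows that $z\mapsto T(\vec f_z)(x)$ is analytic for a.e.\ $x$. For an abstract bounded multilinear $T$ this is not automatic, and your pointwise Poisson majorization of $\log|\Phi(\cdot)(x)|$ rests entirely on it.

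This is exactly the step the paper calls the hardest (Lemmata~\ref{lem:phi_ell_con_subh}, \ref{lem:unif_comp}, \ref{lem:cont_and_subh}). The paper approximates $e_{1,j},e_{2,j}$ uniformly by simple functions $e_{1,k,j},e_{2,k,j}$, for which the finite-sum structure holds. It then passes to the limit using the bound $L^{p_{0,j}(\cdot)}(w_{0,j})\to L^{q_0(\cdot)}(v_0)$.

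You can import this into your route. For the approximants $f^{(k)}_{j,z}$, on the line $\mathrm{Re}(z)=i$ the modulus $|f^{(k)}_{j,z}|w_{i,j}$ depends only on $\mathrm{Re}(z)$ and converges uniformly to $|f_jw_j|^{p_j/p_{i,j}}$. Hence the boundary norms are $\le 1+\eta_k$ with $\eta_k\to0$, uniformly in $\mathrm{Im}(z)$. At $z=\theta$ one has $f^{(k)}_{j,\theta}\to f_j$ in $L^{p_{0,j}(\cdot)}(w_{0,j})$, so a subsequence of $T(\vec f^{(k)}_\theta)$ converges a.e.\ to $T(\vec f)$, and Fatou's lemma finishes.

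\textbf{Gap 2: the Jensen step and the constant.} Your displayed pointwise inequality is false. The derivation you sketch would apply to any bounded analytic function. Take $\Phi\equiv\tfrac12$, $M_0=M_1=1$, $\theta=\tfrac12$, $q_0=1$, $q_1=3$, so $q=\tfrac32$. Then the left side is $2^{-3/2}\approx0.354$ and the right side is $\tfrac14+\tfrac1{16}=0.3125$.

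Multiplying the Poisson majorization by $q(x)$ and rewriting in terms of $q_i(x)\log|\cdot|$ forces the weights $\frac{q(x)}{q_i(x)}P_i(\theta,\cdot)$. These do form a probability measure for each fixed $x$, and Jensen gives
\[
\Bigl|\tfrac{\Phi(\theta)(x)}{\lambda}\Bigr|^{q(x)}\le\int_{\R}\tfrac{q(x)}{q_0(x)}P_0(\theta,t)\Bigl|\tfrac{\Phi(it)(x)}{M_0}\Bigr|^{q_0(x)}\dd t+\int_{\R}\tfrac{q(x)}{q_1(x)}P_1(\theta,t)\Bigl|\tfrac{\Phi(1+it)(x)}{M_1}\Bigr|^{q_1(x)}\dd t .
\]
The factors $q(x)/q_i(x)$ depend on $x$, so integrating in $x$ against boundary modulars $\le1$ gives only
\[
\rho_{q(\cdot)}(T(\vec f)v/\lambda)\le C_*:=(1-\theta)\sup\tfrac{q}{q_0}+\theta\sup\tfrac{q}{q_1}.
\]
Here $C_*\ge1$, with equality iff $q_0/q_1$ is constant. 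So your argument yields \eqref{eq:central_estimate} only with an extra factor $C_*^{1/q_-}$; the claim $\rho\le1$ does not follow.

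This loss is harmless downstream: in Theorem~\ref{thm:interpolation_compactness} you simply shrink $\delta$. The paper's duality route also passes through the norm-conjugate formula and the variable H\"older inequality, which carry constants. So it too effectively controls the estimate only up to exponent-dependent factors.

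Separately, the non-analyticity of $\int|\Phi(z)|^rh_z$ is not the obstacle you took it to be. The paper puts $g^{t(z,\cdot)/s}$ inside the analytic function $U_\ell$ and uses that $z\mapsto\int_\Omega|U_\ell(z,\omega)|^s\dd\omega$ is subharmonic. It combines this with the damping factor $\exp((z^2-1)/(s\ell))$ and the maximum principle. With both repairs above, your duality-free route (pointwise Poisson majorization plus Jensen) is a valid alternative.
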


This subsection is devoted to proving Proposition~\ref{prop:central_estimate}. We note that $|\Omega_j|<\infty$, for $w_{0,j}$ is a simple function on $\Omega_j$ that is a.e.~nonnegative, for all $j=1,\ldots,m$.
Our strategy is inspired by the proof of \cite[Lemma 3.2]{CaoOlivoYabuta2022}. We begin by picking
\begin{equation*}
  s \in (0, \min\{(q_{0})_{-}, (q_{1})_{-}, 1\}).
\end{equation*}
A simple calculation shows $s<q_{-}$. By the homogeneity property (Lemma \ref{lem:homog}) we have
\begin{equation*}
  \Vert T(\vec{f})\Vert_{L^{q(\cdot)}(v)} = \Vert |T(\vec{f})v|\Vert_{L^{q(\cdot)}}
    = \Vert |T(\vec{f})v|^{s}\Vert_{L^{r(\cdot)}}^{1/s},
\end{equation*}
where $r(\cdot) := \frac{q(\cdot)}{s} \in \mathscr{P}$. In addition, by \cite[Theorem 2.34]{CF2013} we have
\begin{equation*}
  \Vert |T(\vec{f})v|^{s}\Vert_{L^{r(\cdot)}(v)} \sim 
  \sup_{g}\int_{\Omega}|T(\vec{f})(x)v(x)|^{s}g(x)\dd x,
\end{equation*}
where the supremum is taken over all nonnegative measurable functions $g$ on $\Omega$ with $\Vert g\Vert_{L^{r'(\cdot)}} \leq 1$ and the implicit constants depend only on $r(\cdot)$. Thus, we only have to show that
\begin{equation}\label{eq:to_prove_1}
  \int_{\Omega}|T(\vec{f})(x)v(x)|^{s}g(x)\dd x \leq M_0^{s(1-\theta)}M_{1}^{s\theta}
\end{equation}
for every such function $g$. It is well-known fact that there exists an increasing sequence $(g_n)_{n=1}^{\infty}$ of nonnegative simple functions converging pointwise to $g$. Then $\Vert g_n\Vert_{L^{r'(\cdot)}} \leq 1$, for all $n=1,2,\ldots$ and Proposition \ref{prop:monotone_convergence} yields moreover
\begin{equation*}
  \int_{\Omega}|T(\vec{f})(x)v(x)|^{s}g(x)\dd x = \lim_{n\to\infty}\int_{\Omega}|T(\vec{f})(x)v(x)|^{s}g_n(x)\dd x.
\end{equation*}
Therefore, we only have to prove \eqref{eq:to_prove_1} in the special case that $g$ is a nonnegative simple function.

\subsubsection{Setup of the interpolation scheme}

Define
\begin{equation*}
  S := \{z\in\C:~0< \mathrm{Re}(z)<1\},\quad \bar{S} := \{z\in\C:~0\leq \mathrm{Re}(z)\leq1\}.
\end{equation*}
For each $j=1,\ldots,m$ set $\widetilde{f}_j(\cdot):= f_j(\cdot)w_j(\cdot)$ and consider its polar representation
\begin{equation*}
  \widetilde{f}_j(\cdot) = |\widetilde{f_j}(\cdot)|e^{is_j(\cdot)}
\end{equation*}
for some real-valued simple function $s_j$ on $\Omega_j$. For every $z\in\bar{S}$ define the \emph{complex valued} functions
\begin{equation*}
  m_{z,j}(\cdot) := \frac{1-z}{p_{0,j}(\cdot)} + \frac{z}{p_{1,j}(\cdot)},\quad j=1,\ldots,m,\quad n_{z}(\cdot) := \frac{1-z}{q_{0}(\cdot)} + \frac{z}{q_{1}(\cdot)}.
\end{equation*}
Then, for every $\ell=1,2,\ldots$ and for every $z\in\bar{S}$ define
\begin{equation*}
  \Phi_{\ell}(z) := \int_{\Omega}|U_{\ell}(z, \omega)|^{s}\dd\omega,
\end{equation*}
where
\begin{equation*}
  U_{\ell}(z,\cdot) := \exp\left(\dfrac{z^2-1}{s\ell}\right)M_0^{z-1}M_1^{-z}T(\vec{F}_{z})(\cdot)v_0(\cdot)^{1-z}v_1(\cdot)^{z}G_{z}(\cdot),
\end{equation*}
\begin{equation*}
  G_{z}(\cdot) := g(\cdot)^{t(z,\cdot)/s}\chi_{\{g\neq0\}}(\cdot),\quad t(z,\cdot) := r'(\cdot) - sr'(\cdot)n_{z}(\cdot),
\end{equation*}
\begin{equation*}
  F_{z,j}(\cdot) := |\widetilde{f}_j(\cdot)|^{p_j(\cdot)m_{z,j}(\cdot)}\chi_{\{\widetilde{f}_j\neq0\}}(\cdot)e^{is_j(\cdot)}w_{0,j}(\cdot)^{z-1}w_{1,j}(\cdot)^{-z}.
\end{equation*}

\begin{lemma}
    \label{lem:phi_ell_con_subh}
    Fix $\ell\in\N$. Then $\Phi_{\ell}$ is continuous on $\bar{S}$ and subharmonic in $S$.
\end{lemma}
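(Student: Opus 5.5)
The plan is to expand $T(\vec F_z)$ by multilinearity into a finite sum whose coefficients are entire functions of $z$. Then $U_\ell(z,\omega)$ becomes, for each fixed $\omega$, an analytic function of $z$ that is continuous up to $\bar S$. The claim then follows because $|h|^s$ is subharmonic for holomorphic $h$ and $s>0$, and integrating subharmonic functions preserves subharmonicity.

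\emph{Step 1: reduction to finite sums.} Each $f_j$ is simple and each weight $w_{0,j},w_{1,j}$ is simple, so $\widetilde f_j$ is simple and $\Omega_j$ has finite measure. The exponents $p_{i,j}(\cdot)$, however, are not simple, so $F_{z,j}$ is not literally a finite combination of characteristic functions with $z$-dependent coefficients. I would handle this by first checking that, for each fixed $z$, $F_{z,j}$ belongs to $L^{p_{0,j}(\cdot)}(w_{0,j})\cap L^{p_{1,j}(\cdot)}(w_{1,j})$. This holds because it is bounded and supported on a set of finite measure where the weights are bounded above and below. Hence $T(\vec F_z)$ is defined.

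For analyticity I would instead write
\[
|\widetilde f_j|^{p_j m_{z,j}}w_{0,j}^{z-1}w_{1,j}^{-z}=\exp\bigl(a_j(\cdot)+z\,b_j(\cdot)\bigr)
\]
on $\{\widetilde f_j\neq0\}$, with $a_j,b_j$ bounded measurable. The map $z\mapsto F_{z,j}$ is then a holomorphic map into $L^{p_{i,j}(\cdot)}(w_{i,j})$: its power series $\sum_k z^k b_j^k e^{a_j}/k!$ converges in norm, since $b_j$ is bounded. Multilinearity and boundedness of $T$ (assumption \eqref{eq:interpolation_assumption} with $i=0$) then give that $z\mapsto T(\vec F_z)$ is holomorphic into $L^{q_0(\cdot)}(v_0)$. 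Multiplying by $v_0^{1-z}v_1^{z}G_z$ and by the scalar factor $\exp((z^2-1)/(s\ell))M_0^{z-1}M_1^{-z}$ preserves this. Here $v_0,v_1,g$ are simple, so these factors are bounded exponentials of $z$ times bounded functions on the relevant supports.

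\emph{Step 2: pointwise holomorphy and subharmonicity.} From the holomorphy into a quasi-Banach function space, I would extract a representative of $U_\ell(z,\omega)$ that is holomorphic in $z$ for a.e.\ $\omega$. The cleanest route is to expand in a power series $U_\ell(z,\omega)=\sum_k c_k(\omega)z^k$ with $\sum_k \|c_k\| R^k<\infty$ for all $R$, using the entire-series structure from Step 1. Norm convergence implies a.e.\ absolute convergence on compacts along a full-measure set, via Proposition~\ref{prop:norm_convergence_to_subs_pointwise_convergence} applied to the partial sums. Then $z\mapsto|U_\ell(z,\omega)|^s$ is subharmonic for a.e.\ $\omega$.

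\emph{Step 3: integration and continuity.} Provided $\omega\mapsto\sup_{z\in K}|U_\ell(z,\omega)|^s$ is integrable for each compact $K\subset\bar S$, the sub-mean-value inequality integrates over $\omega$ by Fubini, giving subharmonicity of $\Phi_\ell$. Continuity on $\bar S$ follows from dominated convergence. Since $|\Omega|<\infty$ and $s<(q_0)_-$, the integral $\int_\Omega |h|^s$ is controlled by $\|h v_0\|_{L^{q_0(\cdot)}}$ on the support where $v_0$ is bounded below, through Hölder. The majorant is the series $\sum_k|c_k(\omega)|R^k$, whose $s$-th power is integrable by the same estimate together with quasi-triangle and $s\le1$ subadditivity.

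\emph{Main obstacle.} I expect the main obstacle to be this domination and measurability step: making the vector-valued holomorphy into the quasi-Banach space $L^{q_0(\cdot)}(v_0)$ yield a genuinely pointwise-holomorphic representative with an integrable majorant. I would first try the explicit power-series expansion. Its advantage is that it needs no Cauchy integral formula in quasi-Banach spaces; it only uses the $s$-subadditivity $|\sum a_k|^s\le\sum|a_k|^s$.
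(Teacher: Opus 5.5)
Your proposal is correct, but it is organized differently from the paper's proof.

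\textbf{The paper's route.} The paper never expands in $z$. It writes $p_jm_{z,j}=e_{1,j}z+e_{2,j}$ and approximates the bounded functions $e_{1,j},e_{2,j}$ uniformly by simple functions. This replaces $F_{z,j}$ by the \emph{finite} combination $F_{z,k,j}=\sum_t a_{t,j}^{z}b_{t,j}e^{ic_{t,j}}\chi_{A_{t,j}}$. Multilinearity then exhibits $U_{\ell,k}(z,\omega)$ as a finite sum of entire functions of $z$ times the fixed functions $T(\chi_{A_{t_1,1}},\ldots,\chi_{A_{t_m,m}})(\omega)$. So pointwise analyticity is immediate, and $\Phi_{\ell,k}$ is continuous and subharmonic (Lemma~\ref{lem:cont_and_subh}). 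Lemma~\ref{lem:unif_comp} then gives $\Phi_{\ell,k}\to\Phi_\ell$ locally uniformly on $\bar S$, using boundedness of $T$ and the H\"older bound $\int_\Omega|h|^s\lesssim\Vert h\Vert^s_{L^{q_0(\cdot)}(v_0)}$. Both properties pass to the limit.

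\textbf{Your route.} You use the exponential structure of $F_{z,j}$ directly. You Taylor-expand in $z$ with factorial decay, push the series through $T$ by continuity, and build an a.e.\ entire representative of $T(\vec F_z)$ together with an $L^s$-integrable local majorant. Continuity and subharmonicity of $\Phi_\ell$ then follow in one step, via dominated convergence and Tonelli applied to the sub-mean-value inequality. That last step is shared with the paper.

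\textbf{What each buys.} The paper's approach keeps all $z$-dependence in finitely many scalar coefficients. It therefore never has to construct a pointwise representative or identify a pointwise series with a norm limit. The price is a second approximation layer and the stability of subharmonicity under locally uniform limits. Your approach is more direct, and its explicit majorant is exactly what dominated convergence needs. In fact, the paper's continuity step for $\Phi_{\ell,k}$ asserts $\sup_{z\in K}\esssup_\omega|R(z,\omega)|<\infty$. That would require $T(\chi_{A_{t_1,1}},\ldots,\chi_{A_{t_m,m}})\in L^\infty$, which is not known. The correct domination there is the same H\"older-type $L^s$ majorant you construct.

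\textbf{Two bookkeeping points.} First, in Step~2 Proposition~\ref{prop:norm_convergence_to_subs_pointwise_convergence} does not give a.e.\ absolute convergence of the series, let alone uniformly in $z$. It only gives a.e.\ convergence of a subsequence of partial sums for each fixed $z$. The a.e.\ absolute convergence must come from your Step~3 majorant, since an integrable function is finite a.e. Use the proposition only to identify the pointwise sum with $T(\vec F_z)$ a.e.\ for each fixed $z$; this suffices because $\Phi_\ell(z)$ depends only on the class of $U_\ell(z,\cdot)$.

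Second, it is cleaner not to group coefficients by total degree. Write $\phi_{j,k}:=b_j^{k}e^{a_j}e^{is_j}\chi_{\{\widetilde f_j\neq0\}}/k!$ and $\vec\phi_{\vec k}:=(\phi_{1,k_1},\ldots,\phi_{m,k_m})$, and let $B:=\max_j\Vert b_j\Vert_\infty$. Then $s$-subadditivity, the H\"older bound and the assumption on $T$ give $\int_\Omega\bigl(\sum_{\vec k}|T(\vec\phi_{\vec k})|R^{|\vec k|}\bigr)^s\lesssim M_0^s\prod_{j=1}^m\sum_{k\geq0}\bigl(C(RB)^{k}/k!\bigr)^s<\infty$. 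This avoids any quasi-triangle bookkeeping for the grouped coefficients $c_k$.
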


\begin{proof}
This is perhaps the hardest part of the proof of Theorem~\ref{thm:interpolation_boundedness} and is much more involved than the analogous argument in \cite[Lemma 3.2]{CaoOlivoYabuta2022}. Observe also that since we allow explicitly for the possibility that $q_{-} < 1$, it is not clear how to adapt for instance the ideas from \cite[Lemma 3.1]{Meskhi_2016}, because one had to deal there only with \emph{analytic} functions.
    
We begin by observing that for each $j=1,\ldots,m$, one can write
\begin{equation*}
  p_j(\cdot)m_{z,j}(\cdot) = e_{1,j}(\cdot)z + e_{2,j}(\cdot)
\end{equation*}
for some bounded real valued measurable functions $e_{1,j}(\cdot)$ and $e_{2,j}(\cdot)$ on $\Omega_j$. Thus, one can find sequences $(e_{1,k,j})_{k=1}^{\infty}$, $(e_{2,k,j})_{k=1}^{\infty}$ of simple functions on $\Omega$ such that for all $i=1,2$ we have
\begin{equation*}
  |e_{i,k,j}| \leq |e_{i,j}|,\quad\forall k=1,2,\ldots 
\end{equation*}
and
\begin{equation*}
  e_{i,k,j}\to e_{i,j} \quad\text{uniformly on }\Omega_j\text{ as }k\to\infty.
\end{equation*}
Now, for every $z\in\bar{S}$ we define
\begin{equation*}
  F_{z,k,j}(\cdot) := |\widetilde{f}_j(\cdot)|^{e_{1,k,j}(\cdot)z + e_{2,k,j}(\cdot)}\chi_{\{\widetilde{f}_j\neq0\}}(\cdot)e^{is_j(\cdot)}w_{0,j}(\cdot)^{z-1}w_{1,j}(\cdot)^{-z}.
\end{equation*}
and
\begin{equation*} 
  U_{\ell,k}(z,\cdot) := \exp\left(\dfrac{z^2-1}{s\ell}\right)M_0^{z-1}M_1^{-z}T(\vec{F}_{z,k})(\cdot)v_0(\cdot)^{1-z}v_1(\cdot)^{z}G_{z}(\cdot),
\end{equation*}
\begin{equation*}
  \Phi_{\ell,k}(z) := \int_{\Omega}|U_{\ell,k}(z, \omega)|^{s}\dd\omega.
\end{equation*}
By combining Lemmas~\ref{lem:unif_comp} and \ref{lem:cont_and_subh} below we deduce that $\Phi_{\ell}$ is continuous on $\bar{S}$ and subharmonic in $S$.
\end{proof}

\begin{lemma}\label{lem:unif_comp}
Fix $\ell\in\N$. It holds that
\begin{equation*}
  \Phi_{\ell,k}(z) \to \Phi_{\ell}(z)
\end{equation*}
as $k\to\infty$ uniformly on compact sets for $z\in\bar{S}$.
\end{lemma}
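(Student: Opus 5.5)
The plan is to show that, for fixed $z\in\bar S$, the error $\int_\Omega\bigl||U_{\ell,k}(z,\omega)|^{s}-|U_\ell(z,\omega)|^{s}\bigr|\dd\omega$ is dominated by a quantity that tends to $0$ uniformly for $z$ in a compact set $K\subset\bar S$. Since $s\in(0,1)$, we have $\bigl||a|^s-|b|^s\bigr|\le|a-b|^s$, so it suffices to bound $\int_\Omega|U_{\ell,k}(z,\cdot)-U_\ell(z,\cdot)|^{s}\dd\omega$. On $K$ the prefactor $\exp\bigl(\frac{z^2-1}{s\ell}\bigr)M_0^{z-1}M_1^{-z}$ is bounded. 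Because $v_0,v_1,g$ are simple functions, $v_0^{1-z}v_1^{z}$ and $G_z$ are bounded uniformly on $K$, with the supports restricted to $\{v_0v_1\neq0\}$ and $\{g\neq0\}$. Here one uses that $t(z,\cdot)$ is affine in $z$ with bounded real coefficients, since $r'(\cdot),q_i(\cdot)$ are bounded. Since $|\Omega|<\infty$, Hölder's inequality with exponent $q_0(\cdot)/s$, say, reduces the matter to showing
\begin{equation*}
\sup_{z\in K}\Vert (T(\vec F_{z,k})-T(\vec F_z))\,v_0\Vert_{L^{q_0(\cdot)}}\to0,\qquad k\to\infty,
\end{equation*}
at least on the set where $v_0\neq0$. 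More precisely, write $|h|^s = |hv_0|^s v_0^{-s}\chi_{\{v_0\neq0\}}$ and absorb $v_0^{-s}$, which is bounded because $v_0$ is simple and bounded below on its support.

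Next I would telescope using $m$-linearity:
\begin{equation*}
T(\vec F_{z,k})-T(\vec F_z)=\sum_{j=1}^m T(F_{z,1},\ldots,F_{z,j-1},F_{z,k,j}-F_{z,j},F_{z,k,j+1},\ldots,F_{z,k,m}).
\end{equation*}
Because $L^{q_0(\cdot)}(v_0)$ is quasi-normed, the sum is controlled via Lemma~\ref{lem: triangle ineq.} iterated $m$ times, which costs only a constant. The bound \eqref{eq:interpolation_assumption} for $i=0$ then gives
\begin{equation*}
\lesssim M_0\sum_j \Vert F_{z,k,j}-F_{z,j}\Vert_{L^{p_{0,j}(\cdot)}(w_{0,j})}\prod_{i\neq j}\bigl(\Vert F_{z,i}\Vert+\Vert F_{z,k,i}\Vert\bigr).
\end{equation*}

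The remaining task is pointwise control. On $\{\widetilde f_j\neq0\}$ the function $|\widetilde f_j|$ takes finitely many positive values, so it lies between two positive constants. The weights $w_{0,j},w_{1,j}$ are simple, and on the support of $f_j$ (where they are positive a.e.) they are also bounded above and below. Since $e_{i,k,j}$ are bounded uniformly by $|e_{i,j}|$ and $z$ stays bounded on $K$, both $|F_{z,k,j}|$ and $|F_{z,j}|$ are bounded by a constant times $\chi_{\{f_j\neq0\}}$, uniformly in $k$ and $z\in K$. This gives the uniform boundedness of the product factors. For the difference, the estimate $\bigl||\widetilde f_j|^{a}-|\widetilde f_j|^{b}\bigr|\le C|a-b|$ for complex $a,b$ in a bounded set (with $|\widetilde f_j|$ ranging in a compact subset of $(0,\infty)$) yields
\begin{equation*}
|F_{z,k,j}-F_{z,j}|\le C\bigl(\Vert e_{1,k,j}-e_{1,j}\Vert_\infty+\Vert e_{2,k,j}-e_{2,j}\Vert_\infty\bigr)\chi_{\{f_j\neq0\}},
\end{equation*}
uniformly on $K$. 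Since $w_{0,j}\chi_{\{f_j\neq0\}}\in L^{p_{0,j}(\cdot)}$ (it is bounded and has finite-measure support), the norm of this difference tends to $0$ uniformly in $z\in K$ by the uniform convergence $e_{i,k,j}\to e_{i,j}$.

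The main obstacle I expect is the bookkeeping on sets where some weight vanishes or where the complex powers $v_0^{1-z}$ and $w^{z-1}$ are undefined. I would handle these by the conventions that are implicit in the definitions: the integrand is understood to vanish off $\{v_0v_1g\neq0\}$, and $F_{z,j}$ is supported in $\{f_j\neq0\}\subset\{w_{0,j}w_{1,j}>0\}$ up to a null set. The other delicate point is making sure that all constants depend only on $K$, through bounds on $|z|$ and $\mathrm{Re}\,z\in[0,1]$, and not on $z$ itself.
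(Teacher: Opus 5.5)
Your proposal is correct and follows essentially the same route as the paper. Both arguments use uniform convergence $F_{z,k,j}\to F_{z,j}$ on $\Omega_j\times K$, hence in $L^{p_{0,j}(\cdot)}(w_{0,j})$ uniformly in $z\in K$; multilinearity plus the $i=0$ bound then give $T(\vec{F}_{z,k})\to T(\vec{F}_{z})$ in $L^{q_0(\cdot)}(v_0)$ uniformly, and $s<1$ together with H\"older and the lower bound on the simple weight $v_0$ transfers this to $\Phi_{\ell,k}\to\Phi_{\ell}$. Your explicit telescoping and Lipschitz estimate for complex powers only make precise steps the paper calls easy.
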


\begin{proof}
Fix a compact subset $K$ of $\bar{S}$. It is well known that for all $a\in(0,\infty)$, the function
\begin{equation*}
        z\mapsto a^{z} = \exp((\ln a)z)
\end{equation*}
in continuous on $\C$ and thus uniformly continuous on every compact subset of $\C$. It is then easy to see that
\begin{equation*}
  \lim_{k\to\infty}F_{z,k,j}(\omega) \to F_{z,j}(\omega)\quad\text{uniformly for }(\omega,z)\in\Omega_j\times K.
\end{equation*}
Since $w_{0,j}$ is bounded and $|\Omega_j|<\infty$, it follows that
\begin{equation*}
  F_{z,k,j} \to F_{z,j}\quad\text{as }k\to\infty\text{ in }L^{p_{0,j}(\cdot)}(w_{0,j})
\end{equation*}
uniformly for $z\in K$. Moreover, it easy to see that
\begin{equation*}
  \sup_{z \in K}\left(\sup_{k\in\N}\esssup |F_{z,k,j}|, \esssup |F_{z,j}|\right) < \infty.
\end{equation*}
Thus, by the same token as before we have
\begin{equation*}
  \sup_{z\in K}\left(\sup_{k\in\N}\Vert F_{z,k,j}\Vert_{L^{p_{0,j}(\cdot)}(w_{0,j})}, \Vert F_{z,j}\Vert_{L^{p_{0,j}(\cdot)}(w_{0,j})}\right) < \infty.
\end{equation*}
Therefore, using assumption \eqref{eq:interpolation_assumption} for $i=0$ and the multilinearity of $T$, we deduce
\begin{equation*}
  T(\vec{F}_{z,k}) \to T(\vec{F}_{z})\quad\text{as }k\to\infty\text{ in }L^{q_{0}(\cdot)}(v_{0})
\end{equation*}
uniformly for $z\in K$. An easy computation shows that there exists some constant $0<C<\infty$ such that for all $z\in K$ and all $k\in\N$ we have
\begin{equation*}
  |U_{\ell,k}(z,\cdot) - U_{\ell}(z,\cdot)| \leq C|T(\vec{F}_{z,k})(\cdot) - T(\vec{F}_{z})|\quad\text{a.e.~on }\Omega.
\end{equation*}
Therefore, since $s<1$, for each $z\in K$ and $k\in\N$ we can compute
\begin{align*}
  |\Phi_{\ell,k}(z) - \Phi_{\ell}(z)| &\leq \int_{\Omega}|U_{\ell,k}(z,\omega) - U_{\ell}(z,\omega)|^{s} \dd\omega\\
  &\leq C^{s} \int_{\Omega}|T(\vec{F}_{z,k})(\omega) - T(\vec{F}_{z})(\omega)|^{s} \dd\omega\\
  & = C^{s} \int_{\Omega}|T(\vec{F}_{z,k})(\omega) - T(\vec{F}_{z})(\omega)|^{s}v_{0}(\omega)^{s}v_{0}(\omega)^{-s}\dd\omega\\
  &\leq C^{s}C_1 \Vert |T(\vec{F}_{z,k}) - T(\vec{F}_{z})|^{s}v_{0}^{s} \Vert_{L^{q_0(\cdot)/s}} \cdot \Vert v_{0}^{-s} \Vert_{L^{(q_0(\cdot)/s)'}}\\
  &\leq C^{s}C_1C_2 \Vert T(\vec{F}_{z,k}) - T(\vec{F}_{z}) \Vert_{L^{q_0(\cdot)}(v_0)}^{s}.
\end{align*}
In the transition from the third to the fourth line we have used H\"older's inequality, Lemma~\ref{Holder's ineq.}, so that $0<C_1<\infty$ depends only on $q_0(\cdot)$ and $s$. Moreover, we have used the fact that, since $v_0$ is a.e.~bounded from below by a positive constant and $|\Omega|<\infty$, we have
\begin{equation*}
  C_2 := \Vert v_{0}^{-s} \Vert_{L^{(q_0(\cdot)/s)'}} < \infty.
\end{equation*}
This completes the proof.
\end{proof}

\begin{lemma}\label{lem:cont_and_subh}
Fix $\ell,k\in\N$. The function $\Phi_{\ell,k}$ is continuous on $\bar{S}$ and subharmonic in $S$.
\end{lemma}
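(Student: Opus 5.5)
The plan is to reduce $\Phi_{\ell,k}$ to a finite sum of moduli of entire functions of $z$, raised to the power $s$.

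\emph{Finite-dimensional decomposition.} Every ingredient of $F_{z,k,j}$ is a simple function: $\widetilde f_j$, $s_j$, $e_{1,k,j}$, $e_{2,k,j}$, $w_{0,j}$, $w_{1,j}$. So there is a finite measurable partition $\{E_{j,\alpha}\}_\alpha$ of $\Omega_j$ on which all of them are constant. On it we can write
\[
F_{z,k,j}=\sum_\alpha c_{j,\alpha}(z)\chi_{E_{j,\alpha}},
\]
where each $c_{j,\alpha}(z)$ is an explicit finite product of terms $a^{z}$ and $a^{1}$ with $a>0$ and unimodular constants. Hence each $c_{j,\alpha}$ is entire.

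\emph{Expanding $T$.} Each $\chi_{E_{j,\alpha}}$ has finite measure and is bounded by simple weights, so it lies in $L^{p_{0,j}(\cdot)}(w_{0,j})$. By $m$-linearity,
\[
T(\vec F_{z,k})=\sum_{\vec\alpha}\prod_{j}c_{j,\alpha_j}(z)\,T(\chi_{E_{1,\alpha_1}},\ldots,\chi_{E_{m,\alpha_m}}).
\]
This is a finite sum $\sum_\beta \gamma_\beta(z)h_\beta(\omega)$ with entire coefficients $\gamma_\beta$ and fixed measurable functions $h_\beta\in L^{q_0(\cdot)}(v_0)$.

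\emph{The remaining factors.} The factors $v_0^{1-z}v_1^{z}$ and $G_z$ are also piecewise of the form $b(\omega)^z c(\omega)$. Here $G_z$ involves $g^{t(z,\cdot)/s}$ with $t(z,\cdot)$ affine in $z$. The coefficient $r'(\cdot)-sr'(\cdot)n_z(\cdot)$ need not be simple, since $q_i$ and $r'$ are merely measurable. It is, however, affine in $z$ with bounded measurable coefficients $\tau_1(\omega)z+\tau_2(\omega)$. On each of the finitely many level sets of $g$, $v_0$, $v_1$ (all simple), and therefore on a finite partition $\{D_\kappa\}$ of $\Omega$, we get
\[
U_{\ell,k}(z,\omega)=\sum_\beta\gamma_\beta(z)\,\exp(z\,a_\kappa(\omega)+b_\kappa(\omega))\,h_\beta(\omega)\,e^{(z^2-1)/(s\ell)}M_0^{z-1}M_1^{-z}.
\]
Here $a_\kappa,b_\kappa$ are bounded measurable functions. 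In particular, for a.e.\ fixed $\omega$, the map $z\mapsto U_{\ell,k}(z,\omega)$ is entire.

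\emph{Subharmonicity.} For holomorphic $H$ and $s>0$, $|H|^s=\exp(s\log|H|)$ is subharmonic. So $z\mapsto|U_{\ell,k}(z,\omega)|^s$ is subharmonic for a.e.\ $\omega$.

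\emph{Continuity and passage to the integral.} Continuity and the sub-mean value property pass to $\Phi_{\ell,k}=\int_\Omega|U_{\ell,k}(z,\omega)|^s\,d\omega$ by dominated convergence and Fubini. On a compact $K\subset\bar S$ we have
\[
|U_{\ell,k}(z,\omega)|^s\le C_K\sum_\beta|h_\beta(\omega)|^s\,\chi_{\{g\ne0\}}(\omega).
\]
The $\exp(a_\kappa z)$ factors are bounded on $K$, and $g$ is bounded away from $0$ and $\infty$ on its support. The dominating function $|h_\beta|^s$ is integrable over $\Omega$ by the Hölder argument of Lemma~\ref{lem:unif_comp}: $|\Omega|<\infty$, $v_0$ is bounded below on $\Omega$, and $h_\beta\in L^{q_0(\cdot)}(v_0)$. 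The domination gives continuity of $\Phi_{\ell,k}$ on $\bar S$. Integrating the sub-mean-value inequality over circles in $S$ and exchanging integrals (Tonelli) gives subharmonicity of $\Phi_{\ell,k}$. Upper semicontinuity is already covered by continuity.

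\emph{Main obstacle.} The delicate point is measurability and uniform domination, not subharmonicity. Measurability is needed for the joint function $(z,\omega)\mapsto U_{\ell,k}$, which is clear from the explicit form. Uniform domination is needed for the nonsimple exponent $t(z,\cdot)$ and for the weights: a priori $v_0$ may vanish somewhere, but on $\{v_0=0\}$ the integrand vanishes. I would handle the first by proving boundedness of $\tau_1,\tau_2$ from $q_\pm$ and $r'_\pm$ being finite and positive, and the second by the Hölder argument above. If needed, I would replace $\Omega$ by $\{v_0v_1g\neq0\}$ throughout.
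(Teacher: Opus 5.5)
Your proposal is correct and follows the paper's route: expand $T(\vec{F}_{z,k})$ by multilinearity into finitely many terms with entire coefficients, so that $z\mapsto U_{\ell,k}(z,\omega)$ is holomorphic for a.e.\ $\omega$. Continuity then follows by dominated convergence, and subharmonicity by integrating the sub-mean-value inequality for $|U_{\ell,k}(\cdot,\omega)|^{s}$. Your domination step is more careful than the paper's, which asserts $\sup_{z\in K}\esssup|R(z,\cdot)|<\infty$ even though $T(\chi_{A_{t_1,1}},\ldots,\chi_{A_{t_m,m}})$ is only known to lie in $L^{q_0(\cdot)}(v_0)$ and need not be bounded. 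Your bound by $C_K\sum_\beta|h_\beta|^{s}$, which is integrable by the H\"older argument of Lemma~\ref{lem:unif_comp}, is what actually justifies passing to the limit.
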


\begin{proof}
It is clear that one can write
\begin{equation*}
  F_{z,k,j} = \sum_{t = 1}^{N_{j}}a_{t,j}^{z}b_{t,j}e^{ic_{t,j}}\chi_{A_{t,j}},\quad z\in\bar{S}
\end{equation*}
where
\begin{equation*}
  a_{t,j}, b_{t,j} \in (0,\infty),\quad c_{t,j} \in \R,
\end{equation*}
and $A_{t,j}$ is a measurable subset of $\Omega_j$ of finite measure. It follows that for all $z\in\bar{S}$, using the multilinearity of $T$ we obtain
\begin{align*}
  U_{\ell,k}(z,\omega) &= \sum_{\substack{t_j\in\{1,\ldots,N_j\}\\j=1,\ldots,m}}\exp\left(\dfrac{z^2-1}{s\ell}\right)M_0^{z-1}M_1^{-z}v_0(\omega)^{1-z}v_1(\omega)^{z}G_{z}(\omega)\\
  &\qquad\cdot\prod_{j=1}^{m}(a_{t_j,j}^{z}b_{t_j,j}e^{ic_{t_j,j}})
  T(\chi_{A_{t_1,1}},\ldots,\chi_{A_{t_m,m}})(\omega) := R(z,\omega)
\end{align*}
for a.e.~$\omega\in\Omega$. Thus, we have
\begin{equation*}
  \Phi_{\ell,k}(z) = \int_{\Omega}|R(z,\omega)|^{s}\dd\omega,
\end{equation*}
for every $z\in\bar{S}$.

Let us first prove that $\Phi_{\ell,k}$ is continuous on the whole of $\bar{S}$. Fix $z_0\in\bar{S}$. Pick a compact neighborhood $K$ of $z_0$ in $\bar{S}$. It is clear that
\begin{equation*}
  R(z,\cdot) \to R(z_0,\cdot)\text{ as }z\to z_0\text{ pointwise on }\Omega.
\end{equation*}
Moreover, it is easy to see that
\begin{equation*}
  \sup_{z\in K}\esssup|R(z,\cdot)| < \infty.
\end{equation*}
Therefore, since $|\Omega|<\infty$, an application of the dominated convergence theorem  for classical Lebesgue spaces yields
\begin{equation*}
  \lim_{z\to z_0}\Phi_{\ell,k}(z) = \lim_{z\to z_0}\int_{\Omega}|R(z,\omega)|^{s}\dd\omega = \int_{\Omega}|R(z_0,\omega)|^{s}\dd\omega = \Phi_{\ell,k}(z_0).
\end{equation*}
    
Next, it is clear that for a.e.~$\omega\in\Omega$, the function
\begin{equation*}
  z\mapsto R(z,\omega)
\end{equation*}
is analytic on $S$. Therefore, since we have already proved that $\Phi_{\ell,k}$ is continuous on $S$, similarly to the top of page 5031 in \cite{CaoOlivoYabuta2022} we deduce that $\Phi_{\ell,k}$ is subharmonic in $S$, concluding the proof.
\end{proof}

\begin{lemma}
\label{lem:vanish}
Fix $\ell\in\N$. Then, there holds
\begin{equation*}
  \lim_{|z|\to\infty}\Phi_{\ell}(z) = 0.
\end{equation*}
\end{lemma}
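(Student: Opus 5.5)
The plan is to bound $|U_\ell(z,\omega)|$ pointwise by a factor that decays in $z$ times a function of $\omega$ whose $L^s(\Omega)$ quasi-norm stays bounded uniformly in $z\in\bar S$. Write $z=x+iy$ with $x\in[0,1]$. Then
\begin{equation*}
  \left|\exp\left(\tfrac{z^2-1}{s\ell}\right)\right| = \exp\left(\tfrac{x^2-y^2-1}{s\ell}\right)\leq \exp\left(-\tfrac{y^2}{s\ell}\right).
\end{equation*}
Since $x$ stays in $[0,1]$, the condition $|z|\to\infty$ in $\bar S$ is equivalent to $|y|\to\infty$. All other factors depend on $z$ only through powers $a^{z}$ with $a>0$, and $|a^{z}|=a^{x}$. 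So these factors have moduli independent of $y$ and bounded uniformly for $x\in[0,1]$. In particular $|M_0^{z-1}M_1^{-z}|\leq \max\{M_0,M_0^{-1}\}\max\{M_1,M_1^{-1}\}$, and $|v_0^{1-z}v_1^{z}|\leq \max(v_0,1)\max(v_1,1)$, which is bounded because $v_0,v_1$ are simple functions. Likewise $|G_z|=g^{\mathrm{Re}\,t(z,\cdot)/s}\chi_{\{g\neq0\}}$. Here $g$ is a nonnegative simple function, and $\mathrm{Re}\,t(z,\cdot)=r'(\cdot)-sr'(\cdot)\mathrm{Re}\,n_z(\cdot)$ is bounded, since $r'$, $1/q_0$ and $1/q_1$ are bounded. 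Hence $|G_z|\leq C$ uniformly in $z\in\bar S$, with $g$ taking finitely many positive values.

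The remaining factor is $T(\vec F_z)$, which depends on $z$ in a nontrivial way through multilinearity. I would not bound it pointwise. Instead I would control it in $L^{q_0(\cdot)}(v_0)$. First, $|F_{z,j}|=|\widetilde f_j|^{p_j\mathrm{Re}\, m_{z,j}}\chi_{\{\widetilde f_j\neq 0\}}w_{0,j}^{x-1}w_{1,j}^{-x}$. This quantity is bounded uniformly in $z\in\bar S$, because $\widetilde f_j$ is simple with finitely many nonzero values, the exponents are bounded, and the weights are simple functions (implicitly bounded away from $0$ on the relevant set). It is also supported in $\{f_j\neq0\}$, which has finite measure. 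Hence $\sup_{z\in\bar S}\Vert F_{z,j}\Vert_{L^{p_{0,j}(\cdot)}(w_{0,j})}<\infty$, since $w_{0,j}$ is bounded. By \eqref{eq:interpolation_assumption} with $i=0$, it follows that $\sup_{z\in\bar S}\Vert T(\vec F_z)\Vert_{L^{q_0(\cdot)}(v_0)}=:K<\infty$.

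Combining these bounds and arguing as in the proof of Lemma~\ref{lem:unif_comp}, I would use $|\Omega|<\infty$, H\"older's inequality (Lemma~\ref{Holder's ineq.}) with exponents $q_0(\cdot)/s$ and its dual, and $C_2=\Vert v_0^{-s}\Vert_{L^{(q_0(\cdot)/s)'}}<\infty$. This gives
\begin{equation*}
  \Phi_\ell(z)\leq C e^{-y^2/\ell}\int_\Omega |T(\vec F_z)|^{s}\dd\omega\leq C C_1C_2\, e^{-y^2/\ell}\,\Vert T(\vec F_z)\Vert_{L^{q_0(\cdot)}(v_0)}^{s}\leq C' e^{-y^2/\ell}K^{s}.
\end{equation*}
The right-hand side tends to $0$ as $|y|\to\infty$, which proves the claim. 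The main obstacle is the uniform (in $z$) control of $T(\vec F_z)$. The key point is that only $\mathrm{Re}\,z\in[0,1]$ enters the moduli of $F_{z,j}$, so one fixed endpoint bound suffices. The only technical care is in checking that the negative powers of the simple weights stay bounded on the supports of the $f_j$.
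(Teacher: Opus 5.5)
Your proposal is correct and follows essentially the same route as the paper: you use $\mathrm{Re}(z)\in[0,1]$ to bound every factor other than $T(\vec{F}_z)$ uniformly, and you pass from $\int_\Omega|T(\vec{F}_z)|^s$ to $\Vert T(\vec{F}_z)\Vert_{L^{q_0(\cdot)}(v_0)}^s$ via H\"older with $v_0^{-s}$ as in Lemma~\ref{lem:unif_comp}. You then invoke the $i=0$ endpoint bound together with $\sup_{z\in\bar{S}}\esssup|F_{z,j}|<\infty$, which leaves exactly the decaying factor $\left|\exp\left(\frac{z^2-1}{\ell}\right)\right|\leq e^{-(\mathrm{Im}\,z)^2/\ell}$.
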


\begin{proof}
It suffices to prove that there exists a constant $0<C<\infty$ such that
\begin{equation*}
  0\leq\Phi_{\ell}(z) \leq C\left|\exp\left(\frac{z^2-1}{\ell}\right)\right|,\quad\forall z\in\bar{S},
\end{equation*}
for
\begin{equation*}
  \left|\exp\left(\frac{z^2-1}{\ell}\right)\right|
  = \exp\left(\mathrm{Re}\left(\frac{z^2-1}{\ell}\right)\right)
  = \exp\left(\frac{(\mathrm{Re}(z))^2-1-(\mathrm{Im}(z))^2}{\ell}\right),\quad\forall z\in\C,
\end{equation*}
and $0<\mathrm{Re}(z) \leq 1$, for all $z\in\bar{S}$.
    
We begin by estimating
\begin{equation*}
  \Phi_{\ell}(z) = \left|\exp\left(\frac{z^2-1}{\ell}\right)\right|\int_{\Omega}|M_0^{z-1}M_1^{-z}T(\vec{F}_{z})(\omega)v_0(\omega)^{1-z}v_1(\omega)^{z}G_{z}(\omega)|^{s}\dd\omega.
\end{equation*}
Next, we have
\begin{align*}
  &\int_{\Omega}|M_0^{z-1}M_1^{-z}T(\vec{F}_{z})(\omega)v_0(\omega)^{1-z}v_1(\omega)^{z}G_{z}(\omega)|^{s}\dd\omega\\
  & = \int_{\Omega}M_0^{s\mathrm{Re}(z)-s}M_1^{-s\mathrm{Re}(z)}|T(\vec{F}_{z})(\omega)|^{s}v_0(\omega)^{s-s\mathrm{Re}(z)}v_1(\omega)^{\mathrm{Re}(z)}g(\omega)^{\mathrm{Re}(t(z,\omega))}\chi_{\{g\neq0\}}(\omega)\dd\omega\\
  &\leq C_1\int_{\Omega}|T(\vec{F}_{z})(\omega)|^{s}\dd\omega
\end{align*}
whenever $0\leq\mathrm{Re}(z)\leq1$, for
\begin{equation*}
  \mathrm{Re}(t(z,\omega)) = r'(\omega) - sr'(\omega)\left[\frac{1-\mathrm{Re}(z)}{q_0(\omega)}+\frac{\mathrm{Re}(z)}{q_1(\omega)}\right].
\end{equation*}
As in the proof of Lemma~\ref{lem:unif_comp} we estimate
\begin{equation*}
  \int_{\Omega}|T(\vec{F}_{z})(\omega)|^{s}\dd\omega \leq C_2\Vert T(\vec{F}_{z}) \Vert_{L^{q_0(\cdot)}(v_0)}^{s}.
\end{equation*}
Assumption \eqref{eq:interpolation_assumption} for $i=0$ gives
\begin{align*}
  \Vert T(\vec{F}_{z}) \Vert_{L^{q_0(\cdot)}(v_0)}
  \leq C_3\prod_{j=1}^{m}\Vert F_{z,j}\Vert_{L^{p_{0,j}(\cdot)}(w_{0,j})}
\end{align*}
Now, we have
\begin{align*}
  |F_{z,j}(\omega)| = |\widetilde{f}_{j}(\omega)|^{\mathrm{Re}(p_j(\omega)m_{z,j}(\omega))}\chi_{\{\widetilde{f}_j\neq0\}}(\omega)w_{0,j}(\omega)^{\mathrm{Re}(z)-1}w_{1,j}(\omega)^{-\mathrm{Re}(z)}
\end{align*}
Thus, since
\begin{equation*}
  \mathrm{Re}((p_jm_{z,j})(\omega)) = 
  \frac{p_j(\omega)(1-\mathrm{Re}(z))}{p_0(\omega)}+\frac{p_{j}(\omega)\mathrm{Re}(z)}{p_1(\omega)},
\end{equation*}
we deduce
\begin{equation*}
  \sup_{z\in\bar{S}}\esssup|F_{z,j}| < \infty.
\end{equation*}
Thus, as in the proof of Lemma~\ref{lem:unif_comp} we deduce
\begin{equation*}
  \sup_{z\in\bar{S}}\Vert F_{z,j}\Vert_{L^{p_{0,j}(\cdot)}(w_{0,j})} < \infty,
\end{equation*}
yielding the desired result.
\end{proof}

\subsubsection{Estimating the boundary}

\begin{lemma}
\label{lem:boundary}
Let $\ell\in\N$ and $y\in\R$ be arbitrary. Then, we have
\begin{equation}\label{eq:left_line}
  0\leq\Phi_{\ell}(iy) \leq 1
\end{equation}
and
\begin{equation}\label{eq:right_line}
  0\leq\Phi_{\ell}(1+iy) \leq 1.
\end{equation}
\end{lemma}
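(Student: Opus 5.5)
On the line $\mathrm{Re}(z)=0$, write $z=iy$ and split $|U_\ell(iy,\cdot)|^s$ into three factors. We bound $\int_\Omega |T(\vec F_{iy})v_0|^s\,|G_{iy}|^s\dd\omega$ by H\"older's inequality (Lemma~\ref{Holder's ineq.}) with exponents $q_0(\cdot)/s$ and its dual. The scalar factors are handled first. We have $|\exp((z^2-1)/(s\ell))|^s=\exp((-y^2-1)/\ell)\le 1$, $|M_0^{iy-1}M_1^{-iy}|^s=M_0^{-s}$, and $|v_0^{1-iy}v_1^{iy}|=v_0$. Thus
\begin{equation*}
  \Phi_\ell(iy)\le M_0^{-s}\int_\Omega |T(\vec F_{iy})v_0|^s\, g^{\mathrm{Re}\, t(iy,\cdot)}\chi_{\{g\neq0\}}\dd\omega .
\end{equation*}
Since $\mathrm{Re}\, t(iy,\cdot)=r'(\cdot)(1-s/q_0(\cdot))$, we get $g^{\mathrm{Re}\, t}=g^{r'(\cdot)/\rho_0'(\cdot)}$, where $\rho_0:=q_0/s$.

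The plan is to use the modular form of H\"older's inequality rather than the norm form, so that no constant appears. The pointwise Young inequality $ab\le a^{\rho_0}/\rho_0+b^{\rho_0'}/\rho_0'$ gives
\begin{equation*}
  |T(\vec F_{iy})v_0|^s g^{r'/\rho_0'}\le \frac{|T(\vec F_{iy})v_0|^{q_0}}{\rho_0}+\frac{g^{r'}}{\rho_0'} .
\end{equation*}
This bound has a defect: the coefficients $1/\rho_0$ and $1/\rho_0'$ vary with $x$, so integrating yields at most $\max\{\rho_{q_0(\cdot)}(T(\vec F_{iy})v_0),\rho_{r'(\cdot)}(g)\}$. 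To repair this, and to absorb the $M_0^{-s}$, we apply Young to the rescaled factor $|T(\vec F_{iy})v_0/M_0|^s$. It then suffices to show
\begin{equation*}
  \rho_{q_0(\cdot)}\bigl(T(\vec F_{iy})v_0/M_0\bigr)\le1,\qquad \rho_{r'(\cdot)}(g)\le1 .
\end{equation*}
The second holds because $\|g\|_{r'(\cdot)}\le1$. For the first, the boundedness hypothesis \eqref{eq:interpolation_assumption} gives $\|T(\vec F_{iy})\|_{L^{q_0(\cdot)}(v_0)}\le M_0\prod_j\|F_{iy,j}\|_{L^{p_{0,j}(\cdot)}(w_{0,j})}$. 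Next, $|F_{iy,j}|w_{0,j}=|\widetilde f_j|^{p_j/p_{0,j}}$, so
\begin{equation*}
  \rho_{p_{0,j}}(F_{iy,j}w_{0,j})=\rho_{p_j}(\widetilde f_j)=\rho_{p_j}(f_jw_j)=1 ,
\end{equation*}
the last equality because $\|f_j\|_{L^{p_j(\cdot)}(w_j)}=1$ and the modular of a simple function with unit norm equals one. Hence each factor has norm at most one, the product is at most one, and therefore so is the modular. Integrating the pointwise bound then gives
\begin{equation*}
  \Phi_\ell(iy)\le \int\Bigl(\frac{1}{\rho_0}+\frac{1}{\rho_0'}\Bigr)\cdot\text{(combined integrand)}\le1 ,
\end{equation*}
where this last step is made precise by applying Young with the individual modular integrands, which is valid since both modulars are at most one. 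The case $\mathrm{Re}(z)=1$ is identical, with $q_1,p_{1,j},w_{1,j},v_1,M_1$ in place of $q_0,p_{0,j},w_{0,j},v_0,M_0$.

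The main obstacle is the Young step with variable exponents. Integrating $\frac{A^{\rho_0}}{\rho_0}+\frac{B^{\rho_0'}}{\rho_0'}$ does not immediately give $\le1$, because one only knows $\int A^{\rho_0}\le1$ and $\int B^{\rho_0'}\le1$ separately, while the weights $1/\rho_0(x)$ and $1/\rho_0'(x)$ sum to one only pointwise. My first attempt would be to use instead the convexity bound $\frac{A^{\rho}}{\rho}+\frac{B^{\rho'}}{\rho'}\le \max\{A^\rho,B^{\rho'}\}\le A^\rho+B^{\rho'}$, which yields $\Phi_\ell\le2$ rather than $\le1$. If that happens, I would use the homogeneity of the three-lines argument to remove the constant: replace $g$ by $\lambda g$ and $T$ by $\mu T$ with tuned $\lambda,\mu$, or run the tensor-power trick applied to $\Phi_\ell$. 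Since an absolute constant $C$ in the boundary estimates still gives $\Phi_\ell(\theta)\le C$ by the maximum principle, and the paper's aim \eqref{eq:to_prove_1} allows exact constants, I would try first the argument with $\|\cdot\|\le1\Rightarrow\rho\le1$ and a careful pointwise split ensuring $\int(\cdot)\le 1/\rho_0+1/\rho_0'$ in an averaged sense. If that fails, I would accept a bound $\le2$ and adjust the normalization.
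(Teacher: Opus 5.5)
Your reduction is correct. With $A:=|T(\vec F_{iy})v_0/M_0|^{s}$ and $B:=g^{r'/\rho_0'}$ you rightly obtain
\begin{equation*}
\Phi_\ell(iy)\le\int_\Omega AB\,\dd\omega,\qquad \int_\Omega A^{\rho_0(\omega)}\,\dd\omega\le1,\qquad \int_\Omega B^{\rho_0'(\omega)}\,\dd\omega\le1 .
\end{equation*}
The gap is the passage from these two modular bounds to $\int_\Omega AB\,\dd\omega\le1$. Integrating Young's inequality gives only $\int_\Omega AB\,\dd\omega\le 1/(\rho_0)_-+1/(\rho_0')_-$, and ``applying Young with the individual modular integrands'' does not improve this.

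No argument can improve it, because for nonconstant $\rho_0$ the implication is false. Take $\Omega=E_1\cup E_2$ with $|E_1|=|E_2|=\tfrac12$ (two well-separated balls, which is compatible with $q_0\in\mathrm{LH}$). Let $\rho_0=4$ on $E_1$ and $\rho_0=\tfrac43$ on $E_2$, and set $A=2^{-1/4}\chi_{E_1}+(3/2)^{3/4}\chi_{E_2}$ and $B=(3/2)^{3/4}\chi_{E_1}+2^{-1/4}\chi_{E_2}$. Both modulars equal $1$, yet $\int_\Omega AB\,\dd\omega=2^{-1/4}(3/2)^{3/4}=3^{3/4}/2\approx1.14$.

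This example is realizable in the lemma's setting. Every such $B$ comes from an admissible $g$, namely $g=B^{\rho_0'/r'}$. A rank-one operator $T(\vec f)=\prod_j\lambda_j(f_j)\,h$ produces this $A$ at $y=0$, where $hv_0=A^{1/s}$ and the $\lambda_j$ are unit-norm functionals with bounded densities and $\lambda_j(F_{0,j})=1$ (so $M_0=1$). Hence $\Phi_\ell(0)=e^{-1/\ell}\,3^{3/4}/2>1$ for large $\ell$, and the bound with constant exactly $1$ fails in general.

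Your proposed repairs fail for the same reason. Rescaling $T$ rescales $M_0$ and $M_1$ with it. Rescaling $g$ by $\lambda$ multiplies the boundary integrand by the $\omega$-dependent factor $\lambda^{r'/\rho_0'}$ but the quantity at $z=\theta$ by $\lambda$ (since $t(\theta,\cdot)\equiv1$), so it only moves the constant around. The tensor-power trick needs norms that are multiplicative under tensor products, which variable-exponent Luxemburg norms are not.

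The paper's proof has the same defect, just hidden. It feeds exactly your two facts, $\Vert g^{r'/r_0'}\Vert_{L^{r_0'(\cdot)}}\le1$ and $\Vert T(\vec F_{iy})\Vert_{L^{q_0(\cdot)}(v_0)}\le M_0$, into the norm form of H\"older's inequality, Lemma~\ref{Holder's ineq.}, which carries an implicit constant. It then writes $\Phi_\ell(iy)\le e^{-(y^2+1)/\ell}$, silently dropping that constant; the factor $M_0^{-s}$ dropped in its middle line is harmless, since it cancels.

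What both routes honestly give is $\Phi_\ell(iy)\le K_0$ and $\Phi_\ell(1+iy)\le K_1$, with $K_i$ depending only on $q_i(\cdot)$ and $s$. Your Young computation even gives the explicit value $K_i=1/(\rho_i)_-+1/(\rho_i')_-<2$, where $\rho_1:=q_1/s$. That is the statement you should prove, and it suffices. The maximum principle yields $\Phi_\ell\le\max\{K_0,K_1\}$ on $\bar S$. Hence \eqref{eq:to_prove_1} and Theorem~\ref{thm:interpolation_boundedness} hold up to a factor depending only on the exponents; the duality step via \cite[Theorem 2.34]{CF2013} already costs such a factor anyway. The proof of Theorem~\ref{thm:interpolation_compactness} only needs the interpolated bounds up to a constant independent of $\delta$. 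So abandon the renormalization and state the boundary estimate with a constant.
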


\begin{proof}
Let us prove \eqref{eq:left_line}.  With $z := iy$ we estimate
\begin{align*}
  &\int_{\Omega}|M_0^{z-1}M_1^{-z}T(\vec{F}_{z})(\omega)v_0(\omega)^{1-z}v_1(\omega)^{z}G_{z}(\omega)|^{s}\dd\omega\\
  & = \int_{\Omega}M_0^{s\mathrm{Re}(z)-s}M_1^{-s\mathrm{Re}(z)}|T(\vec{F}_{z})(\omega)|^{s}v_0(\omega)^{s-s\mathrm{Re}(z)}v_1(\omega)^{\mathrm{Re}(z)}g(\omega)^{\mathrm{Re}(t(z,\omega))}\chi_{\{g\neq0\}}(\omega)\dd\omega\\
  & =\int_{\Omega}|T(\vec{F}_{z})(\omega)|^{s}v_0(\omega)^{s}g(\omega)^{r'(\omega)/r_0'(\omega)}\dd\omega,
\end{align*}
where $r_0(\cdot) := \frac{q_0(\cdot)}{s}$. Using H\"{o}lder's inequality, Lemma~\ref{Holder's ineq.}, as well as Lemma \ref{lem:homog}, we get
\begin{align*}
  \int_{\Omega}|T(\vec{F}_{z})(\omega)|^{s}v_0(\omega)^{s}g(\omega)^{r'(\omega)/r_0'(\omega)}\dd\omega
  & \lesssim \Vert |T(\vec{F}_{z})|^{s}v_0^{s}\Vert_{L^{r_0(\cdot)}}
  \Vert g(\cdot)^{r'(\cdot)/r_0'(\cdot)}\Vert_{L^{r_0'(\cdot)}}\\
  & = \Vert T(\vec{F}_{z})\Vert_{L^{q_0(\cdot)}(v_0)}^{s}
  \Vert g(\cdot)^{r'(\cdot)/r_0'(\cdot)}\Vert_{L^{r_0'(\cdot)}}.
\end{align*}
We compute
\begin{align*}
  \rho_{r_0'(\cdot)}(g(\cdot)^{r'(\cdot)/r_0'(\cdot)}) = 
  \int_{\Omega}g(\omega)^{r'(\omega)}\dd\omega \leq 1,
\end{align*}
since $\Vert g\Vert_{L^{r'(\cdot)}}\leq 1$. By Lemma~\ref{lem: min/max} this implies that
\begin{align*}
  \Vert g(\cdot)^{r'(\cdot)/r_0'(\cdot)}\Vert_{L^{r_0'(\cdot)}} \leq 1.
\end{align*}
Finally, using assumption \eqref{eq:interpolation_assumption} for $i=0$ we compute
\begin{align*}
  \Vert T(\vec{F}_{z})\Vert_{L^{q_0(\cdot)}(v_0)}
  \leq M_0\prod_{j=1}^{m}\Vert F_{z,j}\Vert_{L^{p_{0,j}(\cdot)}(w_{0,j})}
  = M_0\prod_{j=1}^{m}\Vert F_{z,j}w_{0,j}\Vert_{L^{p_{0,j}(\cdot)}}.
\end{align*}
Similarly, we compute
\begin{align*}
  \rho_{p_{0,j}(\cdot)}(F_{z,j}w_{0,j}) = \rho_{p_{j}(\cdot)}(\widetilde{f_j})
  = \rho_{p_{j}(\cdot)}(f_jw_{j}) \leq 1
\end{align*}
by Lemma~\ref{lem: min/max}, since $\Vert f_jw_{j}\Vert_{L^{p_{j}(\cdot)}}\leq1$, and therefore
\begin{align*}
  \Vert F_{z,j}\Vert_{L^{p_{0,j}(\cdot)}(w_{0,j})} \leq 1.
\end{align*}
We conclude that
\begin{equation*}
  \Vert T(\vec{F}_{z})\Vert_{L^{q_0(\cdot)}(v_0)} \leq M_0.
\end{equation*}
Thus
\begin{align*}
  0\leq\Phi_{\ell}(z) \leq 
  \left|\exp\left(\frac{z^2-1}{\ell}\right)\right| = \exp(-(y^2+1)/\ell) \leq 1.
\end{align*}

The proof of \eqref{eq:right_line} is similar.
\end{proof}

\subsubsection{Applying the maximum principle} Fix $\ell\in\N$. In view of the continuity of $\Phi_{\ell}$ on the whole of $\bar{S}$ and the facts that
\begin{align*}
  &\Phi_{\ell}(z) \geq 0,\quad\forall z\in\bar{S},\\
  &\lim_{|z|\to\infty}\Phi_{\ell}(z) = 0,
\end{align*}
which we proved above in Lemmata~\ref{lem:phi_ell_con_subh},~\ref{lem:vanish} and~\ref{lem:boundary}, we deduce that $\Phi_{\ell}$ attains a maximum value on $\bar{S}$.

Since $\Phi_{\ell}$ is subharmonic in $S$ by Lemma~\ref{lem:phi_ell_con_subh}, the maximum principle yields that it cannot attain a global maximum on $S$, unless it is constant on $S$ and hence by continuity on the whole of $\bar{S}$. In view of the facts that
\begin{gather*}
  \Phi_{\ell}(iy)\leq1,\quad\forall y\in\R,\\
  \Phi_{\ell}(1+iy)\leq1,\quad\forall y\in\R,
\end{gather*}
we conclude finally that
\begin{equation*}
  \Phi_{\ell}(z) \leq 1,\quad\forall z\in\bar{S}.
\end{equation*}

\subsubsection{Concluding the proof} In particular, we obtain
\begin{equation*}
  |\Phi_{\ell}(\theta)| \leq 1,\quad\forall \ell=1,2,\ldots.
\end{equation*}
Fatou's lemma for classical Lebesgue spaces yields
\begin{equation*}
  \int_{\Omega}\liminf_{\ell\to\infty}|U_{\ell}(\theta,\omega)|^{s}\dd\omega \leq 
  \liminf_{\ell\to\infty}\int_{\Omega}|U_{\ell}(\theta,\omega)|^{s}\dd\omega 
  = \liminf_{\ell\to\infty}|\Phi_{\ell}(\theta)| \leq 1.
\end{equation*}
Observe that
\begin{equation*}
  U_{\ell}(\theta,\omega) = \exp\left(\frac{\theta^2-1}{s\ell}\right)
  M_0^{\theta-1}M_1^{-\theta}T(f_1w_{1}^{-1},\ldots, f_mw_m^{-1})(\omega)v(\omega)g(\omega)^{1/s},
\end{equation*}
therefore
\begin{equation*}
  \lim_{\ell\to\infty}U_{\ell}(\theta,\omega) = M_0^{\theta-1}M_1^{-\theta}T(f_1,\ldots, f_m)(\omega)v(\omega)g(\omega)^{1/s}.
\end{equation*}
It follows that
\begin{align*}
  \int_{\Omega}|T(f_1,\ldots, f_m)(\omega)v(\omega)|^{s}g(\omega)\dd\omega \leq M_0^{s(1-\theta)}M_1^{s\theta},
\end{align*}
completing the proof.

\subsection{Interpolation of boundedness II}
One of the last remaining steps is to provide the interpolation for multilinear bounded operators with values in a particular type of mixed-variable weighted Lebesgue spaces. Concretely, for $q(\cdot)\in\mathscr{P}_0(\Omega)$, a weight $\nu$ on $\Omega$, a constant $\widetilde{q}\in(0,\infty)$ and a set $\widetilde{\Omega}$, we define the mixed-variable weighted Lebesgue space $L^{q(\cdot)}(v)(L^{\widetilde{q}}(\widetilde{\Omega}))$ as the set of all measurable functions $g:\Omega\times\widetilde{\Omega}\to\C$ with
\begin{equation*}
    \Vert \Vert g(x,y)\Vert_{L^{\widetilde{q}}_{y}}\Vert_{L^{q(\cdot)}_{x}(\nu)} < \infty.
\end{equation*}
The following is the counterpart of a special case of \cite[Theorem 3.5]{CaoOlivoYabuta2022} in the variable exponent setting.

\begin{theorem}
\label{thm:interpolation_boundedness_mixed}
Let $q_0(\cdot),q_1(\cdot)\in(\mathscr{P}_0\cap\mathrm{LH})(\Omega)$, $p_{0,j}(\cdot),p_{1,j}(\cdot)\in(\mathscr{P}\cap\mathrm{LH})(\Omega_j)$, $j=1,\ldots,m$. Moreover, let
\begin{equation*}
  \widetilde{q} \in (0, \min\{(q_0)_{-}, (q_1)_{-}\}).
\end{equation*}
Let $w_{0,j},w_{1,j}$, $j=1,\ldots,m$ be weights on $\Omega_j$ and let also $v_{0},v_{1}$ be weights on $\Omega$. Let $T$ be a $m$-linear operator such that
\begin{equation*}
  T : L^{p_{0,1}(\cdot)}(w_{0,1})\times\cdots\times L^{p_{0,m}(\cdot)}(w_{0,m}) \to L^{q_0(\cdot)}(v_0)(L^{\widetilde{q}}(\widetilde{\Omega}))
\end{equation*}
and
\begin{equation*}
  T : L^{p_{1,1}(\cdot)}(w_{1,1})\times\cdots\times L^{p_{1,m}(\cdot)}(w_{1,m}) \to L^{q_1(\cdot)}(v_1)(L^{\widetilde{q}}(\widetilde{\Omega}))
\end{equation*}
boundedly. For each $i=0,1$, let $M_i\in(0,\infty)$ with
\begin{equation*}
  \Vert \Vert T(\vec{f})(x,y)\Vert_{L^{\widetilde{q}}_{y}}\Vert_{L^{q_i(\cdot)}_{x}(v_i)} \leq M_i\prod_{j=1}^{m}\Vert f_j\Vert_{L^{p_{i,j}(\cdot)}(w_{i,j})}.
\end{equation*}
for all $\vec{f}=(f_1,\ldots,f_m)\in L^{p_{i,1}(\cdot)}(w_{i,1})\times\cdots\times L^{p_{i,m}(\cdot)}(w_{i,m})$. Let $\theta\in(0,1)$ and define
\begin{align*}
  &\frac{1}{p_{j}(\cdot)} := \frac{1-\theta}{p_{0,j}(\cdot)} + \frac{\theta}{p_{1,j}(\cdot)},\quad j=1,\ldots,m,\\
  &\frac{1}{q(\cdot)} := \frac{1-\theta}{q_{0}(\cdot)} + \frac{\theta}{q_{1}(\cdot)}
\end{align*}
as well as
\begin{align*}
  &w_{j} := w_{0,j}^{1-\theta}w_{1,j}^{\theta},\quad j=1,\ldots,m,\\
  &v := v_{0}^{1-\theta}v_{1}^{\theta}.
\end{align*}
Then, we have
\begin{equation*}
  T : L^{p_{1}(\cdot)}(w_{1})\times\cdots\times L^{p_{m}(\cdot)}(w_{m}) \to L^{q(\cdot)}(v)(L^{\widetilde{q}}(\widetilde{\Omega}))
\end{equation*}
boundedly and it holds
\begin{equation*}
  \Vert \Vert T(\vec{f})(x,y)\Vert_{L^{\widetilde{q}}_{y}}\Vert_{L^{q(\cdot)}_{x}(v)} \leq M_0^{1-\theta}M_{1}^{\theta}\prod_{j=1}^{m}\Vert f_j\Vert_{L^{p_{j}(\cdot)}(w_{j})},
\end{equation*}
for all $\vec{f}=(f_1,\ldots,f_m)\in L^{p_{1}(\cdot)}(w_{1})\times\cdots\times L^{p_{m}(\cdot)}(w_{m})$.
\end{theorem}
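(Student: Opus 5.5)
The plan is to mirror the proof of Theorem~\ref{thm:interpolation_boundedness} step by step, replacing the scalar quantity $|T(\vec f)v|$ by the mixed quantity $\Vert T(\vec f)(x,\cdot)\Vert_{L^{\widetilde q}_y}v(x)$. First I would carry out the same reductions.

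\begin{enumerate}[label=\textup{(\alph*)}]
\item Density of the simple functions $A_j$ (Lemma~\ref{lem:density_simple_functions}) reduces matters to $\vec f\in\vec A$. Identification of the extension with $T$ on intersections is done as in Lemma~\ref{lem:first_red}, but now using Proposition~\ref{prop:almost_everywhere_equality_mixed} in place of Proposition~\ref{prop:norm_convergence_to_subs_pointwise_convergence}, since the two limits are taken in mixed norms with different outer exponents.
\item We may take $|\Omega|<\infty$ (Lemma~\ref{lem:second_red}). We may also take $|\widetilde\Omega|<\infty$, by truncating $T$ to $T\chi_{\Omega_k\times\widetilde\Omega_k}$ and using monotone convergence in both variables.
\item We may take all weights to be simple functions, as in Proposition~\ref{prop:third_red}.
\item We may normalize $\Vert f_j\Vert_{L^{p_j(\cdot)}(w_j)}=1$ (Lemma~\ref{lem:fourth_red}).
\end{enumerate}

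These reductions only use multilinearity and the convergence theorems, so they transfer verbatim once the mixed norm is seen to be a quasi-norm with monotone and dominated convergence. The inner $L^{\widetilde q}$ norm is handled by Fubini or Tonelli, and the outer norm by Propositions~\ref{prop:monotone_convergence} and \ref{prop:dominated_convergence}.

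For the central estimate I would choose $s:=\widetilde q$, or more safely $s\le \min\{\widetilde q,1\}$; the case $\widetilde q>1$ can be handled with $s=\widetilde q$ using $|a+b|^s$ subadditivity issues avoided as below. Since $s=\widetilde q<q_-$, Lemma~\ref{lem:homog} and the duality \cite[Theorem 2.34]{CF2013} give
\begin{equation*}
  \Vert \Vert T(\vec f)\Vert_{L^{\widetilde q}_y}\Vert_{L^{q(\cdot)}(v)}^{\widetilde q}\sim\sup_g\int_\Omega\int_{\widetilde\Omega}|T(\vec f)(x,y)|^{\widetilde q}\dd y\, v(x)^{\widetilde q}g(x)\dd x,
\end{equation*}
with $r(\cdot)=q(\cdot)/\widetilde q$ and $g\ge0$ simple with $\Vert g\Vert_{r'(\cdot)}\le1$. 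The point is that the mixed norm raised to the power $\widetilde q$ is exactly an iterated integral of $|T(\vec f)|^{\widetilde q}$ against a function of $x$ only. I then define
\begin{equation*}
  \Phi_\ell(z)=\int_\Omega\int_{\widetilde\Omega}|U_\ell(z,x,y)|^{\widetilde q}\dd y\dd x,
\end{equation*}
with the same $U_\ell$ as before, where $v_0^{1-z}v_1^zG_z$ depend only on $x$. The boundary estimates (Lemma~\ref{lem:boundary}) go through unchanged: at $z=iy$, Hölder's inequality (Lemma~\ref{Holder's ineq.}) in $x$ applied to $\Vert T(\vec F_z)(x,\cdot)\Vert_{L^{\widetilde q}}^{\widetilde q}v_0^{\widetilde q}$ and $g^{r'/r_0'}$ yields the bound $\Vert \Vert T(\vec F_z)\Vert_{L^{\widetilde q}_y}\Vert_{L^{q_0(\cdot)}(v_0)}^{\widetilde q}\le M_0^{\widetilde q}$, by the hypothesis and the modular computation for $F_{z,j}$. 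The decay estimate of Lemma~\ref{lem:vanish} also carries over, using Hölder in $x$ with $v_0^{-\widetilde q}$ bounded and $|\Omega|<\infty$.

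The main obstacle is continuity and subharmonicity, the analogue of Lemmata~\ref{lem:phi_ell_con_subh}--\ref{lem:cont_and_subh}. For the approximants $\Phi_{\ell,k}$, multilinearity again writes $U_{\ell,k}(z,x,y)$ as a finite sum of terms, each analytic in $z$, times the fixed functions $T(\chi_{A_{t_1,1}},\ldots)(x,y)$. These fixed functions are no longer bounded, so dominated convergence must use the integrable majorant $\sum|T(\chi_{A_{t_1,1}},\ldots)(x,y)|^{\widetilde q}$. This majorant lies in $L^1(\Omega\times\widetilde\Omega)$ by Hölder in $x$, since the weights are simple and bounded below and $|\Omega|<\infty$. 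Given continuity, subharmonicity follows because $z\mapsto|R(z,x,y)|^{\widetilde q}$ is subharmonic for each $(x,y)$, since $\log|R|$ is subharmonic, and integrals of subharmonic functions that are continuous in $z$ remain subharmonic by the sub-mean-value property and Fubini.

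For the uniform convergence $\Phi_{\ell,k}\to\Phi_\ell$ on compacts, I use
\begin{equation*}
  \bigl||a|^{\widetilde q}-|b|^{\widetilde q}\bigr|\le|a-b|^{\widetilde q} \quad\text{if }\widetilde q\le1,
\end{equation*}
and otherwise the mean-value bound $\widetilde q\,|a-b|(|a|+|b|)^{\widetilde q-1}$ together with Hölder in $y$. The error is then controlled by the mixed $L^{q_0(\cdot)}(v_0)(L^{\widetilde q})$ norm of $T(\vec F_{z,k})-T(\vec F_z)$, which tends to $0$ uniformly on compacts by the hypothesis. The maximum principle and Fatou's lemma then conclude exactly as in the proof of Theorem~\ref{thm:interpolation_boundedness}.
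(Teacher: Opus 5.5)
Your proposal is correct. Its skeleton is the paper's: the reductions, the functions $\Phi_\ell$, the bounds on $\mathrm{Re}(z)=0,1$, the maximum principle and Fatou's lemma. The genuine difference is your choice $s=\widetilde{q}$, whereas the paper picks $s\in(0,\widetilde{q})$.

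With $s=\widetilde{q}$, the mixed quasi-norm raised to the power $\widetilde{q}$ equals $\bigl\Vert v^{\widetilde{q}}\int_{\widetilde{\Omega}}|T(\vec{f})(\cdot,y)|^{\widetilde{q}}\dd y\bigr\Vert_{L^{q(\cdot)/\widetilde{q}}}$. This has several consequences.
\begin{itemize}
\item The ordinary norm-conjugate formula in $x$ suffices, so the test function $g$ depends on $x$ alone.
\item $G_z$ is literally the one from Proposition~\ref{prop:central_estimate}.
\item The hypothesis $\widetilde{q}<\min\{(q_0)_-,(q_1)_-\}$ is exactly what puts $q_0(\cdot)/\widetilde{q}$, $q_1(\cdot)/\widetilde{q}$ and $q(\cdot)/\widetilde{q}$ in $\mathscr{P}$, as H\"older on the two boundary lines requires.
\end{itemize}
The paper instead has $\widetilde{r}=\widetilde{q}/s>1$. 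It must therefore invoke the mixed-norm duality of Lemma~\ref{lem:dual_mixed_norm} and work with the heavier factor $g(x,y)^{1/s}\Vert g(x,\cdot)\Vert_{L^{\widetilde{r}'}}^{t(x,z)/s}$. Your route makes that lemma unnecessary.

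The price is that $\widetilde{q}$ may exceed $1$, so the inequality $\bigl||a|^{s}-|b|^{s}\bigr|\le|a-b|^{s}$ behind Lemma~\ref{lem:unif_comp} is unavailable. Your replacement handles this correctly: the mean-value bound, then H\"older in $L^{\widetilde{q}}(\Omega\times\widetilde{\Omega})$, then the embedding of $L^{q_0(\cdot)}(v_0)(L^{\widetilde{q}}(\widetilde{\Omega}))$ into $L^{\widetilde{q}}(\Omega\times\widetilde{\Omega})$. That embedding holds because $v_0$ is bounded below, $|\Omega|<\infty$ and $\widetilde{q}<(q_0)_-$. The paper's route, by contrast, leaves $s<1$ available, so the scalar lemmas carry over essentially verbatim.

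You should therefore commit to $s=\widetilde{q}$. Your hedge ``more safely $s\le\min\{\widetilde{q},1\}$'' would, whenever $s<\widetilde{q}$, lose the simplification and force the paper's mixed duality after all.

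Two smaller remarks:
\begin{itemize}
\item Your integrable majorant $\sum_t|T(\chi_{A_{t_1,1}},\ldots,\chi_{A_{t_m,m}})|^{\widetilde{q}}$ is the right justification for dominated convergence in the analogue of Lemma~\ref{lem:cont_and_subh}. The essential boundedness of $R(z,\cdot)$ asserted there is not automatic even in the scalar case, since $T$ of characteristic functions need not be bounded.
\item Exactly as in the paper, the norm-conjugate formula and the variable H\"older inequality give the final bound only up to a factor depending on the exponents. This is harmless for Theorem~\ref{thm:interpolation_compactness}.
\end{itemize}
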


\begin{proof}
We adapt the strategy of the proof of \cite[Theorem 3.5]{CaoOlivoYabuta2022}.

We begin by performing the same reductions as in the proof of Theorem~\ref{thm:interpolation_boundedness}. First of all, observe that one has
\begin{equation*}
  \Vert \Vert g(x,y)\Vert_{L^{\widetilde{q}}_{y}}\Vert_{L^{q(\cdot)}_{x}(v)}
  = \Vert \Vert g(x,y)v(x)\Vert_{L^{\widetilde{q}}_{y}}\Vert_{L^{q(\cdot)}_{x}}.
\end{equation*}
Therefore, working as in the proof of Lemma~\ref{lem:first_red}, with the only difference of using Proposition~\ref{prop:almost_everywhere_equality_mixed} in the place of Proposition~\ref{prop:norm_convergence_to_subs_pointwise_convergence}, we deduce that it suffices to show that 
\begin{equation*}
  \Vert \Vert T(\vec{f})(x,y)\Vert_{L^{\widetilde{q}}_{y}}\Vert_{L^{q(\cdot)}_{x}(v)} \leq M_0^{1-\theta}M_{1}^{\theta}\prod_{j=1}^{m}\Vert f_j\Vert_{L^{p_{j}(\cdot)}(w_{j})}
\end{equation*}
for all $\vec{f}\in\vec{A}$. Next, working as in the proofs of Lemma~\ref{lem:second_red}, Proposition~\ref{prop:third_red} and Lemma~\ref{lem:fourth_red}, we see that without loss of generality, we may assume that
\begin{itemize}
  \item $|\Omega| < \infty$ and $|\widetilde{\Omega}|<\infty$.

  \item All weights $w_{i,j}$, $j=1,\ldots,m$, $i=0,1$ and $u_0,v_1$ are simple functions.

  \item There holds $\Vert f_j\Vert_{L^{p_j(\cdot)}(w_j)}=1$, for all $j=1,\ldots,m$.
\end{itemize}

We now proceed with the main part of the proof, which is similar to the proof of Proposition~\ref{prop:central_estimate}. Observe that
\begin{equation*}
  0 < \widetilde{q} < q_{-}.
\end{equation*}
Pick
\begin{equation*}
  s \in (0, \widetilde{q}).
\end{equation*}
Define $r(\cdot) := q(\cdot)/s$ and $\widetilde{r} := \widetilde{q}/s$. By using Lemma~\ref{lem:homog} twice we have
\begin{align*}
  &\Vert \Vert T(\vec{f})(x,y)\Vert_{L^{\widetilde{q}}_{y}}\Vert_{L^{q(\cdot)}_{x}(v)}
  = \Vert \Vert T(\vec{f})(x,y)v(x)\Vert_{L^{\widetilde{q}}_{y}}\Vert_{L^{q(\cdot)}_{x}}
  = \Vert \Vert |T(\vec{f})(x,y)v(x)|^{s}\Vert_{L^{\widetilde{r}}_{y}}^{1/s}\Vert_{L^{q(\cdot)}_{x}}\\
  & = \Vert \Vert |T(\vec{f})(x,y)v(x)|^{s}\Vert_{L^{\widetilde{r}}_{y}}\Vert_{L^{r(\cdot)}_{x}}^{1/s}.
\end{align*}
By Lemma~\ref{lem:dual_mixed_norm} we have
\begin{equation*}
  \Vert \Vert |T(\vec{f})(x,y)v(x)|^{s}\Vert_{L^{\widetilde{r}}_{y}}\Vert_{L^{r(\cdot)}_{x}} \sim 
  \sup_{g}\int_{\Omega\times\widetilde{\Omega}}|T(\vec{f})(x,y)v(x)|^{s}g(x,y)\dd x\dd y,
\end{equation*}
where the supremum is taken over all nonnegative measurable functions $g$ on $\Omega\times\widetilde{\Omega}$ with
\begin{equation*}
  \Vert \Vert g(x,y)\Vert_{L_{y}^{\widetilde{r}'}}\Vert_{L_{x}^{r'(\cdot)}} \leq 1
\end{equation*}
and the implicit constants depend only on $r(\cdot)$. Thus, we only have to show that
\begin{equation*}
  \int_{\Omega\times\widetilde{\Omega}}|T(\vec{f})(x,y)v(x)|^{s}g(x,y)\dd x\dd y \leq M_0^{s(1-\theta)}M_{1}^{s\theta}
\end{equation*}
for every such function $g$. Finally, as in the proof of Proposition~\ref{prop:central_estimate}, we can assume $g$ is a simple function.
    
Define
\begin{equation*}
  S := \{z\in\C:~0< \mathrm{Re}(z)<1\},\quad \bar{S} := \{z\in\C:~0\leq \mathrm{Re}(z)\leq1\}.
\end{equation*}
For each $j=1,\ldots,m$ write
\begin{equation*}
  \widetilde{f}_j(\cdot):= f_j(\cdot)w_j(\cdot),\quad \widetilde{f}_j(\cdot) = |\widetilde{f_j}(\cdot)|e^{is_j(\cdot)},\quad j=1,\ldots,m
\end{equation*}
for some real-valued simple function $s_j$ on $\Omega_j$. For every $z\in\bar{S}$ define the \emph{complex valued} functions
\begin{equation*}
  m_{z,j}(\cdot) := \frac{1-z}{p_{0,j}(\cdot)} + \frac{z}{p_{1,j}(\cdot)},\quad j=1,\ldots,m,\quad n_{z}(\cdot) := \frac{1-z}{q_{0}(\cdot)} + \frac{z}{q_{1}(\cdot)}.
\end{equation*}
Then, for every $\ell=1,2,\ldots$ and for every $z\in\bar{S}$ define
\begin{equation*}
  \Phi_{\ell}(z) := \int_{\Omega\times\widetilde{\Omega}}|U_{\ell}(z, x, y)|^{s}\dd x\dd y,
\end{equation*}
where
\begin{equation*}
  U_{\ell}(z, x, y) := \exp\left(\dfrac{z^2-1}{s\ell}\right)M_0^{z-1}M_1^{-z}T(\vec{F}_{z})(x, y)v_0(x)^{1-z}v_1(x)^{z}G_{z}(x, y),\quad (x,y)\in\Omega\times\widetilde{\Omega},
\end{equation*}
\begin{equation*}
  G_{z}(x, y) := g(x, y)^{1/s}\Vert g(x, u) \Vert_{L^{\widetilde{r}'}_{u}}^{t(x,z)/s}\chi_{_{\{\Vert g(\cdot, u) \Vert_{L^{\widetilde{r}'}_{u}}\neq0\}}}(x),\quad t(x, z) := r'(x) - sr'(x)n_{z}(x) - 1,
\end{equation*}
\begin{equation*}
  F_{z,j}(\cdot) := |\widetilde{f}_j(\cdot)|^{p_j(\cdot)m_{z,j}(\cdot)}\chi_{\{\widetilde{f}_j\neq0\}}(\cdot)e^{is_j(\cdot)}w_{0,j}(\cdot)^{z-1}w_{1,j}(\cdot)^{-z}.
\end{equation*}
As in the proof of Proposition~\ref{prop:central_estimate}, $\Phi_{\ell}$ is continuous on $\bar{S}$, subharmonic in $S$ and satisfies
\begin{equation*}
  \lim_{|z|\to\infty}\Phi_{\ell}(z) = 0.
\end{equation*}
Let now $b\in\R$ be arbitrary. Then, with $z := ib$ we estimate
\begin{align*}
  &\int_{\Omega\times\widetilde{\Omega}}|M_0^{z-1}M_1^{-z}T(\vec{F}_{z})(x,y)v_0(x)^{1-z}v_1(x)^{z}G_{z}(x,y)|^{s}\dd x\dd y\\
  & = \int_{\Omega\times\widetilde{\Omega}}M_0^{s\mathrm{Re}(z)-s}M_1^{-s\mathrm{Re}(z)}|T(\vec{F}_{z})(x,y)|^{s}v_0(x)^{s-s\mathrm{Re}(z)}v_1(x)^{\mathrm{Re}(z)}g(x,y)\Vert g(x, u) \Vert_{L^{\widetilde{r}'}_{u}}^{\mathrm{Re}(t(z,x))}\\
  &\qquad\cdot\chi_{_{\{\Vert g(\cdot, u) \Vert_{L^{\widetilde{r}'}_{u}}\neq0\}}}(x)\dd x\dd y \\
  & =\int_{\Omega\times\widetilde{\Omega}}|T(\vec{F}_{z})(x, y)|^{s}v_0(x)^{s}g(x, y)\Vert g(x, u) \Vert_{L^{\widetilde{r}'}_{u}}^{\frac{r'(x)}{r'_{0}(x)}-1}\chi_{_{\{\Vert g(\cdot, u) \Vert_{L^{\widetilde{r}'}_{u}}\neq0\}}}(x)\dd x\dd y,
\end{align*}
where $r_0(\cdot) := \frac{q_0(\cdot)}{s}$. Again by Lemmata~\ref{lem:homog} and~\ref{lem:dual_mixed_norm} we estimate 
\begin{align*}
  &\int_{\Omega\times\widetilde{\Omega}}|T(\vec{F}_{z})(x, y)|^{s}v_0(x)^{s}g(x, y)\Vert g(x, u) \Vert_{L^{\widetilde{r}'}_{u}}^{\frac{r'(x)}{r'_{0}(x)}-1}\chi_{_{\{\Vert g(\cdot, u) \Vert_{L^{\widetilde{r}'}_{u}}\neq0\}}}(x)\dd x\dd y\\
  & \lesssim \Vert \Vert|T(\vec{F}_{z})(x,y)|^{s}v_0(x)^{s}\Vert_{L^{\widetilde{r}}_{y}}\Vert_{L^{r_0(\cdot)}_{x}}\cdot
  \Vert \Vert g(x, y)\Vert g(x, u) \Vert_{L^{\widetilde{r}'}_{u}}^{\frac{r'(x)}{r'_{0}(x)}-1}\chi_{_{\{\Vert g(\cdot, u) \Vert_{L^{\widetilde{r}'}_{u}}\neq0\}}}(x)\Vert_{L^{\widetilde{r}'}_{y}}\Vert_{L^{r_0'(\cdot)}_{x}}\\
  & = \Vert \Vert T(\vec{F}_{z})(x,y)\Vert_{L^{\widetilde{q}}_{y}}\Vert_{L^{q_0(\cdot)}_{x}(v_0)}^{s}
  \Vert \Vert g(x, y) \Vert_{L^{\widetilde{r}'}_{y}}^{\frac{r'(x)}{r'_{0}(x)}}\Vert_{L^{r_0'(\cdot)}_{x}}.
\end{align*}
We compute
\begin{align*}
  \rho_{r_0'(\cdot), x}\left(\Vert g(x, y) \Vert_{L^{\widetilde{r}'}_{y}}^{\frac{r'(x)}{r'_{0}(x)}}\right) = 
  \int_{\Omega}\Vert g(x, y) \Vert_{L^{\widetilde{r}'}_{y}}^{r'(x)}\dd x \leq 1
\end{align*}
by Lemma~\ref{lem: min/max}, since $\Vert \Vert g(x,y)\Vert_{L_{y}^{\widetilde{r}'}}\Vert_{L_{x}^{r'(\cdot)}} \leq 1$. The rest of the proof proceeds exactly as that of Proposition~\ref{prop:central_estimate}.
\end{proof}

The following lemma collects special cases of some results from \cite{Ho_2021}. Since in the relevant places in \cite{Ho_2021} only $\R$ was considered as underlying set, we include the proof for the sake of completeness.

\begin{lemma}\label{lem:dual_mixed_norm}
Let $p(\cdot)\in\mathscr{P}(\Omega_1)$ and let $q\in(1,\infty)$.
\begin{enumerate}[label=\textup{(\arabic*)}]
  \item For all measurable functions $f,g:\Omega_1\times\Omega_2\to\C$ we have
\begin{equation*}
  \int_{\Omega_1\times\Omega_2}|f(x,y)g(x,y)|\dd x\dd y \lesssim \Vert \Vert f(x,y)\Vert_{L^{q}_{y}}\Vert_{L^{p(\cdot)}_{x}}\Vert \Vert g(x,y)\Vert_{L^{q'}_{y}}\Vert_{L^{p'(\cdot)}_{x}},
\end{equation*}
where the implicit constant depends only on $p(\cdot)$.\label{itm: (first itm)}

\item For all measurable functions $f:\Omega_1\times\Omega_2\to\C$ we have
\begin{equation*}
  \Vert \Vert f(x,y)\Vert_{L^{q}_{y}}\Vert_{L^{p(\cdot)}_{x}} \sim
  \sup_{g}\int_{\Omega_1\times\Omega_2}|f(x,y)g(x,y)|\dd x \dd y,
\end{equation*}
where the supremum ranges over all measurable functions $g:\Omega_1\times\Omega_2\to\C$ with
\begin{equation*}
  \Vert \Vert g(x,y)\Vert_{L^{q'}_{y}}\Vert_{L^{p'(\cdot)}_{x}}\leq1,   
\end{equation*}
and the implicit constants depend only on $p(\cdot)$.
\end{enumerate}
\end{lemma}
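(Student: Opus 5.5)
Part (1) is a mixed-norm Hölder inequality, and I will prove it by applying Hölder twice. For a.e.\ fixed $x\in\Omega_1$, the classical Hölder inequality in $y$ with exponents $q,q'$ gives
\begin{equation*}
\int_{\Omega_2}|f(x,y)g(x,y)|\dd y\leq F(x)G(x),
\end{equation*}
where $F(x):=\Vert f(x,y)\Vert_{L^{q}_{y}}$ and $G(x):=\Vert g(x,y)\Vert_{L^{q'}_{y}}$. Both $F$ and $G$ are measurable by Tonelli. Integrating in $x$ (Tonelli again) and applying the variable Hölder inequality, Lemma~\ref{Holder's ineq.}, with $m=2$ and exponents $p(\cdot),p'(\cdot)$, I get $\int_{\Omega_1}FG\dd x\lesssim\Vert F\Vert_{p(\cdot)}\Vert G\Vert_{p'(\cdot)}$. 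This is exactly the claim, with the constant depending only on $p(\cdot)$.

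In part (2), the bound $\gtrsim$ follows at once from part (1). For the reverse bound I will use the norm-conjugate formula for $L^{p(\cdot)}$ (\cite[Theorem 2.34]{CF2013}, already used above):
\begin{equation*}
\Vert F\Vert_{p(\cdot)}\lesssim\sup\Big\{\int_{\Omega_1}F h\dd x:~h\geq0,\ \Vert h\Vert_{p'(\cdot)}\leq1\Big\}.
\end{equation*}
Given such an $h$, I set
\begin{equation*}
g(x,y):=h(x)\,\frac{|f(x,y)|^{q-1}}{F(x)^{q-1}}\,\chi_{\{0<F<\infty\}}(x).
\end{equation*}
With this choice $\Vert g(x,\cdot)\Vert_{L^{q'}_{y}}=h(x)\chi_{\{0<F<\infty\}}(x)\leq h(x)$, so the mixed norm of $g$ is at most $1$. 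Moreover $\int_{\Omega_2}|fg|\dd y=h(x)F(x)$ on $\{0<F<\infty\}$, which is the usual extremal function for $L^q$ in the $y$-variable. Taking the supremum over $h$ then yields the bound $\lesssim$, provided $F<\infty$ a.e.

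The main obstacle is the case where $F$ is infinite on a set $E$ of positive measure, and more generally where the left-hand side is infinite. I will handle it by truncation: replace $f$ by
\begin{equation*}
f_N:=f\chi_{\{|f|\leq N\}}\chi_{K_N},
\end{equation*}
where $K_N\uparrow\Omega_1\times\Omega_2$ are sets of finite measure. Then each $F_N$ is finite, so the argument above applies to $f_N$. Since $|f_N|\leq|f|$, any test function $g$ for $f_N$ is also a test function for $f$ with at least as large an integral, so the supremum for $f$ dominates the one for $f_N$. On the other side, $F_N\uparrow F$ by monotone convergence in $y$, and Proposition~\ref{prop:monotone_convergence} gives $\Vert F_N\Vert_{p(\cdot)}\to\Vert F\Vert_{p(\cdot)}$. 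Passing to the limit gives the reverse bound in general, including the case where both sides are infinite.
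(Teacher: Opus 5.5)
Your proposal is correct and follows the paper's proof essentially step by step. For (1) you apply H\"older in $y$ and then the variable H\"older inequality in $x$; for (2) you use a size-and-support truncation $f_N$, the associate-norm formula \cite[Theorem 2.34]{CF2013} for $F_N$, the extremal function $h\,|f_N|^{q-1}/F_N^{q-1}$, and monotone convergence. The only difference is that the paper truncates to product sets $\Omega_1^N\times\Omega_2^N$, which guarantees $\|F_N\|_{L^{p(\cdot)}}<\infty$, the setting in which Theorem 2.34 and the monotone convergence proposition are stated; you should choose your $K_N$ of this form too, rather than relying only on $F_N<\infty$ a.e.
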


\begin{proof}
\begin{enumerate}
  \item[(1)] Using first usual H\"older's inequality and then H\"older's inequality, Lemma~\ref{Holder's ineq.}, we compute
\begin{align*}
  &\int_{\Omega_1\times\Omega_2}|f(x,y)g(x,y)|\dd x\dd y 
  =\int_{\Omega_1}\left(\int_{\Omega_2}|f(x,y)g(x,y)|\dd y \right)\dd x \\
  &\leq\int_{\Omega_1}\Vert f(x,y)\Vert_{L^{q}_{y}}\Vert g(x,y)\Vert_{L^{q'}_{y}}\dd x \lesssim\Vert \Vert f(x,y)\Vert_{L^{q}_{y}}\Vert_{L^{p(\cdot)}}
  \Vert g(x,y)\Vert_{L^{q'}_{y}}\Vert_{L^{p'(\cdot)}}.
\end{align*}

\item[(2)] Inequality $\gtrsim$ follows immediately from part \ref{itm: (first itm)}. Therefore, we only have to show the $\lesssim$. Clearly, we can assume that the right-hand side is finite. We denote it by $C$.

For each $i=0,1$ write
\begin{equation*}
  \Omega_i = \bigcup_{N=1}^{\infty}\Omega_i^{N} 
\end{equation*}
where $(\Omega_i^{N})_{N=1}^{\infty}$ is an increasing sequence of measurable subsets of $\Omega_i$ of finite measure. For every $N=1,2,\ldots$ define
\begin{equation*}
  f_{N} := \chi_{\Omega_1^{N}\times\Omega_2^{N}}f\chi_{\{|f|\leq N\}}.
\end{equation*}

Fix $N\in\N$. It is clear that
\begin{equation}\label{eq:finiteness}
  \Vert \Vert f_{N}(x,y)\Vert_{L^{q}_{y}}\Vert_{L^{p(\cdot)}_{x}} < \infty.    
\end{equation}
We show that
\begin{equation*}
  \Vert \Vert f_{N}(x,y)\Vert_{L^{q}_{y}}\Vert_{L^{p(\cdot)}_{x}} \leq C.   
\end{equation*}       
Let $\eps>0$ be arbitrary. We already know \cite[Theorem 2.34]{CF2013} that there exist a constant $M$ depending only on $p(\cdot)$ and not on $\eps$ as well as a measurable function $h:\Omega_1\to[0,\infty)$ with $\Vert h\Vert_{L^{p'(\cdot)}}\leq 1$ such that
\begin{align*}
  \Vert \Vert f_{N}(x,y)\Vert_{L^{q}_{y}}\Vert_{L^{p(\cdot)}_{x}} \leq (1+\eps)M\int_{\Omega_1}\Vert f_{N}(x,y)\Vert_{L^{q}_{y}}h(x)\dd x ,
\end{align*}
Now, notice that \eqref{eq:finiteness} implies 
\begin{equation*}
  \Vert f_{N}(x,y)\Vert_{L^{q}_{y}} < \infty\quad\text{for a.e. }x\in\Omega_1.
\end{equation*}
Consider the function $k:\Omega_1\times\Omega_2\to[0,\infty)$ with
\begin{equation*}
  k(x,y) := \frac{|f_{N}(x,y)|^{q-1}}{\Vert f_{N}(x,y)\Vert_{L^{q}_{y}}^{q-1}}\chi_{\{\Vert f_{N}(\cdot,z)\Vert_{L^{q}_{z}}\neq 0\}}(x),\quad (x,y)\in\Omega_1\times\Omega_2.
\end{equation*}
It is clear that this function is measurable. Then consider the measurable function $g:\Omega_1\times\Omega_2\to[0,\infty)$ with $g(x,y) := k(x,y)h(x)$, $(x,y)\in\Omega_1\times\Omega_2$. It is easy to compute
\begin{equation*}
  \Vert k(x,y)\Vert_{L^{q'}_{y}} \leq 1,\quad\text{for a.e. }x\in\Omega_1,
\end{equation*}
and therefore
\begin{equation*}
  \Vert \Vert g(x,y)\Vert_{L^{q'}_{y}}\Vert_{L^{p'(\cdot)}_{x}} \leq \Vert h\Vert_{L^{p'(\cdot)}} \leq 1.
\end{equation*}
Moreover, it is straightforward to compute
\begin{equation*}
  \int_{\Omega_2}|f_{N}(x,y)|k(x,y)\dd y  = \Vert f_{N}(x,y)\Vert_{L^{q}_{y}}
\end{equation*}
for a.e.~$x\in\Omega_1$, thus
\begin{align*}
  &\Vert \Vert f_{N}(x,y)\Vert_{L^{q}_{y}}\Vert_{L^{p(\cdot)}_{x}} \leq
  (1+\eps)M\int_{\Omega_1\times\Omega_2}|f_{N}(x,y)g(x,y)|\dd x\dd y \\
  &\leq(1+\eps)M\int_{\Omega_1\times\Omega_2}|f(x,y)g(x,y)|\dd x\dd y 
  \leq(1+\eps)MC.
\end{align*}
Letting $\eps\to0$ we deduce
\begin{equation*}
  \Vert \Vert f_{N}(x,y)\Vert_{L^{q}_{y}}\Vert_{L^{p(\cdot)}_{x}} \leq MC.
\end{equation*}
Finally, letting $N\to\infty$ and using the monotone convergence theorem for classical Lebesgue spaces as well as by Proposition \ref{prop:monotone_convergence} we deduce the desired inequality.
\end{enumerate}
\end{proof}

\subsection{Interpolation of compactness}
The main goal of this last subsection is to finalize the proof of the main result of Section \ref{sec: interpolation}, namely, Theorem \ref{thm:interpolation_compactness}. For this purpose, we first collect and prove three lemmata involving the multilinear $\mathcal{A}_{(\vec{p}(\cdot),q(\cdot)),(\vec{r},s)}$ weights.
In this subsection we always take $\Omega_1 = \ldots = \Omega_m = \Omega = \R^n$ for a fixed $n$, unless explicitly otherwise specified. 

\begin{lemma}
\label{lem:interpolation_weights}
Let $\gamma\in[0,\infty)$ and let $(\vec{p}_i(\cdot), q_i(\cdot), \vec{r}, s)$, $i=0,1$ be proper $m$-admissible quadruples with
\begin{equation*}
  \frac{1}{p_0(\cdot)} - \frac{1}{q_0(\cdot)} = \frac{1}{p_1(\cdot)} - \frac{1}{q_1(\cdot)} = \gamma.
\end{equation*}
Let
\begin{gather*}
  \vec{w}_0 \in \mathcal{A}_{(p_0(\cdot),q_0(\cdot)),(\vec{r}, s)},\\
  \vec{w}_1 \in \mathcal{A}_{(p_1(\cdot),q_1(\cdot)),(\vec{r}, s)}.
\end{gather*}
Let $\theta\in(0,1)$ and define $p_{j}(\cdot) \in \mathscr{P}_{0}$, $q(\cdot)\in\mathscr{P}_{0}$ by
\begin{align*}
  &\frac{1}{p_{j}(\cdot)} := \frac{1-\theta}{p_{0,j}(\cdot)} + \frac{\theta}{p_{1,j}(\cdot)}, \quad j=1,\ldots,m,\\
  &\frac{1}{q(\cdot)} := \frac{1-\theta}{q_{0}(\cdot)} + \frac{\theta}{q_{1}(\cdot)}
\end{align*}
as well as the vector of weights $\vec{w}$ by
\begin{equation*}
  w_{j} := w_{0,j}^{1-\theta}w_{1,j}^{\theta},\quad j=1,\ldots,m.
\end{equation*}
Then
\begin{enumerate}[label=\textup{(\arabic*)}]
  \item We have that $(\vec{p}(\cdot),q(\cdot),\vec{r},s)$ is a proper $m$-admissible quadruple with
\begin{equation*}
  \frac{1}{p(\cdot)} - \frac{1}{q(\cdot)} = \gamma.
\end{equation*}

\item It holds $\vec{w}\in\mathcal{A}_{(\vec{p}(\cdot),q(\cdot)),(\vec{r},s)}$ with
\begin{equation*}
  [\vec{w}]_{\mathcal{A}_{(\vec{p}(\cdot),q(\cdot)),(\vec{r},s)}}
  \lesssim [\vec{w}_0]_{\mathcal{A}_{(\vec{p}_0(\cdot),q_0(\cdot)),(\vec{r},s)}}^{1-\theta}[\vec{w}_1]_{\mathcal{A}_{(\vec{p}_1(\cdot),q_1(\cdot)),(\vec{r},s)}}^{\theta},
\end{equation*}
where the implicit constant depends on $\vec{p}_0(\cdot), q_0(\cdot), \vec{p}_1(\cdot), q_1(\cdot), \vec{r}, s$ and $\theta$.
\end{enumerate}
\end{lemma}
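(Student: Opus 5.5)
Part (1) is bookkeeping. Since $\frac{1}{p(\cdot)}=\sum_j\frac{1}{p_j(\cdot)}=\frac{1-\theta}{p_0(\cdot)}+\frac{\theta}{p_1(\cdot)}$, with the analogous formula for $q(\cdot)$, we get
\begin{equation*}
\frac{1}{p(\cdot)}-\frac{1}{q(\cdot)}=(1-\theta)\gamma+\theta\gamma=\gamma .
\end{equation*}
The exponents $p_j(\cdot)$ and $q(\cdot)$ lie in $\mathscr{P}_0$. Indeed, $1/p_j$ is a convex combination of functions that are bounded above and bounded away from zero. The same convexity gives $(p_j)_->r_j$, because $1/p_{i,j}<1/r_j$ uniformly for $i=0,1$. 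It also gives $q_+<s$, because $1/q_i>1/s$ uniformly. Finally, $\mathrm{LH}$ is preserved. The reciprocal of an $\mathrm{LH}$ exponent bounded away from $0$ and $\infty$ is $\mathrm{LH}$, a convex combination of $\mathrm{LH}$ functions is $\mathrm{LH}$, and taking the reciprocal again preserves $\mathrm{LH}$. In each case the constants are controlled by the bounds $p_\pm$.

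For part (2) I would work directly with the defining quantity \eqref{eq:Apqrs constant} and fix a cube $Q$. The exponents in the norms interpolate linearly in reciprocal form:
\begin{equation*}
\frac{1}{q(\cdot)}-\frac{1}{s}=(1-\theta)\Big(\frac{1}{q_0(\cdot)}-\frac1s\Big)+\theta\Big(\frac{1}{q_1(\cdot)}-\frac1s\Big),
\end{equation*}
and likewise
\begin{equation*}
\frac{1}{r_j}-\frac{1}{p_j(\cdot)}=(1-\theta)\Big(\frac{1}{r_j}-\frac{1}{p_{0,j}(\cdot)}\Big)+\theta\Big(\frac{1}{r_j}-\frac{1}{p_{1,j}(\cdot)}\Big).
\end{equation*}
The weights factor as $\nu_{\vec w}=\nu_{\vec w_0}^{1-\theta}\nu_{\vec w_1}^{\theta}$ and $w_j^{-1}=(w_{0,j}^{-1})^{1-\theta}(w_{1,j}^{-1})^{\theta}$. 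The power of $|Q|$ is the same for all three quadruples, because $\gamma$, $\vec r$ and $s$ are common. It can therefore be split as $|Q|^{(1-\theta)(\cdot)}|Q|^{\theta(\cdot)}$.

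The key step is then the estimate used in the proof of Lemma~\ref{lem:density_simple_functions}. Write $a_i(\cdot)$ for the exponent attached to $\nu_{\vec w_i}$ and $a(\cdot)$ for the interpolated one, so that $1/a=(1-\theta)/a_0+\theta/a_1$. Combining Lemma~\ref{lem:homog} with the generalized Hölder inequality of Lemma~\ref{lem:Hoelder}, used with exponents $a_0/(1-\theta)$ and $a_1/\theta$, gives
\begin{equation*}
\|\nu_{\vec w}\chi_Q\|_{a(\cdot)}=\big\|(\nu_{\vec w_0}\chi_Q)^{1-\theta}(\nu_{\vec w_1}\chi_Q)^{\theta}\big\|_{a(\cdot)}\lesssim\|\nu_{\vec w_0}\chi_Q\|_{a_0(\cdot)}^{1-\theta}\|\nu_{\vec w_1}\chi_Q\|_{a_1(\cdot)}^{\theta}.
\end{equation*}
The same estimate holds for each factor $\|w_j^{-1}\chi_Q\|$. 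Multiplying these bounds and taking the supremum over $Q$ gives the claim, with the implicit constant coming from Lemma~\ref{lem:Hoelder}.

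The main obstacle is the degenerate case $s=\infty$ or $q_i$ with $1/q_i-1/s$ hitting the endpoint, and more generally making sure Lemma~\ref{lem:Hoelder} is applicable. It requires exponents in $\mathscr{P}_0$, i.e.\ bounded, and that is what part (1) provides: $q_+<s$ and $r_j<(p_j)_-$ keep all the reciprocals strictly positive and bounded. When $s=\infty$ the $\nu$-exponent is just $q(\cdot)$, and the argument is unchanged.
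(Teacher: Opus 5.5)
Your proposal is correct and follows the paper's proof essentially step for step. Part (1) goes by convexity of the reciprocal exponents together with the closure of $\mathrm{LH}$ under reciprocals and linear combinations, and part (2) goes cube by cube via Lemma~\ref{lem:homog} and the generalized H\"older inequality of Lemma~\ref{lem:Hoelder} with exponents $a_0/(1-\theta)$ and $a_1/\theta$. One small correction to your last paragraph: Lemma~\ref{lem:Hoelder} applies because the two \emph{given} quadruples are admissible, which puts $a_0$, $a_1$ and the $w_{i,j}^{-1}$-exponents in $\mathscr{P}_0$; it does not depend on part (1).
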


\begin{proof}
\begin{enumerate}
  \item Fix $j\in\{1,\ldots,m\}$. It is clear that $0 < (p_{j})_{-} \leq (p_{j})_{+} <\infty$. Moreover, making repeatedly use of \cite[Proposition 2.3]{CF2013} together with the fact that $\mathrm{LH}$ is closed under linear combinations, we deduce that $p_{j}(\cdot) \in \mathscr{P}_{0}\cap\mathrm{LH}$. Let us show that $(p_{j})_{-} > r_j$. Pick $t\in (r_j, \min\{(p_{0,j})_{-},(p_{1,j})_{-}\})$. Then, we have that
\begin{equation*}
  \frac{1}{p_j(\cdot)} = \frac{1-\theta}{p_{0,j}(\cdot)} + \frac{\theta}{p_{1,j}(\cdot)} < \frac{1-\theta}{t} + \frac{\theta}{t} = \frac{1}{t},
\end{equation*}
therefore $(p_{j})_{-} \geq t > r_j$.

Similarly, $q(\cdot)\in\mathscr{P}_{0}\cap\mathrm{LH}$ and $q_{+} < s$. Finally, we compute
\begin{equation*}
  \frac{1}{p(\cdot)} - \frac{1}{q(\cdot)} = (1-\theta)\left(\frac{1}{p_0(\cdot)} - \frac{1}{q_0(\cdot)}\right) + \theta\left(\frac{1}{p_1(\cdot)} - \frac{1}{q_1(\cdot)}\right) = (1-\theta)\gamma + \theta \gamma = \gamma.
\end{equation*}
This concludes the proof.

\item Let $Q\subset\R^n$ be any cube. Observe that
\begin{equation*}
  \frac{1}{\frac{1}{(1-\theta)\left(\frac{1}{q_0(\cdot)} - \frac{1}{s}\right)}} + 
  \frac{1}{\frac{1}{\theta\left(\frac{1}{q_1(\cdot)} - \frac{1}{s}\right)}}
  = \frac{1}{\frac{1}{\frac{1}{q(\cdot)} - \frac{1}{s}}}
\end{equation*}
and
\begin{equation*}
  \frac{1}{\frac{1}{(1-\theta)\left(\frac{1}{r_j} - \frac{1}{p_{0,j}(\cdot)}\right)}} + 
  \frac{1}{\frac{1}{\theta\left(\frac{1}{r_j} - \frac{1}{p_{1,j}(\cdot)}\right)}}
  = \frac{1}{\frac{1}{\frac{1}{r_j} - \frac{1}{p_{j}(\cdot)}}},\quad j=1,\ldots,m.
\end{equation*}
Thus, using Lemma~\ref{lem:homog} and H\"{o}lder's inequality, Lemma~\ref{lem:Hoelder}, we obtain
\begin{align*}
  &|Q|^{\gamma - \left(\frac{1}{r} - \frac{1}{s}\right)}\Vert \nu_{\vec{w}}\chi_{Q}\Vert_{\frac{1}{\frac{1}{q(\cdot)}-\frac{1}{s}}}
  \prod_{j=1}^{m}\Vert w_j^{-1}\chi_{Q}\Vert_{\frac{1}{\frac{1}{r_j}-\frac{1}{p_j(\cdot)}}}\\
  &= |Q|^{\gamma - \left(\frac{1}{r} - \frac{1}{s}\right)}\Vert \nu_{\vec{w}_0}^{1-\theta}\chi_{Q}\cdot\nu_{\vec{w}_1}^{\theta}\chi_{Q}\Vert_{\frac{1}{\frac{1}{q(\cdot)}-\frac{1}{s}}}
  \prod_{j=1}^{m}\Vert w_{0,j}^{-(1-\theta)}\chi_{Q}\cdot w_{1,j}^{-\theta}\chi_{Q}\Vert_{\frac{1}{\frac{1}{r_j}-\frac{1}{p_j(\cdot)}}}\\
  &\lesssim |Q|^{\gamma - \left(\frac{1}{r} - \frac{1}{s}\right)}
  \Vert \nu_{\vec{w}_0}^{1-\theta}\chi_{Q}\Vert_{\frac{1}{(1-\theta)\left(\frac{1}{q_0(\cdot)}-\frac{1}{s}\right)}}
  \Vert \nu_{\vec{w}_1}^{\theta}\chi_{Q}\Vert_{\frac{1}{\theta\left(\frac{1}{q_1(\cdot)}-\frac{1}{s}\right)}}\\
  &\qquad\cdot\prod_{j=1}^{m}\Vert w_{0,j}^{-(1-\theta)}\chi_{Q}\Vert_{\frac{1}{(1-\theta)\left(\frac{1}{r_j}-\frac{1}{p_{0,j}(\cdot)}\right)}}
  \qquad\cdot \Vert w_{1,j}^{-\theta}\chi_{Q}\Vert_{\frac{1}{\theta\left(\frac{1}{r_j}-\frac{1}{p_{1,j}(\cdot)}\right)}}\\
  & = |Q|^{\gamma - \left(\frac{1}{r} - \frac{1}{s}\right)}
  \Vert \nu_{\vec{w}_0}\chi_{Q}\Vert_{\frac{1}{\frac{1}{q_0(\cdot)}-\frac{1}{s}}}^{1-\theta}
  \Vert \nu_{\vec{w}_1}\chi_{Q}\Vert_{\frac{1}{\frac{1}{q_1(\cdot)}-\frac{1}{s}}}^{\theta}\\
  &\qquad\cdot\prod_{j=1}^{m}\Vert w_{0,j}^{-1}\chi_{Q}\Vert_{\frac{1}{\frac{1}{r_j}-\frac{1}{p_{0,j}(\cdot)}}}^{1-\theta}
  \cdot \Vert w_{1,j}^{-1}\chi_{Q}\Vert_{\frac{1}{\frac{1}{r_j}-\frac{1}{p_{1,j}(\cdot)}}}^{\theta}\\
  & = \left(|Q|^{\gamma - \left(\frac{1}{r} - \frac{1}{s}\right)}
  \Vert \nu_{\vec{w}_0}\chi_{Q}\Vert_{\frac{1}{\frac{1}{q_0(\cdot)}-\frac{1}{s}}}\prod_{j=1}^{m}\Vert w_{0,j}^{-1}\chi_{Q}\Vert_{\frac{1}{\frac{1}{r_j}-\frac{1}{p_{0,j}(\cdot)}}}\right)^{1-\theta}\\
  &\qquad\cdot\left(|Q|^{\gamma - \left(\frac{1}{r} - \frac{1}{s}\right)}
  \Vert \nu_{\vec{w}_1}\chi_{Q}\Vert_{\frac{1}{\frac{1}{q_1(\cdot)}-\frac{1}{s}}}\prod_{j=1}^{m}\Vert w_{1,j}^{-1}\chi_{Q}\Vert_{\frac{1}{\frac{1}{r_j}-\frac{1}{p_{1,j}(\cdot)}}}\right)^{\theta}.
\end{align*}
This yields immediately the desired result.
\end{enumerate}
\end{proof}

\begin{lemma}\label{lem:two_to_one_exp}
Let $(p(\cdot),q(\cdot),r,s)$ be a proper 1-admissible quadruple with
\begin{equation*}
  \frac{1}{p(\cdot)} - \frac{1}{q(\cdot)} = \gamma.
\end{equation*}
Let $w$ be a weight. Then, we have
\begin{equation*}
  w\in\mathcal{A}_{(p(\cdot),q(\cdot)),(r,s)} \Leftrightarrow w^{a} \in \mathcal{A}_{t(\cdot)},
\end{equation*}
where
\begin{equation*}
  a := \frac{1}{\frac{1}{r}-\frac{1}{s}-\gamma}>0,\quad t(\cdot) = \frac{\frac{1}{r}-\frac{1}{s}-\gamma}{\frac{1}{q(\cdot)}-\frac{1}{s}}.
\end{equation*}
In fact,
\begin{equation*}
  [w]_{\mathcal{A}_{(p(\cdot),q(\cdot)),(r,s)}} = [w^{a}]_{\mathcal{A}_{t(\cdot)}}^{1/a}.
\end{equation*}
\end{lemma}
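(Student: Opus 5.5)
The plan is to unwind both definitions and match them exactly, using only the homogeneity property (Lemma~\ref{lem:homog}). For $m=1$ we have $\nu_{w}=w$ and $r_1=r$. The quantity in \eqref{eq:Apqrs constant} then reads
\begin{equation*}
  [w]_{\mathcal{A}_{(p(\cdot),q(\cdot)),(r,s)}}=\sup_{Q}|Q|^{\gamma-\left(\frac{1}{r}-\frac{1}{s}\right)}\Vert w\chi_{Q}\Vert_{\frac{1}{\frac{1}{q(\cdot)}-\frac{1}{s}}}\Vert w^{-1}\chi_{Q}\Vert_{\frac{1}{\frac{1}{r}-\frac{1}{p(\cdot)}}}.
\end{equation*}
The aim is to show that the $a$-th power of the expression inside the supremum equals, cube by cube, $|Q|^{-1}\Vert w^{a}\chi_Q\Vert_{t(\cdot)}\Vert w^{-a}\chi_Q\Vert_{t'(\cdot)}$.

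First I check that the exponents are legitimate.
\begin{itemize}
\item \emph{$a$ is well defined and positive.} Since $r<p_-$, we have $\frac1r>\frac1{p(\cdot)}$. Hence
\begin{equation*}
\frac1r-\frac1s-\gamma>\frac1{p(\cdot)}-\gamma-\frac1s=\frac1{q(\cdot)}-\frac1s>0,
\end{equation*}
where the last inequality uses $q_+<s$.
\item \emph{$t(\cdot)\in\mathscr{P}$.} The same inequality, combined with $p_->r$, gives $t_->1$. The bound $\frac1{q(\cdot)}-\frac1s\ge \frac1{q_+}-\frac1s>0$ gives $t_+<\infty$. This also covers $s=\infty$, read as $\frac1s=0$.
\end{itemize}

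Next comes the key algebraic identity for the dual exponent. Using $\frac{1}{q(\cdot)}+\gamma=\frac1{p(\cdot)}$,
\begin{equation*}
  \frac{1}{t'(\cdot)}=1-\frac{1}{t(\cdot)}=\frac{\frac1r-\frac1s-\gamma-\frac1{q(\cdot)}+\frac1s}{\frac1r-\frac1s-\gamma}=a\left(\frac1r-\frac1{p(\cdot)}\right).
\end{equation*}
Consequently
\begin{equation*}
at(\cdot)=\frac{1}{\frac1{q(\cdot)}-\frac1s},\qquad at'(\cdot)=\frac{1}{\frac1r-\frac1{p(\cdot)}},
\end{equation*}
which are exactly the two exponents appearing in the multilinear constant.

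Now apply Lemma~\ref{lem:homog} with $s=a$, which is allowed since $at(\cdot),at'(\cdot)\in\mathscr{P}_0$. This gives
\begin{equation*}
\Vert w^{a}\chi_Q\Vert_{t(\cdot)}=\Vert w\chi_Q\Vert_{at(\cdot)}^{a},\qquad \Vert w^{-a}\chi_Q\Vert_{t'(\cdot)}=\Vert w^{-1}\chi_Q\Vert_{at'(\cdot)}^{a}.
\end{equation*}
Moreover $|Q|^{-1}=\bigl(|Q|^{\gamma-(\frac1r-\frac1s)}\bigr)^{a}$, by the definition of $a$. Hence for every cube $Q$ the $\mathcal{A}_{t(\cdot)}$ quantity of $w^{a}$ is the $a$-th power of the $\mathcal{A}_{(p(\cdot),q(\cdot)),(r,s)}$ quantity of $w$. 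Since $x\mapsto x^{1/a}$ is increasing on $[0,\infty]$, taking suprema yields $[w]_{\mathcal{A}_{(p(\cdot),q(\cdot)),(r,s)}}=[w^{a}]_{\mathcal{A}_{t(\cdot)}}^{1/a}$, including the case where both sides are infinite. The equivalence follows immediately.

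The only point needing care is the exponent bookkeeping, namely verifying $t(\cdot)\in\mathscr{P}$ and the formula for $t'(\cdot)$. Everything else is a direct application of Lemma~\ref{lem:homog}.
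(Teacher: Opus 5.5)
Your proposal is correct and follows the paper's proof essentially step by step: verify $t_->1$ (you also spell out $a>0$ and $t_+<\infty$), compute $at(\cdot)=\bigl(\frac1{q(\cdot)}-\frac1s\bigr)^{-1}$ and $at'(\cdot)=\bigl(\frac1r-\frac1{p(\cdot)}\bigr)^{-1}$, and apply Lemma~\ref{lem:homog} cube by cube. One small correction that changes nothing: Lemma~\ref{lem:homog} requires $t(\cdot),t'(\cdot)\in\mathscr{P}_0$, not $at(\cdot),at'(\cdot)$, and this holds because you showed $t(\cdot)\in\mathscr{P}$.
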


\begin{proof}
First of all, we observe that $t_{-} > 1$. Indeed, pick $x\in(r, p_{-})$. Then, we have
\begin{equation*}
  0<\frac{1}{q(\cdot)} - \frac{1}{s} = \frac{1}{p(\cdot)} - \gamma - \frac{1}{s} 
  < \frac{1}{x} - \gamma - \frac{1}{s}
\end{equation*}
and therefore
\begin{equation*}
  t(\cdot) > \frac{\frac{1}{r} - \frac{1}{s} - \gamma}{\frac{1}{x} - \gamma - \frac{1}{s}} > 1.
\end{equation*}
Let now $Q\subset\R^n$ be any cube. It holds
\begin{equation*}
  at(\cdot) = \frac{1}{\frac{1}{q(\cdot)}-\frac{1}{s}}.
\end{equation*}
Moreover, we compute
\begin{equation*}
  t'(\cdot) = \frac{t(\cdot)}{t(\cdot)-1}
  = \frac{\frac{1}{r}-\frac{1}{s}-\gamma}{\frac{1}{r}-\frac{1}{s}-\gamma - \frac{1}{q(\cdot)} + \frac{1}{s}} = \frac{\frac{1}{r}-\frac{1}{s}-\gamma}{\frac{1}{r} - \frac{1}{p(\cdot)}},
\end{equation*}
therefore
\begin{equation*}
  at'(\cdot) = \frac{1}{\frac{1}{r} - \frac{1}{p(\cdot)}}.
\end{equation*}
Thus, using Lemma \ref{lem:homog} we have
\begin{align*}
  |Q|^{\gamma-\left(\frac{1}{r}-\frac{1}{s}\right)}\Vert w\chi_{Q}\Vert_{\frac{1}{\frac{1}{q(\cdot)}-\frac{1}{s}}}\Vert w^{-1}\chi_{Q}\Vert_{\frac{1}{\frac{1}{r} - \frac{1}{p(\cdot)}}}
  &= |Q|^{-1/a}\Vert w\chi_{Q}\Vert_{at(\cdot)}\Vert w^{-1}\chi_{Q}\Vert_{at'(\cdot)}\\
  &= |Q|^{-1/a}\Vert w^{a}\chi_{Q}\Vert_{t(\cdot)}^{1/a}\Vert w^{-a}\chi_{Q}\Vert_{t'(\cdot)}^{1/a}\\
  &= (|Q|^{-1}\Vert w^{a}\chi_{Q}\Vert_{t(\cdot)}\Vert w^{-a}\chi_{Q}\Vert_{t'(\cdot)})^{1/a}.
\end{align*}
This yields immediately the desired result.
\end{proof}

\begin{lemma}\label{lem:containment}
Let $(\vec{p}(\cdot),q(\cdot),\vec{r},s)$ be a proper $m$-admissible quadruple. Let $\vec{w}$ be a $m$-tuple of weights. Then, we have
\begin{equation*}
  \vec{w}\in\mathcal{A}_{(\vec{p}(\cdot),q(\cdot)),(\vec{r},s)} \Rightarrow \vec{w}\in\mathcal{A}_{(\vec{p}(\cdot),q(\cdot)),(\vec{r},\infty)}
\end{equation*}
and in fact,
\begin{equation*}
  [\vec{w}]_{\mathcal{A}_{(\vec{p}(\cdot),q(\cdot)),(\vec{r},\infty)}}
  \lesssim [\vec{w}]_{\mathcal{A}_{(\vec{p}(\cdot),q(\cdot)),(\vec{r},s)}},
\end{equation*}
where the implicit constant depends only on $q(\cdot)$ and $s$.
\end{lemma}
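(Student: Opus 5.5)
We compare the two constants cube by cube. Fix a cube $Q$. The factors $\prod_{j}\Vert w_j^{-1}\chi_Q\Vert_{\frac{1}{\frac{1}{r_j}-\frac{1}{p_j(\cdot)}}}$ are identical in both definitions, since they do not involve $s$. The only differences are the power of $|Q|$ and the exponent of the $\nu_{\vec{w}}$ factor. For $s=\infty$ the constant is
\begin{equation*}
|Q|^{\gamma-\frac{1}{r}}\Vert \nu_{\vec{w}}\chi_Q\Vert_{q(\cdot)},
\end{equation*}
while for finite $s$ it is
\begin{equation*}
|Q|^{\gamma-\frac{1}{r}+\frac{1}{s}}\Vert \nu_{\vec{w}}\chi_Q\Vert_{u(\cdot)}, \qquad \frac{1}{u(\cdot)}:=\frac{1}{q(\cdot)}-\frac{1}{s}.
\end{equation*}
If $s=\infty$ there is nothing to prove. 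Otherwise it suffices to show
\begin{equation*}
\Vert \nu_{\vec{w}}\chi_Q\Vert_{q(\cdot)}\lesssim |Q|^{1/s}\,\Vert \nu_{\vec{w}}\chi_Q\Vert_{u(\cdot)},
\end{equation*}
with a constant depending only on $q(\cdot)$ and $s$. Multiplying by the common factors and taking the supremum over $Q$ then gives the claim.

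This inequality is H\"older's inequality, Lemma~\ref{lem:Hoelder}, applied with the splitting $\frac{1}{q(\cdot)}=\frac{1}{u(\cdot)}+\frac{1}{s}$. Here $u(\cdot)\in\mathscr{P}_0$ because $q_+<s$ gives $\frac{1}{u}\ge\frac{1}{q_+}-\frac1s>0$. We write $\nu_{\vec{w}}\chi_Q=(\nu_{\vec{w}}\chi_Q)\cdot\chi_Q$ and obtain
\begin{equation*}
\Vert \nu_{\vec{w}}\chi_Q\Vert_{q(\cdot)}\lesssim \Vert \nu_{\vec{w}}\chi_Q\Vert_{u(\cdot)}\Vert\chi_Q\Vert_{s}=|Q|^{1/s}\Vert \nu_{\vec{w}}\chi_Q\Vert_{u(\cdot)}.
\end{equation*}
The implicit constant depends only on $u(\cdot)$, the constant exponent $s$ and $q(\cdot)$, hence only on $q(\cdot)$ and $s$.

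The only point to check is that Lemma~\ref{lem:Hoelder} permits a constant exponent $s\in(0,\infty)$ as one of the factors. This is fine, since constants belong to $\mathscr{P}_0$. No log-H\"older regularity is needed, so properness of the quadruple plays no role. I do not expect a genuine obstacle here; the main care is bookkeeping of the powers of $|Q|$, which match as computed above.
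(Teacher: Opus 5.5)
Your proposal is correct and matches the paper's proof: the paper also fixes a cube, writes $|Q|^{1/s}=\Vert\chi_Q\Vert_{s}$, and applies H\"older's inequality (Lemma~\ref{lem:Hoelder}) with the splitting $\frac{1}{q(\cdot)}=\frac{1}{s}+\bigl(\frac{1}{q(\cdot)}-\frac{1}{s}\bigr)$ to bound $\Vert\nu_{\vec{w}}\chi_Q\Vert_{q(\cdot)}$ by a constant times $|Q|^{1/s}\Vert\nu_{\vec{w}}\chi_Q\Vert_{\frac{1}{\frac{1}{q(\cdot)}-\frac{1}{s}}}$. The $w_j^{-1}$ factors are left untouched.
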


\begin{proof}
It is clear that $(\vec{p}(\cdot),q(\cdot),\vec{r},\infty)$ is also a proper $m$-admissible quadruple. Let now $Q$ be any cube. Noticing that
\begin{equation*}
  \frac{1}{s} + \frac{1}{\frac{1}{\frac{1}{q(\cdot)}-\frac{1}{s}}}=\frac{1}{q(\cdot)},
\end{equation*}
using H\"{o}lder's inequality, Lemma~\ref{lem:Hoelder}, we compute
\begin{align*}
  &|Q|^{\gamma - \left(\frac{1}{r}-\frac{1}{s}\right)}
  \Vert \nu_{\vec{w}}\chi_{Q}\Vert_{\frac{1}{\frac{1}{q(\cdot)}-\frac{1}{s}}}
  \prod_{j=1}^{m}\Vert w_j^{-1}\chi_{Q}\Vert_{\frac{1}{\frac{1}{r_j}-\frac{1}{p_j(\cdot)}}}\\
  &= |Q|^{\gamma - \frac{1}{r}}\Vert \chi_{Q}\Vert_{s}
  \Vert \nu_{\vec{w}}\chi_{Q}\Vert_{\frac{1}{\frac{1}{q(\cdot)}-\frac{1}{s}}}
  \prod_{j=1}^{m}\Vert w_j^{-1}\chi_{Q}\Vert_{\frac{1}{\frac{1}{r_j}-\frac{1}{p_j(\cdot)}}}\\
  &\gtrsim |Q|^{\gamma - \frac{1}{r}}
  \Vert \chi_{Q}\nu_{\vec{w}}\chi_{Q}\Vert_{q(\cdot)}
  \prod_{j=1}^{m}\Vert w_j^{-1}\chi_{Q}\Vert_{\frac{1}{\frac{1}{r_j}-\frac{1}{p_j(\cdot)}}}\\
  &= |Q|^{\gamma - \frac{1}{r}}
  \Vert\nu_{\vec{w}}\chi_{Q}\Vert_{q(\cdot)}
  \prod_{j=1}^{m}\Vert w_j^{-1}\chi_{Q}\Vert_{\frac{1}{\frac{1}{r_j}-\frac{1}{p_j(\cdot)}}}.
\end{align*}
This yields immediately the desired result.
\end{proof}

The following is the main result of this subsection and Section \ref{sec: interpolation}. It can be thought of as a counterpart of \cite[Corollary 3.7]{CaoOlivoYabuta2022} in the variable exponent setting.

\begin{theorem}\label{thm:interpolation_compactness}
Let $\gamma\in[0,\infty)$ and let $(\vec{p}_i(\cdot), q_i(\cdot), \vec{r}, s)$, $i=0,1$ be proper $m$-admissible quadruples with
\begin{equation*}
  \frac{1}{p_0(\cdot)} - \frac{1}{q_0(\cdot)} = \frac{1}{p_1(\cdot)} - \frac{1}{q_1(\cdot)} = \gamma.
\end{equation*}
Let
\begin{gather*}
  \vec{w}_0 \in \mathcal{A}_{(p_0(\cdot),q_0(\cdot)),(\vec{r}, s)},\\
  \vec{w}_1 \in \mathcal{A}_{(p_1(\cdot),q_1(\cdot)),(\vec{r}, s)}.
\end{gather*}
Let $\theta\in(0,1)$ and define $p_{j}(\cdot) \in \mathscr{P}_{0}$, $q(\cdot)\in\mathscr{P}_{0}$ by
\begin{align*}
  &\frac{1}{p_{j}(\cdot)} := \frac{1-\theta}{p_{0,j}(\cdot)} + \frac{\theta}{p_{1,j}(\cdot)},\quad j=1,\ldots,m,\\
  &\frac{1}{q(\cdot)} := \frac{1-\theta}{q_{0}(\cdot)} + \frac{\theta}{q_{1}(\cdot)}
\end{align*}
as well as the vector of weights $\vec{w}$ by
\begin{equation*}
  w_{j} := w_{0,j}^{1-\theta}w_{1,j}^{\theta},\quad j=1,\ldots,m.
\end{equation*}
Note that by Lemma~\ref{lem:interpolation_weights} we have that $(\vec{p}(\cdot),q(\cdot),\vec{r},s)$ is a proper $m$-admissible quadruple with
\begin{equation*}
  \frac{1}{p(\cdot)} - \frac{1}{q(\cdot)} = \gamma
\end{equation*}
and that $\vec{w}\in\mathcal{A}_{(\vec{p}(\cdot),q(\cdot)),(\vec{r},s)}$.

Let $T$ be a $m$-linear operator such that
\begin{equation*}
  T : L^{p_{0,1}(\cdot)}(w_{0,1})\times\cdots\times L^{p_{0,m}(\cdot)}(w_{0,m}) \to L^{q_0(\cdot)}(\nu_{\vec{w}_0})
\end{equation*}
boundedly and
\begin{equation*}
  T : L^{p_{1,1}(\cdot)}(w_{1,1})\times\cdots\times L^{p_{1,m}(\cdot)}(w_{1,m}) \to L^{q_1(\cdot)}(\nu_{\vec{w}_1})
\end{equation*}
compactly. Then, we have
\begin{equation*}
  T : L^{p_{1}(\cdot)}(w_{1})\times\cdots\times L^{p_{m}(\cdot)}(w_{m}) \to L^{q(\cdot)}(\nu_{\vec{w}})
\end{equation*}
compactly.
\end{theorem}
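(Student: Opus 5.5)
Following the strategy of \cite[Corollary 3.7]{CaoOlivoYabuta2022}, we check the three conditions of the Riesz--Kolmogorov criterion, Theorem~\ref{thm: RK criterion}, in the target space $L^{q(\cdot)}(\nu_{\vec{w}})$, applied to the family
\begin{equation*}
  \mathcal{F}:=\{T(\vec{f}):~\Vert f_j\Vert_{L^{p_j(\cdot)}(w_j)}\leq 1,~j=1,\ldots,m\}.
\end{equation*}
Two inputs are needed first. Boundedness of $T$ on the interpolated scale comes from Theorem~\ref{thm:interpolation_boundedness}, with $v_i=\nu_{\vec{w}_i}$; note that $\nu_{\vec{w}}=\nu_{\vec{w}_0}^{1-\theta}\nu_{\vec{w}_1}^{\theta}$. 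This gives condition \ref{itm: (i)} immediately. We also need an admissible $\widetilde{q}$ and the maximal estimate of Lemma~\ref{lem: HL} for the weight $\nu_{\vec{w}}$. By Lemma~\ref{lem:containment} we may assume $s=\infty$. By Lemma~\ref{lem:Apqrs char.}, $\nu_{\vec{w}}\in\mathcal{A}_{(p(\cdot),q(\cdot)),(r,\infty)}$, and Lemma~\ref{lem:two_to_one_exp} converts this into $\nu_{\vec{w}}^{a}\in\mathcal{A}_{q(\cdot)/a}$ with $a=1/(1/r-\gamma)$. The same holds for $\nu_{\vec{w}_0}$ and $\nu_{\vec{w}_1}$ with the same $a$. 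One checks $a<q_-$, so we take $\widetilde{q}:=a$ (or any smaller value, using the openness of $\mathcal{A}_{p(\cdot)}$ if needed). This is legitimate for all three weights and keeps $\widetilde{q}<\min\{(q_0)_-,(q_1)_-\}$, as Theorem~\ref{thm:interpolation_boundedness_mixed} requires.

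For conditions \ref{itm: (ii)} and \ref{itm: (iii)} we interpolate auxiliary operators. Consider the tail operator $T_R(\vec{f}):=\chi_{\R^n\setminus B(0,R)}T(\vec{f})$. It is bounded into $L^{q_0(\cdot)}(\nu_{\vec{w}_0})$ with norm at most $\Vert T\Vert_0$. Into $L^{q_1(\cdot)}(\nu_{\vec{w}_1})$ its norm $\eps_1(R)$ tends to $0$ as $R\to\infty$, by the necessity part of Theorem~\ref{thm: RK criterion} applied to the precompact image of the unit ball under $T$ at level $1$. Theorem~\ref{thm:interpolation_boundedness} then gives
\begin{equation*}
  \Vert T_R\Vert\leq \Vert T\Vert_0^{1-\theta}\eps_1(R)^{\theta}\to0,
\end{equation*}
which is condition \ref{itm: (iii)}. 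For condition \ref{itm: (ii)}, fix $r>0$ and view
\begin{equation*}
  D_r(\vec{f})(x,y):=|B(0,r)|^{-1/\widetilde{q}}\,\chi_{B(0,r)}(y)\,\bigl(T(\vec{f})(x)-T(\vec{f})(x+y)\bigr)
\end{equation*}
as a multilinear operator into the mixed space $L^{q_i(\cdot)}(\nu_{\vec{w}_i})(L^{\widetilde{q}}(\R^n))$. Then $\Vert\Vert D_r(\vec{f})(x,\cdot)\Vert_{L^{\widetilde{q}}}\Vert_{L^{q(\cdot)}_x(\nu_{\vec{w}})}$ is exactly the quantity appearing in condition \ref{itm: (ii)}. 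At level $0$, $D_r$ is bounded uniformly in $r$: the pointwise bound $|T\vec{f}(x)|+M_{\widetilde{q}}(T\vec{f})(x)$ together with Lemma~\ref{lem: HL} gives this. At level $1$, its norm tends to $0$ as $r\to0$, by necessity of condition \ref{itm: (ii)} in Theorem~\ref{thm: RK criterion}. Theorem~\ref{thm:interpolation_boundedness_mixed} then gives norm $\lesssim \eps_1(r)^{\theta}\to0$ on the interpolated scale, which is condition \ref{itm: (ii)}. Sufficiency in Theorem~\ref{thm: RK criterion} then yields precompactness of $\mathcal{F}$.

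The main obstacle is matching the hypotheses of the interpolation theorems, which require $p_{i,j}(\cdot)\in\mathscr{P}$ and not merely $\mathscr{P}_0$. The plan is to first rescale by a power: replace $T$ by the operator acting on $|f_j|^{1/\ldots}$-type variables. A cleaner route is to use Lemma~\ref{lem:homog}, which shows that $L^{p_j(\cdot)}(w_j)$ with $(p_j)_->r_j$ is not an obstacle once $r_j\geq1$, which holds in the applications. Otherwise we restrict to the range where the theorems apply, which is the case required for Theorem~\ref{thm:main_result_extr}.

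A second, smaller subtlety is that the necessity part of Theorem~\ref{thm: RK criterion} gives uniform-in-family smallness, not an operator-norm bound. This suffices, because the unit-ball image at level $1$ is precompact and the bounds scale by multilinearity.
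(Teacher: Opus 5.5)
Your proposal is correct and follows essentially the same route as the paper. The common steps are: boundedness at the intermediate level via Theorem~\ref{thm:interpolation_boundedness}; the choice $\widetilde q = 1/(1/r-\gamma)$, justified through Lemmata~\ref{lem:Apqrs char.}, \ref{lem:containment} and \ref{lem:two_to_one_exp}; a uniform level-$0$ bound for the difference operator via $M_{\widetilde q}$ and Lemma~\ref{lem: HL}; level-$1$ smallness from the necessity part of Theorem~\ref{thm: RK criterion}; and interpolation of the difference operator (Theorem~\ref{thm:interpolation_boundedness_mixed}) and of the tail operator $\chi_{\R^n\setminus B(0,R)}T$ (Theorem~\ref{thm:interpolation_boundedness}). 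The $\mathscr{P}$-versus-$\mathscr{P}_0$ issue you flag is equally implicit in the paper, which applies those interpolation theorems without comment and only needs $r_j\geq 1$ for Theorem~\ref{thm:main_result_extr}. So drop the ``rescaling by a power'' idea, which does not respect multilinearity of a general $T$. Instead, either state the restriction or check that the input-side steps of the interpolation proofs (density of simple functions, dominated convergence, Lemma~\ref{lem: min/max}) remain valid for $\mathscr{P}_0$ exponents.
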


\begin{proof}
We adapt the strategy of \cite[Theorem 3.6]{CaoOlivoYabuta2022}.
    
Notice that Theorem~\ref{thm:interpolation_boundedness} implies already that
\begin{equation*}
  T : L^{p_{1}(\cdot)}(w_{1})\times\cdots\times L^{p_{m}(\cdot)}(w_{m}) \to L^{q(\cdot)}(\nu_{\vec{w}})
\end{equation*}
boundedly. Define
\begin{equation*}
  \widetilde{q} := \frac{1}{\frac{1}{r} - \gamma}.
\end{equation*}
Observe that by Lemma~\ref{lem:interpolation_weights} we obtain
\begin{equation*}
  \vec{w} \in \mathcal{A}_{(\vec{p}(\cdot),q(\cdot)),(\vec{r},s)},
\end{equation*}
which in view of Lemma~\ref{lem:Apqrs char.} implies
\begin{equation*}
  \nu_{\vec{w}} \in \mathcal{A}_{(p(\cdot),q(\cdot)),(r,s)}.
\end{equation*}
Therefore, by Lemma~\ref{lem:containment} we get
\begin{equation*}
  \nu_{\vec{w}} \in \mathcal{A}_{(p(\cdot),q(\cdot)),(r,\infty)},
\end{equation*}
which with Lemma~\ref{lem:two_to_one_exp} gives
\begin{equation*}
  \nu_{\vec{w}}^{\widetilde{q}} \in \mathcal{A}_{q(\cdot)/\widetilde{q}}.
\end{equation*}
Therefore, to show that this operator $T$ is compact, by Theorem~\ref{thm: RK criterion} it suffices to show the following:
\begin{enumerate}[label=\textup{(\arabic*)}]
  \item
\begin{equation*}
  \sup_{\vec{f}\in\mathcal{F}}\Vert T(\vec{f})\Vert_{L^{q(\cdot)}(\nu_{\vec{w}})} < \infty;
\end{equation*}\label{i.t.m: (1)}
\item
\begin{equation*}
  \lim_{r\to0}\sup_{\vec{f}\in\mathcal{F}}\left\Vert\left(\mint{-}_{B(0,r)}|T(\vec{f})(x)-T(\vec{f})(x+y)|^{\widetilde{q}}\dd y\right)^{1/\widetilde{q}}\right\Vert_{L^{q(\cdot)}(\nu_{\vec{w}})} = 0;
\end{equation*}\label{i.t.m: (2)}
\item
\begin{equation*}
  \lim_{R\to\infty}\sup_{\vec{f}\in\mathcal{F}}\Vert T(\vec{f})\chi_{\R^n\setminus B(0,R)}\Vert_{L^{q(\cdot)}(\nu_{\vec{w}})} = 0,
\end{equation*}\label{i.t.m: (3)}
\end{enumerate}
where
\begin{equation*}
  \mathcal{F} := \{\vec{f}\in L^{p_{1}(\cdot)}(w_{1})\times\cdots\times L^{p_{m}(\cdot)}(w_{m}):~\Vert f_j\Vert_{L^{p_j(\cdot)}{(w_j)}}=1,~j=1,\ldots,m\}.
\end{equation*}
Define $L^{\vec{p}(\cdot)}(\vec{w}) := L^{p_{1}(\cdot)}(w_{1})\times\cdots\times L^{p_{m}(\cdot)}(w_{m})$. Since $T$ is multi-homogeneous, conditions \ref{i.t.m: (1)} to \ref{i.t.m: (3)} are obviously equivalent to the following:
\begin{enumerate}[label=(\arabic*'), start=1]
  \item There exists $M>0$ such that
\begin{equation*}
  \Vert T(\vec{f})\Vert_{L^{q(\cdot)}(\nu_{\vec{w}})} \leq M\prod_{j=1}^{m}\Vert f_j\Vert_{L^{p_j(\cdot)}(w_j)},\quad\forall \vec{f}\in L^{\vec{p}}(\vec{w});
\end{equation*}\label{i.t.m: (1')}

  \item For all $\eps>0$ there exists $r_0=r_0(\eps)>0$, such that for all $r\in(0,r_0)$ we have
\begin{equation*}
  \left\Vert\left(\mint{-}_{B(0,r)}|T(\vec{f})(x)-T(\vec{f})(x+y)|^{\widetilde{q}}\dd y\right)^{1/\widetilde{q}}\right\Vert_{L^{q(\cdot)}(\nu_{\vec{w}})} \leq \eps\prod_{j=1}^{m}\Vert f_j\Vert_{L^{p_j(\cdot)}(w_j)},\quad\forall \vec{f}\in L^{\vec{p}}(\vec{w});
\end{equation*}\label{i.t.m: (2')}

  \item For all $\eps>0$ there exists $R_0=R_0(\eps)>0$, such that for all $R\in(R_0,\infty)$ we have
\begin{equation*}
  \Vert T(\vec{f})\chi_{\R^n\setminus B(0,R)}\Vert_{L^{q(\cdot)}(\nu_{\vec{w}})} \leq \eps\prod_{j=1}^{m}\Vert f_j\Vert_{L^{p_j(\cdot)}(w_j)},\quad\forall \vec{f}\in L^{\vec{p}}(\vec{w}).
\end{equation*}\label{i.t.m: (3')}
\end{enumerate}
Condition \ref{i.t.m: (1')} says just that $T : L^{\vec{p}(\cdot)}(\vec{w}) \to L^{q(\cdot)}(\nu_{\vec{w}})$ is bounded, which follows by Theorem~\ref{thm:interpolation_boundedness}, as noted earlier in the proof. Therefore, we only need to show \ref{i.t.m: (2')} and \ref{i.t.m: (3')}.

Let $\eps>0$ be arbitrary. Set
\begin{equation*}
  M_0 := \Vert T\Vert_{L^{\vec{p}_0(\cdot)}(\vec{w}_0) \to L^{q_0(\cdot)}(\nu_{\vec{w}_0})}<\infty.
\end{equation*}
For all $r>0$, we consider
\begin{equation*}
  N_0(g, r) := \left\Vert\left(\mint{-}_{B(0,r)}|g(x)-g(x+y)|^{\widetilde{q}}\dd y\right)^{1/\widetilde{q}}\right\Vert_{L^{q_0(\cdot)}_{x}(\nu_{\vec{w}_0})}.
\end{equation*}
By combining Lemma~\ref{lem:Apqrs char.} with Lemmata \ref{lem:two_to_one_exp} and \ref{lem:containment} we obtain
\begin{equation}\label{eq:mult. weight}
  \nu_{\vec{w}_0}^{\widetilde{q}} \in \mathcal{A}_{q_0(\cdot)/\widetilde{q}}.
\end{equation}
We claim that
\begin{equation}\label{eq:basic_mixed_bound}
  N_0(g,r) \leq C_0 \Vert g\Vert_{L^{q_0(\cdot)}(\nu_{\vec{w}_0})}.
\end{equation}
Therefore, by the triangle inequality, \eqref{eq: kn. ineq.} and using Lemma~\ref{lem: HL} (since $\nu_{\vec{w}_0}^{\widetilde{q}} \in \mathcal{A}_{q_0(\cdot)/\widetilde{q}}$ by \eqref{eq:mult. weight}) we estimate
\begin{align*}
  N_0(g,r) &\leq C_1\left\Vert\left(\mint{-}_{B(0,r)}|g(x)|^{\widetilde{q}}\dd y\right)^{1/\widetilde{q}}+\left(\mint{-}_{B(0,r)}|g(x+y)|^{\widetilde{q}}\dd y\right)^{1/\widetilde{q}}\right\Vert_{L^{q_0(\cdot)}_{x}(\nu_{\vec{w}_0})}\\
  &\leq C_1\left\Vert |g|+M_{\widetilde{q}}(g)\right\Vert_{L^{q_0(\cdot)}(\nu_{\vec{w}_0})}
  \leq C_2\left\Vert g\right\Vert_{L^{q_0(\cdot)}(\nu_{\vec{w}_0})} + \left\Vert M_{\widetilde{q}}(g)\right\Vert_{L^{q_0(\cdot)}(\nu_{\vec{w}_0})}\\
  &\leq C_0\left\Vert g\right\Vert_{L^{q_0(\cdot)}(\nu_{\vec{w}_0})}.
\end{align*}
This proves \eqref{eq:basic_mixed_bound}.

Now, pick $\delta>0$ such that
\begin{equation*}
  M_0^{1-\theta}\delta^{\theta} < \eps\quad\text{and}\quad (C_0M_0)^{1-\theta}\delta^{\theta} < \eps.
\end{equation*}
Since $T : L^{\vec{p}_1(\cdot)}(\vec{w})\to L^{q_1(\cdot)}(\nu_{\vec{w}_1})$ is compact, by Theorem~\ref{thm: RK criterion} we have that there exists $r_0>0$ such that for all $r\in(0,r_0)$ we have
\begin{equation}\label{eq:cond2_scale1}
  \left\Vert\left(\mint{-}_{B(0,r)}|T(\vec{f})(x)-T(\vec{f})(x+y)|^{\widetilde{q}}\dd y\right)^{1/\widetilde{q}}\right\Vert_{L^{q_1(\cdot)}_x(\nu_{\vec{w}_1})} \leq \delta\prod_{j=1}^{m}\Vert f_j\Vert_{L^{p_{1,j}(\cdot)}(w_{1,j})},\quad\forall \vec{f}\in L^{\vec{p}_1(\cdot)}(\vec{w}_1),
\end{equation}
and furthermore that there exists $R_0>0$ such that for all $R\in(R_0,\infty)$ we have
\begin{equation}\label{eq:cond3_scale1}
  \Vert T(\vec{f})\chi_{\R^n\setminus B(0,R)}\Vert_{L^{q_1(\cdot)}(\nu_{\vec{w}_1})} \leq \delta\prod_{j=1}^{m}\Vert f_j\Vert_{L^{p_{1,j}(\cdot)}(w_{1,j})},\quad\forall \vec{f}\in L^{\vec{p}_1(\cdot)}(\vec{w}_1).
\end{equation}

Let $r\in(0,r_0)$ be arbitrary. We claim that
\begin{equation}\label{eq:cond2}
  \left\Vert\left(\mint{-}_{B(0,r)}|T(\vec{f})(x)-T(\vec{f})(x+y)|^{\widetilde{q}}\dd y\right)^{1/\widetilde{q}}\right\Vert_{L^{q(\cdot)}(\nu_{\vec{w}})} \leq \eps\prod_{j=1}^{m}\Vert f_j\Vert_{L^{p_j(\cdot)}(w_j)},\quad\forall \vec{f}\in L^{\vec{p}(\cdot)}(\vec{w}).
\end{equation}
Indeed, we consider the operator $S$ defined by
\begin{equation*}
  S(\vec{f})(x,y) := T(\vec{f})(x) - T(\vec{f})(x+y),\quad x,y\in\R^n.
\end{equation*}
Observe that $S$ is $m$-linear. For all $\vec{f}\in L^{\vec{p}_{0}(\cdot)}(\vec{w}_0)$, by \eqref{eq:basic_mixed_bound} we compute
\begin{align}\label{eq:cond2_scale0}
  &\left\Vert\left(\mint{-}_{B(0,r)}|S(\vec{f})(x,y)|^{\widetilde{q}}\dd y\right)^{1/\widetilde{q}}\right\Vert_{L^{q_0(\cdot)}_{x}(\nu_{\vec{w}_0})} = N_0(T(\vec{f}), r)\\
  \nonumber&\leq C_0 \left\Vert T(\vec{f})\right\Vert_{L^{q_0(\cdot)}(\nu_{\vec{w}_0})}
  \leq C_0M_0\prod_{j=1}^{m}\Vert f_j\Vert_{L^{p_{0,j}(\cdot)}(w_{0,j})}.
\end{align}
Therefore, combining \eqref{eq:cond2_scale0} and \eqref{eq:cond2_scale1} with Theorem~\ref{thm:interpolation_boundedness_mixed} we obtain \eqref{eq:cond2}.

Finally, let $R\in(R_0,\infty)$ be arbitrary. We claim that
\begin{equation}\label{eq:cond3}
  \Vert T(\vec{f})\chi_{\R^n\setminus B(0,R)}\Vert_{L^{q(\cdot)}(\nu_{\vec{w}})} \leq \eps\prod_{j=1}^{m}\Vert f_j\Vert_{L^{p_j(\cdot)}(w_j)},\quad\forall \vec{f}\in L^{\vec{p}}(\vec{w}).
\end{equation}
Indeed, it is clear that the operator $\chi_{\R^n\setminus B(0,R)}T$ is still $m$-linear and
\begin{equation}\label{eq:cond3_scale0}
  \Vert T(\vec{f})\chi_{\R^n\setminus B(0,R)}\Vert_{L^{q_0(\cdot)}(\nu_{\vec{w}_0})} \leq M_0\prod_{j=1}^{m}\Vert f_j\Vert_{L^{p_{0,j}(\cdot)}(w_{0,j})},\quad\forall\vec{f}\in L^{\vec{p}_0(\cdot)}(\vec{w}_0).
\end{equation}
Thus, combining \eqref{eq:cond3_scale0} and \eqref{eq:cond3_scale1} with Theorem~\ref{thm:interpolation_boundedness} we deduce \eqref{eq:cond3}. This concludes the proof.    
\end{proof}

\section{Proof of abstract extrapolation of compactness principle}\label{sec: pf. abstract results}

This section is devoted to an abstract extrapolation of compactness principle for multilinear operators on weighted variable Lebesgue spaces, namely Theorem \ref{thm:main_result_extr}. To establish it, we employ the weighted interpolation of compactness Theorem~\ref{thm:interpolation_compactness} as well as the following two factorization results that can be found in \cite{extr_comp_var_bil}.

\begin{lemma}[\cite{extr_comp_var_bil}, Lemma 4.1]
\label{lem:KeyLemma}
Let $(\vec{p}(\cdot),q(\cdot),\vec{r},s)$ and $(\vec{p}_1(\cdot),q_1(\cdot),\vec{r},s)$ be proper $m$-admissible quadruples such that
\begin{equation*}
  \frac{1}{p_1(\cdot)}-\frac{1}{q_1(\cdot)}=\frac{1}{p(\cdot)}-\frac{1}{q(\cdot)}=\gamma\in[0,\infty).
\end{equation*}
Moreover, let
\begin{equation*}
  \vec{w}\in\mathcal{A}_{(\vec{p}(\cdot),q(\cdot)),(\vec{r},s)}
\end{equation*}
and
\begin{equation*}
  \vec{w}_1\in\mathcal{A}_{(\vec{p}_1(\cdot),q_1(\cdot)),(\vec{r},s)}.
\end{equation*}
Given any $\theta\in(0,1)$, define $\vec{p}_0(\cdot)\in(\mathscr{P}_0)^m$, $q_0(\cdot)\in\mathscr{P}_0$ by
\begin{align*}
  \frac{1}{p_j(\cdot)}&=\frac{1-\theta}{p_{0,j}(\cdot)}+\frac{\theta}{p_{1,j}(\cdot)},\quad j=1,\ldots,m,\\
  \frac{1}{q(\cdot)}&=\frac{1-\theta}{q_0(\cdot)}+\frac{\theta}{q_1(\cdot)},
\end{align*}
as well as the vector of weights $\vec{w}_0$ by
\begin{align*}
  w_j&=w_{0,j}^{1-\theta}w_{1,j}^{\theta},\quad j=1,\ldots,m.
\end{align*}
Then, there exists $\eta>0$ such that for all $\theta\in(0,\eta)$ all of the following hold:
\begin{enumerate}
  \item $(\vec{p}_0(\cdot), q_0(\cdot), \vec{r}, s)$ is a proper $m$-admissible quadruple with 
\begin{equation*}
  \quad\frac{1}{p_0(\cdot)}-\frac{1}{q_0(\cdot)}=\gamma.
\end{equation*}

  \item There holds
\begin{equation*}
  \vec{w}_0\in\mathcal{A}_{(\vec{p}_0(\cdot),q_0(\cdot)),(\vec{r},s)}.
\end{equation*}

  \item If $(q_1)_{-},q_{-}>1$, then also $(q_{0})_{-}>1$.
\end{enumerate}
\end{lemma}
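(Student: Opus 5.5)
The plan is to solve explicitly for the new exponents and weights, and to check that everything depends continuously on $\theta$ near $0$. Setting
\begin{equation*}
\frac{1}{p_{0,j}(\cdot)} := \frac{1}{1-\theta}\Big(\frac{1}{p_j(\cdot)}-\frac{\theta}{p_{1,j}(\cdot)}\Big),\qquad \frac{1}{q_0(\cdot)} := \frac{1}{1-\theta}\Big(\frac{1}{q(\cdot)}-\frac{\theta}{q_1(\cdot)}\Big),\qquad w_{0,j}:=w_j^{\frac{1}{1-\theta}}w_{1,j}^{-\frac{\theta}{1-\theta}},
\end{equation*}
the definitions in the statement are satisfied. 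Part (1) is elementary. As $\theta\to0$, $1/p_{0,j}\to 1/p_j$ uniformly, since all reciprocals are bounded. Hence for small $\theta$ we keep $0<(p_{0,j})_-\le (p_{0,j})_+<\infty$, $(p_{0,j})_->r_j$ and $(q_0)_+<s$, because these are strict inequalities holding for $\theta=0$ with uniform margins. LH is preserved since LH is closed under linear combinations and under reciprocals of functions bounded away from $0$ and $\infty$. The relation $1/p_0-1/q_0=\gamma$ is a direct computation, as in Lemma~\ref{lem:interpolation_weights}(1), using $\gamma=\frac{\gamma-\theta\gamma}{1-\theta}$. Part (3) follows in the same way: $(q_0)_-$ is close to $q_-$ for small $\theta$, and $q_->1$.

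For part (2), first reduce the multilinear condition to linear ones via Lemma~\ref{lem:Apqrs char.}. It then suffices to show $w_{0,j}\in\mathcal{A}_{p_{0,j}(\cdot),(r_j,\sigma_j)}$ for each $j$ and $\nu_{\vec w_0}=\nu_{\vec w}^{1/(1-\theta)}\nu_{\vec w_1}^{-\theta/(1-\theta)}\in\mathcal{A}_{(p_0(\cdot),q_0(\cdot)),(r,s)}$. By Lemma~\ref{lem:two_to_one_exp}, each of these is equivalent to a statement about a power of the weight lying in a one-weight class $\mathcal{A}_{t(\cdot)}$. We thus arrive at the following core question. Given $u\in\mathcal{A}_{t(\cdot)}$ and $u_1\in\mathcal{A}_{t_1(\cdot)}$ with LH exponents, is $u^{1/(1-\theta)}u_1^{-\theta/(1-\theta)}\in\mathcal{A}_{t_0(\cdot)}$ for $\theta$ small, where $t_0$ is the exponent determined by the corresponding interpolation relation? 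Note that the power $a$ in Lemma~\ref{lem:two_to_one_exp} depends only on $r,s,\gamma$, so it is the same for all three exponent tuples.

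This reverse-factorization step is the main obstacle. A Hölder estimate alone, as in Lemma~\ref{lem:interpolation_weights}(2), goes the wrong way because of the negative power on $u_1$. I would first try the reverse Hölder / openness property of variable Muckenhoupt weights, which is the ingredient behind the factorization results of \cite{LO2025}. By that property, $u\in\mathcal{A}_{t(\cdot)}$ implies that some power $u^{1+\epsilon}$ still belongs to a nearby class with a slightly perturbed LH exponent. Write $\lambda=1/(1-\theta)$, so that $u^{\lambda}u_1^{1-\lambda}$ has exponents $\lambda\approx1+\theta$ and $1-\lambda\approx-\theta$. The plan is then to apply Hölder's inequality (Lemma~\ref{lem:Hoelder}) to split $\Vert u^{\lambda}u_1^{1-\lambda}\chi_Q\Vert_{t_0(\cdot)}$ into a factor controlled by $\Vert u^{1+\epsilon}\chi_Q\Vert$ and a factor controlled by $\Vert u_1^{-1}\chi_Q\Vert_{t_1'(\cdot)}$ raised to a small power. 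The dual norm is treated symmetrically, and the $|Q|$-powers are checked to cancel by the exponent relations.

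Choosing $\theta<\eta$, with $\eta$ depending on the self-improvement parameter $\epsilon$, makes the Hölder exponents admissible. The fact that the exponents $t(\cdot)$ vary requires the LH assumption, both for the openness and for comparing norms with the perturbed exponents. Since this step is exactly \cite[Lemma 4.1]{extr_comp_var_bil}, whose linear version is in \cite{LO2025}, I would, in the end, rely on those factorization results rather than redo the openness argument.
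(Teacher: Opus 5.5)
Your proposal is correct and follows the paper's route. The paper does not reprove this lemma but imports it from \cite{extr_comp_var_bil}, where it rests on the linear variable factorization of \cite{LO2025}. Your continuity arguments for parts (1) and (3), and your reduction via Lemma~\ref{lem:Apqrs char.} and Lemma~\ref{lem:two_to_one_exp} to the $m+1$ linear conditions (taking $\eta$ as the minimum of the resulting thresholds), are how that linear input enters. Two small points. For the conditions on $w_j$ the index $\sigma_j$ can be negative, so Lemma~\ref{lem:two_to_one_exp} is used slightly outside its stated hypotheses, although its computation carries over verbatim (with $a=(\frac{1}{r}-\frac{1}{s})^{-1}$ and $t_j(\cdot)\in\mathscr{P}\cap\mathrm{LH}$). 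And the non-circular result to cite is the linear one of \cite{LO2025}, not Lemma~4.1 of \cite{extr_comp_var_bil}, which is the statement itself.
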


\begin{lemma}[\cite{extr_comp_var_bil}, Lemma 4.3]
\label{lem:KeyLemma_with_t}
Let $(\vec{p}(\cdot),q(\cdot),\vec{1},\infty)$ and $(\vec{p}_1(\cdot),q_1(\cdot),\vec{1},\infty)$ be proper \\$m$-admissible quadruples with
\begin{equation*}
  \frac{1}{p(\cdot)}-\frac{1}{q(\cdot)}=\frac{1}{p_1(\cdot)}-\frac{1}{q_1(\cdot)}=\gamma\in[0,\infty).
\end{equation*}
Let $t\in(0,\infty)$ be a fixed constant with
\begin{equation*}
  t<\min\{(p_j)_{-}:~j=1,\ldots,m\}
\end{equation*}
and
\begin{equation*}
  t<\min\{(p_{1,j})_{-}:~j=1,\ldots,m\}.
\end{equation*}
Let
\begin{equation*}
  \vec{w}\in\mathcal{A}_{\vec{p}(\cdot),q(\cdot)}
\end{equation*}
and
\begin{equation*}
  \vec{w}_1\in\mathcal{A}_{\vec{p}_1(\cdot),q_1(\cdot)}
\end{equation*}
such that in addition one has
\begin{equation*}
  \vec{w}^{t}\in\mathcal{A}_{\frac{\vec{p}(\cdot)}{t},\frac{q(\cdot)}{t}}
\end{equation*}
and
\begin{equation*}
  \vec{w}_1\in\mathcal{A}_{\frac{\vec{p}_1(\cdot)}{t},\frac{q_1(\cdot)}{t}}.
\end{equation*}
Given any $\theta\in(0,1)$, define $\vec{p}_0(\cdot)\in(\mathscr{P}_0)^m$, $q_0(\cdot)\in\mathscr{P}_0$ by
\begin{align*}
  \frac{1}{p_j(\cdot)}&=\frac{1-\theta}{p_{0,j}(\cdot)}+\frac{\theta}{p_{1,j}(\cdot)},\quad j=1,\ldots,m,\\
  \frac{1}{q(\cdot)}&=\frac{1-\theta}{q_0(\cdot)}+\frac{\theta}{q_1(\cdot)},
\end{align*}
as well as the vector of weights $\vec{w}_0$ by
\begin{align*}
  w_j&=w_{0,j}^{1-\theta}w_{1,j}^{\theta},\quad j=1,\ldots,m.
\end{align*}
Then, there exists $\eta>0$ such that for all $\theta\in(0,\eta)$ all of the following hold:
\begin{enumerate}
  \item $(\vec{p}_0(\cdot),q_0(\cdot),\vec{1},\infty)$ is a proper $m$-admissible quadruple with $\frac{1}{p_0(\cdot)}-\frac{1}{q_0(\cdot)} = \gamma$.\label{itm:no 1}

  \item $\vec{w}_0\in\mathcal{A}_{\vec{p}_0(\cdot),q_0(\cdot)}$.\label{itm:no 2}

  \item If $(q_1)_{-},q_{-}>1$, then also $(q_{0})_{-}>1$.\label{itm:no 3}

  \item  $t<\min\{(p_{0,j})_{-}:~j=1,\ldots,m\}$.

  \item $\vec{w}_0^{t}\in\mathcal{A}_{\frac{\vec{p}_0(\cdot)}{t},\frac{q_0(\cdot)}{t}}$.
\end{enumerate}
\end{lemma}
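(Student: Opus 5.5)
The plan is to reduce Lemma~\ref{lem:KeyLemma_with_t} to two applications of Lemma~\ref{lem:KeyLemma}: one with $\vec{r}=\vec{1}$ and one with $\vec{r}=\vec{t}:=(t,\ldots,t)$, both with $s=\infty$. The point is that the extra hypothesis $\vec{w}^{t}\in\mathcal{A}_{\frac{\vec{p}(\cdot)}{t},\frac{q(\cdot)}{t}}$ can be rewritten as membership in a class $\mathcal{A}_{(\vec{p}(\cdot),q(\cdot)),(\vec{t},\infty)}$. The exponents $\vec{p}_0(\cdot),q_0(\cdot)$ and weights $\vec{w}_0$ are defined by the same formulas in both applications. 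So we may intersect the two conclusions, taking $\eta$ to be the minimum of the two $\eta$'s produced.

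\textbf{Step 1: the rewriting of the $t$-power condition.} Let $(\vec{p}(\cdot),q(\cdot),\vec 1,\infty)$ be proper $m$-admissible with gap $\gamma$, and let $t<\min_j (p_j)_-$. Then $(\vec{p}(\cdot)/t,q(\cdot)/t,\vec 1,\infty)$ is admissible with gap $t\gamma$. By Lemma~\ref{lem:homog} we have
\begin{equation*}
\Vert \nu_{\vec{w}}^{t}\chi_Q\Vert_{q(\cdot)/t}=\Vert \nu_{\vec{w}}\chi_Q\Vert_{q(\cdot)}^{t}
\end{equation*}
and
\begin{equation*}
\Vert w_j^{-t}\chi_Q\Vert_{\frac{1}{1-t/p_j(\cdot)}}=\Vert w_j^{-1}\chi_Q\Vert_{\frac{1}{1/t-1/p_j(\cdot)}}^{t}.
\end{equation*}
Moreover, $|Q|^{t\gamma-m}=(|Q|^{\gamma-m/t})^{t}$, and $1/r=m/t$ for $\vec r=\vec t$. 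It follows that
\begin{equation*}
[\vec{w}^{t}]_{\mathcal{A}_{\frac{\vec{p}(\cdot)}{t},\frac{q(\cdot)}{t}}}=[\vec{w}]_{\mathcal{A}_{(\vec{p}(\cdot),q(\cdot)),(\vec{t},\infty)}}^{t}.
\end{equation*}
The condition $t<(p_j)_-$ is exactly the requirement that $(\vec{p}(\cdot),q(\cdot),\vec t,\infty)$ be (proper) $m$-admissible, since $q_+<\infty$ is automatic. The same identity holds for $\vec{w}_1$ and $\vec{p}_1(\cdot),q_1(\cdot)$. (I read the hypothesis on $\vec w_1$ as $\vec{w}_1^{t}\in\mathcal{A}_{\vec p_1(\cdot)/t,q_1(\cdot)/t}$.)

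\textbf{Step 2: two applications of Lemma~\ref{lem:KeyLemma}.} First apply Lemma~\ref{lem:KeyLemma} with $(\vec r,s)=(\vec 1,\infty)$ to $\vec w,\vec w_1$. This gives $\eta_1>0$ such that items (1)--(3) hold for $\theta\in(0,\eta_1)$. Next apply Lemma~\ref{lem:KeyLemma} with $(\vec r,s)=(\vec t,\infty)$, which is legitimate by Step 1. This gives $\eta_2>0$ such that, for $\theta\in(0,\eta_2)$, the quadruple $(\vec p_0(\cdot),q_0(\cdot),\vec t,\infty)$ is proper $m$-admissible and $\vec w_0\in\mathcal{A}_{(\vec p_0(\cdot),q_0(\cdot)),(\vec t,\infty)}$. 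Admissibility of this quadruple yields item (4), $t<(p_{0,j})_-$. The reverse direction of the identity in Step 1 then converts the weight statement into item (5), $\vec w_0^{t}\in\mathcal{A}_{\vec p_0(\cdot)/t,q_0(\cdot)/t}$. Setting $\eta:=\min\{\eta_1,\eta_2\}$ finishes the proof.

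\textbf{Main obstacle.} The only delicate point is checking that the $t$-power condition is genuinely equivalent to an $\mathcal{A}_{(\cdot),(\vec t,\infty)}$ condition, with matching powers of $|Q|$ and matching exponents. This is the bookkeeping in Step 1. It is also where one must note that Lemma~\ref{lem:KeyLemma} allows arbitrary $\vec r\in(0,\infty)^m$, including $t<1$. One must also confirm that the definitions of $\vec p_0(\cdot),q_0(\cdot),\vec w_0$ in Lemma~\ref{lem:KeyLemma} do not depend on $\vec r$, so that both applications produce the same objects. Both points are immediate from the statements.
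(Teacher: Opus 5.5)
Your proof is correct. The paper does not reprove this lemma; it quotes it from \cite{extr_comp_var_bil}. Your reduction to two applications of Lemma~\ref{lem:KeyLemma} (once with $\vec{r}=\vec{1}$, once with $\vec{r}=\vec{t}$) is the natural argument, and the bookkeeping in your Step 1 checks out:
\begin{itemize}
\item By Lemma~\ref{lem:homog}, the exponents $q(\cdot)/t$ and $\frac{1}{1-t/p_j(\cdot)}$ turn into $q(\cdot)$ and $\frac{1}{1/t-1/p_j(\cdot)}$.
\item The cube factors match, since $|Q|^{t\gamma-m}=(|Q|^{\gamma-m/t})^{t}$.
\item The objects $\vec{p}_0(\cdot)$, $q_0(\cdot)$ and $\vec{w}_0$ do not depend on $(\vec{r},s)$, so both applications concern the same data.
\end{itemize}
Your reading of the hypothesis on $\vec{w}_1$ as $\vec{w}_1^{t}\in\mathcal{A}_{\vec{p}_1(\cdot)/t,q_1(\cdot)/t}$ is also the one used in the proof of Theorem~\ref{thm:main_result_extr}.

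A slightly shorter way to get items (4)--(5) avoids Step 1. Apply Lemma~\ref{lem:KeyLemma} with $(\vec{r},s)=(\vec{1},\infty)$ directly to the rescaled data $(\vec{p}(\cdot)/t,q(\cdot)/t)$ and $(\vec{p}_1(\cdot)/t,q_1(\cdot)/t)$, which have gap $t\gamma$, with weights $\vec{w}^{t}$ and $\vec{w}_1^{t}$. Since $\frac{t}{p_j}=\frac{1-\theta}{p_{0,j}/t}+\frac{\theta}{p_{1,j}/t}$ and $w_j^{t}=(w_{0,j}^{t})^{1-\theta}(w_{1,j}^{t})^{\theta}$, the lemma returns exactly $\vec{p}_0(\cdot)/t$, $q_0(\cdot)/t$ and $\vec{w}_0^{t}$. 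This variant never uses Lemma~\ref{lem:KeyLemma} with $r_j=t<1$. The lemma is stated for all $\vec{r}\in(0,\infty)^m$, so this is only a matter of robustness, not a gap in your argument.
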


We are finally in a position to prove the second main theorem of this paper.

\begin{proof}[Proof of Theorem~\ref{thm:main_result_extr}]

We first treat part \ref{eq: main thm 1}. Let $(\vec{p}(\cdot), q(\cdot),\vec{1},\infty)$ be a proper $m$-admissible quadruple such that
\begin{equation*}
  t<\min\{(p_j)_{-}:~j=1,\dots,m\}
\end{equation*}
and
\begin{equation*}
  \frac{1}{p(\cdot)}-\frac{1}{q(\cdot)}=\gamma.
\end{equation*}
Let also $\vec{w}$ be a $m$-tuple of weights with $\vec{w}\in\mathcal{A}_{\vec{p}(\cdot),q(\cdot)}$ and $\vec{w}^{t}\in\mathcal{A}_{\frac{\vec{p}(\cdot)}{t},\frac{q(\cdot)}{t}}$. We will prove that $T:L^{p_1(\cdot)}(w_1)\times\ldots\times L^{p_m(\cdot)}(w_m)\to L^{q(\cdot)}(\nu_{\vec{w}})$ compactly.

By the second assumption of theorem, there exist a proper $m$-admissible quadruple $(\vec{p}_1(\cdot), q_1(\cdot),\vec{1},\infty)$ such that
\begin{equation*}
  t<\min\{(p_{1,j})_{-}:~j=1,\dots,m\}
\end{equation*}
and
\begin{equation*}
  \frac{1}{p_1(\cdot)}-\frac{1}{q_1(\cdot)}=\gamma,
\end{equation*}
as well as a $m$-tuple of weights $\vec{w}_1$ with $\vec{w}_1\in\mathcal{A}_{\vec{p}_1(\cdot),q_1(\cdot)}$ and $\vec{w}_1^{t}\in\mathcal{A}_{\frac{\vec{p}_1(\cdot)}{t},\frac{q_1(\cdot)}{t}}$, such that
\begin{equation}
    \label{eq:ass_comp}
    T:L^{p_{1,1}(\cdot)}(w_{1,1})\times\ldots\times L^{p_{1,m}(\cdot)}(w_{1,m})\to L^{q_1(\cdot)}(\nu_{\vec{w}_1})\quad\text{compactly}.
\end{equation}

Next, applying Lemma~\ref{lem:KeyLemma_with_t} we obtain $\theta>0$ and a proper $m$-admissible quadruple $(\vec{p}_0(\cdot), q_0(\cdot),\vec{1},\infty)$, such that
\begin{equation*}
  t<\min\{(p_0,j)_{-}:~j=1,\dots,m\},
\end{equation*}
\begin{equation*}
\frac{1}{\vec{p}(\cdot)}=\frac{1-\theta}{\vec{p}_0(\cdot)}+\frac{\theta}{\vec{p}_1(\cdot)},\quad \frac{1}{q(\cdot)}=\frac{1-\theta}{q_0(\cdot)}+\frac{\theta}{q_1(\cdot)}
\end{equation*}
and
\begin{equation*}
  \quad\frac{1}{p_0(\cdot)}-\frac{1}{q_0(\cdot)}=\gamma,
\end{equation*}
as well as a $m$-tuple of weights $\vec{w}_0$ with
\begin{equation*}
  \vec{w}=\vec{w}_0^{1-\theta}\vec{w}_1^{\theta}    
\end{equation*}
such that $\vec{w}_0\in\mathcal{A}_{\vec{p}_0(\cdot),q_0(\cdot)}$ and $\vec{w}_0^{t}\in\mathcal{A}_{\frac{\vec{p}_0(\cdot)}{t},\frac{q_0(\cdot)}{t}}$. By the first assumption of the theorem, we have
\begin{equation}
    \label{eq:ass_bound}
    T:L^{p_{0,1}(\cdot)}(w_{0,1})\times\ldots\times L^{p_{0,m}(\cdot)}(w_{0,m})\to L^{q_0(\cdot)}(\nu_{\vec{w}_0})\quad\text{boundedly}.
\end{equation}
In view of~\eqref{eq:ass_bound} and~\eqref{eq:ass_comp}, Theorem~\ref{thm:interpolation_compactness} applies yielding immediately
\begin{equation*}
  T:L^{p_{1}(\cdot)}(w_{1})\times\ldots\times L^{p_{m}(\cdot)}(w_{m})\to L^{q(\cdot)}(\nu_{\vec{w}})\quad\text{compactly}.
\end{equation*}
This shows part \ref{eq: main thm 1}.

Part \ref{eq: main thm 2} is proved similarly, the only difference being the use of Lemma~\ref{lem:KeyLemma} in the place of Lemma~\ref{lem:KeyLemma_with_t}. We omit the details.
\end{proof}

\section{Applications}\label{sec: applic.}

In this section, we describe several applications of Theorem~\ref{thm:main_result_extr}. Since much of the preparatory work has already been carried out in Section 7 of the paper \cite{extr_comp_var_bil} by the authors, we will be briefer at the corresponding places and refer the reader to the aforementioned work for details.

\subsection{Notation for multilinear commutators}

We employ some standard notation for commutators of multilinear operators. Namely, given some $m$-linear operator $T$ accepting as arguments $m$-tuples $\vec{f}=(f_1,\ldots,f_m)$ of appropriate functions on $\R^n$ and some appropriately smooth function $b$ on $\R^n$, we define for each $j=1,\ldots,m$ the commutator
\begin{equation*}
    [T,b]_{e_j}(\vec{f})(x) :=b(x)T(\vec{f})(x)-T(f_1, \ldots,bf_j,\ldots,f_m)(x),\quad x\in\R^n.
\end{equation*}
Moreover, given a $m$-tuple $\vec{b}=(b_1,\ldots,b_m)$ of appropriately smooth functions on $\R^n$, we consider the \emph{sum commutator}
\begin{equation*}
    [T,\vec{b}]_{\Sigma}(\vec{f}) := \sum_{j=1}^{m}[T,b_j]_{e_j}(\vec{f}).
\end{equation*}

\subsection{The symbol of the commutators}

Typically, the symbol $b$ of our commutators will belong to the class
\begin{equation*}
    \CMO(\R^n):=\overline{C_c^\infty(\R^n)}^{\BMO(\R^n)},
\end{equation*}
where $C_c^\infty(\R^n)$ is the space of smooth functions with compact supports and $\BMO(\R^n)$ is the space of bounded mean oscillation functions defined by
\begin{equation*}
    \BMO(\R^n):=\Big\{f:\R^n\to\C\ \Big|\ \Norm{f}{\BMO(\R^n)}:=\sup_{Q\subset\R^n}\ave{\abs{f-\ave{f}_Q}}_Q<\infty\Big\}.
\end{equation*}

It turns out that this is the most natural choice in order to be able to apply our abstract extrapolation Theorem~\ref{thm:main_result_extr}.

\subsection{Multilinear \texorpdfstring{$\omega$}{ω}-Calder\'on--Zygmund operators}\label{subsec:CZ}

Let $\omega:[0,\infty)\rightarrow[0,\infty)$ be a modulus of continuity, namely, $\omega$ is increasing, subadditive and satisfies $\omega(0)=0$. Consider a function $K(x,y_1,\ldots,y_m)$ defined on $(\R^n)^{m+1}\setminus\{x=y_1=\ldots=y_m\}$. We say that $K$ is an \emph{$\omega$-Calder\'{o}n--Zygmund kernel} if it fulfills the following \emph{size and smoothness conditions}:  
\begin{itemize}
  \item
\begin{equation*}
  |K(x,y_1,\ldots,y_m)|\lesssim \frac{1}{(\sum_{j=1}^m|x-y_j|)^{mn}},
\end{equation*} 
  \item
\begin{equation*} 
  |K(x,y_1,\ldots,y_m)-K(x',y_1,\ldots,y_m)|\lesssim \frac{1}{(\sum_{j=1}^m|x-y_j|)^{mn}}\omega\bigg(\frac{|x-x'|}{\sum_{j=1}^m|x-y_j|}\bigg),
\end{equation*}     
whenever $|x-x'|\leq\frac{1}{2}\max_{j=1,\ldots,m}|x-y_j|$, and for each $i=1,\dots,m,$
  \item
\begin{equation*}
  |K(x,y_1,\ldots,y_i,\ldots,y_m)-K(x,y_1,\ldots,y_i',\ldots,y_m)| \lesssim \frac{1}{(\sum_{j=1}^m|x-y_j|)^{mn}}\omega\bigg(\frac{|y_i-y_i'|}{\sum_{j=1}^m|x-y_j|}\bigg),
\end{equation*}
whenever $|y_i-y_i'|\leq\frac{1}{2}\max_{j=1,\ldots,m}|x-y_j|$.
\end{itemize}

Now, we say that the $m$-linear operator $T:\mathcal{S}(\R^n)\times\cdots\times\mathcal{S}(\R^n)\rightarrow\mathcal{S}'(\R^n)$ is an $\omega$-Calder\'on--Zygmund operator if it extends to a bounded multilinear operator from $L^{q_1}(\R^n)\times\cdots\times L^{q_m}(\R^n)$ to $L^q(\R^n)$ for some $\frac{1}{q}=\frac{1}{q_1}+\cdots+\frac{1}{q_m}$ with $1<q_1,\dots,q_m<\infty$, and there exists an $\omega$-Calder\'on--Zygmund kernel $K$ such that
\begin{equation*}
  T(\vec{f})(x) := \int_{(\R^n)^m}K(x,y_1,\ldots,y_m)f(y_1)\cdots f(y_m)\,\dd y_1\cdots\dd y_m,
\end{equation*}
for all $x\notin\cap_{j=1}^m\supp f_j$, $j=1,\dots,m$.

We say that a modulus of continuity $\omega$ satisfies the Dini condition, denoted by $\omega\in$ Dini, if 
\begin{equation*}
  \|\omega\|_{\text{Dini}}:=\int_0^1\omega(t)\frac{\dd t}{t}<\infty.
\end{equation*}
We refer the reader to \cite{Yabuta1985, GT2002, MN2009} for further discussions concerning the Dini condition.

In \cite[Theorem 5.1]{CaoOlivoYabuta2022}, it was shown that commutators of $m$-linear $\omega$-Calder\'on--Zygmund operators are compact on constant exponent weighted Lebesgue spaces. Additionally, the authors of \cite[Theorem 7.5]{extr_comp_var_bil} obtained the compactness of commutators of bilinear $\omega$-Calder\'on--Zygmund operators on weighted variable Lebesgue spaces and under the restriction that the target space is a Banach space. Theorem~\ref{thm:main_result_extr} allows us to extend this to the multilinear setting and remove the restriction on the target space.

To achieve this, we first need the following boundedness result for the commutators obtained by the authors in \cite[Theorem 7.4]{extr_comp_var_bil}, which had built upon ideas from \cite[Theorem 2.4]{CHSW2025}.

\begin{theorem}[\cite{extr_comp_var_bil}, Theorem 7.4]\label{thm:bound_mul_CZO}
Let $b\in\mathrm{BMO}(\R^n)$. Let $t$ be a fixed constant with $t>1$. Let $j\in\{1,\ldots,m\}$. Then, for all proper $m$-admissible quadruples $(\vec{p}(\cdot), q(\cdot),\vec{1},\infty)$ with $q(\cdot)=p(\cdot)$ and
\begin{equation*}
  t<\min\{(p_k)_{-}:~k=1,\ldots,m\},
\end{equation*}
and for all $m$-tuples of weights $\vec{w}$ with $\vec{w}\in\mathcal{A}_{\vec{p}(\cdot)}$ and $\vec{w}^{t}\in\mathcal{A}_{\frac{\vec{p}(\cdot)}{t}}$, it holds that $[T,b]_{e_j}$ maps $L^{p_1(\cdot)}(w_1)\times\cdots\times L^{p_m(\cdot)}(w_m)$ into $L^{p(\cdot)}(\nu_{\vec{w}})$ boundedly.
\end{theorem}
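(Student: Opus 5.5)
This is a boundedness result for the commutator $[T,b]_{e_j}$ of an $\omega$-Calder\'on--Zygmund operator with $b\in\BMO$. I would prove it by the standard Cauchy integral (conjugation) trick, reducing everything to a weighted bound for $T$ itself that is uniform over a family of perturbed weights.

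\emph{Step 1 (weighted bound for $T$).} Establish that $T$ maps $L^{p_1(\cdot)}(w_1)\times\cdots\times L^{p_m(\cdot)}(w_m)\to L^{p(\cdot)}(\nu_{\vec{w}})$ for every proper admissible quadruple and every $\vec{w}$ as in the statement. The operator norm should depend only on $n$, $m$, the exponents and the two weight constants $[\vec{w}]_{\mathcal{A}_{\vec{p}(\cdot)}}$ and $[\vec{w}^t]_{\mathcal{A}_{\vec{p}(\cdot)/t}}$. I would obtain this from sparse domination of $T$, with $\omega\in$ Dini as needed, together with the bound for multilinear sparse forms on weighted variable Lebesgue spaces. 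Alternatively, I would use multilinear extrapolation on variable spaces \cite{CW2017,Nieareth2023}, starting from the constant-exponent $A_{\vec{p}}$ bounds. The auxiliary condition $\vec{w}^{t}\in\mathcal{A}_{\vec{p}(\cdot)/t}$ with $t>1$ supplies the slack these variable-exponent sparse and maximal bounds require, in the spirit of Lemma~\ref{lem: HL}.

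\emph{Step 2 (conjugation).} For $z\in\C$, set $T_z(\vec{f}):=e^{zb}\,T(f_1,\ldots,e^{-zb}f_j,\ldots,f_m)$. Then $\frac{d}{dz}T_z(\vec{f})\big|_{z=0}=[T,b]_{e_j}(\vec{f})$. By the Cauchy integral formula,
\[
[T,b]_{e_j}(\vec{f})=\frac{1}{2\pi i}\oint_{|z|=\delta}\frac{T_z(\vec{f})}{z^2}\dd z .
\]
When $p_-\ge1$ the Minkowski inequality gives $\Vert[T,b]_{e_j}\vec{f}\Vert\le\delta^{-1}\sup_{|z|=\delta}\Vert T_z\vec{f}\Vert$. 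If $p_-<1$, I would instead raise to a power $s<p_-$ via Lemma~\ref{lem:homog}, or approximate the integral by Riemann sums and use Lemma~\ref{lem: triangle ineq.} together with the $p_-$-convexity of the quasi-norm.

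\emph{Step 3 (perturbed weights).} The bound $\Vert T_z(\vec{f})\Vert_{L^{p(\cdot)}(\nu_{\vec{w}})}\le C\prod_k\Vert f_k\Vert_{L^{p_k(\cdot)}(w_k)}$ is exactly Step 1 applied to the perturbed tuple in which $w_j$ is replaced by $w_je^{-\mathrm{Re}(z)b}$, so that $\nu_{\vec{w}}$ becomes $\nu_{\vec{w}}e^{-\mathrm{Re}(z)b}$. I therefore need that for $|z|\le\delta$, with $\delta$ small depending on $\Vert b\Vert_{\BMO}$ and the weight constants, the perturbed tuple still lies in $\mathcal{A}_{\vec{p}(\cdot)}$ and its $t$-th power still lies in $\mathcal{A}_{\vec{p}(\cdot)/t}$, with uniformly controlled constants. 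By Lemma~\ref{lem:Apqrs char.}, each multilinear condition reduces to linear variable $\mathcal{A}$ conditions on $w_k$ and on $\nu_{\vec{w}}$. For each linear condition I would use the openness (self-improvement) of variable Muckenhoupt classes under LH exponents, so that $w\in\mathcal{A}_{p(\cdot)}$ implies $w^{1+\epsilon}$-type improvements. By H\"older's inequality (Lemma~\ref{lem:Hoelder}) this reduces the claim to the John--Nirenberg estimate: $e^{\pm\lambda b}$ satisfies a reverse H\"older / $A_2$-type bound on cubes when $\lambda\Vert b\Vert_{\BMO}$ is small.

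\emph{Main obstacle.} Step 3 is the crux. In variable spaces, norms of $\chi_Q$ times an exponential of $b$ cannot be computed directly, and the perturbation must be absorbed while keeping both the $\mathcal{A}_{\vec{p}(\cdot)}$ condition and the condition on $\vec{w}^{t}$ with uniform constants. My first attempt would be to pass through Lemma~\ref{lem:two_to_one_exp} to a single $\mathcal{A}_{t(\cdot)}$ class and use the factorization-type openness results of \cite{LO2025}. Alternatively, I would bypass the issue by working via sparse domination of the commutator itself, adding an oscillation term $|b-\langle b\rangle_Q|$, and bounding it with the BMO estimate of \cite[Theorem 2.4]{CHSW2025}.
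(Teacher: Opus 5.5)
The paper does not prove this statement; it imports it from \cite[Theorem 7.4]{extr_comp_var_bil}, which builds on \cite[Theorem 2.4]{CHSW2025}. Your proposal therefore has to stand on its own, and as written it does not: the step that carries all the difficulty is left open. After conjugation you need
\[
\sup_{|z|=\delta}\Vert T\Vert_{L^{p_1(\cdot)}(\widetilde{w}_1)\times\cdots\times L^{p_m(\cdot)}(\widetilde{w}_m)\to L^{p(\cdot)}(\nu_{\widetilde{\vec{w}}})}<\infty,\qquad \widetilde{w}_j=w_je^{\mathrm{Re}(z)b},\quad \widetilde{w}_k=w_k\ (k\neq j).
\]
The sign in the exponent is $+$, not $-$; this is harmless because the circle is symmetric. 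The display requires two things you do not supply.

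\begin{enumerate}
\item The perturbed tuples must lie in $\mathcal{A}_{\vec{p}(\cdot)}$, their $t$-th powers must lie in $\mathcal{A}_{\vec{p}(\cdot)/t}$, and both characteristics must be uniformly bounded in $z$. For this you only name tools: a quantitative openness or reverse H\"older property for variable Muckenhoupt weights, and a John--Nirenberg bound $\Vert e^{\lambda(b-\langle b\rangle_Q)}\chi_Q\Vert_{r(\cdot)}\lesssim\Vert\chi_Q\Vert_{r(\cdot)}$ that is uniform over all cubes, large ones included. Neither can simply be quoted for these multilinear variable classes, and you give no argument for either.
\item The constant from Step~1 must be uniform along this family. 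You assert that it depends only on the two characteristics. However, the available weighted variable-exponent maximal bounds, e.g.\ Lemma~\ref{lem: HL}, are stated with constants depending on the weight itself. Without this, the uniformity in $z$, which is the whole point of the trick, is unjustified.
\end{enumerate}

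Your treatment of $p_-<1$ also fails, and this case does occur (e.g.\ $m\ge2$, $t<m$, all $p_j$ close to $t$). Neither Riemann sums with Lemma~\ref{lem: triangle ineq.} nor $s$-subadditivity controls a continuous integral. You only get $\Vert\sum_k F(z_k)\Delta z_k\Vert^{s}\le\sum_k|\Delta z_k|^{s}\Vert F(z_k)\Vert^{s}$, and $\sum_k|\Delta z_k|^{s}\sim(2\pi\delta)^{s}N^{1-s}\to\infty$. What works is subharmonicity. The function $G(z):=(T_z\vec{f}-T\vec{f})/z$ is analytic near $0$ with $G(0)=[T,b]_{e_j}\vec{f}$. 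Hence pointwise $|G(0)|^{s}\le\frac{1}{2\pi}\int_0^{2\pi}|G(\delta e^{i\theta})|^{s}\dd\theta$, and Minkowski's integral inequality in the Banach space $L^{p(\cdot)/s}(\nu_{\vec{w}}^{s})$, $s\le\min\{1,p_-\}$, then applies.

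More to the point, Steps~2--3 can be bypassed altogether. In constant exponents the perturbation argument is classical and gives weighted bounds for $[T,b]_{e_j}$ itself for all $\vec{w}\in A_{\vec{p}}$. By Lemma~\ref{lem:homog}, $\vec{w}^{t}\in\mathcal{A}_{\vec{p}(\cdot)/t}$ is exactly $\vec{w}\in\mathcal{A}_{\vec{p}(\cdot),(\vec{t},\infty)}$ with $\vec{t}=(t,\ldots,t)$. So the multilinear extrapolation you invoke in Step~1 for $T$ can be applied directly to the commutator. The hypothesis $t>1$, $\vec{w}^{t}\in\mathcal{A}_{\vec{p}(\cdot)/t}$ is the type of input such an extrapolation, or a sharp-function argument with $M_{L\log L}\lesssim M_t$, needs: boundedness of $t$-rescaled multilinear maximal operators, the analogue of Lemma~\ref{lem: HL}. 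This keeps all BMO-dependent work out of the variable-exponent setting.
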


Next, we also need the following unweighted compactness result, which had already been obtained as part of the proof of \cite[Theorem 5.1]{CaoOlivoYabuta2022}.

\begin{theorem}[\cite{CaoOlivoYabuta2022}]\label{thm:comp_muly_CZO}
Let $b\in\mathrm{CMO}(\R^n)$, $1<p_1,\ldots,p_m<\infty$ and $0<p<\infty$ with $\frac{1}{p}:=\sum_{j=1}^{m}\frac{1}{p_j}$. Then, $[T,b]_{e_j}$ maps $L^{p_1}(\R^n)\times\cdots\times L^{p_m}(\R^n)$ into $L^{p}(\R^n)$ compactly for every $j=1,\ldots,m$.    
\end{theorem}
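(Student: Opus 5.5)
The plan is to reduce to a smooth compactly supported symbol and then verify the three conditions of Theorem~\ref{thm: RK criterion} in the unweighted, constant-exponent case. There we take $w\equiv1$, $p(\cdot)\equiv p$ and some fixed $\widetilde{q}\in(0,\min\{p,1\})$, so that $w^{\widetilde{q}}\equiv1\in\mathcal{A}_{p/\widetilde{q}}$ trivially.

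\textbf{Step 1: reduction to $b\in C_c^\infty(\R^n)$.} First I would invoke the unweighted bound
\begin{equation*}
  \Vert[T,b]_{e_j}\Vert_{L^{p_1}\times\cdots\times L^{p_m}\to L^{p}}\lesssim\Vert b\Vert_{\BMO(\R^n)}.
\end{equation*}
This is Theorem~\ref{thm:bound_mul_CZO} with trivial weights and constant exponents; any constant $t\in(1,\min_k p_k)$ works. Since $[T,b]_{e_j}$ is linear in $b$, for $b\in\CMO(\R^n)$ and $b_k\in C_c^\infty$ with $b_k\to b$ in $\BMO$, the operators $[T,b_k]_{e_j}$ converge to $[T,b]_{e_j}$ in operator quasi-norm. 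The image of the unit ball under a uniform limit of compact multilinear operators is still totally bounded in the quasi-Banach space $L^p$: take an $\eps$-net for the approximant and use the quasi-triangle inequality of Lemma~\ref{lem: triangle ineq.}. So it suffices to treat $b\in C_c^\infty$ with $\supp b\subset B(0,A)$.

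\textbf{Step 2: uniform boundedness and vanishing at infinity.} Condition \ref{itm: (i)} of Theorem~\ref{thm: RK criterion} is exactly the boundedness from Step 1. For condition \ref{itm: (iii)}, let $|x|>2A$. Then $b(x)=0$, so $[T,b]_{e_j}(\vec f)(x)=-T(f_1,\ldots,bf_j,\ldots,f_m)(x)$, and $x$ lies off the support of $bf_j$, so the kernel representation applies. Since $|x-y_j|\sim|x|$ on $\supp(bf_j)$, the size condition together with H\"older's inequality in each variable $y_k$ should give
\begin{equation*}
  |[T,b]_{e_j}(\vec f)(x)|\lesssim\Vert b\Vert_\infty A^{n/p_j'}|x|^{-n/p}\prod_k\Vert f_k\Vert_{p_k}.
\end{equation*}
For each $k\neq j$ one integrates $|f_k(y_k)|(|x|+|x-y_k|)^{-n}$ and obtains the factor $|x|^{-n/p_k}$. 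This bound is not quite integrable in $L^p(\{|x|>R\})$ because of the borderline exponent. I would fix it by sharpening to $|x|^{-n/p-\epsilon}$: split the powers of $(\sum_k|x-y_k|)^{-mn}$ unevenly between the variables, spending the extra decay coming from the compactly supported variable $y_j$. The tail norm is then $O(R^{-\epsilon})$ uniformly on the unit ball.

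\textbf{Step 3: $\widetilde{q}$-equicontinuity, the main obstacle.} Take $|h|<r$ small and write
\begin{equation*}
  [T,b]_{e_j}(\vec f)(x)-[T,b]_{e_j}(\vec f)(x+h)
  =(b(x)-b(x+h))\,T(\vec f)(x)+\bigl(\text{difference of }\textstyle\int (b(x+h)-b(y_j))K\cdots\bigr).
\end{equation*}
The first term is at most $r\Vert\nabla b\Vert_\infty|T(\vec f)(x)|$, whose $L^p$ norm is $O(r)$ by the unweighted boundedness of $T$. For the second term I would split the $(y_1,\ldots,y_m)$-integration at $\sum_k|x-y_k|\sim\delta$ with $\delta\gg r$.

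On the near part, $|b(x)-b(y_j)|\le\Vert\nabla b\Vert_\infty\sum_k|x-y_k|$ cancels one power of the singularity. This leaves a fractional-type operator at scale $\delta$, which is dominated by $\delta\prod_k Mf_k(x)$ and similar terms at $x+h$.

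On the far part, the smoothness condition with modulus $\omega$ bounds the kernel differences by
\begin{equation*}
  \sum_{\ell\ge0}\omega\bigl(2^{-\ell}r/\delta\bigr)\prod_k Mf_k(x),
\end{equation*}
and the Dini condition makes this sum small as $r/\delta\to0$. The multilinear maximal function $\prod_k Mf_k$ is bounded into $L^p$ by H\"older's inequality, even when $p<1$. The $\widetilde{q}$-averages over $B(x,r)$ are then handled by $M_{\widetilde{q}}$ via Lemma~\ref{lem: HL}, which is exactly why $\widetilde{q}<p$ was chosen.

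Choosing first $\delta$ small and then $r/\delta$ small yields condition \ref{itm: (ii)}, and Theorem~\ref{thm: RK criterion} then gives compactness. I expect the delicate point to be the far-part estimate with only Dini regularity and $p<1$. I would follow the truncation and smooth-cutoff scheme of \cite{CaoOlivoYabuta2022}: replace $K$ by $K\,\psi(\sum_k|x-y_k|/\delta)$ with a smooth cutoff $\psi$, prove compactness of the truncated commutators as above, and show the truncation error is $O(\delta)$ in operator quasi-norm.
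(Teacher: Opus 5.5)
The paper gives no proof of this statement. It is quoted from \cite{CaoOlivoYabuta2022} and used only once, as the unweighted compactness input in the proof of Theorem~\ref{thm:main_result_CZO}. There a single exponent suffices, and one may take $d>\max\{t,m\}$ so that the target $L^{d/m}$ is even Banach.

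Your route is different: approximate $b\in\CMO(\R^n)$ by $C_c^\infty$ symbols via the $\BMO$ bound, smoothly truncate the kernel, and verify the three conditions of Theorem~\ref{thm: RK criterion} with $w\equiv1$, $p(\cdot)\equiv p$ and $\widetilde q<p$. This is the standard B\'enyi--Torres-type argument, and it goes through. Compared with the citation, it gives a self-contained proof of the whole range $0<p<\infty$ inside the paper's own framework, at the price of the usual kernel technicalities. Your Step~3 uses $\omega\in$ Dini, which the statement tacitly assumes.

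Some points need tightening.

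\textbf{Step 2.} There is no borderline problem. Since $|x-y_j|\geq|x|/2$ on $\supp(bf_j)$, one has $(\sum_k|x-y_k|)^{-mn}\lesssim\prod_k(|x|+|x-y_k|)^{-n}$. The $y_j$-integral over $B(0,A)$ then gives $\Vert b\Vert_\infty A^{n/p_j'}|x|^{-n}\Vert f_j\Vert_{p_j}$, not $|x|^{-n/p_j}$. So the decay is already $|x|^{-n/p-n/p_j'}$ and the tail is $O(R^{-n/p_j'})$.

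\textbf{Step 3.} Your splitting pairs $b(x+h)-b(y_j)$ with $K(x,\vec y)$. The quantity $|b(x+h)-b(x)|\,|K(x,\vec y)|$ is not integrable near the diagonal, so the near-part bound you write (with $b(x)-b(y_j)$) does not cover that term. The smooth truncation is therefore necessary, not a fallback.

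After truncating, let $T^\delta$ be the operator with kernel $K\,\psi(\sum_k|x-y_k|/\delta)$. The term $(b(x)-b(x+h))T^\delta(\vec f)(x)$ is no longer controlled by the boundedness of $T$, but it is harmless. It vanishes for $|x|>A+r$, and $|T^\delta(\vec f)(x)|\lesssim\delta^{-n/p}\prod_k\Vert f_k\Vert_{p_k}$ pointwise, so its $L^p$ norm is $O(r\delta^{-n/p})$. The derivative of the cutoff contributes $O(r/\delta)\prod_k Mf_k(x)$. The kernel smoothness contributes the Dini tail $\int_0^{Cr/\delta}\omega(s)\frac{\dd s}{s}\prod_k Mf_k(x)$. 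All of these bounds are uniform in $|h|<r$, so for the truncated commutator you do not even need $M_{\widetilde q}$.

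\textbf{Remaining justifications.} The $O(\delta)$ truncation error relies on the absolutely convergent representation $[T,b]_{e_j}(\vec f)(x)=\int(b(x)-b(y_j))K(x,\vec y)\prod_k f_k(y_k)\dd\vec y$ for a.e.\ $x$. This must be derived from the off-diagonal representation in the definition. Also, the linear dependence on $\Vert b\Vert_{\BMO(\R^n)}$ used in Step~1 comes from the proof of Theorem~\ref{thm:bound_mul_CZO} (or a closed-graph argument), not from its statement.
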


Now, we can state and prove the main result of this subsection.

\begin{theorem}\label{thm:main_result_CZO}
Let $\vec{b}=(b_1,b_2,\dots,b_m)$ be a $m$-tuple of functions in $\mathrm{CMO}(\R^n)$ and $t$ be a fixed constant with $t>1$. Then, for all proper $m$-admissible quadruples $(\vec{p}(\cdot), q(\cdot),\vec{1},\infty)$ with $q(\cdot)=p(\cdot)$,
\begin{equation*}
    t<\min\{(p_j)_{-}:~j=1,\dots,m\},
\end{equation*}
and for all $m$-tuples of weights $\vec{w}$ with $\vec{w}\in\mathcal{A}_{\vec{p}(\cdot)}$ and $\vec{w}^{t}\in\mathcal{A}_{\frac{\vec{p}(\cdot)}{t}}$, it holds that $[T,b_j]_{e_j}$ for each $j=1,\dots,m$ as well as $[T,\vec{b}]_{\Sigma}$ map $L^{p_1(\cdot)}(w_1)\times\dots\times L^{p_m(\cdot)}(w_m)$ into $L^{p(\cdot)}(\nu_{\vec{w}})$ compactly.
\end{theorem}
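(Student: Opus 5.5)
The plan is to apply part (1) of Theorem~\ref{thm:main_result_extr} with $\gamma=0$ and the given $t>1$. The operator is $[T,b_j]_{e_j}$ for fixed $j$. Let $\Theta$ be the collection of all pairs $(\vec{Y},Y)$ with $Y_k=L^{p_k(\cdot)}(w_k)$ and $Y=L^{p(\cdot)}(\nu_{\vec{w}})$, where $(\vec{p}(\cdot),p(\cdot),\vec{1},\infty)$ is a proper $m$-admissible quadruple with $t<\min_k (p_k)_-$, $\vec{w}\in\mathcal{A}_{\vec{p}(\cdot)}$ and $\vec{w}^{t}\in\mathcal{A}_{\frac{\vec{p}(\cdot)}{t}}$. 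Note that $q(\cdot)=p(\cdot)$ is exactly the case $\gamma=0$.

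The spaces $Y_k$ are Banach, since $(p_k)_->t>1$ gives $p_k(\cdot)\in\mathscr{P}$. The target $Y$ is only quasi-Banach, which the theorem allows. Since $b_j\in\CMO\subset\BMO$, boundedness of $[T,b_j]_{e_j}$ for every pair in $\Theta$ is exactly Theorem~\ref{thm:bound_mul_CZO}.

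It remains to find one pair in $\Theta$ at which $[T,b_j]_{e_j}$ is compact. I would use constant exponents $p_1=\cdots=p_m$ chosen with $p_k>t$, which are trivially in $\mathrm{LH}$, together with the trivial weights $w_k\equiv1$. Then $\nu_{\vec{w}}\equiv1$ and the pair is $L^{p_1}\times\cdots\times L^{p_m}\to L^{p}$ with $1/p=\sum_k 1/p_k$. Compactness there is Theorem~\ref{thm:comp_muly_CZO}. I must check that $\vec{1}=(1,\ldots,1)$ lies in $\mathcal{A}_{\vec{p}}$ and that $\vec{1}^t=\vec{1}$ lies in $\mathcal{A}_{\vec{p}/t}$. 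Unwinding \eqref{eq:Apqrs constant} with $r_k=1$, $s=\infty$, $\gamma=0$, the quantity to bound is
\[
|Q|^{-m}\,\|\chi_Q\|_{p}\prod_k\|\chi_Q\|_{p_k'} = |Q|^{-m+1/p+\sum_k(1-1/p_k)} = 1 .
\]
The same computation applies with the exponents divided by $t$, using that $p_k/t>1$. So this pair belongs to $\Theta$.

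Theorem~\ref{thm:main_result_extr}(1) then gives compactness of $[T,b_j]_{e_j}$ for every pair in $\Theta$, which is the claimed range of exponents and weights. For the sum commutator $[T,\vec{b}]_{\Sigma}$, I would use that a finite sum of compact multilinear operators into a quasi-Banach space is compact. The image of the product of unit balls lies in a finite sum of precompact sets, and such a sum is precompact thanks to the quasi-triangle inequality, Lemma~\ref{lem: triangle ineq.}. Alternatively, the same extrapolation argument can be run directly on the sum.

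The one point needing care is that the hypotheses of Theorem~\ref{thm:main_result_extr} fix $t$ and $\gamma$ across all of $\Theta$, so the constant-exponent pair must also satisfy $t<p_k$. This is arranged by choosing $p_k$ large. Beyond this bookkeeping, everything reduces to results already stated.
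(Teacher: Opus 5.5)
Your proposal is correct and follows the paper's proof: you apply part (1) of Theorem~\ref{thm:main_result_extr} with $\gamma=0$, take boundedness from Theorem~\ref{thm:bound_mul_CZO}, and get the single compact pair from Theorem~\ref{thm:comp_muly_CZO} using constant exponents $p_{1,j}=d>t$ and trivial weights, with the sum commutator handled by the compactness of finite sums of compact multilinear operators. Your explicit check that $\vec{1}\in\mathcal{A}_{\vec{p}}\cap\mathcal{A}_{\vec{p}/t}$ and your quasi-triangle argument for the sum (where the paper cites \cite{BenTor2013}) supply details the paper leaves implicit.
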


\begin{proof}
We apply part \ref{eq: main thm 1} of Theorem~\ref{thm:main_result_extr}. The class $\Theta$ we will be considering consists of all pairs $(\vec{Y},Y)$ of the form
\begin{align*}
    &Y_j = L^{p_j(\cdot)}(w_j),\quad j=1,\ldots,m,\\
    &Y = L^{q(\cdot)}(\nu_{\vec{w}}),
\end{align*}
where $(\vec{p}(\cdot),q(\cdot),\vec{1},\infty)$ is a proper $m$-admissible quadruple with
\begin{equation*}
    t<\min\{(p_{j})_{-}:~j=1,\ldots,m\}
\end{equation*}
and
\begin{equation*}
    \frac{1}{p(\cdot)}-\frac{1}{q(\cdot)}=\gamma = 0,
\end{equation*}
and $\vec{w} = (w_1,\ldots,w_m)$ is a $m$-tuple of weights such that $\vec{w}\in\mathcal{A}_{\vec{p}(\cdot)}$ and $\vec{w}^{t}\in\mathcal{A}_{\frac{\vec{p}(\cdot)}{t}}$.

Fix $(\vec{Z},Z)\in\Theta$. First of all, observe that the sum of multilinear compact operators remains compact per \cite[Proposition 2]{BenTor2013}. While the proof given there considered formally only bilinear operators, it holds with minor changes in the general multilinear setting. Thus, we only need to show that $[T,b_j]_{e_j}$ maps $Z_1\times\ldots\times Z_m$ into $Z$ compactly for each $j=1,\ldots,m$.

Fix $j\in\{1,\ldots,m\}$. The boundedness assumption of part \ref{eq: main thm 1} of Theorem~\ref{thm:main_result_extr} for the operator $[T,b_j]_{e_j}$ is clearly just the conclusion of Theorem~\ref{thm:bound_mul_CZO}.
    
Pick now a constant $d>t$. Then, Theorem~\ref{thm:comp_muly_CZO} yields that the compactness assumption of part \ref{eq: main thm 1} of Theorem~\ref{thm:main_result_extr} is satisfied for the operator $[T,b_j]_{e_j}$ with the particular choices
\begin{align*}
    &X_j = L^{p_{1,j}(\cdot)}(w_{1,j}),\quad j=1,\ldots,m,\\
    &X = L^{q_1(\cdot)}(\nu_{\vec{w}_1}),
\end{align*}
where
\begin{equation*}
    \vec{p}_1(\cdot):=(d,\ldots,d),\quad p_1(\cdot):=\frac{d}{m},\quad\text{and}\quad \vec{w}_1:=(1,\ldots,1).
\end{equation*}

Therefore, applying part \ref{eq: main thm 1} of Theorem~\ref{thm:main_result_extr} we conclude the proof.
\end{proof}

We conclude this subsection by noticing that there are various examples of multilinear $\omega$-Calder\'on--Zygmund operators (see \cite[Subsection 5.1]{CaoOlivoYabuta2022}) that satisfy the assumptions and the conclusions of Theorem \ref{thm:main_result_CZO}.

\subsection{Multilinear fractional Calder\'on--Zygmund operators}\label{subsec:fractional}

Let $m\geq2$ and $\alpha\in(0,mn)$. Consider a function $K_{\alpha}(x,y_1,\ldots,y_m)$ defined on $(\R^n)^{m+1}\setminus\{x=y_1=\ldots=y_m\}$. We say that $K_{\alpha}$ is an \emph{$m$-linear $\alpha$-fractional Calder\'{o}n--Zygmund kernel} if there exist constants $A,B>0$, such that the following \emph{size} and \emph{smoothness} estimates are satisfied:
\begin{itemize}
  \item 
\begin{equation*}
  |K_{\alpha}(x,y_1,\ldots,y_m)| \leq \frac{A}{(|x-y_1|+\cdots+|x-y_m|)^{mn-\alpha}},
\end{equation*}
  \item
\begin{equation*}
    |K_{\alpha}(x,y_1,\ldots,y_m)-K_{\alpha}(x',y_1,\ldots,y_m)| \leq \frac{B|x-x'|}{(|x-y_1|+\cdots+|x-y_m|)^{mn-\alpha+1}},
\end{equation*}
whenever $|x-x'|\leq\frac{1}{2}\max_{j=1,\ldots,m}|x-y_j|$,
\item For each $j=1,\ldots,m$, one has
\begin{equation*}
    | K_{\alpha}(x,y_1,\ldots,y_j,\ldots,y_m)-K_{\alpha}(x,y_1,\ldots,y_j',\ldots,y_m)| \leq \frac{B|y_j-y_j'|}{(|x-y_1|+\cdots+|x-y_m|)^{mn-\alpha+1}},
\end{equation*}
whenever $|y_j-y_j'|\leq\frac{1}{2}\max_{k=1,\ldots,m}|x-y_k|$.
\end{itemize}

As noticed in \cite{BDMT2015} in the special case $m=2$, given such a kernel $K_{\alpha}$, the $m$-linear operator $T_{\alpha}$ defined by
\begin{equation*}
     T_{\alpha}(\vec{f})(x) := \int_{(\R^n)^m}K_{\alpha}(x,y_1,\ldots,y_m)f_1(y_1)\cdots f_m(y_m)\,\dd y_1\cdots\dd y_m,\quad x\in\R^n
\end{equation*}
is well defined whenever $f_j$, $j=1,\ldots,m$ are for example bounded functions with compact support on $\R^n$. Combining results from \cite{KenigStein1999, BDMT2015}, one can see that $T_{\alpha}$ extends to a bounded $m$-linear operator $L^{p_1}(\R^n)\times\cdots\times L^{p_m}(\R^n)\to L^{q}(\R^n)$ whenever $1<p_1,\ldots,p_m<\infty$, $0<q<\infty$ and $\frac{\alpha}{n}=\frac{1}{p}-\frac{1}{q}$, where $\frac{1}{p}:=\sum_{j=1}^{m}\frac{1}{p_j}$.

The compactness of commutators of $T_{\alpha}$ on unweighted and constant exponent weighted Lebesgue spaces had already been studied in \cite{BDMT2015, CaoOlivoYabuta2022}. Another related result appeared in \cite[Theorem 7.9]{extr_comp_var_bil}, where the compactness of commutators of bilinear fractional Calder\'on--Zygmund operators on weighted variable Lebesgue spaces was established under the assumption that the target space is Banach. Theorem~\ref{thm:main_result_extr} extends the latter result to the multilinear setting without imposing any restriction on the target space.

The first ingredient for doing so is the general boundedness result for the commutators established by the authors in \cite[Theorem 7.8]{extr_comp_var_bil}, which had adapted itself ideas from \cite[Theorem 2.4]{CHSW2025}.

\begin{theorem}[\cite{extr_comp_var_bil}, Theorem 7.8]
\label{thm:bound_mult_frac}
Let $b\in\mathrm{BMO}(\R^n)$. Set $\gamma:=\frac{\alpha}{n}$. Let $t$ be a fixed constant with $1<t<\frac{m}{\gamma}$. Let $j\in\{1,\ldots,m\}$. Then, for all proper $m$-admissible quadruples $(\vec{p}(\cdot), q(\cdot),\vec{1},\infty)$ with
\begin{equation*}
    t<\min\{(p_k)_{-}:~k=1,\ldots,m\}
\end{equation*}
and
\begin{equation*}
    \frac{1}{p(\cdot)}-\frac{1}{q(\cdot)}=\gamma,
\end{equation*}
and for all $m$-tuples of weights $\vec{w}$ with $\vec{w}\in\mathcal{A}_{\vec{p}(\cdot),q(\cdot)}$ and $\vec{w}^{t}\in\mathcal{A}_{\frac{\vec{p}(\cdot)}{t},\frac{q(\cdot)}{t}}$, it holds that $[T_{\alpha},b]_{e_j}$ maps $L^{p_1(\cdot)}(w_1)\times\cdots\times L^{p_m(\cdot)}(w_m)$ into $L^{q(\cdot)}(\nu_{\vec{w}})$ boundedly.
\end{theorem}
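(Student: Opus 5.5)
The plan is to deduce the statement from a constant exponent weighted estimate by a Rubio de Francia type extrapolation adapted to the variable, off-diagonal multilinear setting. This is the route suggested by the reference to \cite[Theorem 2.4]{CHSW2025}.

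\textbf{Step 1: constant exponent estimate.} I would first establish, or quote, that for all constants $1<p_1,\ldots,p_m<\infty$ with $\frac{1}{p}-\frac{1}{q}=\gamma=\frac{\alpha}{n}$, and for all $\vec{w}\in\mathcal{A}_{\vec{p},q}$, we have
\begin{equation*}
\Vert [T_\alpha,b]_{e_j}(\vec f)\Vert_{L^q(\nu_{\vec w})}\lesssim \Vert b\Vert_{\BMO}\prod_{k}\Vert f_k\Vert_{L^{p_k}(w_k)}.
\end{equation*}
The constant should depend only on the characteristic $[\vec w]_{\mathcal{A}_{\vec p,q}}$. The standard proof goes through sparse domination. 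The commutator is pointwise controlled by a finite sum of sparse forms of the type
\begin{equation*}
\sum_{Q\in\mathcal S}|Q|^{\gamma}\,|b(x)-\langle b\rangle_Q|\prod_k\langle |f_k|\rangle_Q\chi_Q(x)
\end{equation*}
and their adjoint-type counterparts, in which the oscillation of $b$ sits on the $f_j$ side. These forms are then estimated with the multilinear fractional Muckenhoupt condition together with the John--Nirenberg inequality.

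\textbf{Step 2: rescaling.} Next I would rescale by $t$. Set $P_k(\cdot)=p_k(\cdot)/t$ and $Q(\cdot)=q(\cdot)/t$. By hypothesis $(P_k)_->1$ and $\vec w^{t}\in\mathcal{A}_{\vec P(\cdot),Q(\cdot)}$. By Lemma~\ref{lem:homog}, the desired inequality is equivalent to a bound of $\Vert |[T_\alpha,b]_{e_j}\vec f|^{t}\nu_{\vec w}^{t}\Vert_{Q(\cdot)}$ by $\prod_k\Vert |f_k|^t w_k^t\Vert_{P_k(\cdot)}$. This formulation lives in Banach-function-space norms, and the off-diagonal parameter becomes $t\gamma<m$; this is where the restriction $t<m/\gamma$ enters.

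\textbf{Step 3: extrapolation.} The core step is a multilinear off-diagonal extrapolation in the spirit of Cruz-Uribe--Wang \cite{CW2017}. I would dualize $L^{Q(\cdot)}$ against some $h\ge 0$ with $\Vert h\Vert_{Q'(\cdot)}\le1$. I would then run Rubio de Francia iteration algorithms $\mathcal R h$ and $\mathcal R' f_k$, built from weighted maximal operators on the spaces $L^{Q'(\cdot)}(\nu_{\vec w}^{-t})$ and $L^{P_k(\cdot)}(w_k^{t})$. Each iteration should satisfy three properties:
\begin{itemize}
\item pointwise domination, $h\le\mathcal Rh$;
\item norm control, $\Vert\mathcal Rh\Vert\le 2\Vert h\Vert$;
\item an $A_1$-type bound for the iterated function.
\end{itemize}
The required maximal-function bounds come from Lemma~\ref{lem: HL}, combined with the characterization Lemma~\ref{lem:Apqrs char.} and Lemma~\ref{lem:two_to_one_exp}. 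Together these convert $\vec w^t\in\mathcal{A}_{\vec P,Q}$ into linear $\mathcal A_{(\cdot)}$ conditions on the individual weights and on the product weight, and LH regularity of the exponents makes the maximal operator bounds available. From the iterated functions I would build constant exponent weights $W_k$ and apply Step~1 with suitably chosen constant exponents, for instance all $p_k$ equal to some fixed $p_0>1$. The weights are arranged so that
\begin{equation*}
\int |[T_\alpha,b]_{e_j}\vec f|^{t}\nu_{\vec w}^{t}h
\end{equation*}
is bounded by products of constant exponent norms. Hölder's inequality (Lemma~\ref{Holder's ineq.}) then returns those norms to the variable-exponent norms of the $f_k$.

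\textbf{Main obstacle.} The hard part is Step 3. One must check that the weights produced by the iteration actually lie in the constant exponent class $\mathcal{A}_{\vec p_0,q_0}$ with the correct off-diagonal relation. This amounts to a factorization $W=(\mathcal R'f)^{1-\cdot}(\mathcal Rh)^{\cdot}$ with precise exponent bookkeeping involving $t$, $\gamma$ and $m$. My first attempt would be to choose $p_0$ so that $\frac{1}{p_0}-\frac{1}{q_0}=\gamma$ and to use the reverse-factorization direction of Jones' theorem for the two $A_1$ functions. A further technical point is ensuring that the left-hand side is finite, so that the iteration argument is not vacuous. I would handle this by first truncating $\vec f$ to bounded, compactly supported functions and truncating $b$, and then passing to the limit with Proposition~\ref{prop:monotone_convergence}.
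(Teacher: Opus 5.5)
The paper does not reprove this statement; it is imported from \cite[Theorem 7.8]{extr_comp_var_bil}. So your outline has to stand on its own, and it fails in Step~3, which is where all the difficulty sits. Your iterations $\mathcal{R}'f_k$ and $\mathcal{R}h$ require $M$ to be bounded on $L^{P_k(\cdot)}(w_k^{t})$ and on $L^{Q'(\cdot)}(\nu_{\vec w}^{-t})$. For LH exponents this amounts to $w_k^{t}\in\mathcal{A}_{P_k(\cdot)}$ and $\nu_{\vec w}^{t}\in\mathcal{A}_{Q(\cdot)}$ with $Q_->1$. Nothing like this follows from $\vec w^{t}\in\mathcal{A}_{\vec P(\cdot),Q(\cdot)}$. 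Lemmas~\ref{lem:Apqrs char.} and~\ref{lem:two_to_one_exp} give only two weaker facts. First, $w_k^{t}\in\mathcal{A}_{P_k(\cdot),(1,\sigma)}$ with $\frac{1}{\sigma}=1-m$; for constant exponents this reads $w_k^{-tP_k'}\in A_{mP_k'}$. Second, $\nu_{\vec w}^{\widetilde q}\in\mathcal{A}_{q(\cdot)/\widetilde q}$ with $\widetilde q=(\frac{m}{t}-\gamma)^{-1}$. The factor $m$ is exactly why multilinear classes are strictly larger than products of linear ones.

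Here is a concrete counterexample. Take $n=1$, $m=2$, $p_1=p_2=2$, $\gamma\in(0,\frac{1}{2})$, $q=\frac{1}{1-\gamma}$, $t\in(1,\frac{4}{3})$, $w_1=|x|^{-3/4}$ and $w_2=|x|^{1/4}$. For these power weights the scaling balances automatically. So both $\vec w\in\mathcal{A}_{\vec p,q}$ and $\vec w^{t}\in\mathcal{A}_{\frac{\vec p}{t},\frac{q}{t}}$ reduce to local integrability of the powers of $|x|$ involved, which holds since $q<2$ and $t<\frac{4}{3}$. All hypotheses are therefore satisfied. Yet $w_1^{2}=|x|^{-3/2}\notin L^1_{\loc}$, so $M$ is unbounded on $L^{2/t}(w_1^{t})$ and $\mathcal{R}'f_1$ cannot even be defined. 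In the same example $Q=\frac{1}{t(1-\gamma)}<1$ for, e.g., $\gamma=\frac{1}{10}$ and $t=\frac{6}{5}$. Then $L^{Q(\cdot)}$ is not normable, and duality against $h\in L^{Q'(\cdot)}$ is unavailable. Your assertion in Step~2 that the rescaled problem lives in Banach function spaces is therefore false on the target side.

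What is missing is a genuinely multilinear extrapolation mechanism. It must never iterate $M$ on the individual spaces $L^{p_k(\cdot)}(w_k)$. It must also dualize the target only at the level $q(\cdot)/\widetilde q$, where $\nu_{\vec w}^{\widetilde q}\in\mathcal{A}_{q(\cdot)/\widetilde q}$ does give the maximal bounds an iteration for $h$ needs; compare the role of $\widetilde q$ in the proof of Theorem~\ref{thm:interpolation_compactness}. It also helps to read the hypothesis correctly. By Lemma~\ref{lem:homog}, $[\vec w^{t}]_{\mathcal{A}_{\frac{\vec p(\cdot)}{t},\frac{q(\cdot)}{t}}}=[\vec w]_{\mathcal{A}_{(\vec p(\cdot),q(\cdot)),(\vec t,\infty)}}^{t}$ with $\vec t=(t,\ldots,t)$. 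So the assumption is a limited-range multilinear condition. The room $t>1$ is what a multilinear off-diagonal extrapolation in the spirit of \cite{LMO2020, Nieareth2023} would consume, or alternatively a direct bound for your sparse forms on the weighted variable spaces.

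The restriction $t<\frac{m}{\gamma}$ is not what does the work in Step~2. It is automatic, since $\gamma<\frac{1}{p(\cdot)}\le\sum_k\frac{1}{(p_k)_-}<\frac{m}{t}$. Finally, the bookkeeping in your ``first attempt'' is off. With all $p_k=p_0$ the relation is $\frac{m}{p_0}-\frac{1}{q_0}=\gamma$, not $\frac{1}{p_0}-\frac{1}{q_0}=\gamma$. And Jones' factorization of two $A_1$ functions produces one linear $A_p$ weight, not an $m$-tuple in $\mathcal{A}_{\vec p_0,q_0}$.
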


The second ingredient concerns compactness on unweighted constant exponent Lebesgue spaces. Such a result is the unweighted version of \cite[Theorem 5.8]{CaoOlivoYabuta2022}. While \cite[Theorem 5.8]{CaoOlivoYabuta2022} is formally stated only for the Riesz potential, the authors of \cite{CaoOlivoYabuta2022} observe that their statement and proof hold for arbitrary $T_{\alpha}$.

\begin{theorem}[\cite{BDMT2015, CaoOlivoYabuta2022}]\label{thm:comp_mult_frac}
Let $b\in\mathrm{CMO}(\R^n)$. Let $1<p_1,\ldots,p_m<\infty$ and let $0<q<\infty$ such that $\frac{1}{p}-\frac{1}{q}=\frac{\alpha}{n}$, where $\frac{1}{p}:=\sum_{j=1}^{m}\frac{1}{p_j}$. Then, $[T_{\alpha},b]_{e_j}$ maps $L^{p_1}(\R^n)\times\cdots\times L^{p_m}(\R^n)$ into $L^{q}(\R^n)$ compactly for every $j=1,\ldots,m$.
\end{theorem}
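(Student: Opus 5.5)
The plan is to reduce to a smooth, compactly supported symbol by a density argument, and then verify the three conditions of the Riesz--Kolmogorov criterion, Theorem~\ref{thm: RK criterion}. That theorem is applied in the unweighted constant exponent case: $p(\cdot)\equiv q$, $w\equiv1$, $x_0=0$, and a fixed $\widetilde{q}\in(0,\min\{1,q\})$. Since $q$ may be smaller than $1$, this quasi-Banach version of the criterion is exactly what is needed.

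\emph{Step 1 (reduction to $b\in C_c^\infty$).} I will use the known unweighted boundedness
\begin{equation*}
\Vert [T_\alpha,b]_{e_j}(\vec f)\Vert_{L^q}\lesssim \Vert b\Vert_{\BMO}\prod_{k=1}^m\Vert f_k\Vert_{L^{p_k}}.
\end{equation*}
This follows, for instance, from Theorem~\ref{thm:bound_mult_frac} with constant exponents and $\vec w=(1,\ldots,1)$, after choosing $t\in(1,\min_k p_k)$ with $t<m/\gamma$. The constant there depends on $b$ only through $\Vert b\Vert_{\BMO}$, by linearity in $b$ and the usual proof.

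Given $b\in\CMO$, I pick $b_k\in C_c^\infty$ with $b_k\to b$ in $\BMO$. By linearity in the symbol, $[T_\alpha,b_k]_{e_j}\to[T_\alpha,b]_{e_j}$ in the multilinear operator quasi-norm. The image of the unit ball under the limit operator lies within $C\eps$ (in the quasi-norm of $L^q$) of the image under an approximating operator. A totally bounded set enlarged by a uniformly small quasi-norm perturbation stays totally bounded, using the quasi-triangle inequality with its fixed constant. Hence compactness passes to the limit, and it suffices to treat $b\in C_c^\infty$ with $\supp b\subset B(0,R_b)$.

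\emph{Step 2 (conditions (i) and (iii)).} Condition (i) is just the boundedness from Step 1. For (iii), take $|x|>2R_b$. Then $b(x)=0$, so
\begin{equation*}
[T_\alpha,b]_{e_j}(\vec f)(x)=-T_\alpha(f_1,\ldots,bf_j,\ldots,f_m)(x).
\end{equation*}
Since $|y_j|<R_b$, we have $\sum_i|x-y_i|\gtrsim |x|+\sum_{i\neq j}|x-y_i|$. Integrating out $y_i$, $i\neq j$, by H\"older's inequality against $(|x|+\sum_{i\neq j}|x-y_i|)^{-(mn-\alpha)}$ gives the pointwise bound
\begin{equation*}
|[T_\alpha,b]_{e_j}(\vec f)(x)|\lesssim \Vert b\Vert_\infty\, |x|^{-(mn-\alpha)+\sum_{i\neq j}n/p_i'}\,R_b^{n/p_j'}\prod_k\Vert f_k\Vert_{p_k}.
\end{equation*}
The exponent equals $-n/p-n+\alpha+n/p_j'\cdot 0$ adjusted, i.e.\ $-(n/q+n/p_j')$ up to the bookkeeping; the key point is that it is strictly less than $-n/q$ because $p_j>1$. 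This makes $\int_{|x|>R}|\cdot|^q\,\dd x\to0$ uniformly over the unit ball.

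\emph{Step 3 (condition (ii), the main obstacle).} This is the $\widetilde q$-equicontinuity for $|h|<r$. Fix $\delta>0$ (to be sent to $0$ after $r$) and split the kernel integral into the region where $\sum_i|x-y_i|\le\delta^{-1}r$ and its complement. I write
\begin{equation*}
[T_\alpha,b]_{e_j}(\vec f)(x)-[T_\alpha,b]_{e_j}(\vec f)(x+h)=(b(x)-b(x+h))T_\alpha(\vec f)(x)+\int\big[(b(x+h)-b(y_j))K_\alpha(x,\vec y)-(b(x+h)-b(y_j))K_\alpha(x+h,\vec y)\big]\prod_k f_k(y_k)\,\dd\vec y.
\end{equation*}
The first term is bounded by $r\Vert\nabla b\Vert_\infty|T_\alpha(\vec f)|$, which is fine. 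In the local part of the integral, I use $|b(x)-b(y_j)|\le\Vert\nabla b\Vert_\infty|x-y_j|$ to gain a factor $\delta^{-1} r$ relative to a fractional-type maximal bound $\prod_k\mathcal M(f_k)$. In the far part, the kernel smoothness gives the factor $|h|/\sum_i|x-y_i|\le\delta$, again against a multilinear fractional maximal function of order $\alpha$.

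The delicate points are the following. First, the bounds must be controlled in $L^q$ with $q<1$, where the triangle inequality holds only in the $\eps$-quasi-norm form and the averages $\mint{-}_{B(0,r)}|\cdot|^{\widetilde q}\dd h$ must be pulled inside; I will use $\widetilde q<1$ and the $\widetilde q$-power subadditivity for this. Second, one needs the $L^{p_1}\times\cdots\times L^{p_m}\to L^q$ boundedness of the multilinear fractional maximal operator. I expect this to follow from \cite{KenigStein1999, BDMT2015}. Choosing first $\delta$ small and then $r$ small closes the argument, and Theorem~\ref{thm: RK criterion} yields compactness.
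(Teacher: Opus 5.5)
The paper does not prove this statement: it is quoted from \cite{BDMT2015, CaoOlivoYabuta2022}, with only the remark that the Riesz-potential argument of \cite{CaoOlivoYabuta2022} works for general $T_\alpha$. Your proposal is a self-contained direct proof in the spirit of Uchiyama's classical argument. It reduces to $b\in C_c^\infty$ via $\BMO$-density and the bound $\Vert[T_\alpha,b]_{e_j}\Vert\lesssim\Vert b\Vert_{\BMO}$. Your use of Theorem~\ref{thm:bound_mult_frac} for that bound is legitimate: take $\vec w=(1,\ldots,1)$ and $1<t<\min\{\min_k p_k,\,mn/\alpha\}$, and every relevant $\mathcal{A}$-constant of the trivial weight equals $1$; the linear dependence on $\Vert b\Vert_{\BMO}$ comes from the proof rather than the statement. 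You then verify the three conditions of the unweighted constant-exponent case of Theorem~\ref{thm: RK criterion}. Compared with the citation, this costs the kernel estimates. In exchange it is independent of the literature, covers all $0<q<\infty$ in one argument using only tools available in the paper, and builds the kernel truncation into the equicontinuity estimate through the near/far split rather than first approximating by truncated operators. Your tail estimate is correct. You can write $mn-\alpha=\sum_{i\neq j}\beta_i$ with $\beta_i>n/p_i'$ because the excess is $n/q+n/p_j'>0$, and the resulting decay $|x|^{-(n/q+n/p_j')}$ is $q$-integrable at infinity.

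One step in Step 3 needs repair; write $F=[T_\alpha,b]_{e_j}(\vec f)$.
\begin{itemize}
\item In your decomposition, the factor paired with $K_\alpha(x,\vec y)$ in the local region is $b(x+h)-b(y_j)$, not $b(x)-b(y_j)$. So the available gain is only $|x-y_j|+|h|$.
\item The $|h|$ part leaves $|h|\int_{\sum_i|x-y_i|\le\delta^{-1}r}(\sum_i|x-y_i|)^{-(mn-\alpha)}\prod_k|f_k(y_k)|\,d\vec y$. A fractional maximal function does not control this: each dyadic shell contributes about $\mathcal{M}_\alpha(\vec f)(x)$, with no decay.
\item A product $\prod_k Mf_k$ of Hardy--Littlewood maximal functions has the wrong homogeneity: it lies in $L^p$, not $L^q$.
\end{itemize}

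The clean fix is the positive multilinear fractional integral $\mathcal{I}_\alpha(|f_1|,\ldots,|f_m|)(x)=\int(\sum_i|x-y_i|)^{-(mn-\alpha)}\prod_k|f_k(y_k)|\,d\vec y$. It maps $L^{p_1}\times\cdots\times L^{p_m}$ into $L^q$: dominate it pointwise by $\prod_k I_{\alpha_k}|f_k|$ with $\alpha_k=\alpha p/p_k$, then apply Hardy--Littlewood--Sobolev and H\"older.

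With this tool, first use the crude bound $|b(x+h)-b(y_j)|\le 2\Vert\nabla b\Vert_\infty\delta^{-1}r$ in the local region. Next take $\delta\le 1/(2m)$, so the smoothness condition applies in the far region. Then $|F(x)-F(x+h)|\lesssim(r\Vert\nabla b\Vert_\infty+\delta^{-1}r\Vert\nabla b\Vert_\infty+\delta\Vert b\Vert_\infty)\bigl(\mathcal{I}_\alpha(|\vec f|)(x)+\mathcal{I}_\alpha(|\vec f|)(x+h)\bigr)$. The $\widetilde q$-average in $h$ of the second term is at most $M_{\widetilde q}(\mathcal{I}_\alpha(|\vec f|))(x)$, which is bounded on $L^q$ because $\widetilde q<q$. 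Choosing $\delta$ small and then $r$ small finishes the proof as you intended.
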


Now, we can state and prove the main result of this subsection.

\begin{theorem}
\label{thm:compact_frac_cz}
Let $\vec{b}=(b_1,\ldots,b_m)$ be a $m$-tuple of functions in $\mathrm{CMO}(\R^n)$. Set $\gamma:=\frac{\alpha}{n}$. Let $t$ be a fixed constant with $1<t<\frac{m}{\gamma}$. Then, for all proper $m$-admissible quadruples $(\vec{p}(\cdot),q(\cdot),\vec{1},\infty)$ with
\begin{equation*}
    t<\min\{(p_j)_{-}:~j=1,\ldots,m\}
\end{equation*}
and
\begin{equation*}
    \frac{1}{p(\cdot)}-\frac{1}{q(\cdot)}=\gamma
\end{equation*}
and for all $m$-tuples of weights $\vec{w}$ with $\vec{w}\in\mathcal{A}_{\vec{p}(\cdot),q(\cdot)}$ and $\vec{w}^{t}\in\mathcal{A}_{\frac{\vec{p}(\cdot)}{t},\frac{q(\cdot)}{t}}$, it holds that $[T_{\alpha},b_j]_{e_j}$ for each $j=1,\ldots,m$ as well as $[T_{\alpha},\vec{b}]_{\Sigma}$ map $L^{p_1(\cdot)}(w_1)\times\ldots\times L^{p_m(\cdot)}(w_m)$ into $L^{q(\cdot)}(\nu_{\vec{w}})$ compactly.
\end{theorem}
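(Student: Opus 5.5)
The plan is to copy the proof of Theorem~\ref{thm:main_result_CZO}, now using part~\ref{eq: main thm 1} of Theorem~\ref{thm:main_result_extr} with $\gamma=\frac{\alpha}{n}$ and the fixed $t\in(1,\frac{m}{\gamma})$. Let $\Theta$ be the set of all pairs $(\vec{Y},Y)$ with $Y_j=L^{p_j(\cdot)}(w_j)$ and $Y=L^{q(\cdot)}(\nu_{\vec{w}})$, where $(\vec{p}(\cdot),q(\cdot),\vec{1},\infty)$ is a proper $m$-admissible quadruple satisfying $t<\min_j (p_j)_{-}$ and $\frac{1}{p(\cdot)}-\frac{1}{q(\cdot)}=\gamma$, and where $\vec{w}\in\mathcal{A}_{\vec{p}(\cdot),q(\cdot)}$ and $\vec{w}^{t}\in\mathcal{A}_{\frac{\vec{p}(\cdot)}{t},\frac{q(\cdot)}{t}}$. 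Multilinear compact operators are closed under finite sums (the multilinear version of \cite[Proposition 2]{BenTor2013}), so the statement for $[T_\alpha,\vec{b}]_{\Sigma}$ follows from the one for each $[T_\alpha,b_j]_{e_j}$. It therefore suffices to fix $j$ and verify the two hypotheses of Theorem~\ref{thm:main_result_extr} for $[T_\alpha,b_j]_{e_j}$.

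\emph{Boundedness on all of $\Theta$.} Since $\mathrm{CMO}\subset\mathrm{BMO}$, this is exactly Theorem~\ref{thm:bound_mult_frac}.

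\emph{Compactness at one pair.} Take constant exponents $p_{1,1}=\cdots=p_{1,m}=d$ with unit weights $\vec{w}_1=(1,\ldots,1)$. Then $\frac{1}{p_1}=\frac{m}{d}$, and we need $\frac{1}{q_1}=\frac{m}{d}-\gamma>0$. The constraints on $d$ are:
\begin{itemize}
\item[(a)] $d>t$, so that $t<(p_{1,j})_-$;
\item[(b)] $d<\frac{m}{\gamma}$, so that $q_1\in(0,\infty)$ (constant exponents are trivially in $\mathscr{P}_0\cap\mathrm{LH}$);
\item[(c)] $d>1$, needed for Theorem~\ref{thm:comp_mult_frac}, which is automatic from (a) since $t>1$.
\end{itemize}
Such a $d$ exists because the hypothesis is precisely $t<\frac{m}{\gamma}$, so we can pick $d\in(t,\frac{m}{\gamma})$. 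Theorem~\ref{thm:comp_mult_frac} then gives that $[T_\alpha,b_j]_{e_j}:L^d\times\cdots\times L^d\to L^{q_1}$ is compact.

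It remains to check that this pair lies in $\Theta$, i.e.\ that the unit weights satisfy both multilinear Muckenhoupt conditions. With all weights equal to $1$, the quantity in $\mathcal{A}_{\vec{p}_1,q_1}$ is $|Q|^{\gamma-m}\,|Q|^{1/q_1}\prod_j|Q|^{1-1/d}=|Q|^{\gamma-m+\frac{m}{d}-\gamma+m-\frac{m}{d}}=1$. The same computation with $\vec{p}_1/t$, $q_1/t$ and exponent $t\gamma$ shows the quantity for $\mathcal{A}_{\frac{\vec{p}_1}{t},\frac{q_1}{t}}$ is also $1$. This uses that $(\vec{p}_1/t,q_1/t,\vec{1},\infty)$ is admissible, with gap $\frac{1}{p_1/t}-\frac{1}{q_1/t}=t\gamma$ and $d/t>1$, which holds by (a). With both hypotheses verified, Theorem~\ref{thm:main_result_extr} yields compactness on every pair in $\Theta$.

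The only step requiring real care is the choice of $d$ in the compactness step. The condition $t<\frac{m}{\gamma}$ is exactly what makes the window $(t,\frac{m}{\gamma})$ nonempty, so the argument depends on that inequality and nothing else.
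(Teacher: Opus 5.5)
Your proposal is correct and follows the paper's proof step for step. In both, part~(1) of Theorem~\ref{thm:main_result_extr} is applied with the same class $\Theta$, and the sum commutator is reduced to the individual $[T_\alpha,b_j]_{e_j}$. Boundedness on $\Theta$ comes from Theorem~\ref{thm:bound_mult_frac}, and compactness at the unweighted pair $\vec{p}_1=(d,\ldots,d)$, $q_1=(\frac{m}{d}-\gamma)^{-1}$ with $d\in(t,\frac{m}{\gamma})$ comes from Theorem~\ref{thm:comp_mult_frac}. Your explicit check that the unit weights lie in $\mathcal{A}_{\vec{p}_1,q_1}$ and in $\mathcal{A}_{\frac{\vec{p}_1}{t},\frac{q_1}{t}}$ is correct, and the paper leaves it implicit.
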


\begin{proof}
We apply part \ref{eq: main thm 1} of Theorem~\ref{thm:main_result_extr}. The class $\Theta$ we will be considering consists of all pairs $(\vec{Y},Y)$ of the form
\begin{align*}
    &Y_j = L^{p_j(\cdot)}(w_j),\quad j=1,\ldots,m,\\
    &Y = L^{q(\cdot)}(\nu_{\vec{w}}),
\end{align*}
where $(\vec{p}(\cdot),q(\cdot),\vec{1},\infty)$ is a proper $m$-admissible quadruple with
\begin{equation*}
    t<\min\{(p_{j})_{-}:~j=1,\ldots,m\}
\end{equation*}
and
\begin{equation*}
    \frac{1}{p(\cdot)}-\frac{1}{q(\cdot)}=\gamma = \frac{\alpha}{n},
\end{equation*}
and $\vec{w} = (w_1,\ldots,w_m)$ is a $m$-tuple of weights such that $\vec{w}\in\mathcal{A}_{\vec{p}(\cdot),q(\cdot)}$ and $\vec{w}^{t}\in\mathcal{A}_{\frac{\vec{p}(\cdot)}{t},\frac{q(\cdot)}{t}}$.

Fix $(\vec{Z},Z)\in\Theta$. Similarly to the proof of Theorem \ref{thm:main_result_CZO}, we only have to show that for all $j=1,\ldots,m$, $[T_{\mathfrak{m}},b_j]_{e_j}$ maps $Z_1\times\ldots\times Z_m$ into $Z$.

Fix $j\in\{1,\ldots,m\}$. The boundedness assumption of part \ref{eq: main thm 1} of Theorem~\ref{thm:main_result_extr} for the operator $[T_{\alpha},b_j]_{e_j}$ is clearly just the conclusion of Theorem~\ref{thm:bound_mult_frac}.
    
Pick now a constant
\begin{equation*}
    d \in \left(t,\frac{m}{\gamma}\right).
\end{equation*}
Such a choice is possible, because $\gamma<\frac{m}{t}$ by assumption. Then, Theorem~\ref{thm:comp_mult_frac} yields that the compactness assumption of part \ref{eq: main thm 1} of Theorem~\ref{thm:main_result_extr} is satisfied for the operator $[T_{\alpha},b_j]_{e_j}$ with the particular choices
\begin{align*}
    &X_j = L^{p_{1,j}(\cdot)}(w_{1,j}),\quad j=1,\ldots,m,\\
    &X = L^{q_1(\cdot)}(\nu_{\vec{w}_1}),
\end{align*}
where
\begin{equation*}
    \vec{p}_1(\cdot):=(d,\ldots,d),\quad q_1(\cdot):=\frac{1}{\frac{m}{d}-\gamma},\quad\text{and}\quad \vec{w}_1:=(1,\ldots,1).
\end{equation*}

Therefore, applying part \ref{eq: main thm 1} of Theorem~\ref{thm:main_result_extr} we conclude the proof.
\end{proof}

\subsection{Multilinear Fourier multipliers}\label{subsec:fourier}

Let $s\in\N$ and $m\geq 2$. Let $\mathfrak{m}\in\mathrm{C}^{s}(\R^{nm}\setminus\{0\})$ be a bounded function and let $\Phi$ be a Schwarz function on $\R^{nm}$ satisfying the conditions
\begin{equation*}
    \mathrm{supp}(\Phi) \subset \left\{(\xi_1,\ldots,\xi_{m})\in(\R^{n})^{m}:~\frac{1}{2}\leq\sum_{j=1}^{m}|\xi_j|\leq 2\right\}
\end{equation*}
and
\begin{equation*}
    \sum_{k\in\Z}\Phi(2^{-j}\xi)=1,\quad\forall\; \xi = (\xi_1,\ldots,\xi_m)\in\R^{nm}\setminus\{0\}.
\end{equation*}
We consider the usual Sobolev space $W^{s}(\R^{nm})$ equipped with the norm
\begin{equation*}
    \Vert f\Vert_{W^{s}(\R^{nm})} := \left(\int_{\R^{nm}}(1+|\xi|^2)^{s}|\widehat{f}(\xi)|^2\,\dd \xi\right)^{1/2},
\end{equation*}
where $\widehat{f}$ denotes the Fourier transform of $f$ on all the variables in $\R^{nm}$. Define
\begin{equation*}
    \mathfrak{m}_{k}(\xi) := \Phi(\xi)\mathfrak{m}(2^{k}\xi),\quad \xi\in\R^{nm}
\end{equation*}
for each $k\in\Z$. We say that $\mathfrak{m}\in\mathcal{W}^{s}(\R^{nm})$ if
\begin{equation*}
    \Vert\mathfrak{m}\Vert_{\mathcal{W}^{s}(\R^{nm})} := \sup_{k\in\Z}\Vert\mathfrak{m}_{k}\Vert_{W^{s}(\R^{nm})}<\infty.
\end{equation*}
Now, let $T_{\mathfrak{m}}$ denote the $m$-linear Fourier multiplier associated to the symbol $\mathfrak{m}$, defined by
\begin{equation*}
    T_{\mathfrak{m}}(f_1,\ldots,f_m)(x) := \int_{(\R^n)^m}\mathfrak{m}(\xi)e^{2\pi i x\cdot(\xi_1+\cdots+\xi_m)}\widehat{f}_1(\xi_1)\cdots\widehat{f}_m(\xi_m)\,\dd \xi,\quad x\in\R^n
\end{equation*}
for Schwarz functions $f_1,\ldots,f_m$ on $\R^n$.

The compactness properties of the commutators $[T_{\mathfrak{m}},b]_{e_j}$, $j=1,\ldots,m$ on constant exponent weighted Lebesgue spaces were studied in \cite{Hu2017, CaoOlivoYabuta2022}. 
Moreover, the compactness of commutators of bilinear Fourier multipliers on weighted variable Lebesgue spaces was obtained in \cite[Theorem 7.15]{extr_comp_var_bil}, provided that the target space is Banach. Theorem~\ref{thm:main_result_extr} generalizes this to the multilinear setting without requiring the target space to be Banach.

To do this, we first need the general boundedness result for the commutators established by the authors in \cite[Theorem 7.13]{extr_comp_var_bil}, which had adapted in turn some ideas from \cite{Li_Sun_2013}.

\begin{theorem}[\cite{extr_comp_var_bil}, Theorem 7.13]
\label{thm:bound_FourMult_com}
Let $b\in\mathrm{BMO}(\R^n)$. Assume $\frac{mn}{2}<s\leq mn$ and $\mathfrak{m}\in\mathcal{W}^{s}(\R^{nm})$. Then, there exists a constant $p_0\in\left(\frac{mn}{s},\infty\right)$ depending only on $m,n$ and $s$, such that the following holds. For all $r_0\in(p_0,\infty)$, for all proper $m$-admissible quadruples $(\vec{p}(\cdot), q(\cdot),\vec{r},\infty)$ with $q(\cdot)=p(\cdot)$ and $r_j := r_0$, $j = 1, \ldots, m$, and for all $\vec{w}\in\mathcal{A}_{\vec{p}(\cdot),(\vec{r},\infty)}$, we have that $[T_{\mathfrak{m}},b]_{e_j}$ maps $L^{p_1(\cdot)}(w_1)\times\cdots\times L^{p_m(\cdot)}(w_m)$ into $L^{p(\cdot)}(\nu_{\vec{w}})$ boundedly, for all $j=1,\ldots,m$.
\end{theorem}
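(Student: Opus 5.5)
The plan is to prove this statement by the Cauchy integral trick. Boundedness of the commutator is reduced to weighted boundedness of the multiplier $T_{\mathfrak{m}}$ itself, uniformly over a family of perturbed weights, and the variable exponent weighted boundedness of $T_{\mathfrak{m}}$ is obtained by extrapolation from the constant exponent theory in \cite{Li_Sun_2013}.

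\textbf{Step 1 (constant exponents).} We first recall the constant exponent input. There is $p_0\in(\frac{mn}{s},\infty)$, depending only on $m,n,s$, such that for every $r_0>p_0$, every $r_0<p_1,\ldots,p_m<\infty$ and every $\vec w\in\mathcal{A}_{\vec p,(\vec r,\infty)}$ with $r_j=r_0$, the operator $T_{\mathfrak{m}}$ maps $L^{p_1}(w_1)\times\cdots\times L^{p_m}(w_m)$ into $L^{p}(\nu_{\vec w})$. The operator norm is controlled by an increasing function of $[\vec w]_{\mathcal{A}_{\vec p,(\vec r,\infty)}}$. This follows from the pointwise sparse or sharp maximal control of $T_{\mathfrak{m}}$ by the $r_0$-multisublinear maximal operator; that control uses only the $L^{r_0'}$-Hörmander regularity of the kernel, which is guaranteed by $\mathfrak{m}\in\mathcal{W}^s$ with $s>\frac{mn}{2}$.

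\textbf{Step 2 (variable exponents for $T_{\mathfrak{m}}$).} Next we pass from constant to variable exponents for $T_{\mathfrak{m}}$ via limited-range multilinear extrapolation on variable Lebesgue spaces, as in \cite{CW2017, Nieareth2023}. The range restriction $p_j>r_0$ is handled by rescaling every exponent by $r_0$. The extrapolation is in the range $(r_0,\infty)$, with the $\mathrm{LH}$ hypotheses ensuring boundedness of $M$ on the relevant rescaled and dual spaces. The output is that $T_{\mathfrak{m}}:\prod_j L^{p_j(\cdot)}(w_j)\to L^{p(\cdot)}(\nu_{\vec w})$ is bounded for all proper quadruples $(\vec p(\cdot),p(\cdot),\vec r,\infty)$ and all $\vec w\in\mathcal{A}_{\vec p(\cdot),(\vec r,\infty)}$. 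The norm bound is monotone in the weight characteristic, and we must track this monotonicity explicitly in the extrapolation.

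\textbf{Step 3 (Cauchy integral trick).} Write
\begin{equation*}
[T_{\mathfrak{m}},b]_{e_j}(\vec f)=\frac{1}{2\pi i}\oint_{|z|=\varepsilon}\frac{e^{zb}\,T_{\mathfrak{m}}(f_1,\ldots,e^{-zb}f_j,\ldots,f_m)}{z^2}\dd z .
\end{equation*}
Then, using Minkowski's inequality in the quasi-normed space $L^{p(\cdot)}(\nu_{\vec w})$, it suffices to show two things:
\begin{enumerate}[label=\textup{(\alph*)}]
\item $T_{\mathfrak{m}}$ is bounded with weights $(w_1,\ldots,e^{\mathrm{Re}(z) b}w_j,\ldots,w_m)$ uniformly for $|z|\le\varepsilon$;
\item the weight vector $(w_1,\ldots,e^{\mathrm{Re}(z)b}w_j,\ldots,w_m)$ stays in $\mathcal{A}_{\vec p(\cdot),(\vec r,\infty)}$ with uniformly bounded characteristic.
\end{enumerate}
For quasi-normed targets with $p_-\le 1$, we replace Minkowski's inequality by an $L^{p(\cdot)/\delta}$ argument with $\delta<p_-$, via Lemma~\ref{lem:homog}, or by a Riemann-sum approximation of the contour integral. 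For (b), Lemma~\ref{lem:Apqrs char.} reduces to linear classes, and Lemma~\ref{lem:two_to_one_exp} rewrites these as $\mathcal{A}_{t(\cdot)}$ conditions.

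\textbf{Main obstacle.} The hardest step is the stability of linear variable $\mathcal{A}_{t(\cdot)}$ classes under multiplication by $e^{\lambda b}$, $|\lambda|\le\varepsilon\|b\|_{\BMO}^{-1}$. The constant exponent proof uses reverse Hölder and John--Nirenberg. In the variable setting we would first try the Rubio de Francia factorization of $\mathcal{A}_{t(\cdot)}$ weights in terms of $A_1$ weights from \cite{LO2025}, combined with $e^{\lambda b}\in A_1$-type openness arguments. If that fails, we would instead avoid the perturbation altogether. In that alternative, the commutator bound is proved directly in constant exponents, via the Fefferman--Stein sharp function bound $M^\#_\delta([T_{\mathfrak{m}},b]_{e_j}\vec f)\lesssim\|b\|_{\BMO}\bigl(M_\eta(T_{\mathfrak{m}}\vec f)+\mathcal{M}_{L\log L,r_0}(\vec f)\bigr)$. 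That constant exponent commutator estimate is then extrapolated as in Step~2.
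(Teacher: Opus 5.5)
The paper does not prove this statement. It imports it from \cite[Theorem~7.13]{extr_comp_var_bil}, noting only that the argument adapts ideas of \cite{Li_Sun_2013}, so your proposal has to stand on its own.

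Your main line (Steps~2--3: extrapolate the bound for $T_{\mathfrak{m}}$ to variable exponents, then run the Cauchy integral trick there) has a genuine gap at its decisive point. Item~(b) says that $e^{\lambda b}w_j$ and $e^{\lambda b}\nu_{\vec w}$ stay in the relevant variable classes for $|\lambda|\lesssim\|b\|_{\BMO}^{-1}$. You present this only as something you ``would first try''. Even granting it, you need the variable-exponent norm of $T_{\mathfrak{m}}$ from Step~2 to be uniform along this family of weights. You acknowledge this but do not supply it, and the weighted maximal bounds feeding such an extrapolation are stated with constants depending on the weight itself (see Lemma~\ref{lem: HL}), not merely on a characteristic.

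Your treatment of quasi-Banach targets ($p_-<1$) also fails as written. Set $F(z):=e^{zb}T_{\mathfrak{m}}(f_1,\ldots,e^{-zb}f_j,\ldots,f_m)$. A Riemann sum with $N$ nodes on $|z|=\eps$ has coefficients $|c_k|=(\eps N)^{-1}$, so the $\delta$-triangle inequality only gives $\|\sum_k c_kF(z_k)\|^\delta\le\eps^{-\delta}N^{1-\delta}\sup_z\|F(z)\|^\delta$, which blows up as $N\to\infty$. Rescaling to $L^{p(\cdot)/\delta}$ via Lemma~\ref{lem:homog} helps only after you have the pointwise bound $|F'(0)(x)|^\delta\lesssim\eps^{-\delta-2}\int_{|z|<\eps}|F(z)(x)|^\delta\dd A(z)$. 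That bound requires a.e.\ analyticity of $z\mapsto F(z)(x)$ and subharmonicity of $|F(\cdot)(x)|^\delta$, which you never mention. With it, Minkowski's inequality in the Banach space $L^{p(\cdot)/\delta}$ closes the estimate.

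Your fallback is the viable route, and it is the one consistent with the paper's pointer to \cite{Li_Sun_2013}. Make it the proof and drop Steps~2--3.
First, prove the constant-exponent bound for $[T_{\mathfrak{m}},b]_{e_j}$ for every $\vec w\in\mathcal{A}_{\vec p,(\vec r,\infty)}$ with $r_j=r_0$. The constant should depend only on $[\vec w]$ and $\|b\|_{\BMO}$. The ingredients are the sharp-function estimate, Fefferman--Stein for $\nu_{\vec w}\in A_\infty$, and the usual a priori finiteness reduction.
Then apply multilinear limited-range extrapolation in quasi-Banach function spaces \cite{Nieareth2023}. Verify its maximal-function hypotheses for proper quadruples through Lemma~\ref{lem:Apqrs char.} and the $\mathrm{LH}$ weighted maximal theorem.

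Bounding $\mathcal{M}_{L\log L,r_0}$ on the class with the same $r_0$ uses the openness of multiple weight classes. Alternatively, run the sharp-function estimate at a level $\rho\in(p_0,r_0)$ and use $\mathcal{M}_{L\log L,\rho}\lesssim\mathcal{M}_{r_0}$ pointwise; this is where the strict inequality $r_0>p_0$ enters.

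If you prefer to keep the Cauchy integral trick, apply it \emph{before} extrapolating. In constant exponents, stability of multiple weight classes under $w_j\mapsto e^{\lambda b}w_j$ is classical (John--Nirenberg and reverse H\"older, via the characterization by linear classes). The quasi-Banach case is handled by the subharmonicity bound above. You then extrapolate the resulting commutator estimate, so your ``main obstacle'' never arises.
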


Second, we need some information about the compactness on unweighted constant exponent Lebesgue spaces. Such a result is the unweighted version of \cite[Theorem 5.9]{CaoOlivoYabuta2022}.

\begin{theorem}[{\cite[Theorem 5.9]{CaoOlivoYabuta2022}}]
\label{thm:comp_FourMult_com_unw}
Assume $\frac{mn}{2}<s\leq mn$ and $\mathfrak{m}\in\mathcal{W}^{s}(\R^{nm})$. Let $1\leq t_1,\ldots, t_m <2$ with
\begin{equation*}
    \frac{s}{n} = \sum_{j=1}^{m}\frac{1}{t_j}.
\end{equation*}
Let $b\in\mathrm{CMO}(\R^{n})$. Then, for all $1<p_1,\ldots,p_m<\infty$ with $t_j<p_j$ for each $j=1,\ldots,m$ and
\begin{equation*}
    \frac{1}{p} := \sum_{j=1}^{m}\frac{1}{p_j},
\end{equation*}
we have that $[T_{\mathfrak{m}},b]_{e_j}$ maps $L^{p_1}(\R^n)\times\cdots\times L^{p_m}(\R^n)$ into $L^{p}(\R^n)$ compactly for each $j=1,\ldots,m$.
\end{theorem}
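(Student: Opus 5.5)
The statement is the special case of \cite[Theorem 5.9]{CaoOlivoYabuta2022} in which every weight is identically $1$, so the plan is to reduce to that result. As recalled in the discussion of the Fourier multiplier setting, \cite[Theorem 5.9]{CaoOlivoYabuta2022} gives compactness of $[T_{\mathfrak{m}},b]_{e_j}$ from $L^{p_1}(w_1)\times\cdots\times L^{p_m}(w_m)$ to $L^{p}(\nu_{\vec{w}})$. The hypotheses there are $b\in\CMO(\R^n)$ and $t_j<p_j$, together with $\vec{w}$ in the multiple Muckenhoupt class adapted to $\vec{p}/\vec{t}$. This class is a constant-exponent condition of the same shape as $\mathcal{A}_{\vec{p},(\vec{r},\infty)}$ with $\vec{r}=\vec{t}$.

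The only thing to check is that $\vec{w}=(1,\ldots,1)$ lies in that class. With constant exponents, every norm of $\chi_Q$ is a power of $|Q|$: $\Vert\chi_Q\Vert_{p}=|Q|^{1/p}$ and $\Vert\chi_Q\Vert_{\frac{1}{1/t_j-1/p_j}}=|Q|^{1/t_j-1/p_j}$, and the latter is a genuine (finite) exponent precisely because $t_j<p_j$. Hence the quantity in \eqref{eq:Apqrs constant} with $\gamma=0$ and $s=\infty$ equals
\begin{equation*}
|Q|^{-\frac{1}{t}+\frac{1}{p}+\sum_{j=1}^m\left(\frac{1}{t_j}-\frac{1}{p_j}\right)}=1,
\end{equation*}
where $\frac1t=\sum_j\frac1{t_j}$. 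So the characteristic constant of the trivial weight is $1$, and the cited theorem applies directly.

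If one prefers a self-contained argument, I would follow the standard route. First, by the unweighted boundedness of $[T_{\mathfrak{m}},b]_{e_j}$ with a bound $\lesssim\Vert b\Vert_{\BMO}$, and since norm limits of compact multilinear operators into the quasi-Banach space $L^p$ remain compact, it suffices to treat $b\in C_c^\infty(\R^n)$. For such $b$, I would verify conditions \ref{itm: (i)}--\ref{itm: (iii)} of Theorem~\ref{thm: RK criterion} with constant exponent $p$, $w\equiv1$ and a small $\widetilde{q}<\min\{p,1\}$; this choice is needed because $p$ may be below $1$.
\begin{itemize}
\item Condition \ref{itm: (i)} is the boundedness just used.
\item Condition \ref{itm: (iii)} uses the decay of the kernel of $T_{\mathfrak{m}}$ away from $\supp b$, which comes from the Sobolev regularity $s>mn/2$ of $\mathfrak{m}$.
\item Condition \ref{itm: (ii)} uses the Lipschitz continuity of $b$ together with the kernel smoothness.
\end{itemize}
The main obstacle is this equicontinuity estimate: the multiplier kernel only satisfies $L^2$-type (Hörmander-type) smoothness rather than pointwise bounds. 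For that step I would simply defer to the argument in \cite{Hu2017, CaoOlivoYabuta2022} rather than redo it.
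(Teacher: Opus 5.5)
Your proposal is correct and matches the paper's approach: the paper gives no separate proof and records the statement as the unweighted case $\vec{w}=(1,\ldots,1)$ of \cite[Theorem 5.9]{CaoOlivoYabuta2022}. Your check that the trivial weight has characteristic constant $|Q|^{-1/t+1/p+\sum_j(1/t_j-1/p_j)}=1$ is right. The optional self-contained sketch is not needed, since it defers its key equicontinuity step to the same references anyway.
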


Now, we can state and prove the main result of this subsection.

\begin{theorem}
\label{thm:compact_FourMult_com}
Let $\vec{b}=(b_1,\ldots,b_m)$ be a $m$-tuple of function in $\mathrm{CMO}(\R^n)$. Assume $\frac{mn}{2}<s\leq mn$ and $\mathfrak{m}\in\mathcal{W}^{s}(\R^{mn})$. Consider the constant $p_0\in\left(\frac{mn}{s},\infty\right)$ depending only on $m$, $n$ and $s$ from Theorems~\ref{thm:bound_FourMult_com}. Then, the following holds. For all $r_0\in(p_0,\infty)$, for all proper $m$-admissible quadruples $(\vec{p}(\cdot), q(\cdot),\vec{r},\infty)$ with $q(\cdot)=p(\cdot)$, $r_j := r_0$, $j = 1, \ldots,m$, and for all $\vec{w}\in\mathcal{A}_{\vec{p}(\cdot),(\vec{r},\infty)}$, we have that $[T_{\mathfrak{m}},b_j]_{e_j}$ maps $L^{p_1(\cdot)}(w_1)\times\ldots\times L^{p_m(\cdot)}(w_m)$ into $L^{p(\cdot)}(\nu_{\vec{w}})$ compactly, for all $j=1,\ldots,m$. In particular, $[T_{\mathfrak{m}},\vec{b}]_{\Sigma}$ maps $L^{p_1(\cdot)}(w_1)\times\ldots\times L^{p_m(\cdot)}(w_m)$ into $L^{p(\cdot)}(\nu_{\vec{w}})$ compactly.
\end{theorem}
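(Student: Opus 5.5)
The plan is to apply part \ref{eq: main thm 2} of Theorem~\ref{thm:main_result_extr}, following the proofs of Theorems~\ref{thm:main_result_CZO} and \ref{thm:compact_frac_cz}. Fix $r_0\in(p_0,\infty)$ and set $\vec{r}:=(r_0,\ldots,r_0)$, $s=\infty$, $\gamma=0$. Let $\Theta$ consist of all pairs $(\vec{Y},Y)$ with $Y_j=L^{p_j(\cdot)}(w_j)$ and $Y=L^{p(\cdot)}(\nu_{\vec{w}})$, where $(\vec{p}(\cdot),p(\cdot),\vec{r},\infty)$ is a proper $m$-admissible quadruple and $\vec{w}\in\mathcal{A}_{\vec{p}(\cdot),(\vec{r},\infty)}$. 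Since $(p_j)_->r_0>1$, each $Y_j$ is a Banach space and $Y$ is quasi-Banach, as Theorem~\ref{thm:main_result_extr} requires.

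By \cite[Proposition 2]{BenTor2013}, extended to the multilinear setting, a sum of compact multilinear operators is compact. So the statement about $[T_{\mathfrak{m}},\vec{b}]_{\Sigma}$ follows once each $[T_{\mathfrak{m}},b_j]_{e_j}$ is known to be compact, and we fix $j$. The boundedness hypothesis on all of $\Theta$ is exactly Theorem~\ref{thm:bound_FourMult_com}, applied with $b=b_j\in\mathrm{CMO}\subset\mathrm{BMO}$.

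For the compactness hypothesis we need one member of $\Theta$ on which Theorem~\ref{thm:comp_FourMult_com_unw} applies.
\begin{enumerate}[label=\textup{(\alph*)}]
\item Choose a constant $d>r_0$ and set $\vec{p}_1:=(d,\ldots,d)$, $p_1=q_1:=d/m$ and $\vec{w}_1:=(1,\ldots,1)$.
\item Check that $\vec{w}_1\in\mathcal{A}_{\vec{p}_1,(\vec{r},\infty)}$. For constant exponents and trivial weights the defining quantity is $|Q|^{-1/r}\,|Q|^{1/p_1}\prod_{j}|Q|^{1/r_0-1/d}=1$, since $1/r=m/r_0$. The exponents are constant, hence in $\mathrm{LH}$.
\item Choose $t_1,\ldots,t_m\in[1,2)$ with $\sum_j 1/t_j=s/n$ (the Sobolev index $s$ of Theorem~\ref{thm:comp_FourMult_com_unw}, not the admissibility parameter). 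This is possible because $s/n\in(m/2,m]$; for instance take $t_j:=mn/s$ for all $j$.
\item Verify $t_j<d$. This holds because $d>r_0>p_0>mn/s=t_j$.
\end{enumerate}
Theorem~\ref{thm:comp_FourMult_com_unw} then gives compactness of $[T_{\mathfrak{m}},b_j]_{e_j}\colon L^d\times\cdots\times L^d\to L^{d/m}$, which is the required pair $(\vec{X},X)\in\Theta$. Applying part \ref{eq: main thm 2} of Theorem~\ref{thm:main_result_extr} yields compactness on every $(\vec{Z},Z)\in\Theta$.

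The main point to check is that the parameters of the two cited theorems fit together: $r_0>p_0>mn/s$ must leave room for the $t_j$ of Theorem~\ref{thm:comp_FourMult_com_unw} to sit below $d$. This works with the uniform choice $t_j=mn/s$. We also need that $\Theta$ is defined with a fixed $\vec{r}$ and $s=\infty$, as part \ref{eq: main thm 2} demands. Beyond these checks the argument is routine bookkeeping.
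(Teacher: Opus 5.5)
Your proposal is correct and follows the paper's proof essentially step for step. Both apply part~\ref{eq: main thm 2} of Theorem~\ref{thm:main_result_extr} with $r_j=r_0$, $s=\infty$, $\gamma=0$, take boundedness from Theorem~\ref{thm:bound_FourMult_com}, and get compactness at the unweighted pair $\vec{p}_1=(d,\ldots,d)$, $q_1=d/m$ with $d>r_0$ and $t_j=mn/s$ via Theorem~\ref{thm:comp_FourMult_com_unw}. Your additional checks are correct and are left implicit in the paper: that the trivial weights lie in $\mathcal{A}_{\vec{p}_1,(\vec{r},\infty)}$, and that the domain spaces are Banach because $(p_j)_->r_0>1$.
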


\begin{proof}
We apply part \ref{eq: main thm 2} of Theorem~\ref{thm:main_result_extr}. Fix $r_0\in(p_0,\infty)$. The class $\Theta$ we will be considering consists of all pairs $(\vec{Y},Y)$ of the form
\begin{align*}
    &Y_j = L^{p_j(\cdot)}(w_j),\quad j=1,\ldots,m,\\
    &Y = L^{q(\cdot)}(\nu_{\vec{w}}),
\end{align*}
where $(\vec{p}(\cdot),q(\cdot),\vec{r},\infty)$ is a proper $m$-admissible quadruple with $q(\cdot)=p(\cdot)$,
\begin{equation*}
    r_j := r_0 > p_0 > \frac{mn}{s} \geq 1,\quad j=1,\ldots,m,
\end{equation*}
and $\vec{w} = (w_1,\ldots,w_m)$ is a $m$-tuple of weights such that $\vec{w}\in\mathcal{A}_{(\vec{p}(\cdot),q(\cdot)),(\vec{r},\infty)}=\mathcal{A}_{\vec{p}(\cdot),(\vec{r},\infty)}$. Observe that in the notation of part \ref{eq: main thm 2} of Theorem~\ref{thm:main_result_extr}, for this particular class we have $\gamma=0$.

Fix $(\vec{Z},Z)\in\Theta$. Similarly to the proofs of Theorems~\ref{thm:main_result_CZO} and~\ref{thm:compact_frac_cz}, we only have to show that for all $j=1,\ldots,m$, $[T_{\mathfrak{m}},b_j]_{e_j}$ maps $Z_1\times\ldots\times Z_m$ into $Z$.

Fix $j\in\{1,\ldots,m\}$. The boundedness assumption of part \ref{eq: main thm 2} of Theorem~\ref{thm:main_result_extr} for $[T_{\mathfrak{m}},b]_{e_j}$ is clearly just the content of Theorem~\ref{thm:bound_FourMult_com}.
    
We check the compactness assumption of part \ref{eq: main thm 2} of Theorem~\ref{thm:main_result_extr}. Set
\begin{equation*}
    t_j := \frac{mn}{s},\quad j=1,\ldots,m.
\end{equation*}
Then, we have $1\leq t_j<2$, $j=1,\ldots,m$ and
\begin{equation*}
    \sum_{j=1}^{m}\frac{1}{t_j} = \frac{s}{n}.
\end{equation*}
Pick now $d > r_0$.
Then, Theorem~\ref{thm:comp_FourMult_com_unw} yields that the compactness assumption of Theorem~\ref{thm:main_result_extr} holds for the operator $[ T_{m},b_j]_{e_j}$ with the particular choices
\begin{align*}
    &X_j = L^{p_{1,j}(\cdot)}(w_{1,j}),\quad j=1,\ldots,m,\\
    &X = L^{q_1(\cdot)}(\nu_{\vec{w}_1}),
\end{align*}
where
\begin{equation*}
    \vec{p}_1(\cdot):=(d,\ldots,d),\quad q_1(\cdot) := \frac{d}{m},\quad\text{and}\quad \vec{w}_1:=(1,\ldots,1).
\end{equation*}

Therefore, applying part \ref{eq: main thm 2} of Theorem~\ref{thm:main_result_extr} we conclude the proof.
\end{proof}

\section*{Declarations}

\subsection*{Author Contributions} This has been a joint collaboration in all sense.

\subsection*{Funding}
S. L. was partly supported by the grant 26-21107S of the Czech Science Foundation and by the ERC Synergy grant HALF 101224275.

\subsection*{Data Availability}
Not applicable.

\subsection*{Competing Interests} The authors declare no competing interests.

\subsection*{Ethical Approval}
Not applicable.

\printbibliography

\end{document}